\documentclass[reqno]{amsart}
\usepackage{setspace,tikz,xcolor,mathrsfs,listings,multicol}
\usepackage{rotating}
\usepackage[vcentermath]{youngtab}
\usepackage{fullpage}
\usepackage{enumerate}
\usepackage{booktabs}
\usepackage{cite}
\usepackage[all,cmtip]{xy}
\usetikzlibrary{arrows,matrix}
\tikzset{tab/.style={matrix of math nodes,column sep=-.35, row sep=-.35,text height=7pt,text width=7pt,align=center,inner sep=2,font=\footnotesize}}
%\usepackage[margin=1.25in]{geometry}
%\onehalfspacing

\usepackage[colorlinks=true, pdfstartview=FitV, linkcolor=blue, citecolor=blue, urlcolor=blue]{hyperref}

% use these commands for typesetting doi and arXiv references in the bibliography

\newcommand{\g}{\mathfrak{g}}
\newcommand{\HH}{\mathcal{H}}
\newcommand{\clfw}{\overline{\Lambda}} % classical fundamental weights
\newcommand{\clsr}{\overline{\alpha}} % classical simple roots
\newcommand{\clW}{\overline{W}} % classical Weyl group
\newcommand{\inner}[2]{\left\langle #1, #2 \right\rangle}
 % inner product but does not scale the brackets
\newcommand{\iso}{\cong}

\newcommand{\column}[1]{[#1]}

\newcommand{\qbinom}[3]{\genfrac{[}{]}{0pt}{}{#1}{#2}_{#3}}
\newcommand{\absval}[1]{\left\lvert #1 \right\rvert}
\newcommand{\virtual}[1]{\widehat{#1}}
\newcommand{\folding}{\searrow}
\newcommand{\KSS}[1]{\widetilde{#1}} % Denote the KSS-type bijection/map
\newcommand{\lusztig}{\star}

 % Support
\DeclareMathOperator{\RC}{RC} % rigged configurations
\DeclareMathOperator{\wt}{wt} % weight
 % promotion
\DeclareMathOperator{\ls}{ls} % left split
\DeclareMathOperator{\rs}{rs} % right split
\DeclareMathOperator{\lb}{lb} % left box
\DeclareMathOperator{\rb}{rb} % right box
\DeclareMathOperator{\cc}{cc} % cocharge
\DeclareMathOperator{\fillmap}{fill} % the filling map
\DeclareMathOperator{\id}{id} % identity
 % character
 % grading

\newcommand{\hwRC}{\RC^{HW}} % highest weight rigged configurations

\newcommand{\ZZ}{\mathbb{Z}}

\newcommand{\bon}{\overline{1}}
\newcommand{\btw}{\overline{2}}
\newcommand{\bth}{\overline{3}}
\newcommand{\bfo}{\overline{4}}
\newcommand{\bfive}{\overline{5}}
\newcommand{\bsix}{\overline{6}}
\newcommand{\bseven}{\overline{7}}
\newcommand{\beight}{\overline{8}}

\newcommand{\bn}{\overline{n}}
\newcommand{\ellbar}{\overline{\ell}}

\newcommand{\tnu}{\widetilde{\nu}}
\newcommand{\tJ}{\widetilde{J}}

% Dark red emphasis
\definecolor{darkred}{rgb}{0.7,0,0} % darkred color
\newcommand{\defn}[1]{{\color{darkred}\emph{#1}}} % emphasis of a definition

% Some other colors
\definecolor{UQgold}{RGB}{196, 158, 54} % UQ gold
\definecolor{UQpurple}{RGB}{73, 7, 94} % UQ purple
\definecolor{UMNgold}{RGB}{255,200,46} % UMN gold
\definecolor{UMNmaroon}{RGB}{106,0,50} % UMN maroon

%% For typesetting code listings                                                
\usepackage{listings}
\lstdefinelanguage{Sage}[]{Python}
{morekeywords={False,sage,True},sensitive=true}
\lstset{
  frame=single,
  showtabs=False,
  showspaces=False,
  showstringspaces=False,
  commentstyle={\ttfamily\color{dgreencolor}},
  keywordstyle={\ttfamily\color{dbluecolor}\bfseries},
  stringstyle={\ttfamily\color{dgraycolor}\bfseries},
  language=Sage,
  basicstyle={\footnotesize\ttfamily},
  aboveskip=0.75em,
  belowskip=0.75em,
  xleftmargin=.15in,
}
\definecolor{dblackcolor}{rgb}{0.0,0.0,0.0}
\definecolor{dbluecolor}{rgb}{0.01,0.02,0.7}
\definecolor{dgreencolor}{rgb}{0.2,0.4,0.0}
\definecolor{dgraycolor}{rgb}{0.30,0.3,0.30}

\usepackage{xparse}

\makeatletter
% \specialmergetwolists{<coupler>}{<list1>}{<list2>}{<return macro>}
% \specialmergetwolists*{<coupler>}{<listcmd1>}{<listcmd2>}{<return macro>}
\protected\def\specialmergetwolists{%
  \begingroup
  \@ifstar{\def\cnta{1}\@specialmergetwolists}
    {\def\cnta{0}\@specialmergetwolists}%
}
\def\@specialmergetwolists#1#2#3#4{%
  \def\tempa##1##2{%
    \edef##2{%
      \ifnum\cnta=\@ne\else\expandafter\@firstoftwo\fi
      \unexpanded\expandafter{##1}%
    }%
  }%
  \tempa{#2}\tempb\tempa{#3}\tempa
  \def\cnta{0}\def#4{}%
  \foreach \x in \tempb{%
    \xdef\cnta{\the\numexpr\cnta+1}%
    \gdef\cntb{0}%
    \foreach \y in \tempa{%
      \xdef\cntb{\the\numexpr\cntb+1}%
      \ifnum\cntb=\cnta\relax
        \xdef#4{#4\ifx#4\empty\else,\fi\x#1\y}%
        \breakforeach
      \fi
    }%
  }%
  \endgroup
}
\makeatother

\usepackage{xparse}
\DeclareDocumentCommand\rpp{ m m g }{
	\foreach \x [count=\s from 1] in {#1}{
	        {\ifnum\s=1
	                \draw (0,-\s)--(\x,-\s);
	                \fi}
	   \draw (0,-\s-1) to (\x,-\s-1);
	   \foreach \y in {0, ..., \x} {\draw (\y,-\s)--(\y,-\s-1);}
	}
	\specialmergetwolists{/}{#1}{#2}\ziplist
	\foreach \x/\y [count=\yi from 1] in \ziplist{
	    \node[anchor=west,font=\small] at (\x,-\yi - .5) {$\y$};
	}
	\IfValueT {#3}
	{\foreach \z [count=\zi from 1] in {#3} {\node[anchor=east,font=\small] at (0,-\zi - .5) {$\z$};}}
	{}
}

\theoremstyle{plain}
\newtheorem{theorem}{Theorem}[section]
\newtheorem{lemma}[theorem]{Lemma}
\newtheorem{conjecture}[theorem]{Conjecture}
\newtheorem{proposition}[theorem]{Proposition}
\newtheorem{corollary}[theorem]{Corollary}
\theoremstyle{definition}
\newtheorem{definition}[theorem]{Definition}
\newtheorem{example}[theorem]{Example}
\newtheorem{remark}[theorem]{Remark}
\newtheorem{problem}[theorem]{Problem}
\numberwithin{equation}{section}
%\numberwithin{figure}{section}
%\numberwithin{table}{section}
%\setcounter{section}{-1}

% For breaking equations across multiple pages
% \allowdisplaybreaks[1]

\usepackage[colorinlistoftodos]{todonotes}

%%%%%%%%%%%%%%%%%%%%%%%%%%%%%%%%%%%%%%%%

\begin{document}
\title{Uniform description of the rigged configuration bijection}

\author[T.~Scrimshaw]{Travis Scrimshaw}
\address[T. Scrimshaw]{School of Mathematics and Physics, The University of Queensland, St.\ Lucia, QLD 4072, Australia}
\email{tcscrims@gmail.com}
\urladdr{https://people.smp.uq.edu.au/TravisScrimshaw/}

\keywords{crystal, rigged configuration, Kirillov--Reshetikhin crystal, fermionic formula}
\subjclass[2010]{05E10, 17B37, 05A19, 81R50, 82B23}

\thanks{The author was partially supported by the National Science Foundation RTG grant NSF/DMS-1148634.}

\begin{abstract}
We give a uniform description of the bijection $\Phi$ from rigged configurations to tensor products of Kirillov--Reshetikhin crystals of the form $\bigotimes_{i=1}^N B^{r_i,1}$ in dual untwisted types: simply-laced types and types $A_{2n-1}^{(2)}$, $D_{n+1}^{(2)}$, $E_6^{(2)}$, and $D_4^{(3)}$. We give a uniform proof that $\Phi$ is a bijection and preserves statistics. We describe $\Phi$ uniformly using virtual crystals for all remaining types, but our proofs are type-specific. We also give a uniform proof that $\Phi$ is a bijection for $\bigotimes_{i=1}^N B^{r_i,s_i}$ when $r_i$, for all $i$, map to $0$ under an automorphism of the Dynkin diagram. Furthermore, we give a description of the Kirillov--Reshetikhin crystals $B^{r,1}$ using tableaux of a fixed height $k_r$ depending on $r$ in all affine types. Additionally, we are able to describe crystals $B^{r,s}$ using $k_r \times s$ shaped tableaux that are conjecturally the crystal basis for Kirillov--Reshetikhin modules for various nodes $r$.
\end{abstract}

\maketitle

\setcounter{tocdepth}{1} % Show sections
\tableofcontents
\pagebreak

%=====================================================================
\section{Introduction}
\label{sec:introduction}

Kashiwara began the study of crystals in the early 1990's as a method to explore the representation theory of quantum groups~\cite{K90,K91}. One particular application is the highest weight elements of a tensor product of Kirillov--Reshetikhin (KR) crystals naturally index solutions on two-dimensional solvable lattice models from using Baxter's corner transfer matrix~\cite{B89}. Kerov, Kirillov, and Reshetikhin introduced combinatorial objects called rigged configurations that naturally index solutions to the Bethe Ansatz for the isotropic Heisenberg spin model~\cite{KKR86,KR86}. Moreover, the row-to-row transfer matrices can be described by tensor product of KR crystals. This suggests a link between rigged configurations and highest weight elements of a tensor product of KR crystals.

This was formalized by Kerov, Kirillov, and Reshetikhin by constructing a bijection $\Phi$ for the tensor product $(B^{1,1})^{\otimes N}$ in type $A_n^{(1)}$ and the corresponding rigged configurations. This was extended to the general case $\bigotimes_{i=1}^N B^{r_i, s_i}$ in type $A_n^{(1)}$ in~\cite{KSS02}, and it was soon conjectured that there exists an analogous bijection in all affine types. For the remaining non-exceptional types, such a bijection for $(B^{1,1})^{\otimes N}$ was given in~\cite{OSS03} and type $E_6^{(1)}$ in~\cite{OS12}. Many other special cases are also known: the $\bigotimes_{i=1}^N B^{1, s_i}$ case for non-exceptional types~\cite{OSS03III,SS2006}; the $\bigotimes_{i=1}^N B^{r_i, 1}$ case for types $D_{n+1}^{(2)}$, $A_{2n}^{(2)}$, and $C_n^{(1)}$~\cite{OSS03II}, and type $D_n^{(1)}$~\cite{S05}; the case $B^{r,s}$ for type $D_n^{(1)}$~\cite{OSS13} and other non-exceptional types~\cite{SchillingS15}; and both types of tensor products are known for type $D_4^{(3)}$~\cite{Scrimshaw15}. Recently, the general case for type $D_n^{(1)}$ was proven~\cite{OSSS16}, followed soon thereafter for all non-exceptional types~\cite{OSS17}. Additionally, the bijection $\Phi$ was extended to a crystal isomorphism for the full crystal in type $A_n^{(1)}$ in~\cite{DS06,SW10} and a classical crystal isomorphism for type $D_n^{(1)}$ in~\cite{Sakamoto14} and $A_{2n-1}^{(2)}$ in~\cite{SchillingS15}.

Despite being defined recursively, obfuscating many of its properties, the bijection $\Phi$ has many remarkable (conjectural) attributes. There is a natural statistic defined on tensor products of KR crystals called energy that arose from the related statistical mechanics, but energy is an algebraic statistic whose computation requires using the very intricate combinatorial $R$-matrix. On the rigged configuration side, there is a combinatorial statistic called cocharge, which also comes from the related physics, and $\Phi$ sends energy to cocharge (with interchanging riggings and coriggings). This gives a combinatorial proof the $X = M$ conjecture of~\cite{HKOTY99,HKOTT02}. We recall that the $X$ side comes from the sum over the classically highest weight elements of tensor products of KR crystals and is related to the one-point function of 2D lattice models. Additionally, the $M$ side is summed over highest weight rigged configurations and is related to solutions to the Bethe equation of the Heisenberg spin chain. Moreover, the combinatorial $R$-matrix gets sent to the identity map under $\Phi$.

Because of these properties, rigged configurations in type $A_n^{(1)}$ describe the action-angle variables of box-ball systems~\cite{KOSTY06}, which is an ultradiscrete version of the Korteweg-de Vries (KdV) equation. More specifically, the partition $\nu^{(1)}$ describes the sizes of the solitons when there is no interaction~\cite{KOSTY06,Takagi05}. A tropicalization of a ratio of (cylindric) loop Schur functions is conjectured to describe $\Phi$ for box-ball systems~\cite{LPS15,Scrimshaw17}, and $\Phi^{-1}$ can be described using the $\tau$ function from the Kadomtsev--Petviashvili (KP) heirarchy~\cite{KSY07}. Generalizations of box-ball systems, soliton cellular automata~\cite{Mahathir12,HKOTY02,HKT00,MOW12,Yamada04,Yamada07}, are also believed to have deep connections with rigged configurations. In type $A_n^{(1)}$, the state energy was related to rigged configurations~\cite{Sakamoto09}.

There are many properties of rigged configurations that are known to be uniform. A crystal structure on rigged configurations was first given for simply-laced types in~\cite{S06}, which was then extended to a classical crystal structure for $U_q'(\g)$-crystals for affine types~\cite{SchillingS15} and highest weight crystals and $B(\infty)$ for general Kac--Moody algebras in~\cite{SalisburyS15,SalisburyS15II}. Furthermore, the $\ast$-involution on $B(\infty)$~\cite{K93,Lusztig90} is the map that replaces all riggings with their respective coriggings~\cite{SalisburyS16II}.
In~\cite{SalisburyS16}, the bijection $\Phi$ was also extended (uniformly) to describe a bijection between the rigged configurations and marginally large tableaux~\cite{Cliff98,HL08} for $B(\infty)$.

Similarly, there are also uniform descriptions of KR crystals of the form $B = \bigotimes_{i=1}^N B^{r_i,1}$ using the alcove path model (up to non-dual Demazure arrows)~\cite{LL15} and quantum and projected level-zero LS paths~\cite{LNSSS14,LNSSS16,LNSSS14II,NS06II,NS08II,NS08}. This is based upon the work of Kashiwara~\cite{Kashiwara02}, where $B$ is a crystal basis of the tensor product of the corresponding KR modules and is constructed by projecting the crystal basis of a level-zero extremal weight module. A uniform model of extremal level-zero crystals using Nakajima monomials was given in~\cite{HN06}, but the projection onto $B$ was done type-by-type.
The connection of (resp.\ Demazure) characters of $B$ with (resp.\ non-symmetric) Macdonald polynomials was given in~\cite{LNSSS14,LNSSS14II} (resp.~\cite{LNSSS15}).

KR crystals also have a number of other additional properties. Their characters (resp.\ $q$-characters in the sense of~\cite{FR99}) give solutions to Q-systems (resp.\ T-systems)~\cite{Hernandez06,Hernandez10,Nakajima01,Nakajima03,Nakajima10}. The existence and combinatorial structure of $B^{r,s}$ was given for non-exceptional types in~\cite{FOS09,Okado07,OS08} and a few other special cases~\cite{JS10,KMOY07,Yamane98}. Existence for types $G_2^{(1)}$ and $D_4^{(3)}$ was recently proven in~\cite{Naoi17}. KR crystals are conjectured to generally be perfect, which is known for non-exceptional types~\cite{FOS10} and some other special cases~\cite{KMOY07,Yamane98}.

While many special cases of the conjectured bijection $\Phi$ are known (as mentioned above), the description of $\Phi$ is given in a type-by-type fashion, meaning that there is no natural extension to the other exceptional types. The original goal of this paper was to extend $\Phi$ to $\bigotimes_{i=1}^N B^{r_i, 1}$ for type $E_{6,7,8}^{(1)}$ by using the crystal graph, which was first explicitly used by Okado--Sano~\cite{OS12} for $\bigotimes_{i=1}^N B^{1,1}$ in type $E_6^{(1)}$. However, it soon became apparent that our description of $\Phi$ could be given uniformly for dual untwisted types, and moreover, the proofs given here are uniform. Using this, we are able to prove a number of special cases of the $X = M$ conjecture in all exceptional types, where there has otherwise been very little progress~\cite{JS10,KMOY07,OS12,Scrimshaw15,Yamane98}.

Explicitly, the core of our main result is a description of $\Phi$ when the basic map $\delta$ removes the left-most factor $B^{r,1}$, where $\clfw_r$ is either a minuscule weight (Section~\ref{sec:minuscule}, Lemma~\ref{lemma:minuscule_bijections}) or the highest (short) root\footnote{Our results also include type $A_n^{(1)}$, where we instead have $B^{1,1} \otimes B^{n,1}$ as the atomic object.} (\textit{i.e.}, it is the perfect crystal of~\cite{BFKL06} or $B(\clfw_r)$ is the (``little'') adjoint representation) (Section~\ref{sec:adjoint}, Lemma~\ref{lemma:adjoint_bijections}). We then extend the bijection to $B = \bigotimes_{i=1}^N B^{r_i,1}$ (Section~\ref{sec:box_map}, Proposition~\ref{prop:lb_bijection_reduction}). As stated above, the description and proof of this is uniform for all dual untwisted types. For the remaining types, we give a uniform description using virtual crystals (Section~\ref{sec:untwisted}), and while our proof is essentially uniform, it does contain some type-specific arguments. However, the last part of our main results are that we give a uniform proof that the virtualization map commutes with the bijection $\Phi$ (Theorem~\ref{thm:virtual_bijection}).

We show that these descriptions of $\delta$ are equivalent to those described in~\cite{Mahathir,KSS02,OSS03,S05,Scrimshaw15} (in particular, proving the conjectural description of $\Phi$ in~\cite{Mahathir}) (Section~\ref{sec:equivalence}). As a secondary result, we provide further evidence of the conjecture that KR crystals $B^{r,s}$ in the exceptional types correspond to crystal bases of KR modules and of the $X = M$ conjecture by showing the fermionic formula agrees with the conjectured decompositions of~\cite{HKOTY99,HKOTT02}. We also describe the so-called Kirillov--Reshetikhin (KR) tableaux for $B^{r,1}$ in types $E_{6,7,8}^{(1)}$, $E_6^{(2)}$, $F_4^{(1)}$, and $G_2^{(1)}$ (Section~\ref{sec:filling_map}). For certain $r$, we describe the KR tableaux for $B^{r,s}$ and show that $\Phi$ gives a bijection for the single tensor factor.

We are further able to extend our bijection for $\bigotimes_{i=1}^N B^{r_i,s_i}$ when $\clfw_{r_i}$ is a minuscule weight by using the tableaux description given in~\cite{JS10} (Theorem~\ref{thm:general_minuscule_bijection}). Specifically, the tableaux can be thought of as single rows that are weakly increasing with entries in $B(\clfw_{r_i})$, which is naturally considered as a poset. Moreover, the proofs that we give are also uniform. This is the generalization of the results of~\cite{SS2006}.

Our results are evidence that there should be a natural bijection between rigged configurations and the aforementioned models for KR crystals. Additionally, it also suggests that there should be a uniform description of the $U_q'(\g)$-crystal structure on rigged configurations by considering the Demazure subcrystal of $B(\ell \Lambda_0)$ following~\cite{FSS07,ST12}. Furthermore, our results and our proof techniques are further evidence that the map $\eta$ that replaces riggings with coriggings in our setting of rigged configurations, which is key in the proof that $\Phi$ preserves statistics, is connected with the $\ast$-involution on $B(\infty)$. Our results also give a uniform description of the combinatorial $R$-matrix in the cases we consider (extend Remark~\ref{rem:universal_R_construction} to all of our results), for which a uniform description was given on the alcove path model in~\cite{LL16}, but our proof is type-specific.

Our results also give further evidence that rigged configurations are intimately connected with the Weyl chamber geometry. Indeed, as rigged configurations are well-behaved under virtualization, the results of~\cite{PS15} gives the first evidence. Yet, it is the fact that our results are given for the types where the fundamental alcove is translated by precisely $\alpha_a$ (there are some slight modifications needed for type $A_{2n}^{(2)}$) to another alcove is further evidence. This is additionally emphasized with our result showing $\Phi$ intertwines with the virtualization map, extending results of~\cite{OSS03III,OSS03II,SchillingS15}. Additionally, the related results of~\cite{OSS03} for the untwisted non-simply-laced types appears to be related to our descriptions when the rigged partition $(\nu, J)^{(a)}$ is scaled by the coefficient of $\alpha_a$ required to translate the fundamental alcove. Making this explicit would lead to a completely uniform description of $\Phi$ and more strongly link rigged configurations to the underlying geometry.

\subsection*{Summary of new results}

Recall that we consider the case $\bigotimes_{i=1}^N B^{r_i,1}$ and the case $\bigotimes_{i=1}^N B^{r_i,s_i}$, where $r_i$ is a minuscule node for all $i$.
Our results for the rigged configuration bijection and a combinatorial proof of the $X = M$ conjecture are new for all exceptional types with the exception of $(B^{r,1})^{\otimes N}$ for $r =1,6$ (the $r = 6$ is implicit in~\cite{OS12} by the diagram symmetry) and $\bigotimes_{i=1}^N B^{1,s_i}$ and $\bigotimes_{i=1}^N B^{r_i,1}$ in type $D_4^{(3)}$. Furthermore, our description of the bijection for single columns in type $A_n^{(1)}$ and spin columns in type $D_n^{(1)}$ is new, which significantly reduces the number of steps needed to compute the bijection $\Phi$. In addition, we note that our proofs are now done almost uniformly.

\subsection*{Organization}

This paper is organized as follows.
In Section~\ref{sec:background}, we describe the necessary background on crystals, KR crystals, rigged configurations, and the bijection $\Phi$.
In Section~\ref{sec:minuscule}, we describe the map $\delta$ for minuscule nodes for dual untwisted types.
In Section~\ref{sec:adjoint}, we describe the map $\delta$ for the adjoint node for dual untwisted types.
In Section~\ref{sec:box_map}, we extend the left-box map for dual untwisted types.
In Section~\ref{sec:untwisted}, we show that the map $\delta$ for untwisted types is well-defined by using the virtualization map to the corresponding dual type.
In Section~\ref{sec:results}, we give our results and proofs.
In Section~\ref{sec:equivalence}, we show that our description of $\Phi$ is the same as the KSS bijections.
In Section~\ref{sec:filling_map}, we describe the highest weight rigged configurations and KR tableaux for $B^{r,s}$ in a number of different cases.
In Section~\ref{sec:outlook}, we give our concluding remarks.

\subsection*{Acknowledgements}

The author would like to thank Masato Okado for the reference~\cite{Mahathir} and useful discussions. The author would like to thank Anne Schilling for useful discussions. The author would like to thank Ben Salisbury for comments on an early draft of this paper. The author would like to thank the referee for their useful comments. This work benefited from computations using {\sc SageMath}~\cite{sage,combinat}.

The majority of this work was done while the author was at the University of Minnesota.

%=====================================================================
\section{Background}
\label{sec:background}

In this section, we provide the necessary background.

%%%%%%%%%%
\subsection{Crystals}

\begin{figure}[t]
\begin{center}
% E6~
\begin{tikzpicture}[scale=0.5, baseline=-20]
\draw (-1,0) node[anchor=east] {$E_6^{(1)}$};
\draw (0 cm,0) -- (8 cm,0);
\draw (4 cm, 0 cm) -- +(0,2 cm);
\draw (4 cm, 2 cm) -- +(0,2 cm);
\draw[fill=white] (4 cm, 4 cm) circle (.25cm) node[right=3pt]{$0$};
\draw[fill=white] (0 cm, 0 cm) circle (.25cm) node[below=4pt]{$1$};
\draw[fill=white] (2 cm, 0 cm) circle (.25cm) node[below=4pt]{$3$};
\draw[fill=white] (4 cm, 0 cm) circle (.25cm) node[below=4pt]{$4$};
\draw[fill=white] (6 cm, 0 cm) circle (.25cm) node[below=4pt]{$5$};
\draw[fill=white] (8 cm, 0 cm) circle (.25cm) node[below=4pt]{$6$};
\draw[fill=white] (4 cm, 2 cm) circle (.25cm) node[right=3pt]{$2$};
\end{tikzpicture}
% E7~
%\begin{scope}[xshift=15cm]
\begin{tikzpicture}[scale=0.5, baseline=-20]
\draw (-1,0) node[anchor=east] {$E_7^{(1)}$};
\draw (0 cm,0) -- (12 cm,0);
\draw (6 cm, 0 cm) -- +(0,2 cm);
\draw[fill=white] (0 cm, 0 cm) circle (.25cm) node[below=4pt]{$0$};
\draw[fill=white] (2 cm, 0 cm) circle (.25cm) node[below=4pt]{$1$};
\draw[fill=white] (4 cm, 0 cm) circle (.25cm) node[below=4pt]{$3$};
\draw[fill=white] (6 cm, 0 cm) circle (.25cm) node[below=4pt]{$4$};
\draw[fill=white] (8 cm, 0 cm) circle (.25cm) node[below=4pt]{$5$};
\draw[fill=white] (10 cm, 0 cm) circle (.25cm) node[below=4pt]{$6$};
\draw[fill=white] (12 cm, 0 cm) circle (.25cm) node[below=4pt]{$7$};
\draw[fill=white] (6 cm, 2 cm) circle (.25cm) node[right=3pt]{$2$};
%\end{scope}
\end{tikzpicture}

% E8~
%\begin{scope}[xshift=5cm,yshift=-5cm]
\begin{tikzpicture}[scale=0.5]
\draw (-1,0) node[anchor=east] {$E_8^{(1)}$};
\draw (0 cm,0) -- (12 cm,0);
\draw (4 cm, 0 cm) -- +(0,2 cm);
\draw (12 cm,0) -- +(2 cm,0);
\draw[fill=white] (14 cm, 0 cm) circle (.25cm) node[below=4pt]{$0$};
\draw[fill=white] (0 cm, 0 cm) circle (.25cm) node[below=4pt]{$1$};
\draw[fill=white] (2 cm, 0 cm) circle (.25cm) node[below=4pt]{$3$};
\draw[fill=white] (4 cm, 0 cm) circle (.25cm) node[below=4pt]{$4$};
\draw[fill=white] (6 cm, 0 cm) circle (.25cm) node[below=4pt]{$5$};
\draw[fill=white] (8 cm, 0 cm) circle (.25cm) node[below=4pt]{$6$};
\draw[fill=white] (10 cm, 0 cm) circle (.25cm) node[below=4pt]{$7$};
\draw[fill=white] (12 cm, 0 cm) circle (.25cm) node[below=4pt]{$8$};
\draw[fill=white] (4 cm, 2 cm) circle (.25cm) node[right=3pt]{$2$};
%\end{scope}
\end{tikzpicture}

%F4~
\begin{tikzpicture}[scale=0.5,baseline=10]
\draw (-1,0) node[anchor=east] {$F_4^{(1)}$};
\draw (0 cm,0) -- (2 cm,0);
{
\pgftransformxshift{2 cm}
\draw (0 cm,0) -- (2 cm,0);
\draw (2 cm, 0.1 cm) -- +(2 cm,0);
\draw (2 cm, -0.1 cm) -- +(2 cm,0);
\draw (4.0 cm,0) -- +(2 cm,0);
\draw[shift={(3.2, 0)}, rotate=0] (135 : 0.45cm) -- (0,0) -- (-135 : 0.45cm);
\draw[fill=white] (0 cm, 0 cm) circle (.25cm) node[below=4pt]{$1$};
\draw[fill=white] (2 cm, 0 cm) circle (.25cm) node[below=4pt]{$2$};
\draw[fill=white] (4 cm, 0 cm) circle (.25cm) node[below=4pt]{$3$};
\draw[fill=white] (6 cm, 0 cm) circle (.25cm) node[below=4pt]{$4$};
}
\draw[fill=white] (0 cm, 0 cm) circle (.25cm) node[below=4pt]{$0$};

% E_6^(2)
\begin{scope}[xshift=13cm]
\draw (-1,0) node[anchor=east] {$E_6^{(2)}$};
\draw (0 cm,0) -- (2 cm,0);
{
\pgftransformxshift{2 cm}
\draw (0 cm,0) -- (2 cm,0);
\draw (2 cm, 0.1 cm) -- +(2 cm,0);
\draw (2 cm, -0.1 cm) -- +(2 cm,0);
\draw (4.0 cm,0) -- +(2 cm,0);
\draw[shift={(2.8, 0)}, rotate=180] (135 : 0.45cm) -- (0,0) -- (-135 : 0.45cm);
\draw[fill=white] (0 cm, 0 cm) circle (.25cm) node[below=4pt]{$1$};
\draw[fill=white] (2 cm, 0 cm) circle (.25cm) node[below=4pt]{$2$};
\draw[fill=white] (4 cm, 0 cm) circle (.25cm) node[below=4pt]{$3$};
\draw[fill=white] (6 cm, 0 cm) circle (.25cm) node[below=4pt]{$4$};
}
\draw[fill=white] (0 cm, 0 cm) circle (.25cm) node[below=4pt]{$0$};
\end{scope}
\end{tikzpicture}

% G_2^{(1)}
\begin{tikzpicture}[scale=0.5,baseline=20]
\begin{scope}
\draw (-1,0) node[anchor=east] {$G_2^{(1)}$};
\draw (2 cm,0) -- (4.0 cm,0);
\draw (0, 0.15 cm) -- +(2 cm,0);
\draw (0, -0.15 cm) -- +(2 cm,0);
\draw (0,0) -- (2 cm,0);
\draw (0, 0.15 cm) -- +(2 cm,0);
\draw (0, -0.15 cm) -- +(2 cm,0);
\draw[shift={(0.8, 0)}, rotate=180] (135 : 0.45cm) -- (0,0) -- (-135 : 0.45cm);
\draw[fill=white] (0 cm, 0 cm) circle (.25cm) node[below=4pt]{$1$};
\draw[fill=white] (2 cm, 0 cm) circle (.25cm) node[below=4pt]{$2$};
\draw[fill=white] (4 cm, 0 cm) circle (.25cm) node[below=4pt]{$0$};
\end{scope}

% D_4^{(3)}
\begin{scope}[xshift=15cm]
\draw (-1,0) node[anchor=east] {$D_4^{(3)}$};
\draw (2 cm,0) -- (4.0 cm,0);
\draw (0, 0.15 cm) -- +(2 cm,0);
\draw (0, -0.15 cm) -- +(2 cm,0);
\draw (0,0) -- (2 cm,0);
\draw (0, 0.15 cm) -- +(2 cm,0);
\draw (0, -0.15 cm) -- +(2 cm,0);
\draw[shift={(1.2, 0)}, rotate=0] (135 : 0.45cm) -- (0,0) -- (-135 : 0.45cm);
\draw[fill=white] (0 cm, 0 cm) circle (.25cm) node[below=4pt]{$2$};
\draw[fill=white] (2 cm, 0 cm) circle (.25cm) node[below=4pt]{$1$};
\draw[fill=white] (4 cm, 0 cm) circle (.25cm) node[below=4pt]{$0$};
\end{scope}
\end{tikzpicture}
\end{center}
\caption{Dynkin diagrams for the exceptional affine types.}
\label{fig:exceptional_types}
\end{figure}

Let $\g$ be an affine Kac--Moody Lie algebra with index set $I$, Cartan matrix $(A_{ab})_{a,b \in I}$, simple roots $(\alpha_a)_{a \in I}$, simple coroots $(\alpha_a^{\vee})_{a \in I}$, fundamental weights $(\Lambda_a)_{a \in I}$, weight lattice $P$, coweight lattice $P^{\vee}$, and canonical pairing $\langle\ ,\ \rangle \colon P^{\vee} \times P \to \ZZ$ given by $\inner{\alpha_a^{\vee}}{\alpha_b} = A_{ab}$.
We note that we follow the labeling given in~\cite{Bourbaki02} (see Figure~\ref{fig:exceptional_types} for the exceptional types and their labellings).
Let $\g_0$ denote the canonical simple Lie algebra given by the index set $I_0 = I \setminus \{0\}$.
Let $\clfw_a$ and $\clsr_a$ denote the natural projection of $\Lambda_a$ and $\alpha_a$, respectively, onto the weight lattice $\overline{P}$ of $\g_0$. Note $(\clsr_a)_{a \in I_0}$ are the simple roots in $\g_0$.

Let $c_a$ and $c_a^{\vee}$ denote the Kac and dual Kac labels~\cite[Table~Aff1-3]{kac90}. We define
\[
t_a := \max\left(\frac{c_a}{c_a^{\vee}}, c_0^{\vee}\right),
\hspace{40pt}
t_a^{\vee} := \max\left(\frac{c_a^{\vee}}{c_a}, c_0\right).
\]
Note that $t_a^{\vee}$ for type $\g$ equals $t_a$ for the dual type of $\g$.
Let $T_a$ denote the \defn{translation factors}, the smallest factors such that $T_a \alpha_a$ maps the fundamental polygon, the fundamental domain of the action of the root lattice or the image of the fundamental alcove under the corresponding finite Weyl group, to another polygon.
Note that $T_a = t_a$ except for type $A_{2n}^{(2)}$ (resp.~$A_{2n}^{(2)\dagger}$), where we have $T_n = \frac{1}{2}$ (resp.~$T_0 = \frac{1}{2}$) and $T_a = 1$ otherwise.
We have $T_a = 1$ for all $a \in I$ except in the cases mentioned above, $T_n = 2$ in type $B_n^{(1)}$, $T_a = 2$ for $a \neq 0,n$ in type $C_n^{(1)}$, $T_3 = T_4 = 2$ in type $F_4^{(1)}$, and $T_1 = 3$ in type $G_2^{(1)}$.
The null root is $\delta = \sum_{a \in I} c_a \alpha_a$, and the canonical central element is $c = \sum_{a \in I} c_a^{\vee} \alpha_a^{\vee}$.
The normalized (symmetric) invariant form $( \cdot | \cdot ) \colon P \times P \to \ZZ$ is defined by  $(\alpha_a | \alpha_b) = \frac{c_a^{\vee}}{c_a} A_{ab}$.
We write $a \sim b$ if $A_{ab} \neq 0$ and $a \neq b$; in other words, the nodes $a$ and $b$ are adjacent in the Dynkin diagram of $\g$.

For $\g$ not of type $A_n^{(1)}$, let $N_{\g}$ denote the unique node such that $N_{\g} \sim 0$, which we call the \defn{adjoint node}.
We say a node $r \in I_0$ is \defn{special} if there exists a diagram automorphism $\phi \colon I \to I$ such that $\phi(0) = r$. We say a node $r \in I_0$ is \defn{minuscule} if it is special and $\g$ is of dual untwisted affine type.

An \defn{abstract $U_q(\g)$-crystal} is a set $B$ with the \defn{crystal operators} $e_a, f_a \colon B \to B \sqcup \{0\}$, for $a \in I$, and \defn{weight function} $\wt \colon B \to P$ that satisfy the following conditions.
Let $\varepsilon_a, \varphi_a \colon  B \to \ZZ_{\geq 0}$ be statistics given by
\[
\varepsilon_a(b) := \max \{ k \mid e_a^k b \neq 0 \},
\qquad \qquad
\varphi_a(b) := \max \{ k \mid f_a^k b \neq 0 \}.
\]
\begin{itemize}
\item[(1)] $\varphi_a(b) = \varepsilon_a(b) + \inner{\alpha^{\vee}_a}{\wt(b)}$ for all $b \in B$ and $a \in I$.
\item[(2)] $f_a b = b'$ if and only if $b = e_a b'$ for $b, b' \in B$ and $a \in I$.
\end{itemize}
We say an element $b \in B$ is \defn{highest weight} if $e_a b = 0$ for all $a \in I$. Define
\[
\varepsilon(b) = \sum_{a \in I} \varepsilon_a(b) \Lambda_a,
\qquad\qquad
\varphi(b) = \sum_{a \in I} \varphi_a(b) \Lambda_a.
\]

\begin{remark}
The abstract crystals we consider in this paper sometimes called \defn{regular} or \defn{seminormal} in the literature.
\end{remark}

We call an abstract $U_q(\g)$-crystal $B$ a \defn{$U_q(\g)$-crystal} if $B$ is the crystal basis of some $U_q(\g)$-module. Kashiwara has shown that the irreducible highest weight module $V(\lambda)$ admits a crystal basis~\cite{K91}. We denote this crystal basis by $B(\lambda)$, and let $u_{\lambda} \in B(\lambda)$ denote the unique highest weight element and is the unique element of weight $\lambda$. Recall that $B(\lambda)$ is connected. A $U_q(\g_0)$-crystal is a \defn{minuscule representation} if the corresponding finite Weyl group $\clW$ acts transitively on $B(\clfw_r)$. In particular, the $U_q(\g_0)$-crystal $B(\clfw_r)$ is a minuscule representation if and only if $r$ is a minuscule node.

We define the \defn{tensor product} of abstract $U_q(\g)$-crystals $B_1$ and $B_2$ as the crystal $B_2 \otimes B_1$ that is the Cartesian product $B_2 \times B_1$ with the crystal structure
\begin{align*}
e_a(b_2 \otimes b_1) & = \begin{cases}
e_a b_2 \otimes b_1 & \text{if } \varepsilon_a(b_2) > \varphi_a(b_1), \\
b_2 \otimes e_a b_1 & \text{if } \varepsilon_a(b_2) \leq \varphi_a(b_1),
\end{cases}
\\ f_a(b_2 \otimes b_1) & = \begin{cases}
f_a b_2 \otimes b_1 & \text{if } \varepsilon_a(b_2) \geq \varphi_a(b_1), \\
b_2 \otimes f_a b_1 & \text{if } \varepsilon_a(b_2) < \varphi_a(b_1),
\end{cases}
\\ \varepsilon_a(b_2 \otimes b_1) & = \max(\varepsilon_a(b_1), \varepsilon_a(b_2) - \inner{\alpha^{\vee}_a}{\wt(b_1)}),
\\ \varphi_a(b_2 \otimes b_1) & = \max(\varphi_a(b_2), \varphi_a(b_1) + \inner{\alpha^{\vee}_a}{\wt(b_2)}),
\\ \wt(b_2 \otimes b_1) & = \wt(b_2) + \wt(b_1).
\end{align*}

\begin{remark}
Our tensor product convention is opposite of Kashiwara~\cite{K91}.
\end{remark}

Let $B_1$ and $B_2$ be two abstract $U_q(\g)$-crystals. A \defn{crystal morphism} $\psi \colon B_1 \to B_2$ is a map $B_1 \sqcup \{0\} \to B_2 \sqcup \{0\}$ with $\psi(0) = 0$ such that the following properties hold for all $b \in B_1$:
\begin{itemize}
\item[(1)] If $\psi(b) \in B_2$, then $\wt\bigl(\psi(b)\bigr) = \wt(b)$, $\varepsilon_a\bigl(\psi(b)\bigr) = \varepsilon_a(b)$, and $\varphi_a\bigl(\psi(b)\bigr) = \varphi_a(b)$.
\item[(2)] We have $\psi(e_a b) = e_a \psi(b)$ if $\psi(e_a b) \neq 0$ and $e_a \psi(b) \neq 0$.
\item[(3)] We have $\psi(f_a b) = f_a \psi(b)$ if $\psi(f_a b) \neq 0$ and $f_a \psi(b) \neq 0$.
\end{itemize}
An \defn{embedding} and \defn{isomorphism} is a crystal morphism such that the induced map $B_1 \sqcup \{0\} \to B_2 \sqcup \{0\}$ is an embedding or bijection respectively. A crystal morphism is \defn{strict} if it commutes with all crystal operators.

\begin{figure}
\[
\begin{tikzpicture}[>=latex,xscale=1.4,yscale=1.4]
\node (1) at (0,0) {$1$};
\node (b12) at (1,0) {$\bon2$};
\node (b23) at (2,0) {$\btw3$};
\node (b345) at (3,0) {$\bth45$};
\node (b45) at (4,1) {$\bfo5$};
\node (4b5) at (4,-1) {$4\bfive$};
\node (3b4b5) at (5,0) {$3\bfo\bfive$};
\node (2b3) at (6,0) {$2\bth$};
\node (1b2) at (7,0) {$1\btw$};
\node (b1) at (8,0) {$\bon$};
\draw[->,red] (1) -- (b12) node[midway,above] {\scriptsize $1$};
\draw[->,blue] (b12) -- (b23) node[midway,above] {\scriptsize $2$};
\draw[->,dgreencolor] (b23) -- (b345) node[midway,above] {\scriptsize $3$};
\draw[->,black] (b345) -- (b45) node[midway,above left] {\scriptsize $4$};
\draw[->,UQpurple] (b345) -- (4b5) node[midway,below left] {\scriptsize $5$};
\draw[->,black] (b45) -- (3b4b5) node[midway,above right] {\scriptsize $4$};
\draw[->,UQpurple] (4b5) -- (3b4b5) node[midway,below right] {\scriptsize $5$};
\draw[->,dgreencolor] (3b4b5) -- (2b3) node[midway,above] {\scriptsize $3$};
\draw[->,blue] (2b3) -- (1b2) node[midway,above] {\scriptsize $2$};
\draw[->,red] (1b2) -- (b1) node[midway,above] {\scriptsize $1$};
\end{tikzpicture}
\]
\caption{The crystal $B(\clfw_1)$ in type $D_5$.}
\label{fig:B1_type_D5}
\end{figure}

In type $E_n$ for $n = 6,7$, we follow~\cite{JS10} and label elements $b \in B(\clfw_n)$ (and in $B(\clfw_1)$ for type $E_6$) by $X \subset \{1, \bon, 2, \btw, \dotsc, n, \bn\}$, where we have an $a \in X$ (resp.~$\overline{a} \in X$) if and only if $\varphi_a(b) = 1$ (resp.~$\varepsilon_a(b) = 1$). To ease notation, we write $X$ as a word in the alphabet $\{1, \bon, \dotsc, n, \bn\}$. See Figure~\ref{fig:B1_type_D5} and Figure~\ref{fig:B1_type_E6} for examples. We follow the same notation for an element of any minuscule representation.

\begin{figure}
\[
\begin{tikzpicture}[>=latex,xscale=1.4,yscale=1.1]
\node (1) at (0,0) {$1$};
\node (1b3) at (1,0) {$\bon3$};
\node (3b4) at (2,0) {$\bth4$};
\node (4b25) at (3,0) {$2\bfo5$};
\node (5b26) at (4,0) {$2\bfive6$};
\node (6b2) at (5,0) {$2\bsix$};
\node (2b5) at (3,-1) {$\btw5$};
\node (25b46) at (4,-1) {$\btw4\bfive6$};
\node (26b4) at (5,-1) {$\btw4\bsix$};
\node (4b36) at (4,-2) {3$\bfo6$};
\node (46b35) at (5,-2) {$3\bfo5\bsix$};
\node (5b3) at (6,-2) {$3\bfive$};
\node (3b16) at (4,-3) {$1\bth6$};
\node (36b15) at (5,-3) {$1\bth5\bsix$};
\node (35b14) at (6,-3) {$1\bth4\bfive$};
\node (4b12) at (7,-3) {$12\bfo$};
\node (2b1) at (8,-3) {$1\btw$};
\node (1b6) at (4,-4) {$\bon6$};
\node (16b5) at (5,-4) {$\bon5\bsix$};
\node (15b4) at (6,-4) {$\bon4\bfive$};
\node (14b23) at (7,-4) {$\bon23\bfo$};
\node (12b3) at (8,-4) {$\bon\btw3$};
\node (3b2) at (7,-5) {$2\bth$};
\node (23b4) at (8,-5) {$\btw\bth4$};
\node (4b5) at (8,-6) {$\bfo5$};
\node (5b6) at (8,-7) {$\bfive6$};
\node (6b) at (8,-8) {$\bsix$};
\draw[->,red] (1) -- (1b3) node[midway,above] {\scriptsize $1$};
\draw[->,dgreencolor] (1b3) -- (3b4) node[midway,above] {\scriptsize $3$};
\draw[->,black] (3b4) -- (4b25) node[midway,above] {\scriptsize $4$};
\draw[->,UQpurple] (4b25) -- (5b26) node[midway,above] {\scriptsize $5$};
\draw[->,UQgold] (5b26) -- (6b2) node[midway,above] {\scriptsize $6$};
\draw[->,blue] (4b25) -- (2b5) node[midway,right] {\scriptsize $2$};
\draw[->,blue] (5b26) -- (25b46) node[midway,right] {\scriptsize $2$};
\draw[->,blue] (6b2) -- (26b4) node[midway,right] {\scriptsize $2$};
\draw[->,UQpurple] (2b5) -- (25b46) node[midway,above] {\scriptsize $5$};
\draw[->,UQgold] (25b46) -- (26b4) node[midway,above] {\scriptsize $6$};
\draw[->,black] (25b46) -- (4b36) node[midway,right] {\scriptsize $4$};
\draw[->,black] (26b4) -- (46b35) node[midway,right] {\scriptsize $4$};
\draw[->,UQgold] (4b36) -- (46b35) node[midway,above] {\scriptsize $6$};
\draw[->,UQpurple] (46b35) -- (5b3) node[midway,above] {\scriptsize $5$};
\draw[->,dgreencolor] (4b36) -- (3b16) node[midway,right] {\scriptsize $3$};
\draw[->,dgreencolor] (46b35) -- (36b15) node[midway,right] {\scriptsize $3$};
\draw[->,dgreencolor] (5b3) -- (35b14) node[midway,right] {\scriptsize $3$};
\draw[->,UQgold] (3b16) -- (36b15) node[midway,above] {\scriptsize $6$};
\draw[->,UQpurple] (36b15) -- (35b14) node[midway,above] {\scriptsize $5$};
\draw[->,black] (35b14) -- (4b12) node[midway,above] {\scriptsize $4$};
\draw[->,blue] (4b12) -- (2b1) node[midway,above] {\scriptsize $2$};
\draw[->,red] (3b16) -- (1b6) node[midway,right] {\scriptsize $1$};
\draw[->,red] (36b15) -- (16b5) node[midway,right] {\scriptsize $1$};
\draw[->,red] (35b14) -- (15b4) node[midway,right] {\scriptsize $1$};
\draw[->,red] (4b12) -- (14b23) node[midway,right] {\scriptsize $1$};
\draw[->,red] (2b1) -- (12b3) node[midway,right] {\scriptsize $1$};
\draw[->,UQgold] (1b6) -- (16b5) node[midway,above] {\scriptsize $6$};
\draw[->,UQpurple] (16b5) -- (15b4) node[midway,above] {\scriptsize $5$};
\draw[->,black] (15b4) -- (14b23) node[midway,above] {\scriptsize $4$};
\draw[->,blue] (14b23) -- (12b3) node[midway,above] {\scriptsize $2$};
\draw[->,black] (14b23) -- (3b2) node[midway,right] {\scriptsize $3$};
\draw[->,black] (12b3) -- (23b4) node[midway,right] {\scriptsize $3$};
\draw[->,blue] (3b2) -- (23b4) node[midway,above] {\scriptsize $2$};
\draw[->,black] (23b4) -- (4b5) node[midway,right] {\scriptsize $4$};
\draw[->,UQpurple] (4b5) -- (5b6) node[midway,right] {\scriptsize $5$};
\draw[->] (5b6) -- (6b) node[midway,right] {\scriptsize $6$};
\end{tikzpicture}
\]
\caption{The crystal $B(\Lambda_1)$ in type $E_6$.}
\label{fig:B1_type_E6}
\end{figure}

%%%%%%%%%%
\subsection{Kirillov--Reshetikhin crystals}

Let $U_q'(\g) = U_q([\g, \g])$ denote the quantum group of the derived subalgebra of $\g$. Denote the corresponding weight lattice by $P' = P / \ZZ \delta$, and therefore, there is a linear dependence relation on the simple roots in $P'$. As we will not be considering $U_q(\g)$-crystals in this paper, we will abuse notation and also denote the $U_q'(\g)$-weight lattice by $P$. If $B$ is a $U_q'(\g)$-crystal, then we say $b \in B$ is \defn{classically highest weight} if $e_a b = 0$ for all $a \in I_0$.

\defn{Kirillov--Reshetikhin (KR) crystals} are the conjectural crystal bases corresponding to an important class of finite-dimensional irreducible $U_q'(\g)$-modules known as \defn{Kirillov--Reshetikhin (KR) modules}~\cite{HKOTY99,HKOTT02}. We denote a KR module and crystal by $W^{r,s}$ and $B^{r, s}$, respectively, where $r \in I_0$ and $s \in \ZZ_{>0}$. KR modules are classified by their Drinfel'd polynomials, and $W^{r,s}$ corresponds to the minimal affinization of $B(s\clfw_r)$~\cite{Chari95,Chari01,CP95II,CP95,CP96,CP96II,CP98}. In~\cite{OS08}, it was shown that KR modules in all non-exceptional types admit crystal bases whose combinatorial structure is given in~\cite{FOS09}. For the exceptional types, this has been done in a few special cases~\cite{JS10,KKMMNN92,KMOY07,Yamane98}. Recently, existence was established in general for types $G_2^{(1)}$ and $D_4^{(3)}$ in~\cite{Naoi17}. Moreover, a uniform model for $B^{r,1}$ was given using quantum and projected level-zero LS paths~\cite{LNSSS14,LNSSS16,LNSSS14II,NS06II,NS08II,NS08}.

We note that there is a unique element $u(B^{r,s}) \in B^{r,s}$, called the \defn{maximal element}, which is characterized by being the unique element of classical weight $s \clfw_r$ (it is also classically highest weight, so $\varepsilon_a\bigl( u(B^{r,s}) \bigr) = 0$ and $\varphi_a\bigl( u(B^{r,s}) \bigr) = \delta_{ar}s$ for all $a \in I_0$). Furthermore, it is known that tensor products of KR crystals are connected~\cite{FSS07,Okado13}, and it is known that the maximal vector of $B \otimes B'$ is given by the tensor product of maximal elements $u \otimes u'$. We define the unique $U_q'(\g)$-crystal morphism $R \colon B \otimes B' \to B' \otimes B$, called the \defn{combinatorial $R$-matrix}, by $R(u \otimes u') = u' \otimes u$, where $u$ and $u'$ are the maximal weight elements of $B$ and $B'$ respectively.

We say a KR crystal $B^{r,1}$ is \defn{minuscule} if $r$ is a minuscule node. We note that $B^{r,s} \iso B(s\clfw_r)$ as $U_q(\g_0)$-crystals if $r$ is a special node.

%%%%%%%%%%
\subsection{Virtual crystals}

We recall the definition of a virtual crystal from~\cite{OSS03III,OSS03II}. Let $\phi \colon \virtual{\Gamma} \folding \Gamma$ denote a folding of the Dynkin diagram $\virtual{\Gamma}$ of $\virtual{\g}$ onto the Dynkin diagram $\Gamma$ of  $\g$ that arises from the natural embeddings~\cite{JM85}
\begin{equation}
\begin{aligned}
C_n^{(1)}, A_{2n}^{(2)}, A_{2n}^{(2)\dagger}, D_{n+1}^{(2)} & \lhook\joinrel\longrightarrow A_{2n-1}^{(1)},
\\ B_n^{(1)}, A_{2n-1}^{(2)} & \lhook\joinrel\longrightarrow D_{n+1}^{(1)},
\\ F_4^{(1)}, E_6^{(2)} & \lhook\joinrel\longrightarrow E_6^{(1)},
\\ G_2^{(1)}, D_4^{(3)} &\lhook\joinrel\longrightarrow D_4^{(4)}.
\end{aligned}
\end{equation}
For ease of notation, if $X$ is an object for type $\g$, we denote the corresponding object for type $\virtual{\g}$ by $\virtual{X}$. By abuse of notation, let $\phi \colon \virtual{I} \folding I$ also denote the corresponding map on the index sets. We define the \defn{scaling factors} $\gamma = (\gamma_a)_{a \in I}$ by
\[
\gamma_a = \frac{\max_a T_a}{T_a}.
%\begin{cases}
%t_a^{\vee} & \text{if } \g = A_{2n}^{(2)}, A_{2n}^{(2)\dagger},
%\\ \frac{\max_a t_a}{t_a} & \text{otherwise}.
%\end{cases}
\]
Note that if $\lvert\phi^{-1}(a)\rvert = 1$, then $\gamma_a = 1$. See Table~\ref{table:scaling_factors}.

\begin{table}
\[
\begin{array}{cc}
\toprule
\g & (\gamma_a)_{a \in I}
\\ \midrule
\text{dual untwisted type} &
  \gamma_a = 1 \quad (a \in I)
\\ \midrule
B_n^{(1)} &
  \begin{array}{cl} \gamma_a = 2 & (a \neq n) \\ \gamma_n = 1 \end{array}
\\ \midrule
C_n^{(1)} &
  \begin{array}{cl} \gamma_a = 2 & (a = 0,n) \\ \gamma_a = 1 & (a \neq 0,n) \end{array}
\\ \midrule
A_{2n}^{(2)} &
  \begin{array}{cl} \gamma_a = 2 & (a \neq n) \\ \gamma_n = 1 \end{array}
\\ \midrule
A_{2n}^{(2)\dagger} &
  \begin{array}{cl} \gamma_a = 2 & (a \neq 0) \\ \gamma_0 = 1 \end{array}
\\ \midrule
F_4^{(1)} &
  (2,2,2,1,1)
\\ \midrule
G_2^{(1)} &
  (3,1,3)
\\ \bottomrule
\end{array}
\]
\caption{The values $\gamma_a$ for $a \in I$.}
\label{table:scaling_factors}
\end{table}

Furthermore, we have a natural embedding $\Psi \colon P \to \virtual{P}$ given by
\[
\Lambda_a \mapsto \gamma_a \sum_{b \in \phi^{-1}(a)} \virtual{\Lambda}_b,
\]
and note this induces a similar embedding on the root lattice
\[
\alpha_a \mapsto \gamma_a \sum_{b \in \phi^{-1}(a)} \virtual{\alpha}_b
\]
and $\Psi(\delta) = c_0 \gamma_0 \virtual{\delta}$.

\begin{definition}
Let $\virtual{B}$ be a $U_q'(\virtual{\g})$-crystal and $V \subseteq \virtual{B}$. Let $\phi$ and 
$(\gamma_a)_{a \in I}$ be the folding and the scaling factors given above. The \defn{virtual crystal operators} (of type $\g$) are defined as
\[
e^v_a := \prod_{b \in \phi^{-1}(a)} \virtual{e}_b^{\;\gamma_a},
\hspace{100pt}
f^v_a := \prod_{b \in \phi^{-1}(a)} \virtual{f}_b^{\;\gamma_a}.
\]
A \defn{virtual crystal} is the quadruple $(V, \virtual{B}, \phi, (\gamma_a)_{a \in I})$ such that $V$ has an abstract $U_q'(\g)$-crystal structure defined by
\begin{equation}
\label{eq:virtual_crystal}
\def\arraystretch{1.3}
\begin{array}{c}
e_a := e^v_a, \hspace{100pt} f_a := f^v_a, \\
\varepsilon_a := \gamma_a^{-1} \virtual{\varepsilon}_b, \hspace{100pt} \varphi_a := \gamma_a^{-1} \virtual{\varphi}_b, \\
\wt := \Psi^{-1} \circ \virtual{\wt},
\end{array}
\end{equation}
for any $b \in \phi^{-1}(a)$.
\end{definition}

When there is no danger of confusion, we simply denote the virtual crystal by $V$. We say $B$ \defn{virtualizes} in $\virtual{B}$ if there exists a $U_q'(\g)$-crystal isomorphism $v \colon B \to V$ for some virtual crystal, and we call the resulting isomorphism a \defn{virtualization map}.

It is straightforward to see that virtual crystals are closed under direct sums. Moreover, they are closed under tensor products.

\begin{proposition}[{\cite[Prop.~6.4]{OSS03III}}]
Virtual crystals form a tensor category.
\end{proposition}

Furthermore, KR crystals are conjecturally well-behaved under virtualization.

\begin{conjecture}[{\cite[Conj.~3.7]{OSS03III}}]
\label{conj:KR_virtualization}
There exists a virtualization map from the KR crystal $B^{r,s}$ into
\[
\virtual{B}^{r,s} = \begin{cases}
B^{n,s} \otimes B^{n,s} & \text{if } \g = A_{2n}^{(2)}, A_{2n}^{(2)\dagger} \text{ and } r = n,
\\ \bigotimes_{r' \in \phi^{-1}(r)} B^{r', \gamma_r s} & \text{otherwise}.
\end{cases}
\]
\end{conjecture}
Conjecture~\ref{conj:KR_virtualization} is known for all non-exceptional types~\cite[Thm.~5.1]{Okado13} and $B^{r,1}$ for all other types unless $r=2$ and $\g$ is of type $F_4^{(1)}$~\cite{SchillingS15}. A more uniform proof of Conjecture~\ref{conj:KR_virtualization} for some special cases using projected level-zero LS paths was given in~\cite{PS15}; in particular, for $B^{r,1}$ when $\gamma_r = 1$.

%%%%%%%%%%
\subsection{Adjoint crystals}

We recall the construction of certain level 1 perfect crystals from~\cite{BFKL06}. Define $\theta := \delta / c_0 - \alpha_0$, and so
%\begin{equation}
%\label{eq:highest_root}
\[
\theta = (c_1 \alpha_1 + c_2 \alpha_2 + \cdots + c_n \alpha_n) / c_0.
\]
%\end{equation}
Let $\Delta = \{\overline{\wt}(b) \mid b \in B(\theta) \} \setminus \{0\}$. We denote the vertices of the $U_q(\g_0)$-crystal $B(\theta)$ by
\[
\{x_{\alpha} \mid \alpha \in \Delta\} \sqcup \{y_a \mid a \in I_0, \alpha_a \in \Delta\},
\]
and $a$-arrows of $B(\theta)$ are given by
\begin{itemize}
\item $x_{\alpha} \xrightarrow[\hspace{20pt}]{a} x_{\beta}$ if and only if $\alpha - \alpha_a = \beta$, or
\item $x_{\alpha_a} \xrightarrow[\hspace{20pt}]{a} y_a \xrightarrow[\hspace{20pt}]{a} x_{-\alpha_a}$.
\end{itemize}
The (classical) weight function $\overline{\wt}$ is given by $\overline{\wt}(x_{\alpha}) = \alpha$ and $\overline{\wt}(y_a) = 0$. Let $\Delta^{\pm} := \Delta \cap P^{\pm}$, and we say $\overline{\wt}(b) > 0$ if $\overline{\wt}(b) \in \Delta^+$ and $\overline{\wt}(b) < 0$ if $\overline{\wt}(b) \in \Delta^-$.

\begin{remark}
\label{remark:notes_adjoint_crystal}
If $\g$ is of untwisted type, then $B(\theta)$ is the adjoint representation and $\Delta$ is the set of all roots of $\g_0$. For $\g$ of twisted type, $B(\theta)$ is the ``little'' adjoint representation of $\g_0$ with highest weight being the highest short root of $\g_0$ and $\Delta$ the set of all short roots of $\g_0$. For type $A_{2n}^{(2)}$, there does not exist an $a \in I_0$ such that $\alpha_a \in \Delta$. For type $A_{2n}^{(2)\dagger}$, we have $B(\theta) = B(\clfw_1)$ of type $B_n$.
For more information on finite (crystallographic) root systems, we refer the reader to standard texts such as~\cite{Bourbaki02,Humphreys90}.
\end{remark}

Let $\emptyset$ be the unique element of $B(0)$. We then define a $U_q'(\g)$-crystal $B^{\theta,1}$ by the classical decomposition
\[
B^{\theta,1} \iso \begin{cases}
B(\theta) & \text{if $\g$ is of type $A_{2n}^{(2)\dagger}$,} \\
B(\theta) \oplus B(0) & \text{otherwise,}
\end{cases}
\]
and $0$-arrows
\begin{itemize}
\item $x_{\alpha} \xrightarrow[\hspace{20pt}]{0} x_{\beta}$ if and only if $\alpha + \theta = \beta$ and $\alpha,\beta \neq \pm\theta$, or
\item $x_{-\theta} \xrightarrow[\hspace{20pt}]{0} \emptyset \xrightarrow[\hspace{20pt}]{0} x_{\theta}$.
\end{itemize}
The weight is given by requiring it to be level zero. That is to say, we let
\begin{equation}
\label{eq:level_zero_weights}
\wt(b) = \overline{\wt}(b) + k_0 \Lambda_0,
\end{equation}
under the natural lift $\clfw_a \to \Lambda_a$ for all $a \in I_0$ and $k_0$ is such that $\inner{c}{\wt(b)} = 0$.
Thus we have
\[
B^{\theta,1} \iso \begin{cases}
B^{n,1} \otimes B^{1,1} & \text{if $\g$ is of type $A_n^{(1)}$,} \\
B^{N_{\g},2} & \text{if $\g$ is of type $C_n^{(1)}$,} \\
B^{N_{\g},1} & \text{otherwise.}
\end{cases}
\]

\begin{remark}
\label{remark:notes_A2dual}
In the construction of $B^{\theta,1}$, we can consider $\emptyset = y_0$. Thus, for type $A_{2n}^{(2)\dagger}$, as there are not elements $x_{\alpha_0}$ nor $x_{-\alpha_0}$, we do not include $y_0$. This reflects the fact that $A_{2n}^{(2)\dagger}$ is isomorphic to $A_{2n}^{(2)}$ after forgetting about the choice of the affine node.
\end{remark}

There exists higher level analogs $B^{\theta,s}$, where as classical crystals, we have $B^{\theta,s} \iso \bigoplus_{k=0}^s B(k\theta)$. The $U_q'(\g)$-crystal structure is currently known for all non-exceptional types~\cite{FOS09,KKM94,KKMMNN92,Kodera09,SS06} and type $D_4^{(3)}$~\cite{KMOY07}. However, the $U_q'(\g)$-crystal structure is not known in general, much less uniformly. One potential approach might be to generalize the approach of~\cite{Kodera09} by examining various embeddings of $B^{\theta,s-1}$ into $B^{\theta,s}$, where the difficulty is overcoming that the multiplicity of the weights of $B(k\theta)$ that do not appear in $B\bigl((k-1)\theta\bigr)$ are not all $1$ in general.

%%%%%%%%%%
\subsection{Rigged configurations}

For this section, we assume that $\g$ is not of type $A_{2n}^{(2)}$ or $A_{2n}^{(2)\dagger}$ for simplicity of the exposition. However, the analogous statements with the necessary modifications for these types may be found in~\cite{SchillingS15}.

Denote $\HH_0 := I_0 \times \ZZ_{>0}$. Consider a tensor product of KR crystals $B = \bigotimes_{i=1}^N B^{r_i, s_i}$. A \defn{configuration} $\nu = \bigl(\nu^{(a)}\bigr)_{a \in I_0}$ is a sequence of partitions. Let $m_i^{(a)}$ denote the multiplicity of $i$ in $\nu^{(a)}$. Define the \defn{vacancy numbers} as
\begin{equation}
\label{eq:vacancy}
\begin{split}
p_i^{(a)}(\nu; B) & = \sum_{j \in \ZZ_{>0}} L_j^{(a)} \min(i, j) - \frac{1}{t_a^{\vee}} \sum_{(b,j) \in \HH_0} (\clsr_a | \clsr_b) \min(t_b \upsilon_a i, t_a \upsilon_b j) m_j^{(b)}
\\ & = \sum_{j \in \ZZ_{>0}} L_j^{(a)} \min(i,j) - \sum_{b \in I_0} \frac{A_{ab}}{\gamma_b} \sum_{j \in \ZZ_{>0}} \min(\gamma_a i, \gamma_b j) m_j^{(b)},
\end{split}
\end{equation}
where $L_s^{(r)}$ equals the number of factors $B^{r,s}$ that occur in $B$ and
\[
\upsilon_a = \begin{cases}
2 & a = n \text{ and } \g = C_n^{(1)}, \\
\frac{1}{2} & a = n \text{ and } \g = B_n^{(1)}, \\
\frac{1}{2} & a = 3,4 \text{ and } \g = F_4^{(1)}, \\
\frac{1}{3} & a = 1 \text{ and } \g = G_2^{(1)}, \\
1 & \text{otherwise.}
\end{cases}
\]
When there is no danger of confusion, we will simply write $p_i^{(a)}$. Note that when $t_a = 1$ for all $a \in I_0$, we have
\begin{equation}
\label{eq:nonzero_convexity}
-p_{i-1}^{(a)} + 2 p_i^{(a)} - p_{i+1}^{(a)} = L_i^{(a)} - \sum_{b \sim a} A_{ab} m_i^{(b)}.
\end{equation}
Moreover, when $m_i^{(a)} = 0$, we have the \defn{convexity inequality}
\begin{equation}
\label{eq:convexity}
p_{i-1}^{(a)} + p_{i+1}^{(a)} \leq 2 p_i^{(a)}
\end{equation}
or equivalently $-p_{i-1}^{(a)} + 2p_i^{(a)} - p_{i+1}^{(a)} \geq 0$.

\begin{remark}
\label{remark:conventions}
The values $(\upsilon_a)_{a \in I_0}$ arise from a different convention for rigged configurations than those used in, \textit{e.g.},~\cite{OSS03}.
\end{remark}

A \defn{$B$-rigged configuration} is the pair $(\nu, J)$, where $\nu$ is a configuration and $J = (J_i^{(a)})_{(a,i) \in \HH_0}$ is such that $J_i^{(a)}$ is a multiset $\{x \in \ZZ \mid x \leq p_i^{(a)}(\nu; B) \}$ with $\lvert J_i^{(a)} \rvert = m_i^{(a)}$ for all $(a, i) \in \HH_0$. When $B$ is clear, we call a $B$-rigged configuration simply a rigged configuration. A \defn{highest weight} rigged configuration is a rigged configuration $(\nu, J)$ such that $\min J_i^{(a)} \geq 0$ for all $(a, i) \in \HH_0$ such that $m_i^{(a)} > 0$. Let $\hwRC(B)$ denote the set of all highest weight $B$-rigged configurations.

The integers in $J_i^{(a)}$ are called \defn{riggings} or \defn{labels}. The \defn{corigging} or \defn{colabel} of a rigging $x \in J_i^{(a)}$ is defined as $p_i^{(a)} - x$. We note that we can associate a row of length $i$ in $\nu^{(a)}$ with a rigging $x$, and we call such a pair $(i, x)$ a \defn{string}. We identify each row of the partition $\nu^{(a)}$ with its corresponding string. %We let $(\nu, J)^{(a)}$ denote the partition $\nu^{(a)}$ with the corresponding riggings or equivalently the set of strings corresponding to $\nu^{(a)}$.
We say a row (or string) is \defn{singular} if $p_i^{(a)} = x$.
We say a row (or string) is \defn{quasisingular} if $p_i^{(a)} = x + 1$ and there does not exist a singular row of length $i$.

Next, let $\RC(B)$ denote the closure of $\hwRC(B)$ under the following crystal operators. Fix a rigged configuration $(\nu, J)$ and $a \in I_0$. For simplicity, we assume there exists a row of length $0$ in $\nu^{(a)}$ with a rigging of $0$. Let $x = \min \{ \min J_i^{(a)} \mid i \in \ZZ_{>0} \}$; \textit{i.e.}, the smallest rigging in $(\nu, J)^{(a)}$.
\begin{description}
\item[\defn{$e_a$}] If $x = 0$, then define $e_a(\nu, J) = 0$. Otherwise, remove a box from the smallest row with rigging $x$, replace that rigging with $x + 1$, and change all other riggings so that the coriggings remain fixed. The result is $e_a(\nu, J)$.

\item[\defn{$f_a$}] Add a box from the largest row with rigging $x$, replace that rigging with $x - 1$, and change all other riggings so that the coriggings remain fixed. The result is $f_a(\nu, J)$ unless the result is not a valid rigged configuration, in which case $f_a(\nu, J) = 0$.
\end{description}
We can extend this to a full $U_q(\g_0)$-crystal structure on $\RC(B)$ by
\[
\overline{\wt}(\nu, J) = \sum_{(a, i) \in \HH_0} i \left( L_i^{(a)} \clfw_a - m_i^{(a)} \clsr_a \right).
\]
We note that
\[
\inner{\alpha_a^{\vee}}{\overline{\wt}(\nu, J)} = p_{\infty}^{(a)} = \sum_{j \in \ZZ_{>0}} j L_j^{(a)} - \sum_{b \in I_0} A_{ab} \lvert \nu^{(b)} \rvert,
\]
and we can extend the classical weight $\overline{\wt} \colon \RC(B) \to \overline{P}$ to the affine weight $\wt \colon \RC(B) \to P$ as in Equation~\eqref{eq:level_zero_weights}.

\begin{theorem}[{\cite{S06,SchillingS15}}]
\label{thm:rc_crystal}
Let $B$ be a tensor product of KR crystals. Fix some $(\nu, J) \in \hwRC(B)$. Let $X_{(\nu, J)}$ denote the closure of $(\nu, J)$ under $e_a$ and $f_a$ for all $a \in I_0$. Then $X_{(\nu, J)} \iso B(\lambda)$, where $\lambda = \overline{\wt}(\nu, J)$.
\end{theorem}

Furthermore, we have the following way to compute the statistics $\varepsilon_a$ and $\varphi_a$ on a rigged configuration.

\begin{proposition}[{\cite{Sakamoto14,S06,SchillingS15}}]
\label{prop:ep_phi}
Let $x = \min \{ \min J_i^{(a)} \mid i \in \ZZ_{>0} \}$; \textit{i.e.}, the smallest rigging in $(\nu, J)^{(a)}$.
We have
\[
\varepsilon_a(\nu, J) = -x, \hspace{100pt} \varphi_a(\nu, J) = p_{\infty}^{(a)} - x.
\]
\end{proposition}

\begin{remark}
Proposition~\ref{prop:ep_phi} states that we could define $f_a(\nu, J) = 0$ if and only if $\varphi_a(\nu, J) = p_{\infty}^{(a)} - x = 0$.
\end{remark}

We will need the \defn{complement rigging} involution $\eta \colon \RC(B) \to \RC(B^{\mathrm{rev}})$, where $B^{\mathrm{rev}}$ is $B$ in the reverse order. The map $\eta$ is given on highest weight rigged configurations by replacing each rigging with its corresponding corigging and then extended as a $U_q(\g_0)$-crystal isomorphism.

Additionally, rigged configurations are known to be well-behaved under virtualization~\cite{OSS03III,OSS03II,SchillingS15}. We construct a virtualization map $v \colon \RC(B) \to \RC(\virtual{B})$, where the virtual rigged configuration $(\virtual{\nu}, \virtual{J}) := v(\nu, J)$ is given by
\begin{subequations}
\label{eq:virtual_RC}
\begin{align}
\label{eq:virtual_m} \virtual{m}_{\gamma_a i}^{(b)} & = m_i^{(a)},
\\ \label{eq:virtual_J} \virtual{J}_{\gamma_a i}^{(b)} & = \gamma_a J_i^{(a)},
\end{align}
\end{subequations}
for all $b \in \phi^{-1}(a)$. Moreover, we have
\begin{equation}
\label{eq:virtual_vacancy_numbers}
\virtual{p}_{\gamma_a i}^{(b)} = \gamma_a p_i^{(a)}
\end{equation}
for all $b \in \phi^{-1}(a)$.

%%%%%%%%%%
\subsection{Statistics}

We now describe two important statistics that arise from mathematical physics. The first is defined on tensor products of KR crystals and the second is defined on rigged configurations.

Let $B^{r,s}$ and $B^{r',s'}$ be KR crystals of type $\g$. The \defn{local energy function} $H \colon B^{r,s} \otimes B^{r',s'} \to \ZZ$ is defined as follows. Let $\widetilde{b}' \otimes \widetilde{b} = R(b \otimes b')$, and define the following conditions:
\begin{itemize}
\item[(LL)] $e_0(b \otimes b') = e_0 b \otimes b' \text{ and } e_0(\widetilde{b}' \otimes \widetilde{b}) = e_0 \widetilde{b}' \otimes \widetilde{b}$;
\item[(RR)] $e_0(b \otimes b') = b \otimes e_0 b' \text{ and } e_0(\widetilde{b}' \otimes \widetilde{b}) = \widetilde{b}' \otimes e_0 \widetilde{b}$.
\end{itemize}
The local energy function is given by
\begin{equation}
\label{eq:local_energy}
H\bigl( e_i(b \otimes b') \bigr) = H(b \otimes b') + \begin{cases}
-1 & \text{if } i = 0 \text{ and } (LL), \\
1 & \text{if } i = 0 \text{ and } (RR), \\
0 & \text{otherwise,}
\end{cases}
\end{equation}
and it is known $H$ is uniquely defined up to an additive constant~\cite{KKMMNN91}. We normalize $H$ by the condition $H\bigl( u(B^{r ,s}) \otimes u(B^{r',s'}) \bigr) = 0$.

Next consider $B^{r,s}$, and let $b^{\sharp}$ be the unique element such that $\varphi(b^{\sharp}) = \ell \Lambda_0$, where $\ell = \min \{ \inner{c}{\varphi(b)} \mid b \in B^{r,s} \}$. We then define $D_{B^{r,s}} \colon B^{r,s} \to \ZZ$, following~\cite{HKOTT02}, by
\[
D_{B^{r,s}}(b) = H(b \otimes b^{\sharp}) - H(u(B^{r,s}) \otimes b^{\sharp}).
\]
%\travis{This and the uniqueness of $b^{\sharp}$ might be a conjecture of~\cite{HKOTT02} still}
Let $B = \bigotimes_{i=1}^N B^{r_i,s_i}$. We define \defn{energy}~\cite{HKOTY99} $D \colon B \to \ZZ$ by
\begin{equation}
\label{eq:energy_function}
D = \sum_{1 \leq i < j \leq N} H_i R_{i+1} R_{i+2} \cdots R_{j-1} + \sum_{j=1}^N D_{B^{r_j,s_j}} R_1 R_2 \cdots R_{j-1},
\end{equation}
where $R_i$ and $H_i$ are the combinatorial $R$-matrix and local energy function, respectively, acting on the $i$-th and $(i+1)$-th factors and $D_{B^{r_j,s_j}}$ acts on the rightmost factor. Note that $D$ is constant on classical components since $H$ is and $R$ is a $U_q'(\g)$-crystal isomorphism.

For rigged configurations, we define a statistic called \defn{cocharge} as follows. First consider  a configuration $\nu$, and define the cocharge of $\nu$ by
\begin{equation}
\label{eq:cocharge_configurations}
\cc(\nu) = \frac{1}{2} \sum_{\substack{(a,i) \in \HH_0 \\ (b, j) \in \HH_0}} (\clsr_a | \clsr_b) \min(t_b \upsilon_a i, t_a \upsilon_b j) m_i^{(a)} m_j^{(b)}.
\end{equation}
To obtain the cocharge of a rigged configuration $(\nu, J)$, we add all of the riggings to $\cc(\nu)$:
\begin{equation}
\label{eq:cocharge}
\cc(\nu, J) = \cc(\nu) + \sum_{(a,i) \in \HH_0} t_a^{\vee} \sum_{x \in J_i^{(a)}} x.
\end{equation}
Moreover, it is known that cocharge is invariant under the classical crystal operators.
\begin{proposition}[{\cite{S06,SchillingS15}}]
\label{prop:cocharge_classical_invar}
Fix a classical component $X_{(\nu, J)}$ as given in Theorem~\ref{thm:rc_crystal}. The cocharge $\cc$ is constant on $X_{(\nu, J)}$.
\end{proposition}

Let $q$ be an indeterminate. The \defn{one-dimensional sum} is defined as
\begin{equation}
\label{eq:X}
X(B, \lambda; q) = \sum_{b \in \mathcal{P}(B; \lambda)} q^{D(b)},
\end{equation}
where $\mathcal{P}(B; \lambda)$ denotes the classically highest weight elements of $B$ of classical weight $\lambda$. The \defn{fermionic formula} is defined as
\begin{equation}
\label{eq:M}
M(B, \lambda; q) = \sum_{\nu \in \mathcal{C}(B; \lambda)} q^{\cc(\nu)} \prod_{(a, i) \in \HH_0} \begin{bmatrix} m_i^{(a)} + p_i^{(a)} \\ m_i^{(a)} \end{bmatrix}_q,
\end{equation}
where $\mathcal{C}(B; \lambda)$ are all $B$-configurations of classical weight $\lambda$ and $\qbinom{a}{b}{q}$ is the usual $q$-binomial.
Note that $J_i^{(a)}$ of a highest weight rigged configuration can be considered as a partition in a $p_i^{(a)} \times m_i^{(a)}$ box for all $(a, i) \in \HH_0$. Thus we can write
\[
M(B, \lambda; q) = \sum_{(\nu, J) \in \hwRC(B; \lambda)} q^{\cc(\nu, J)}.
\]
Now we recall the $X = M$ conjecture of~\cite{HKOTY99,HKOTT02}.\footnote{To obtain the formulas of~\cite{HKOTY99,HKOTT02}, we need to substitute $q = q^{-1}$.}

\begin{conjecture}[$X = M$ conjecture]
\label{conj:X=M}
Let $B$ be a tensor product of KR crystals of type $\g$. Then we have
\[
X(B, \lambda; q) = M(B, \lambda; q).
\]
\end{conjecture}

Consider a virtualization map $v \colon B \to B^v$. We first define the virtual combinatorial $R$-matrix $R^v \colon B^v \otimes B'^v \to B'^{v} \otimes B^v$ as the restriction of the type $\virtual{\g}$ combinatorial $R$-matrix $\virtual{R}$ to the image of $v$. We note that it is not clear that $R^v$ is well-defined, but this follows for $B^{r,1} \otimes B^{r',1}$ for dual untwisted types from the results of~\cite{LL16,LNSSS14,LNSSS14II,PS15}.
Thus, we may define virtual analogs of (local) energy and cocharge by
\begin{align*}
H^v(b \otimes b') & := \gamma_0^{-1} \virtual{H}\bigl( v(b) \otimes v(b') \bigr),
\\ D^v(b) & := \gamma_0^{-1} \virtual{D}\bigl( v(b) \bigr),
\\ \cc^v(\nu, J) & := \gamma_0^{-1} \virtual{\cc}(\virtual{\nu}, \virtual{J}).
\end{align*}
Hence, we define
\begin{align*}
X^v(B, \lambda; q) & = \sum_{b \in \mathcal{P}(B; \lambda)} q^{D^v(b)},
\\ M^v(B, \lambda; q) & = \sum_{(\nu, J) \in \hwRC(B; \lambda)} q^{\cc^v(\nu, J)}.
\end{align*}

\begin{proposition}[{\cite{OSS03II}}]
\label{prop:virtual_statistics}
Let $B^v$ be a virtual crystal of $B$. Then we have
\begin{align*}
D^v(b) & = D(b),
\\ \cc^v(\nu, J) & = \cc(\nu, J).
\end{align*}
Moreover, we have
\begin{align*}
X^v(B, \lambda; q) & = X(B, \lambda; q),
\\ M^v(B, \lambda; q) & = M(B, \lambda; q).
\end{align*}
\end{proposition}

%For convenience we define
%\[
%\overline{H} = -H, \hspace{20pt} \overline{D} = -D, \hspace{20pt} \overline{X}(\lambda, \mu; q) = X(\lambda, \mu; q^{-1}).
%\]

%%%%%%%%%%
\subsection{Kleber algorithm}

These results will be used in Section~\ref{sec:filling_map}.

We first recall the \defn{Kleber algorithm}~\cite{Kleber98} for when $\g$ is an affine type such that $\g_0$ is simply-laced. For $x,y \in \overline{P}^+$, let $d_{xy} := x - y$.
\begin{definition}[Kleber algorithm]
\label{def:kleber_algorithm}
Let $B$ be a tensor product of KR crystals of type $\g$. The \defn{Kleber tree} $T(B)$ is a tree whose nodes will be given by weights in $\overline{P}^+$ and edges are labeled by $d_{xy} \in \overline{Q}^+ \setminus \{0\}$ and constructed recursively as follows. Begin with $T_0$ being the tree consisting of a single node of weight $0$. We then do the following steps starting with $\ell = 1$.
\begin{itemize}
\item[(K1)] Let $T^{\prime}_{\ell}$ be obtained from $T_{\ell-1}$ by adding $\sum_{a=1}^n \clfw_a \sum_{i \geq \ell} L_i^{(a)}$ to the weight of each node.
\item[(K2)] Construct $T_{\ell}$ from $T^{\prime}_{\ell}$ as follows. Let $x$ be a node at depth $\ell - 1$. Suppose there is a weight $y \in \overline{P}^+$ such that $d_{xy} \in \overline{Q}^+ \setminus \{0\}$. If $x$ is not the root, then let $w$ be the parent of $x$. Then we have $d_{wx} - d_{xy} \in \overline{Q}^+ \setminus \{0\}$. For all such $y$, attach $y$ as a child of $x$.
\item[(K3)] If $T_{\ell} \neq T_{\ell-1}$, then repeat from~(K1); otherwise terminate and return $T(B) = T_{\ell}$.
\end{itemize}
Now we convert the tree to highest weight rigged configurations as follows. Let $x$ be a node at depth $p$ in $T(B)$, and 
$x_0, x_1, \dotsc, x_p = x$ be the weights of nodes on the path from the root of $T(B)$ to $x$. The resulting configuration $\nu$ is given by
\[
m_i^{(a)} = (x_{i-1} - 2 x_i + x_{i+1} \mid \clfw_a)
\]
where we make the convention that $x = x_j$ for all $j > p$. We then take the riggings over all possible values between $0$ and $p_i^{(a)}$.
\end{definition}

\begin{remark}
We can reformulate the construction of the configuration $\nu$ in the following ways. Suppose $d_{x_{i-1}x_i} = \sum_{b \in I} k_i^{(b)} \alpha_b$ for all $1 \leq i \leq p$. There are $k_i^{(a)}$ rows of length $i$ in $\nu^{(a)}$. We also have $\nu^{(a)}$ equal to the transpose of the partition $k_1^{(a)} k_2^{(a)} \cdots k_p^{(a)}$, or we stack a column of height $k_a^{(i)}$ over all $i$.
\end{remark}

When $\g_0$ is of non-simply-laced type, we use the \defn{virtual Kleber algorithm}~\cite{OSS03II} by using virtual rigged configurations.

\begin{definition}[Virtual Kleber algorithm]
\label{def:virtual_kleber}
The \defn{virtual Kleber tree} is defined from the Kleber tree of $\virtual{B}$ in the ambient type, but we only add a child $y$ to $x$ in step~(K2) if the following conditions are satisfied:
\begin{itemize}
\item[(V1)] $(y \mid \virtual{\alpha}_b) = (y \mid \virtual{\alpha}_{b'})$ for all $b, b' \in \phi^{-1}(a)$.
\item[(V2)] For all $a \in I_0$, if $\ell - 1 \notin \gamma_a \ZZ$, then the coefficient of $\clsr_a$ in $d_{wx}$ and $d_{xy}$,  where $w$ is the parent of $x$, must be equal.
\end{itemize}
Let $\virtual{T}(B)$ be the resulting tree, which we will call the \defn{ambient tree}. Let $\gamma = \max_{a \in I} \gamma_a$. We now select nodes which satisfy either:
\begin{itemize}
\item[(A1)] $y$ is at depth $\ell \in \gamma \ZZ$, or
\item[(A2)] $(d_{xy} \mid \virtual{\clfw}_a) = 0$ for every $a$ such that $1 < \gamma = \gamma_a$, where $x$ is the parent of $y$.
\end{itemize}
We construct the final rigged configurations from the selected nodes by devirtualizing the (virtual) rigged configurations obtained from the usual Kleber algorithm satisfying Equation~\eqref{eq:virtual_J} (note that Equation~\eqref{eq:virtual_m} is satisfied by~(V1) and~(V2)).
\end{definition}

%%%%%%%%%%
\subsection{KSS-type bijection}
\label{sec:bijection_KSS}

In this section, we describe the (conjectural) KSS-type bijection $\KSS{\Phi} \colon \RC(B) \to B$.

Let $B$ be a tensor product of KR crystals. We consider $B$ expressed in terms of the so-called \defn{Kirillov--Reshetikhin (KR) tableaux} of~\cite{OSS13,SchillingS15,Scrimshaw15}. KR tableaux, generally speaking, are $r \times s$ rectangular tableaux filled with entries of $B^{1,1}$ and determined by their classically highest weight elements. Following~\cite{KSS02}, we define a map $\KSS{\Phi} \colon \RC(B) \to B$ recursively by the composition of
\begin{align*}
\KSS{\delta} \colon B^{1,1} \otimes B^{\bullet} & \to B^{\bullet},
\\ \young(b) \otimes b^{\bullet} & \mapsto b^{\bullet},
\allowdisplaybreaks \\ \lb \colon B^{r,1} \otimes B^{\bullet} & \to B^{1,1} \otimes B^{r-1,1} \otimes B^{\bullet} \qquad (r \neq 1),
\\ \begin{array}{|c|} \hline b_1 \\\hline \vdots \\\hline b_{r-1} \\\hline b_r \\\hline \end{array} \otimes b^{\bullet} & \mapsto \begin{array}{|c|} \hline b_r \\\hline \end{array} \otimes \begin{array}{|c|} \hline b_1 \\\hline \vdots \\\hline b_{r-1} \\\hline \end{array} \otimes b^{\bullet},
\allowdisplaybreaks \\ \ls \colon B^{r,s} \otimes B^{\bullet} & \to B^{r,1} \otimes B^{r,s-1} \otimes B^{\bullet} \qquad (s \geq 2),
\\ \begin{array}{|c|c|c|c|} \hline b_{11} & b_{12} & \cdots & b_{1s} \\\hline \vdots & \vdots & \ddots & \vdots \\\hline b_{r1} & b_{r2} & \cdots & b_{rs} \\\hline \end{array} \otimes b^{\bullet} & \mapsto \begin{array}{|c|} \hline b_{11} \\\hline \vdots \\\hline b_{r1} \\\hline \end{array} \otimes  \begin{array}{|c|c|c|} \hline b_{12} & \cdots & b_{1s} \\\hline \vdots & \ddots & \vdots \\\hline b_{r2} & \cdots & b_{rs} \\\hline \end{array} \otimes b^{\bullet},
&& 
\end{align*}
where $B^{\bullet}$ is a tensor product of KR crystals, and their corresponding maps on rigged configurations. We do not explicitly recall the map $\KSS{\delta}$ on rigged configurations here as it strongly depends upon type and we later give a more uniform description of the map. Instead we refer the reader to~\cite{OS12,OSS03,Scrimshaw15} for the explicit (type-dependent) descriptions. Note that $\KSS{\delta}$ is currently only described/known for non-exceptional affine types, type $E_6^{(1)}$, and type $D_4^{(3)}$.\footnote{A map for when the left factor is $B^{2,1}$ of type $E_6^{(1)}$ was conjectured in~\cite{Mahathir}.} Moreover, $\KSS{\delta}$ is the key component of the bijection. The map $\lb$ is given for all non-exceptional types by adding a length 1 singular row to all $\nu^{(a)}$ for all $a < r$. The map $\ls$ is the identity map.

We recall and consolidate some of the conjectures given in~\cite{SchillingS15} and has been known to experts prior.

\begin{conjecture}
\label{conj:bijection}
Let $B$ be a tensor product of KR crystals of affine type. Then $\KSS{\Phi} \colon \RC(B) \to B$ is a (classical) crystal isomorphism such that $D \circ \KSS{\Phi} \circ \eta = \cc$ and the diagram
\[
\xymatrixrowsep{3pc}
\xymatrixcolsep{3.5pc}
\xymatrix{\RC(B) \ar[r]^{\KSS{\Phi}} \ar[d]_{\id} & B \ar[d]^{R} \\ \RC(B') \ar[r]_{\KSS{\Phi}} & B'}
\]
commutes, where $B'$ is a reordering of the factors of $B$.
\end{conjecture}

When we restrict $\KSS{\Phi}$ to classically highest weight elements, this gives a combinatorial proof of the $X = M$ conjecture of~\cite{HKOTY99,HKOTT02}.

In type $A_n^{(1)}$, it was shown in~\cite{KSS02} that Conjecture~\ref{conj:bijection} holds on classically highest weight elements, and as such, the analogous (conjectural) bijections are known as KSS-type bijections. This was an extension of the pioneering work of Kerov, Kirillov, and Reshetikhin in~\cite{KKR86,KR86}, where they showed Conjecture~\ref{conj:bijection} is true for classically highest weight elements in the special case $B = (B^{1,1})^{\otimes N}$ of type $A_n^{(1)}$. In~\cite{DS06}, it was shown that $\KSS{\Phi}$ is a classical crystal isomorphism in type $A_n^{(1)}$ and a $U_q'(\g)$-crystal isomorphism in~\cite{SW10}. Furthermore, Conjecture~\ref{conj:bijection} is known to (partially) hold in many different special cases for classically highest weight elements across the non-exceptional types~\cite{OSS13,OSS03,OSS03III,OSS03II,S05,SchillingS15,SS2006}, with recent and some progress has been made in the exceptional types~\cite{OS12,Scrimshaw15}. Furthermore, in~\cite{Sakamoto14}, it was shown that $\KSS{\Phi}$ is a classical crystal isomorphism in type $D_n^{(1)}$. Recently, the general case for type $D_n^{(1)}$ was proven in~\cite{OSSS16} and all non-exceptional types in~\cite{OSS17}.

As we will need it later on, we recall the general steps of the proof that $\KSS{\Phi}$ is a statistic preserving bijection on highest weight elements for $B = \bigotimes_{k=1}^N B^{1, 1}$ when $r_k = 1$ for all $k$. Let $(\nu, J) \in \hwRC(B; \lambda)$.
Define $(\overline{\nu}, \overline{J}) = \KSS{\delta}(\nu, J)$ and let $r$ be the return value from $\KSS{\delta}$.
Denote by $\beta_1^{(r_N)}, \overline{\beta}_1^{(r_N)}$ the length of the first column in $\nu^{(r_N)}, \overline{\nu}^{(r_N)}$, respectively.

There are five things the need to be verified to show that $\KSS{\Phi}$ is a statistic preserving bijection on classically highest weight elements for $B = B^{r_N, 1} \otimes B^{\bullet}$, where $B^{\bullet}$ is a tensor product of KR crystals:
\begin{enumerate}[(I)]
\item\label{cI} $\lambda - \overline{\wt}(r)$ is dominant.
\item\label{cII} $\KSS{\delta}(\nu, J) \in \hwRC\bigl(B^{\bullet}, \lambda - \wt(r)\bigr)$.
\item\label{cIII} $r$ can be appended to $(\overline{\nu}, \overline{J})$ to give $(\nu, J)$.
\item\label{cIV} For $N \geq 2$, we have
\begin{equation}
\label{eq:change_cocharge}
\cc(\nu, J) - \cc(\overline{\nu}, \overline{J}) = \frac{t_{r_N}^{\vee}}{c_0^{\vee}} \beta_1^{(r_N)} - \chi(b_N = \emptyset).
\end{equation}
\item\label{cV} For $N \geq 2$, we have
\begin{equation}
\label{eq:change_energy}
H(b_N \otimes b_{N-1}) = \frac{t_{r_N}^{\vee}}{c_0^{\vee}} \left( \beta_1^{(r_N)} - \overline{\beta}_1^{(r_N)} \right) - \chi(b_N = \emptyset) + \chi(b_{N-1} = \emptyset),
\end{equation}
\end{enumerate}
where $\chi(S)$ is $1$ if the statement $S$ is true and $0$ otherwise.

We remark that Equation~\eqref{eq:change_cocharge} and Equation~\eqref{eq:change_energy} are those given in~\cite[Lemma~5.1]{OSS03}.

Next, we will need dual notions of the maps $\KSS{\delta}$, $\lb$, and $\ls$ acting on the right, which we denote by $\KSS{\delta}^{\lusztig}$, $\rb$, and $\rs$. First, we recall the definition of \defn{Lusztig's involution} $\lusztig \colon B(\lambda) \to B(\lambda)$, the unique $U_q(\g_0)$-crystal involution satisfying
\begin{equation}
\label{eq:lusztig_involution}
(e_i b)^{\lusztig} = f_{\xi(i)} b^{\lusztig},
\qquad\quad
(f_i b)^{\lusztig} = e_{\xi(i)} b^{\lusztig},
\qquad\quad
\wt(B^{\lusztig}) = w_0 \wt(b),
\end{equation}
where $w_0$ is the long element of the Weyl group of $\g_0$ and $\xi \colon I_0 \to I_0$ is defined by $w_0\clfw_i \mapsto \clfw_{\xi(i)}$ and $w_0 \clsr_i = -\clsr_{\xi(i)}$.
In particular, Lusztig's involution sends the highest weight element to the lowest weight element.
We extend Lusztig's involution to an involution $\lusztig \colon B^{r,s} \to B^{r,s}$ by defining $\xi(0) = 0$ and satisfying Equation~\eqref{eq:lusztig_involution} and sends the maximal element to the \defn{minimal element}, the unique element of weight $-\wt\bigl(u(B^{r,s})\bigr)$.
We also can extend Lusztig's involution to tensor products by a natural isomorphism
\[
(B_2 \otimes B_1)^{\lusztig} \iso B_1^{\lusztig} \otimes B_2^{\lusztig}
\]
given by $(b_2 \otimes b_1)^{\lusztig} = b_1^{\lusztig} \otimes b_2^{\lusztig}$.
Then we define on classically highest weight elements
\[
\KSS{\delta}^{\lusztig}  := \mathbin{\uparrow} \circ \lusztig \circ \KSS{\delta} \circ \lusztig,
\qquad\qquad
\rb := \lusztig \circ \lb \circ \lusztig,
\qquad\qquad
\rs := \lusztig \circ \ls \circ \lusztig,
\]
where $\mathbin{\uparrow}(b)$ is the classically highest weight corresponding to $b$.
By considering $\diamond := \mathbin{\uparrow} \circ \lusztig$, we also have
\[
\KSS{\delta}^{\lusztig}  = \diamond \circ \KSS{\delta} \circ \diamond,
\qquad\qquad
\rb = \diamond \circ \lb \circ \diamond,
\qquad\qquad
\rs = \diamond \circ \ls \circ \diamond.
\]
We then extend these maps as classical crystal isomorphisms.

%=====================================================================
\section{Minuscule \texorpdfstring{$\delta$}{delta} for dual untwisted types}
\label{sec:minuscule}

In this section, we describe the map $\delta$ used to construct $\Phi$ for minuscule fundamental weights when $\g$ is of dual untwisted affine type. More explicitly, we restrict ourselves to simply-laced affine types and types $A_{2n-1}^{(2)}$ and $D_{n+1}^{(2)}$ as types $D_4^{(3)}$ and $E_6^{(2)}$ do not contain any minuscule fundamental weights. Note that for these types, we have $t_a = 1$ for all $a \in I$.

We construct the map $\delta_r \colon B^{r,1} \otimes B^{\bullet} \to B^{\bullet}$, where $B^{\bullet}$ is a tensor product of KR crystals and $\clfw_r$ is a minuscule weight of type $\g_0$ (\textit{i.e.}, $r$ is a special node) as follows. Start at $b_1 = u_{\clfw_r}$, and set $\ell_0 = 1$. Consider step $j$. From $b_j$, let $\ell_j$ denote a minimal $i_a \geq \ell_{j-1}$ ($a \in I_0$ also varies) such that $f_a b_j \neq 0$ and $\nu^{(a)}$ has a singular row of length $i_a$ that has not been previously selected. If no such row exists, terminate, set all $\ell_{j'} = \infty$ for $j' \geq j$, and return $b_j$. Otherwise select such a row in $\nu^{(a)}$ and repeat the above with $b_{j+1} := f_a b_j$.

We form the new rigged configuration by removing a box from each row selected by $\delta_r$, making the resulting rows singular, and keeping all other rows the same.

\begin{example}
\label{ex:minuscule_bijection}
Consider type $D_5^{(1)}$ and $B = B^{5,1} \otimes B^{4,1} \otimes B^{1,1} \otimes B^{5,1}$. See Figure~\ref{fig:B45_type_D5} for the crystal graphs of $B(\clfw_4)$ and $B(\clfw_5)$ and Figure~\ref{fig:B1_type_D5} for the crystal graph of $B(\clfw_1)$. We compute the bijection
\[
\begin{tikzpicture}[scale=.35,baseline=-18]
\begin{scope}[yshift=0cm]
\fill[lightgray] (20,-4) rectangle (21,-3);
\node[scale=.7] at (20.6, -3.5) {$\ell_1$}; % 5
\fill[lightgray] (10,-5) rectangle (11,-4);
\node[scale=.7] at (10.6, -4.5) {$\ell_2$}; % -5,3
\fill[lightgray] (5,-4) rectangle (6,-3);
\node[scale=.7] at (5.6, -3.5) {$\ell_3$}; % -3,2,4
\fill[lightgray] (0,-3) rectangle (1,-2);
\node[scale=.7] at (0.6, -2.5) {$\ell_4$}; % -2,1,4
\rpp{1,1}{0,0}{0,0}
 \begin{scope}[xshift=5cm]
 \rpp{1,1,1}{0,0,0}{0,0,0}
 \end{scope}
 \begin{scope}[xshift=10cm]
 \rpp{1,1,1,1}{0,0,0,0}{0,0,0,0}
 \end{scope}
 \begin{scope}[xshift=15cm]
 \rpp{1,1}{0,0}{1,1}
 \end{scope}
 \begin{scope}[xshift=20cm]
 \rpp{1,1,1}{0,0,0}{0,0,0}
 \end{scope}
\end{scope} % y-shift
%%%
\draw[->] (10.5,-6cm) -- (10.5,-9cm) node[midway,right] {$\delta_5$};
\draw (17,-7.5cm) node {(returns $\bon4$)};
\begin{scope}[yshift=-9cm]
\fill[lightgray] (15,-3) rectangle (16,-2);
\node[scale=.7] at (15.6, -2.5) {$\ell_1$}; % 4
\fill[lightgray] (10,-4) rectangle (11,-3);
\node[scale=.7] at (10.6, -3.5) {$\ell_2$}; % -4,3
\fill[lightgray] (5,-3) rectangle (6,-2);
\node[scale=.7] at (5.6, -2.5) {$\ell_3$}; % -3,2,5
\fill[lightgray] (20,-3) rectangle (21,-2);
\node[scale=.7] at (20.6, -2.5) {$\ell_4$}; % -2,1,5
\fill[lightgray] (10,-3) rectangle (11,-2);
\node[scale=.7] at (10.6, -2.5) {$\ell_5$}; % -2,-5,1,3
\fill[lightgray] (15,-2) rectangle (16,-1);
\node[scale=.7] at (15.6, -1.5) {$\ell_6$}; % -3,1,4
\rpp{1}{0}{1}
 \begin{scope}[xshift=5cm]
 \rpp{1,1}{0,0}{0,0}
 \end{scope}
 \begin{scope}[xshift=10cm]
 \rpp{1,1,1}{0,0,0}{0,0,0}
 \end{scope}
 \begin{scope}[xshift=15cm]
 \rpp{1,1}{0,0}{0,0}
 \end{scope}
 \begin{scope}[xshift=20cm]
 \rpp{1,1}{0,0}{0,0}
 \end{scope}
\end{scope} % y-shift
%%%
\draw[->] (10.5,-14cm) -- (10.5,-17cm) node[midway,right] {$\delta_4$};
\draw (17,-15.5cm) node {(returns $1\bfo$)};
\begin{scope}[yshift=-17cm]
\fill[lightgray] (0,-1) rectangle (1,-2);
\node[scale=.7] at (0.6, -1.5) {$\ell_1$}; % 1
\fill[lightgray] (5,-1) rectangle (6,-2);
\node[scale=.7] at (5.6, -1.5) {$\ell_2$}; % 2
\fill[lightgray] (10,-1) rectangle (11,-2);
\node[scale=.7] at (10.6, -1.5) {$\ell_3$}; % 3
\fill[lightgray] (20,-1) rectangle (21,-2);
\node[scale=.7] at (20.6, -1.5) {$\ell_1$}; % 4
\rpp{1}{0}{0}
 \begin{scope}[xshift=5cm]
 \rpp{1}{0}{0}
 \end{scope}
 \begin{scope}[xshift=10cm]
 \rpp{1}{0}{0}
 \end{scope}
 \begin{scope}[xshift=15cm]
 \node at (0.5,-1.5) {$\emptyset$};
 \end{scope}
 \begin{scope}[xshift=20cm]
 \rpp{1}{0}{0}
 \end{scope}
\end{scope} % y-shift
%%%
\draw[->] (10.5,-20cm) -- (10.5,-23cm) node[midway,right] {$\delta_1$};
\draw (17,-21.5cm) node {(returns $4\bfive$)};
\begin{scope}[yshift=-23cm]
 \node at (0.5,-1.5) {$\emptyset$};
 \begin{scope}[xshift=5cm]
 \node at (0.5,-1.5) {$\emptyset$};
 \end{scope}
 \begin{scope}[xshift=10cm]
 \node at (0.5,-1.5) {$\emptyset$};
 \end{scope}
 \begin{scope}[xshift=15cm]
 \node at (0.5,-1.5) {$\emptyset$};
 \end{scope}
 \begin{scope}[xshift=20cm]
 \node at (0.5,-1.5) {$\emptyset$};
 \end{scope}
\end{scope} % y-shift
%%%
\draw[->] (10.5,-25.5cm) -- (10.5,-28.5cm) node[midway,right] {$\delta_5$};
\draw (17,-27cm) node {(returns $5$)};
\begin{scope}[yshift=-28cm]
 \node at (0.5,-1.5) {$\emptyset$};
 \begin{scope}[xshift=5cm]
 \node at (0.5,-1.5) {$\emptyset$};
 \end{scope}
 \begin{scope}[xshift=10cm]
 \node at (0.5,-1.5) {$\emptyset$};
 \end{scope}
 \begin{scope}[xshift=15cm]
 \node at (0.5,-1.5) {$\emptyset$};
 \end{scope}
 \begin{scope}[xshift=20cm]
 \node at (0.5,-1.5) {$\emptyset$};
 \end{scope}
\end{scope} % y-shift
%%%
\end{tikzpicture}
\]
where at each step, we have labeled the sequence of boxes that are removed under $\delta_r$.
Recall that we are using the notation for minuscule nodes, so for an element $b$, any $a \in b$ (resp.~$\overline{a} \in b$) corresponds to $\varepsilon_a(b) = 1$ (resp.~$\varphi_a(b) = 1$) and is $0$ otherwise.
By using the sequence of returned elements above, we obtain
\[
\begin{tikzpicture}[scale=.35,baseline=-28]
\rpp{1,1}{0,0}{0,0}
 \begin{scope}[xshift=5cm]
 \rpp{1,1,1}{0,0,0}{0,0,0}
 \end{scope}
 \begin{scope}[xshift=10cm]
 \rpp{1,1,1,1}{0,0,0,0}{0,0,0,0}
 \end{scope}
 \begin{scope}[xshift=15cm]
 \rpp{1,1}{0,0}{1,1}
 \end{scope}
 \begin{scope}[xshift=20cm]
 \rpp{1,1,1}{0,0,0}{0,0,0}
 \end{scope}
 \end{tikzpicture}
 \overset{\Phi}{\longmapsto} \bon4 \otimes 1\bfo \otimes 4\bfive \otimes 5.
\]
\end{example}

\begin{figure}
\[
\begin{tikzpicture}[>=latex,xscale=1.4,yscale=1.1]
\node (4) at (2,2) {$4$};
\node (3b4) at (2,1) {$3\bfo$};
\node (2b35) at (2,0) {$2\bth5$};
\node (2b5) at (3,0) {$2\bfive$};
\node (1b25) at (2,-1) {$1\btw5$};
\node (1b23b5) at (3,-1) {$1\btw3\bfive$};
\node (1b34) at (4,-1) {$1\bth4$};
\node (1b4) at (5,-1) {$1\bfo$};
\node (b15) at (2,-2) {$\bon5$};
\node (b13b5) at (3,-2) {$\bon3\bfive$};
\node (b12b34) at (4,-2) {$\bon2\bth4$};
\node (b12b4) at (5,-2) {$\bon2\bfo$};
\node (b24) at (4,-3) {$\btw4$};
\node (b23b4) at (5,-3) {$\btw3\bfo$};
\node (b35) at (5,-4) {$\bth5$};
\node (b5) at (5,-5) {$\bfive$};
\draw[->,black] (4) -- (3b4) node[midway,right] {\scriptsize $4$};
\draw[->,dgreencolor] (3b4) -- (2b35) node[midway,right] {\scriptsize $3$};
\draw[->,UQpurple] (2b35) -- (2b5) node[midway,above] {\scriptsize $5$};
\draw[->,blue] (2b35) -- (1b25) node[midway,right] {\scriptsize $2$};
\draw[->,blue] (2b5) -- (1b23b5) node[midway,right] {\scriptsize $2$};
\draw[->,UQpurple] (1b25) -- (1b23b5) node[midway,above] {\scriptsize $5$};
\draw[->,dgreencolor] (1b23b5) -- (1b34) node[midway,above] {\scriptsize $3$};
\draw[->,black] (1b34) -- (1b4) node[midway,above] {\scriptsize $4$};
\draw[->,red] (1b25) -- (b15) node[midway,right] {\scriptsize $1$};
\draw[->,red] (1b23b5) -- (b13b5) node[midway,right] {\scriptsize $1$};
\draw[->,red] (1b34) -- (b12b34) node[midway,right] {\scriptsize $1$};
\draw[->,red] (1b4) -- (b12b4) node[midway,right] {\scriptsize $1$};
\draw[->,UQpurple] (b15) -- (b13b5) node[midway,above] {\scriptsize $5$};
\draw[->,dgreencolor] (b13b5) -- (b12b34) node[midway,above] {\scriptsize $3$};
\draw[->,black] (b12b34) -- (b12b4) node[midway,above] {\scriptsize $4$};
\draw[->,blue] (b12b34) -- (b24) node[midway,right] {\scriptsize $2$};
\draw[->,blue] (b12b4) -- (b23b4) node[midway,right] {\scriptsize $2$};
\draw[->,black] (b24) -- (b23b4) node[midway,above] {\scriptsize $4$};
\draw[->,dgreencolor] (b23b4) -- (b35) node[midway,right] {\scriptsize $3$};
\draw[->,UQpurple] (b35) -- (b5) node[midway,right] {\scriptsize $5$};
\end{tikzpicture}
\qquad\qquad
\begin{tikzpicture}[>=latex,xscale=1.4,yscale=1.1]
\node (5) at (2,2) {$5$};
\node (3b5) at (2,1) {$3\bfive$};
\node (2b34) at (2,0) {$2\bth4$};
\node (2b4) at (3,0) {$2\bfo$};
\node (1b24) at (2,-1) {$1\btw4$};
\node (1b23b4) at (3,-1) {$1\btw3\bfo$};
\node (1b35) at (4,-1) {$1\bth5$};
\node (1b5) at (5,-1) {$1\bfive$};
\node (b14) at (2,-2) {$\bon4$};
\node (b13b4) at (3,-2) {$\bon3\bfo$};
\node (b12b35) at (4,-2) {$\bon2\bth5$};
\node (b12b5) at (5,-2) {$\bon2\bfive$};
\node (b25) at (4,-3) {$\btw5$};
\node (b23b5) at (5,-3) {$\btw3\bfive$};
\node (b34) at (5,-4) {$\bth4$};
\node (b4) at (5,-5) {$\bfo$};
\draw[->,UQpurple] (5) -- (3b5) node[midway,right] {\scriptsize $5$};
\draw[->,dgreencolor] (3b5) -- (2b34) node[midway,right] {\scriptsize $3$};
\draw[->,black] (2b34) -- (2b4) node[midway,above] {\scriptsize $4$};
\draw[->,blue] (2b34) -- (1b24) node[midway,right] {\scriptsize $2$};
\draw[->,blue] (2b4) -- (1b23b4) node[midway,right] {\scriptsize $2$};
\draw[->,black] (1b24) -- (1b23b4) node[midway,above] {\scriptsize $4$};
\draw[->,dgreencolor] (1b23b4) -- (1b35) node[midway,above] {\scriptsize $3$};
\draw[->,UQpurple] (1b35) -- (1b5) node[midway,above] {\scriptsize $5$};
\draw[->,red] (1b24) -- (b14) node[midway,right] {\scriptsize $1$};
\draw[->,red] (1b23b4) -- (b13b4) node[midway,right] {\scriptsize $1$};
\draw[->,red] (1b35) -- (b12b35) node[midway,right] {\scriptsize $1$};
\draw[->,red] (1b5) -- (b12b5) node[midway,right] {\scriptsize $1$};
\draw[->,black] (b14) -- (b13b4) node[midway,above] {\scriptsize $4$};
\draw[->,dgreencolor] (b13b4) -- (b12b35) node[midway,above] {\scriptsize $3$};
\draw[->,UQpurple] (b12b35) -- (b12b5) node[midway,above] {\scriptsize $5$};
\draw[->,blue] (b12b35) -- (b25) node[midway,right] {\scriptsize $2$};
\draw[->,blue] (b12b5) -- (b23b5) node[midway,right] {\scriptsize $2$};
\draw[->,UQpurple] (b25) -- (b23b5) node[midway,above] {\scriptsize $5$};
\draw[->,dgreencolor] (b23b5) -- (b34) node[midway,right] {\scriptsize $3$};
\draw[->,black] (b34) -- (b4) node[midway,right] {\scriptsize $4$};
\end{tikzpicture}
\]
\caption{The crystals $B(\clfw_4)$ (left) and $B(\clfw_5)$ (right) in type $D_5$.}
\label{fig:B45_type_D5}
\end{figure}

%=====================================================================
\section{Adjoint \texorpdfstring{$\delta$}{delta} for dual untwisted types}
\label{sec:adjoint}

In this section, we describe the map $\delta_{\theta} := \delta_{N_{\g}}$ for the adjoint node $N_{\g}$ for dual untwisted types (\textit{i.e.}, $t_a = 1$ for all $a \in I$ or equivalently, $\g$ is of simply-laced affine type, $A_{2n-1}^{(2)}$, $D_{n+1}^{(2)}$, $D_4^{(3)}$, $E_6^{(2)}$). Furthermore, we give a uniform proof that $\Phi$ is a statistic preserving bijection.

We define the map $\delta_{\theta} \colon B^{\theta, 1} \otimes B^{\bullet} \to B^{\bullet}$, where $B^{\bullet}$ is a tensor product of KR crystals, by the following algorithm. Begin with $r_1 = u_{\theta}$ being the highest weight element in $B(\theta) \subseteq B^{\theta,1}$, and set $\ell_0 = 1$. Consider step $j$ such that $r_j = x_{\beta}$, where $\beta > 0$ and $\beta \neq \alpha_a$ for all $a \in I_0$. From $r_j$, consider any outgoing arrow labeled by $a$ and find the minimal $i_a \geq \ell_{j-1}$ such that $\nu^{(a)}$ has a singular row of length $i_a$ which has not been previously selected. If no such row exists, terminate, set all $\ell_{j'} = \infty$ for $j' \geq j$ and $\ellbar_{j'} = \infty$ for all $j'$, and return $r_j$. Otherwise select such a row, set $\ell_j = \min_a i_a$, and repeat the above with $r_{j+1} := f_{a'} r_j$, where $a'$ is such that $i_{a'} = \min_a i_a$. If $r_j = x_{\alpha_a}$ for some $a \in I_0$, we do one of the following disjoint cases. We discard all previously selected (singular) rows.
\begin{itemize}
\item[(S)] If there exists a singular row of length $i_a \geq \max\{\ell_{j-1}, 2\}$,\footnote{Note that if $\ell_{j-1} = 1$ and there exists a singular row of length $1$, then we would not be in this case as $i_a = 1 < \max\{\ell_{j-1},2\} = 2$.} select such a row and set $\ell_j = i_a$.
\item[(E)] If there exists a singular row of length $1$ and $\ell_{j-1} = 1$, we terminate, set $\ell_j = 1$ and $\ellbar_{j'} = \infty$ for all $j'$, and return $\emptyset$.
\item[(Q)] If there exists a quasisingular row of length $i_a \geq \ell_{j-1}$, we select the quasisingular string and set $\ell_j = i_a$.
\item[(T)] Otherwise we terminate, set $\ell_j = \ellbar_{j'} = \infty$ for all $j'$, and return $x_{\alpha_a}$.
\end{itemize}
If the process has not terminated, set $r_{j+1} := y_a$ and perform the following. Let $\ellbar_0 = \ell_h$, where $h = \sum_{a \in I_0} c_a$, \textit{i.e.}, the height of $\theta$ or the number of steps we currently have done. Consider step $j$, and consider any outgoing arrow labeled by $a$ from $r_j$. Find the minimal $i_a \geq \ellbar_{j-1}$ such that $\nu^{(a)}$ has a singular row of length $i_a$ such that
\begin{itemize}
\item[(D)] it had been selected at step $j'$ with $\ell_{j'} = i_a$ or
\item[(N)] it had not been previously selected and Case~(D) does not occur.
\end{itemize}
If no such row exists, terminate, set all $\ellbar_{j'} = \infty$ for $j' \geq j$, and return $r_j$. Otherwise select such a row, set $\ellbar_j = \min_a i_a$, redefine $\ell_{j'} := \ell_{j'} - 1$ if Case~(D) had occurred, and repeat the above with $r_{j+1} := f_{a'} r_j$, where $a'$ is such that $i_{a'} = \min_a i_a$.

We form the new rigged configuration by
\begin{itemize}
\item[(1)] removing a box from each row each time it was selected by $\delta$ (\textit{i.e.}, if Case~(D) occurred, then we remove 2 boxes);
\item[(2)] making the resulting rows singular unless Case~(Q) occurred, then we make the row selected by $\ellbar_1$ (if $\ellbar_1 \neq \infty$) quasisingular; and
\item[(3)] keeping all other rows the same.
\end{itemize}

Note that the same row cannot be selected twice by Case~(D) due to the redefinition of $\ell_{j'}$. We clearly cannot have more than $x_{\alpha_a}$ in this process since $\alpha_a$ is a simple root and hence has no directed path between them by the crystal axioms.

\begin{remark}
This is the (conjectural) map $\delta$ of bin Mohammad~\cite{Mahathir} for $\g$ of type $E_6^{(1)}$.
Moreover, this was the map $\KSS{\delta}$ for $\g$ of type $D_{n+1}^{(2)}$ in~\cite{OSS03} and of type $D_4^{(3)}$ in~\cite{Scrimshaw15}.
\end{remark}

\begin{remark}
\label{rem:adjoint_extensions}
We can extend this description for types $C_n^{(1)}$, $A_{2n}^{(2)}$, and $A_{2n}^{(2)\dagger}$.
Indeed, since $B^{\theta,1}$ for type $A_{2n}^{(2)}$ (resp., type $A_{2n}^{(2)\dagger}$) does not contain any elements $y_a$, for $a \in I_0$ (resp., $a = 0$), as noted in Remark~\ref{remark:notes_adjoint_crystal} (resp.\ Remark~\ref{remark:notes_A2dual}), we modify the definition of $\delta_{\theta}$ by removing Case~(Q) (resp., Case~(E)) as a possibility. Likewise for type $C_n^{(1)}$, we do not have $y_a$ for all $a \in I$, so we modify the definition of $\delta_{\theta}$ by removing both Case~(Q) and Case~(E) and combine Case~(S) with the first Case~(D) (think of performing these steps simultaneously to do $x_{\alpha_1} \xrightarrow[\hspace{20pt}]{1} x_{-\alpha_1}$), but we also need to consider the parts of $\nu^{(n)}$ doubled as per Remark~\ref{remark:conventions}.
\end{remark}

We have the following classification of elements in $B(\theta)$ and will be used to describe the KR tableaux of type $E_8^{(1)}$ and $E_6^{(2)}$.

\begin{proposition}
\label{prop:adjoint_elements}
Let $\g$ be of simply-laced or twisted type, and fix some $b \in B(\theta)$. Then $b$ has the following properties.
\begin{itemize}
\item $b$ is uniquely determined by $\varepsilon$ and $\varphi$.
\item $\wt(b) = 0$ if and only if there exists a unique $i \in I_0$ such that $\varepsilon_i(b) = \varphi_i(b) = 1$ and $\varepsilon_j(b) = \varphi_j(b) = 0$ for all $j \neq i$.
\item $\varepsilon_i(b) = 2$ implies $\varepsilon_j(b) = 0$ for all $j \neq i$.
\item $\varphi_i(b) = 2$ implies $\varphi_j(b) = 0$ for all $j \neq i$.
\end{itemize}
\end{proposition}

\begin{proof}
This follows from the description of $B(\theta)$.
\end{proof}

Thus, similar to types $E_{6,7}$, we can equate our elements in $B(\theta)$ by multisets of $\{1, \bon, 2, \btw, \dotsc, n, \bn\}$, which as above, we write as words.

\begin{example}
\label{ex:E6t_B11_power}
Consider type $E_6^{(2)}$ and $B = (B^{1,1})^{\otimes 4}$. See Figure~\ref{fig:B4_type_F4} for the corresponding classical crystal $B(\clfw_1)$. We compute the bijection
\[
\begin{tikzpicture}[scale=.35,baseline=-18]
\begin{scope}[yshift=0cm]
\fill[lightgray] (0,-4) rectangle (1,-3);
\node[scale=.7] at (0.6, -3.5) {$\ell_1$}; % -1,2
\fill[lightgray] (5,-7) rectangle (6,-6);
\node[scale=.7] at (5.6, -6.5) {$\ell_2$}; % -2,3
\fill[lightgray] (10,-5) rectangle (11,-4);
\node[scale=.7] at (10.6, -4.5) {$\ell_3$}; % 2,-3,4
\fill[lightgray] (5,-6) rectangle (6,-5);
\node[scale=.7] at (5.6, -5.5) {$\ell_4$}; % 1,-2,4
\fill[lightgray] (16,-3) rectangle (17,-2);
\node[scale=.7] at (16.6, -2.5) {$\ell_5$}; % 1,-2,3,-4
\fill[lightgray] (10,-4) rectangle (11,-3);
\node[scale=.7] at (10.6, -3.5) {$\ell_6$}; % 1,2,-3
\fill[lightgray] (5,-5) rectangle (6,-4);
\node[scale=.7] at (5.6, -4.5) {$\ell_7$}; % 1,1,-2
\fill[lightgray] (1,-2) rectangle (2,-1);
\node[scale=.7] at (1.6, -1.5) {$\ell_9$}; % 1,-1
\fill[lightgray] (0,-4) rectangle (1,-5);
\node[scale=.7] at (0.6, -4.5) {$\ell_8$}; % -1,-1,2
\fill[lightgray] (6,-3) rectangle (7,-4);
\node[scale=.7] at (6.6, -3.5) {$\ell_{10}$}; % -1,-2,3
\fill[lightgray] (11,-2) rectangle (12,-3);
\node[scale=.7] at (11.6, -2.5) {$\ell_{11}$}; % -1,2,-3,4
\fill[lightgray] (17,-2) rectangle (18,-1);
\node[scale=.7] at (17.6, -1.5) {$\ell_{12}$}; % -1,2,-4
\fill[lightgray] (6,-2) rectangle (7,-3);
\node[scale=.7] at (6.6, -2.5) {$\ell_{13}$}; % -2,3,-4
\fill[lightgray] (11,-1) rectangle (12,-2);
\node[scale=.7] at (11.6, -1.5) {$\ell_{14}$}; % 2,-3
\fill[lightgray] (6,-1) rectangle (7,-2);
\node[scale=.7] at (6.6, -1.5) {$\ell_{15}$}; % 1,-2
 \rpp{2,2,1,1}{1,0,2,1}{1,1,2,2}
 \begin{scope}[xshift=5cm]
 \rpp{2,2,2,1,1,1}{0,0,0,0,0,0}{0,0,0,0,0,0}
 \end{scope}
 \begin{scope}[xshift=10cm]
 \rpp{2,2,1,1}{0,0,0,0}{0,0,0,0}
 \end{scope}
 \begin{scope}[xshift=16cm]
 \rpp{2,1}{0,0}{0,0}
 \end{scope}
\end{scope} % y-shift
%%%
\draw[->] (7.5,-8cm) -- (7.5,-11cm) node[midway,right] {$\delta_{\theta}$};
\draw (13,-9.5cm) node {(returns $1\btw$)};
\begin{scope}[yshift=-11cm]
\fill[lightgray] (1,-1) rectangle (2,-2);
\node[scale=.7] at (1.6, -1.5) {$\ell_1$}; % -1,2
 \rpp{2,1}{0,1}{0,2}
 \begin{scope}[xshift=5cm]
 \rpp{1,1,1}{0,0,0}{0,0,0}
 \end{scope}
 \begin{scope}[xshift=10cm]
 \rpp{1,1}{0,0}{0,0}
 \end{scope}
 \begin{scope}[xshift=16cm]
 \rpp{1}{0}{0}
 \end{scope}
\end{scope} % y-shift
%%%
\draw[->] (7.5,-16cm) -- (7.5,-19cm) node[midway,right] {$\delta_{\theta}$};
\draw (13,-17.5cm) node {(returns $\bon2$)};
\begin{scope}[yshift=-19cm]
\fill[lightgray] (0,-3) rectangle (1,-2);
\node[scale=.7] at (0.6, -2.5) {$\ell_1$}; % -1,2
\fill[lightgray] (5,-4) rectangle (6,-3);
\node[scale=.7] at (5.6, -3.5) {$\ell_2$}; % -2,3
\fill[lightgray] (10,-2) rectangle (11,-3);
\node[scale=.7] at (10.6, -2.5) {$\ell_3$}; % 2,-3,4
\fill[lightgray] (16,-2) rectangle (17,-1);
\node[scale=.7] at (16.6, -1.5) {$\ell_4$}; % 2,-4
\fill[lightgray] (5,-3) rectangle (6,-2);
\node[scale=.7] at (5.6, -2.5) {$\ell_5$}; % 1,-2,3,-4
\fill[lightgray] (10,-1) rectangle (11,-2);
\node[scale=.7] at (10.6, -1.5) {$\ell_6$}; % 1,2,-3
\fill[lightgray] (5,-1) rectangle (6,-2);
\node[scale=.7] at (5.6, -1.5) {$\ell_7$}; % 1,1,-2
\fill[lightgray] (0,-1) rectangle (1,-2);
\node[scale=.7] at (0.6, -1.5) {$\ell_8$}; % E
 \rpp{1,1}{1,1}{1,1}
 \begin{scope}[xshift=5cm]
 \rpp{1,1,1}{0,0,0}{0,0,0}
 \end{scope}
 \begin{scope}[xshift=10cm]
 \rpp{1,1}{0,0}{0,0}
 \end{scope}
 \begin{scope}[xshift=16cm]
 \rpp{1}{0}{0}
 \end{scope}
\end{scope} % y-shift
%%%
\draw[->] (7.5,-24cm) -- (7.5,-27cm) node[midway,right] {$\delta_{\theta}$};
\draw (13,-25.5cm) node {(returns $\emptyset$)};
\begin{scope}[yshift=-27cm]
 \node at (0,-1.5) {$\emptyset$};
 \begin{scope}[xshift=5cm]
 \node at (0,-1.5) {$\emptyset$};
 \end{scope}
 \begin{scope}[xshift=10cm]
 \node at (0,-1.5) {$\emptyset$};
 \end{scope}
 \begin{scope}[xshift=16cm]
 \node at (0,-1.5) {$\emptyset$};
 \end{scope}
\end{scope} % y-shift
%%%
\end{tikzpicture}
\]
and the final application of $\delta_{\theta}$ returns $1$. As with Example~\ref{ex:minuscule_bijection}, we label the sequence of boxes removed under $\delta_{\theta}$, but in our labeling here, we have $\ell_{k-h} = \ellbar_k$ for all $k \geq h$. Note that for the first (resp.\ third) application of $\delta_{\theta}$, we used Case~(Q) (resp.\ Case~(E)) when at $x_{\alpha_1} = \overline{2}11$. Hence, the result of applying $\Phi$ is the element
\[
1\btw \otimes \bon2 \otimes \emptyset \otimes 1.
\]
\end{example}

\begin{figure}
\[
\begin{tikzpicture}[>=latex,xscale=1.4,yscale=1.1]
\node (4) at (2,2) {$1$};
\node (3b4) at (2,1) {$\bon2$};
\node (2b3) at (2,0) {$3\btw$};
\node (1b23) at (3,0) {$4\bth2$};
\node (1b34) at (4,0) {$4\btw1$};
\node (1b4) at (5,0) {$4\bon$};
\node (b13) at (3,-1) {$\bfo2$};
\node (b12b34) at (4,-1) {$\bfo3\btw\bon$};
\node (b12b4) at (5,-1) {$\bfo3\bon$};
\node (b234) at (4,-2) {$\bth21$};
\node (b233b4) at (5,-2) {$\bth22\bon$};
\node (b344) at (4,-3) {$\btw11$};
\node (b44) at (5,-3) {$\bon1$};
\node (b33) at (6,-2) {$\btw2$};
\node (3b4b4) at (6,-3) {$2\bon\bon$};
\node (2b3b34) at (7,-2) {$3\btw\btw1$};
\node (2b3b4) at (7,-3) {$3\btw\bon$};
\node (1b24) at (8,-2) {$4\bth1$};
\node (1b23b4) at (8,-3) {$4\bth2\bon$};
\node (b14) at (9,-2) {$\bfo1$};
\node (b13b4) at (9,-3) {$\bfo2\bon$};
\node (1b3) at (8,-4) {$4\btw$};
\node (b12b3) at (9,-4) {$\bfo3\btw$};
\node (b23) at (10,-4) {$\bth2$};
\node (b34) at (10,-5) {$\btw1$};
\node (b4) at (10,-6)  {$\bon$};
\draw[->,red] (4) -- (3b4) node[midway,right] {\scriptsize $1$};
\draw[->,blue] (3b4) -- (2b3) node[midway,right] {\scriptsize $2$};
\draw[->,dgreencolor] (2b3) -- (1b23) node[midway,above] {\scriptsize $3$};
\draw[->,blue] (1b23) -- (1b34) node[midway,above] {\scriptsize $2$};
\draw[->,red] (1b34) -- (1b4) node[midway,above] {\scriptsize $1$};
\draw[->,black] (1b23) -- (b13) node[midway,right] {\scriptsize $4$};
\draw[->,black] (1b34) -- (b12b34) node[midway,right] {\scriptsize $4$};
\draw[->,black] (1b4) -- (b12b4) node[midway,right] {\scriptsize $4$};
\draw[->,blue] (b13) -- (b12b34) node[midway,above] {\scriptsize $2$};
\draw[->,red] (b12b34) -- (b12b4) node[midway,above] {\scriptsize $1$};
\draw[->,dgreencolor] (b12b34) -- (b234) node[midway,right] {\scriptsize $3$};
\draw[->,dgreencolor] (b12b4) -- (b233b4) node[midway,right] {\scriptsize $3$};
\draw[->,red] (b234) -- (b233b4) node[midway,above] {\scriptsize $1$};
\draw[->,blue] (b234) -- (b344) node[midway,right] {\scriptsize $2$};
\draw[->,red] (b344) -- (b44) node[midway,above] {\scriptsize $1$};
\draw[->,red] (b44) -- (3b4b4) node[midway,above] {\scriptsize $1$};
\draw[->,blue] (b233b4) -- (b33) node[midway,above] {\scriptsize $2$};
\draw[->,blue] (b33) -- (2b3b34) node[midway,above] {\scriptsize $2$};
\draw[->,red] (2b3b34) -- (2b3b4) node[midway,right] {\scriptsize $1$};
\draw[->,blue] (3b4b4) -- (2b3b4) node[midway,above] {\scriptsize $2$};
\draw[->,dgreencolor] (2b3b34) -- (1b24) node[midway,above] {\scriptsize $3$};
\draw[->,dgreencolor] (2b3b4) -- (1b23b4) node[midway,above] {\scriptsize $3$};
\draw[->,red] (1b24) -- (1b23b4) node[midway,right] {\scriptsize $1$};
\draw[->,red] (b14) -- (b13b4) node[midway,right] {\scriptsize $1$};
\draw[->,black] (1b24) -- (b14) node[midway,above] {\scriptsize $4$};
\draw[->,black] (1b23b4) -- (b13b4) node[midway,above] {\scriptsize $4$};
\draw[->,blue] (1b23b4) -- (1b3) node[midway,right] {\scriptsize $2$};
\draw[->,blue] (b13b4) -- (b12b3) node[midway,right] {\scriptsize $2$};
\draw[->,black] (1b3) -- (b12b3) node[midway,above] {\scriptsize $4$};
\draw[->,dgreencolor] (b12b3) -- (b23) node[midway,above] {\scriptsize $3$};
\draw[->,blue] (b23) -- (b34) node[midway,right] {\scriptsize $2$};
\draw[->,red] (b34) -- (b4) node[midway,right] {\scriptsize $1$};
\end{tikzpicture}
\]
\caption{The crystal graph of $B(\clfw_1)$ in the dual of type $F_4$ (\textit{i.e.}, the usual labeling of $F_4$ has become $i \leftrightarrow 5-i$) used in constructing $B^{1,1}$ in type $E_6^{(2)}$.}
\label{fig:B4_type_F4}
\end{figure}

\begin{example}
Consider type $E_6^{(2)}$ and $B = (B^{1,1})^{\otimes 3}$. We compute the bijection
\[
\begin{tikzpicture}[scale=.35,baseline=-18]
\begin{scope}[yshift=0cm]
 \fill[lightgray] (1,-1) rectangle (2,-2);
 \node[scale=.7] at (1.6, -1.5) {$\ell_1$};
 \fill[lightgray] (1,-2) rectangle (2,-3);
 \node[scale=.7] at (1.6, -2.5) {$\ell_8$};
 \rpp{2,2}{1,0}{1,1}
 \begin{scope}[xshift=5cm]
 \fill[lightgray] (1,-3) rectangle (2,-4);
 \node[scale=.7] at (1.6, -3.5) {$\ell_2$};
 \fill[lightgray] (1,-2) rectangle (2,-3);
 \node[scale=.7] at (1.6, -2.5) {$\ell_5$};
 \fill[lightgray] (1,-1) rectangle (2,-2);
 \node[scale=.7] at (1.6, -1.5) {$\ell_7$};
 \rpp{2,2,2}{0,0,0}{0,0,0}
 \end{scope}
 \begin{scope}[xshift=10cm]
 \fill[lightgray] (1,-2) rectangle (2,-3);
 \node[scale=.7] at (1.6, -2.5) {$\ell_3$};
 \fill[lightgray] (1,-1) rectangle (2,-2);
 \node[scale=.7] at (1.6, -1.5) {$\ell_6$};
 \rpp{2,2}{0,0}{0,0}
 \end{scope}
 \begin{scope}[xshift=16cm]
 \fill[lightgray] (1,-1) rectangle (2,-2);
 \node[scale=.7] at (1.6, -1.5) {$\ell_4$};
 \rpp{2}{0}{0}
 \end{scope}
\end{scope} % y-shift
%%%
\draw[->] (7.5,-5cm) -- (7.5,-8cm) node[midway,right] {$\delta_{\theta}$};
\draw (13,-6.5cm) node {(returns $\bon1$)};
\begin{scope}[yshift=-8cm]
 \rpp{1,1}{1,0}{1,1}
 \begin{scope}[xshift=5cm]
 \rpp{1,1,1}{0,0,0}{0,0,0}
 \end{scope}
 \begin{scope}[xshift=10cm]
 \rpp{1,1}{0,0}{0,0}
 \end{scope}
 \begin{scope}[xshift=16cm]
 \rpp{1}{0}{0}
 \end{scope}
\end{scope} % y-shift
%%%
\end{tikzpicture}
\]
the second application of $\delta_{\theta}$ is similar and also returns $\bon1$ and the final returns $1$. Note that in the examples above we are in Case~(Q) when performing $\delta_{\theta}$ as we disregarded the previously selected singular row in $\nu^{(1)}$ (as in Example~\ref{ex:E6t_B11_power}). Hence, the result of applying $\Phi$ is the element
\[
\bon1 \otimes \bon1 \otimes 1.
\]
\end{example}

\begin{example}
Consider type $E_6^{(2)}$ and $B = (B^{1,1})^{\otimes 3}$. We compute the bijection
\[
\begin{tikzpicture}[scale=.35,baseline=-18]
\begin{scope}[yshift=0cm]
 \fill[lightgray] (1,-1) rectangle (2,-2);
 \node[scale=.7] at (1.6, -1.5) {$\ell_1$};
 \fill[lightgray] (1,-2) rectangle (2,-3);
 \node[scale=.7] at (1.6, -2.5) {$\ell_6$};
 \fill[lightgray] (0,-2) rectangle (1,-3);
 \node[scale=.7] at (0.6, -2.5) {$\ell_{12}$};
 \fill[lightgray] (0,-1) rectangle (1,-2);
 \node[scale=.7] at (0.6, -1.5) {$\ell_{16}$};
 \rpp{2,2}{1,1}{1,1}
 \begin{scope}[xshift=5cm]
 \fill[lightgray] (1,-3) rectangle (2,-4);
 \node[scale=.7] at (1.6, -3.5) {$\ell_2$};
 \fill[lightgray] (1,-2) rectangle (2,-3);
 \node[scale=.7] at (1.6, -2.5) {$\ell_5$};
 \fill[lightgray] (1,-1) rectangle (2,-2);
 \node[scale=.7] at (1.6, -1.5) {$\ell_8$};
 \fill[lightgray] (0,-1) rectangle (1,-2);
 \node[scale=.7] at (0.6, -1.5) {$\ell_9$};
 \fill[lightgray] (0,-2) rectangle (1,-3);
 \node[scale=.7] at (0.6, -2.5) {$\ell_{13}$};
 \fill[lightgray] (0,-3) rectangle (1,-4);
 \node[scale=.7] at (0.6, -3.5) {$\ell_{15}$};
 \rpp{2,2,2}{0,0,0}{0,0,0}
 \end{scope}
 \begin{scope}[xshift=10cm]
 \fill[lightgray] (1,-2) rectangle (2,-3);
 \node[scale=.7] at (1.6, -2.5) {$\ell_3$};
 \fill[lightgray] (1,-1) rectangle (2,-2);
 \node[scale=.7] at (1.6, -1.5) {$\ell_7$};
 \fill[lightgray] (0,-1) rectangle (1,-2);
 \node[scale=.7] at (0.6, -1.5) {$\ell_{10}$};
 \fill[lightgray] (0,-2) rectangle (1,-3);
 \node[scale=.7] at (0.6, -2.5) {$\ell_{14}$};
 \rpp{2,2}{0,0}{0,0}
 \end{scope}
 \begin{scope}[xshift=16cm]
 \fill[lightgray] (1,-1) rectangle (2,-2);
 \node[scale=.7] at (1.6, -1.5) {$\ell_4$};
 \fill[lightgray] (0,-1) rectangle (1,-2);
 \node[scale=.7] at (0.6, -1.5) {$\ell_{11}$};
 \rpp{2}{0}{0}
 \end{scope}
\end{scope} % y-shift
%%%
\draw[->] (7.5,-5cm) -- (7.5,-8cm) node[midway,right] {$\delta_{\theta}$};
\draw (13,-6.5cm) node {(returns $\bon$)};
\begin{scope}[yshift=-8cm]
 \node at (0,-1.5) {$\emptyset$};
 \begin{scope}[xshift=5cm]
 \node at (0,-1.5) {$\emptyset$};
 \end{scope}
 \begin{scope}[xshift=10cm]
 \node at (0,-1.5) {$\emptyset$};
 \end{scope}
 \begin{scope}[xshift=16cm]
 \node at (0,-1.5) {$\emptyset$};
 \end{scope}
\end{scope} % y-shift
%%%
\end{tikzpicture}
\]
and the last two applications of $\delta_{\theta}$ return $1$. In this example, we are in Case~(S) when at $x_{\alpha_2} = \bth22\bon$ and then the remaining strings are selected according to Case~(D). Hence, the result of applying $\Phi$ is the element
\[
\bon \otimes 1 \otimes 1.
\]
\end{example}

%=====================================================================
\section{Extending the left-box map}
\label{sec:box_map}

In this section, we describe a generalization of the left-box map in order to give a tableau description of the crystals $B^{r,1}$ for dual untwisted types. To do so, we first construct \defn{$\lb$-diagrams}, which are digraphs on $I_0$ such that
\begin{itemize}
\item every node has at most one outgoing edge,
\item there is a unique sink $\sigma$, and
\item each arrow $r \xrightarrow[\hspace{20pt}]{b} r'$ is labeled by $b \in B(\clfw_{\sigma})$ such that $\varepsilon_a(b) = \delta_{ar'}$ and $\varphi_a(b) = \delta_{ar}$.
%\item for every path in the diagram, the corresponding elements form a path in $B(\clfw_k)$.
\end{itemize}

For a fixed $\lb$-diagram $D$, we define the \defn{left-box} map on rigged configurations $\lb \colon \RC(B^{r,1}) \to \RC(B^{\sigma,1} \otimes B^{r',1})$, where we have the arrow $r \xrightarrow[\hspace{20pt}]{b} r'$ in $D$ as follows. Let $e_{a_1} e_{a_2} \cdots e_{a_m} b = u_{\clfw_{\sigma}}$, and define $\lb(\nu, J)$ as the rigged configuration obtained by adding a singular row of length $1$ to $\nu^{(a_i)}$, for all $1 \leq i \leq m$. By weight considerations, the map is well-defined since the result is independent of the order of the path from $b$ to the highest weight. Note that we can consider $\delta \circ \lb$ to be the same procedure as $\delta$ except starting at $b$.

Next, we define $\lb$ on $B^{r,1}$ by requiring that the diagram
\[
\xymatrixrowsep{3pc}
\xymatrixcolsep{3.5pc}
\xymatrix{\RC(B^{r,1}) \ar[r]^-{\lb} \ar[d]_{\Phi} & \RC(B^{\sigma,1} \otimes B^{r',1}) \ar[d]^{\Phi}
\\ B^{r,1} \ar[r]_-{\lb} & B^{\sigma,1} \otimes B^{r',1}}
\]
commutes, where again $\sigma$ is the unique sink in the $\lb$-diagram. In particular, we note that $\lb$ is a strict $U_q(\g_0)$-crystal embedding. Therefore, we define \defn{Kirillov--Reshetikhin (KR) tableaux} as the tableaux given by iterating the $\lb$ map, where the entries elements in $B^{\sigma,1}$ and the classical crystal structure is induced by the reverse column reading word. See Appendix~\ref{sec:KR_tableaux} for the description of $B^{r,1}$ in types $E_{6,7,8}^{(1)}$ and $E_6^{(2)}$.

For example, consider for $\lb \colon B^{r,1} \to B^{\sigma,1} \otimes B^{\sigma,1}$ corresponding to the arrow $r \xrightarrow[\hspace{10pt}]{b} \sigma$, we can use this to construct the tableau $\young(x,y)$, where $x, y \in B^{\sigma,1}$, given by its image under $\lb$, which is $\young(y) \otimes \young(x)$. We also note that the construction of the KR tableaux is dependent upon the choice of $\lb$-diagram.

We then extend the left-box map to $\lb \colon B^{r,1} \otimes B^{\bullet} \to B^{\sigma,1} \otimes B^{r',1} \otimes B^{\bullet}$, with respect to the $\lb$-diagram $D$, as the strict $U_q(\g_0)$-crystal embedding given by $b \otimes b^{\bullet} \mapsto \lb(b) \otimes b^{\bullet}$.

We note that this is a generalization of the $\lb$ map for the KSS bijection. Specifically, for the non-exceptional types, the defining $\lb$-diagram is
\begin{equation}
\label{eq:lb_general}
\begin{tikzpicture}[baseline=-4, xscale=1.3]
\node (1) at (0,0) {$1$};
\node (2) at (2,0) {$2$};
\node (3) at (4,0) {$\cdots$};
\node (4) at (6,0) {$n-1$};
\node (5) at (8,0) {$n$};
\draw[->] (2) -- (1) node[midway, above]{\small $\boxed{1}$};
\draw[->] (3) -- (2) node[midway, above]{\small $\boxed{2}$};
\draw[->] (4) -- (3) node[midway, above]{\small $\boxed{n-2}$};
\draw[->] (5) -- (4) node[midway, above]{\small $\boxed{n-1}$};
\end{tikzpicture}.
\end{equation}

For type $E_6^{(1)}$, we use the $\lb$-diagram
\begin{equation}
\label{eq:lb_E6}
\begin{tikzpicture}[baseline=-4]
\node (1) at (0,0) {$1$};
\node (2) at (4,1) {$2$};
\node (3) at (2,0) {$3$};
\node (4) at (4,0) {$4$};
\node (5) at (6,1) {$5$};
\node (6) at (2,1) {$6$};
\draw[->] (3) -- (1) node[midway, below]{\small $\bon 3$};
\draw[->] (4) -- (3) node[midway, below]{\small $\bth 4$};
\draw[->] (2) -- (6) node[midway, above]{\small $2 \overline{6}$};
%\draw[->] (2) -- (3) node[midway, above left]{\small $2 \overline{3}$};
\draw[->] (5) -- (2) node[midway, above]{\small $\btw 5$};
\draw[->] (6) -- (1) node[midway, above left]{\small $\bon 6$};
\end{tikzpicture}.
\end{equation}
(Note that the edges are labeled by the elements given in Figure~\ref{fig:B1_type_E6}.) We have chosen the $\lb$-diagram to minimize the distance from node $r$ to $\sigma$ and each edge label $b$ has minimal depth from $u_{\clfw_{\sigma}}$.

\begin{example}
In type $E_6^{(1)}$ for $B^{6,1} \otimes B^{6,1}$, we have for
\begin{align*}
(\nu, J) & = \;\;
\begin{tikzpicture}[scale=.35,baseline=-18]
 \node at (0,-1.5) {$\emptyset$};
 \begin{scope}[xshift=4cm]
 \rpp{1}{0}{0}
 \end{scope}
 \begin{scope}[xshift=8cm]
 \rpp{1}{0}{0}
 \end{scope}
 \begin{scope}[xshift=12cm]
 \rpp{1,1}{0,0}{0,0}
 \end{scope}
 \begin{scope}[xshift=16cm]
  \rpp{1,1}{0,0}{0,0}
 \end{scope}
 \begin{scope}[xshift=20cm]
 \rpp{1,1}{0,0}{0,0}
 \end{scope}
\end{tikzpicture},
\\[5pt]
\lb(\nu, J) & =
\begin{tikzpicture}[scale=.35,baseline=-18]
 \rpp{1,1}{0,0}{0,0}
 \begin{scope}[xshift=4cm]
 \rpp{1,1}{0,0}{0,0}
 \end{scope}
 \begin{scope}[xshift=8cm]
  \rpp{1,1,1}{0,0,0}{0,0,0}
 \end{scope}
 \begin{scope}[xshift=12cm]
 \rpp{1,1,1,1}{0,0,0,0}{0,0,0,0}
 \end{scope}
 \begin{scope}[xshift=16cm]
  \rpp{1,1,1}{0,0,0}{0,0,0}
 \end{scope}
 \begin{scope}[xshift=20cm]
 \rpp{1,1}{0,0}{0,0}
 \end{scope}
\end{tikzpicture},
\end{align*}
which is in $\RC(B^{1,1} \otimes B^{1,1} \otimes B^{6,1})$. In particular, we added $2$ singular rows of length $1$ to $\nu^{(1)}, \nu^{(3)}, \nu^{(4)}$ and $1$ such row to $\nu^{(2)}, \nu^{(3)}, \nu^{(5)}$ since $1 = e_1 e_3 e_4 e_2 e_5 e_4 e_3 e_1( \bon 6 )$. Note that $\bon6$ comes from the edge $1 \xleftarrow[\hspace{15pt}]{\bon6} 6$ in the $\lb$-diagram. Thus, we obtain
\[
\Phi\bigl(\lb(\nu, J) \bigr) =
\lb\bigl(\Phi(\nu, J) \bigr) =
\begin{tikzpicture}[baseline=-2pt]
\matrix [matrix of math nodes,column sep=-.4, row sep=-.5,text height=10,text width=10,align=center,inner sep=3] 
 {
	\node[draw]{\bsix}; \\
 };
\end{tikzpicture}
\otimes
\begin{tikzpicture}[baseline=-2pt]
\matrix [matrix of math nodes,column sep=-.4, row sep=-.5,text height=10,text width=10,align=center,inner sep=3] 
 {
	\node[draw]{1}; \\
 };
\end{tikzpicture}
\otimes
\begin{tikzpicture}[baseline]
\matrix [matrix of math nodes,column sep=-.4, row sep=-.5,text height=10,text width=10,align=center,inner sep=3] 
 {
	\node[draw]{1}; \\
	\node[draw]{\bon6}; \\
 };
\end{tikzpicture},
\qquad
\Phi(\nu, J) =
\begin{tikzpicture}[baseline]
\matrix [matrix of math nodes,column sep=-.4, row sep=-.5,text height=10,text width=10,align=center,inner sep=3] 
 {
	\node[draw]{1}; \\
	\node[draw]{\bsix}; \\
 };
\end{tikzpicture}
\otimes
\begin{tikzpicture}[baseline]
\matrix [matrix of math nodes,column sep=-.4, row sep=-.5,text height=10,text width=10,align=center,inner sep=3] 
 {
	\node[draw]{1}; \\
	\node[draw]{\bon6}; \\
 };
\end{tikzpicture}.
\]
\end{example}

\begin{remark}
We could alternatively use the $\lb$-digram for type $E_6^{(1)}$ by having the edge $3 \xleftarrow[\hspace{20pt}]{2\bth} 2$ instead of $6 \xleftarrow[\hspace{20pt}]{2\bsix} 2$. However, this results in different KR tableaux.
\end{remark}

%\TravisR{This is an alternative $\lb$-diagram where the elements in paths are paths in $B(\clfw_1)$.}
%\[
%\label{eq:lb_E6}
%\begin{tikzpicture}[baseline=-4]
%\node (1) at (0,0) {$1$};
%\node (2) at (4,1) {$2$};
%\node (3) at (2,0) {$3$};
%\node (4) at (4,0) {$4$};
%\node (5) at (6,0) {$5$};
%\node (6) at (2,1) {$6$};
%\draw[->] (3) -- (1) node[midway, below]{\small $\bon 3$};
%\draw[->] (4) -- (3) node[midway, below]{\small $\bth 4$};
%\draw[->] (2) -- (3) node[midway, above left]{\small $2 \overline{3}$};
%\draw[->] (5) -- (4) node[midway, below]{\small $\bfo 5$};
%\draw[->] (6) -- (1) node[midway, above left]{\small $\bon 6$};
%\end{tikzpicture}.
%\]
%\TravisR{End of alternative diagram.}

Using $\delta_6$, we define the $\lb^{\vee}$-diagram for type $E_6^{(1)}$ by
\begin{equation}
\label{eq:lb_E6_dual}
\begin{tikzpicture}[baseline=-4]
\node (1) at (2,1) {$1$};
\node (2) at (4,1) {$2$};
\node (3) at (6,1) {$3$};
\node (4) at (4,0) {$4$};
\node (5) at (2,0) {$5$};
\node (6) at (0,0) {$6$};
\draw[->] (1) -- (6) node[midway, above left]{\small $1 \bsix$};
\draw[->] (2) -- (1) node[midway, above]{\small $\bon 2$};
\draw[->] (3) -- (2) node[midway, above]{\small $\btw 3$};
\draw[->] (4) -- (5) node[midway, below]{\small $4 \bfive$};
\draw[->] (5) -- (6) node[midway, below]{\small $5 \bsix$};
\end{tikzpicture}.
\end{equation}
Note that this is a usual $\lb$-diagram, but we name it in parallel to the contragredient dual (recall $B(\clfw_1)^{\vee} = B(\clfw_6)$ and we can define $\delta_1^{\vee} = \delta_6$).

For type $E_7^{(1)}$, the definition of left-box we use is given by the $\lb$-diagram
\begin{equation}
\label{eq:lb_E7}
\begin{tikzpicture}[baseline=-4]
\node (1) at (2,1) {$1$};
\node (2) at (4,1) {$2$};
\node (3) at (6,1) {$3$};
\node (4) at (6,0) {$4$};
\node (5) at (4,0) {$5$};
\node (6) at (2,0) {$6$};
\node (7) at (0,0) {$7$};
\draw[->] (6) -- (7) node[midway, below]{\small $6 \bseven$};
\draw[->] (5) -- (6) node[midway, below]{\small $5 \overline{6}$};
\draw[->] (4) -- (5) node[midway, below]{\small $4 \overline{5}$};
\draw[->] (2) -- (1) node[midway, above]{\small $\bon 2$};
\draw[->] (1) -- (7) node[midway, above left]{\small $1 \bseven$};
\draw[->] (3) -- (2) node[midway, above]{\small $\btw 3$};
\end{tikzpicture}.
\end{equation}
We note that other $\lb$-diagrams are possible, but we use the one in~\eqref{eq:lb_E7} for its similarity to~\eqref{eq:lb_E6}.

\begin{example}
\label{ex:RC_E7_B41}
In type $E_7^{(1)}$ for $B^{4,1}$, consider the rigged configuration
\[
(\nu, J) =
\begin{tikzpicture}[scale=.35,anchor=top,baseline=-18]
 \rpp{1}{0}{0}
 \begin{scope}[xshift=4cm]
 \rpp{1,1}{0,0}{0,0}
 \end{scope}
 \begin{scope}[xshift=8cm]
 \rpp{1,1}{1,0}{1,1}
 \end{scope}
 \begin{scope}[xshift=12cm]
 \rpp{1,1,1,1}{0,0,0,0}{0,0,0,0}
 \end{scope}
 \begin{scope}[xshift=16cm]
 \rpp{1,1,1}{0,0,0}{0,0,0}
 \end{scope}
 \begin{scope}[xshift=20cm]
 \rpp{1,1}{0,0}{0,0}
 \end{scope}
 \begin{scope}[xshift=24cm]
 \rpp{1}{0}{0}
 \end{scope}
\end{tikzpicture}.
\]
Note that $e_7 e_6 e_5(4 \bfive) = 7$,
and so we have
\[
\lb(\nu, J) =
\begin{tikzpicture}[scale=.35,anchor=top,baseline=-18]
 \rpp{1}{0}{0}
 \begin{scope}[xshift=4cm]
 \rpp{1,1}{0,0}{0,0}
 \end{scope}
 \begin{scope}[xshift=8cm]
 \rpp{1,1}{1,0}{1,1}
 \end{scope}
 \begin{scope}[xshift=12cm]
 \rpp{1,1,1,1}{0,0,0,0}{0,0,0,0}
 \end{scope}
 \begin{scope}[xshift=16cm]
 \rpp{1,1,1,1}{0,0,0,0}{0,0,0,0}
 \end{scope}
 \begin{scope}[xshift=20cm]
 \rpp{1,1,1}{0,0,0}{0,0,0}
 \end{scope}
 \begin{scope}[xshift=24cm]
 \rpp{1,1}{0,0}{0,0}
 \end{scope}
\end{tikzpicture}
\]
in $\RC(B^{7,1} \otimes B^{5,1})$.
Next, by applying $\delta_7$, we remove the following boxes:
\[
\begin{tikzpicture}[scale=.35,baseline=-18]
\fill[lightgray] (24,-3) rectangle (25,-2);
\node[scale=.7] at (24.6, -2.5) {$\ell_1$}; % 6, -7
\fill[lightgray] (20,-4) rectangle (21,-3);
\node[scale=.7] at (20.6, -3.5) {$\ell_2$}; % 5, -6
\fill[lightgray] (16,-5) rectangle (17,-4);
\node[scale=.7] at (16.6, -4.5) {$\ell_3$}; % 4, -5
\fill[lightgray] (12,-5) rectangle (13,-4);
\node[scale=.7] at (12.6, -4.5) {$\ell_4$}; % 2, 3, -4
\fill[lightgray] (4,-3) rectangle (5,-2);
\node[scale=.7] at (4.6, -2.5) {$\ell_5$}; % -2, 3
\fill[lightgray] (8,-2) rectangle (9,-1);
\node[scale=.7] at (8.6, -1.5) {$\ell_6$}; % 1, -2, -3, 4
\fill[lightgray] (0,-2) rectangle (1,-1);
\node[scale=.7] at (0.6, -1.5) {$\ell_7$}; % -1, -2, 4
\fill[lightgray] (12,-4) rectangle (13,-3);
\node[scale=.7] at (12.6, -3.5) {$\ell_8$}; % -1, 3, -4, 5
\fill[lightgray] (16,-4) rectangle (17,-3);
\node[scale=.7] at (16.6, -3.5) {$\ell_9$}; % -1, 3, -5, 6
\fill[lightgray] (20,-3) rectangle (21,-2);
\node[scale=.7] at (20.6, -2.5) {$\ell_{10}$}; %-1, 3, -6, 7
\fill[lightgray] (24,-2) rectangle (25,-1);
\node[scale=.7] at (24.6, -1.5) {$\ell_{11}$}; %-1, 3, -7

 \rpp{1}{0}{0}
 \begin{scope}[xshift=4cm]
 \rpp{1,1}{0,0}{0,0}
 \end{scope}
 \begin{scope}[xshift=8cm]
 \rpp{1,1}{1,0}{1,1}
 \end{scope}
 \begin{scope}[xshift=12cm]
 \rpp{1,1,1,1}{0,0,0,0}{0,0,0,0}
 \end{scope}
 \begin{scope}[xshift=16cm]
 \rpp{1,1,1,1}{0,0,0,0}{0,0,0,0}
 \end{scope}
 \begin{scope}[xshift=20cm]
 \rpp{1,1,1}{0,0,0}{0,0,0}
 \end{scope}
 \begin{scope}[xshift=24cm]
 \rpp{1,1}{0,0}{0,0}
 \end{scope}
\end{tikzpicture}.
\]
Thus $\delta_7$ returns $\bon 3 \bseven$ and the resulting rigged configuration $(\delta_7\circ\lb)(\nu, J) \in \RC(B^{5,1})$ is the following:
\[
\begin{tikzpicture}[scale=.35,baseline=-18]
 \node at (0,-1.5) {$\emptyset$};
 \begin{scope}[xshift=3cm]
 \rpp{1}{0}{0}
 \end{scope}
 \begin{scope}[xshift=7cm]
 \rpp{1}{0}{0}
 \end{scope}
 \begin{scope}[xshift=11cm]
 \rpp{1,1}{0,0}{0,0}
 \end{scope}
 \begin{scope}[xshift=15cm]
 \rpp{1,1}{0,0}{0,0}
 \end{scope}
 \begin{scope}[xshift=19cm]
 \rpp{1}{0}{0}
 \end{scope}
 \begin{scope}[xshift=23cm]
 \node at (0,-1.5) {$\emptyset$};
 \end{scope}
\end{tikzpicture}.
\]
Since $e_7 e_6 (5 \bsix) = 7$, applying $\lb$ results in
\[
\begin{tikzpicture}[scale=.35,baseline=-18]
 \node at (0,-1.5) {$\emptyset$};
 \begin{scope}[xshift=3cm]
 \rpp{1}{0}{0}
 \end{scope}
 \begin{scope}[xshift=7cm]
 \rpp{1}{0}{0}
 \end{scope}
 \begin{scope}[xshift=11cm]
 \rpp{1,1}{0,0}{0,0}
 \end{scope}
 \begin{scope}[xshift=15cm]
 \rpp{1,1}{0,0}{0,0}
 \end{scope}
 \begin{scope}[xshift=19cm]
 \rpp{1,1}{0,0}{0,0}
 \end{scope}
 \begin{scope}[xshift=23cm]
 \rpp{1}{0}{0}
 \end{scope}
\end{tikzpicture},
\]
and applying $\delta_7$ selects
\[
\begin{tikzpicture}[scale=.35,baseline=-18]
\fill[lightgray] (23,-2) rectangle (24,-1);
\node[scale=.7] at (23.6, -1.5) {$\ell_1$}; % 6, -7
\fill[lightgray] (19,-3) rectangle (20,-2);
\node[scale=.7] at (19.6, -2.5) {$\ell_2$}; % 5, -6
\fill[lightgray] (15,-3) rectangle (16,-2);
\node[scale=.7] at (15.6, -2.5) {$\ell_3$}; % 4, -5
\fill[lightgray] (11,-3) rectangle (12,-2);
\node[scale=.7] at (11.6, -2.5) {$\ell_4$}; % 2, 3, -4
\fill[lightgray] (3,-2) rectangle (4,-1);
\node[scale=.7] at (3.6, -1.5) {$\ell_5$}; % -2, 3
\fill[lightgray] (7,-2) rectangle (8,-1);
\node[scale=.7] at (7.6, -1.5) {$\ell_6$}; % 1, -2, -3, 4
\fill[lightgray] (11,-2) rectangle (12,-1);
\node[scale=.7] at (11.6, -1.5) {$\ell_7$}; % 1, -4, 5
\fill[lightgray] (15,-2) rectangle (16,-1);
\node[scale=.7] at (15.6, -1.5) {$\ell_8$}; % 1, -5, 6
\fill[lightgray] (19,-2) rectangle (20,-1);
\node[scale=.7] at (19.6, -1.5) {$\ell_9$}; %1, -6, 7

 \node at (0,-1.5) {$\emptyset$};
 \begin{scope}[xshift=3cm]
 \rpp{1}{0}{0}
 \end{scope}
 \begin{scope}[xshift=7cm]
 \rpp{1}{0}{0}
 \end{scope}
 \begin{scope}[xshift=11cm]
 \rpp{1,1}{0,0}{0,0}
 \end{scope}
 \begin{scope}[xshift=15cm]
 \rpp{1,1}{0,0}{0,0}
 \end{scope}
 \begin{scope}[xshift=19cm]
 \rpp{1,1}{0,0}{0,0}
 \end{scope}
 \begin{scope}[xshift=23cm]
 \rpp{1}{0}{0}
 \end{scope}
\end{tikzpicture},
\]
which yields the empty rigged configuration and a return value of $1\bsix 7$. Thus, iterating this, we have
\[
\Phi(\nu, J) =
\begin{tikzpicture}[baseline]
\matrix [matrix of math nodes,column sep=-.4, row sep=-.5,text height=10,text width=15,align=center,inner sep=3] 
 {
	\node[draw]{7}; \\
	\node[draw]{\bseven6}; \\
	\node[draw]{1\bsix7}; \\
	\node[draw]{\bon3\bseven}; \\
 };
\end{tikzpicture}.
\]
\end{example}

For type $E_8^{(1)}$, the $\lb$-diagram is
\begin{equation}
\label{eq:lb_E8}
\begin{tikzpicture}[baseline=-4]
\node (1) at (2,1) {$1$};
\node (2) at (4,1) {$2$};
\node (3) at (6,1) {$3$};
\node (4) at (8,0) {$4$};
\node (5) at (6,0) {$5$};
\node (6) at (4,0) {$6$};
\node (7) at (2,0) {$7$};
\node (8) at (0,0) {$8$};
\draw[->] (7) -- (8) node[midway, below]{\small $7 \overline{8}$};
\draw[->] (6) -- (7) node[midway, below]{\small $6 \bseven$};
\draw[->] (5) -- (6) node[midway, below]{\small $5 \overline{6}$};
\draw[->] (4) -- (5) node[midway, below]{\small $4 \overline{5}$};
\draw[->] (2) -- (1) node[midway, above]{\small $\bon 2$};
\draw[->] (1) -- (8) node[midway, above left]{\small $1 \overline{8}$};
\draw[->] (3) -- (2) node[midway, above]{\small $\btw 3$};
\end{tikzpicture}.
\end{equation}

For type $E_6^{(2)}$, we use the $\lb$-diagram
\begin{equation}
\label{eq:lb_E6_2}
\begin{tikzpicture}[baseline=-4]
\node (1) at (0,0) {$1$};
\node (2) at (2,0) {$2$};
\node (3) at (4,0) {$3$};
\node (4) at (2,1) {$4$};
\draw[->] (2) -- (1) node[midway, below]{\small $\bon 2$};
\draw[->] (3) -- (2) node[midway, below]{\small $\btw 3$};
\draw[->] (4) -- (1) node[midway, above left]{\small $\bon 4$};
\end{tikzpicture}.
\end{equation}

%=====================================================================
\section{Untwisted types}
\label{sec:untwisted}

Let $\g$ be of type $C_n^{(1)}$, $F_4^{(1)}$, or $G_2^{(1)}$. For these types, we note that there is a virtualization map $v$ to the corresponding dual type $\g^{\vee}$ and that the scaling factors $(\gamma_a)_{a \in I}$ are exactly those considering $\g$ as a folding of the corresponding simply-laced type $\virtual{\g}$. For type $G_2^{(1)}$ to $D_4^{(3)}$, we also need to interchange $1 \leftrightarrow 2$ due to our numbering conventions. However, for type $B_n^{(1)}$, we will use the embedding into type $D_{n+1}^{(1)}$ as it affords an easier proof than $A_{2n-1}^{(2)}$. Using this, we construct the bijection $\Phi$ by showing it commutes with the virtualization map to the dual untwisted type.

%%%%%%%%%%
\subsection{The map $\delta_r$.}

\begin{table}
\[
\begin{array}{ccccc}
\toprule
\g & B_n^{(1)} & C_n^{(1)} & F_4^{(1)} & G_2^{(1)} \\ \midrule
\g^{\vee} & D_{n+1}^{(1)} & D_{n+1}^{(2)} & E_6^{(2)} & D_4^{(3)} \\ \midrule
r & n & 1 & 4 & 1 \\ \midrule
\delta^v_r & \virtual{\delta}_n \circ \virtual{\delta}_{n+1} & \virtual{\delta}_1 & \virtual{\delta}_1 \circ \virtual{\delta}_1 \circ \virtual{\lb} & \virtual{\delta}_1 \circ  \virtual{\delta}_1 \circ \virtual{\lb} \circ \virtual{\delta}_1 \circ \virtual{\lb} %\\ \midrule
%(\gamma_a)_{a \in I} & (2\dotsc, 2, 1) & (2, 1, \dotsc, 1, 2) & (2,2,2,1,1) & (3, 1, 3)
\\ \bottomrule
\end{array}
\]
\caption{The map $\delta_r^v$ for the virtualization map $v$ given by $\g \to \g^{\vee}$. Recall that $\virtual{\lb}$ is the $\lb$ map in type $\g^{\vee}$ and is needed to split the resulting column from the virtualization map.}
\label{table:virtual_dual}
\end{table}

It is known that $B^{r,1}$ can be realized as a virtual crystal inside of
\[
\virtual{B}^{r,1} = \begin{cases}
B^{n+1,1} \otimes B^{n,1} & \text{if } \g = B_n^{(1)}, \\
B^{2,1} & \text{if } \g = G_2^{(1)}, \\
B^{r,1} & \text{otherwise,}
\end{cases}
\]
of type $\g^{\vee}$. We want to define the map $\delta_r := v^{-1} \circ \delta^v_r \circ v$, where $r$ and $\delta_r^v$ are given in Table~\ref{table:virtual_dual}. Thus, we need to show that
\[
\delta_r^v \colon \virtual{\RC}(B^{r,1} \otimes B^{\bullet}) \to \virtual{\RC}(B^{\bullet})
\]
is well-defined when restricted to the image of $v$.

\begin{theorem}
\label{thm:virtual_untwisted_delta}
Suppose $(\virtual{\nu}, \virtual{J})$ satisfies Equation~\eqref{eq:virtual_RC}, then $\delta_r^v(\virtual{\nu}, \virtual{J})$ satisfies Equation~\eqref{eq:virtual_RC}.
Moreover, the map $\delta_r^v$ is well-defined when restricted to the image of $v$.
\end{theorem}

\begin{proof}
We proceed by induction by examining $(\delta^v_r)^{-1}$, where the base case is done by $\delta_r^v(\virtual{\nu}, \virtual{J}) = (\virtual{\nu}, \virtual{J})$, which returns the highest weight element $v(u_{\clfw_r})$. Next we assume the claim holds when $\delta_r^v$ returns $\virtual{b} := v(b)$. Fix $a \in I_0$. Let $(\virtual{\nu}', \virtual{J}')$ be the rigged configuration such that $(\overline{\nu}, \overline{J}) := \delta_r^v(\virtual{\nu}, \virtual{J}) = \delta_r^v(\virtual{\nu}', \virtual{J}')$ but with a return value of $f_a^v \virtual{b} = v(f_a b)$.

For type $C_n^{(1)}$, we have that $(\virtual{\nu}', \virtual{J}')$ differs from $(\virtual{\nu}, \virtual{J})$ by the addition of $\gamma_a$ boxes to a row in $\virtual{\nu}^{(a)}$. From Equation~\eqref{eq:virtual_vacancy_numbers}, we have all riggings $\virtual{J}'$ are still multiples of $\gamma_{a'}$ for all $a' \in \virtual{I}$, and the claim follows.

Next we consider type $F_4^{(1)}$.
The case when $f_a^v \lb(\virtual{b}) = (f_a^v \virtual{b}_2) \otimes \virtual{b}_1$ is similar to the type $C_n^{(1)}$ case.
Now suppose $f_a^v \lb(\virtual{b}) = (\virtual{f}_a \virtual{b}_2) \otimes (\virtual{f}_a \virtual{b}_1)$. Note that $\virtual{\delta}^{-1}$ for $\virtual{f}_a \virtual{b}_1$ starts at $\nu^{(a)}$ and the only singular rows in $\nu^{(a')}$ for $\gamma_{a'} > 1$ are the rows selected by $\virtual{\delta}_r^{-1}$ by Equation~\eqref{eq:virtual_vacancy_numbers}. Hence, applying $\virtual{\delta}^{-1}$ for $\virtual{f}_a \virtual{b}_2$ must select those same rows in $\nu^{(a')}$ for $\gamma_{a'} > 1$ as there are sufficient singular rows in $\nu^{(a')}$ for $\gamma_{a'} = 1$ of length $\ell_{i_j} \leq \ell_k \leq \ell_{i_{j+1}}$ for all $i_j \leq k \leq i_{j+1}$, where $\ell_{i_1}, \dotsc, \ell_{i_q}$ are the lengths of the rows selected of $\nu^{(a')}$ for fixed $a'$ such that $\gamma_{a'} > 1$. We note that such rows exists because $\virtual{b}_2 \geq \virtual{b}_1$. Once all such rows have been paired, we are equivalent to the case of $\virtual{b}' \otimes v(u_{\clfw_r})$ with all sufficiently long rows non-singular. Hence, the claim follows by induction.

Now suppose $f_a^v \lb(\virtual{b}) = \virtual{b}_2 \otimes (f_a^v \virtual{b}_1)$ and let $(\tnu, \tJ)$ and $(\tnu', \tJ')$ denote $\virtual{\delta}_r^{-1}(\overline{\nu}, \overline{J})$ by adding $\virtual{b}_1$ and $f_a^v \virtual{b}_1$ respectively. Note that any row selected to obtain $\tnu'$ is at most as long as that to obtain $\tnu$ and that $\virtual{\delta}_r^{-1}$ added $\gamma_a$ boxes to this row in obtaining $(\tnu')^{(a)}$. Therefore, this case follows from our induction assumption for the case where the necessary rows are made to be (non-)singular but with a return value of $\lb(\virtual{b})$.

The proof for type $G_2^{(1)}$ is similar. For type $B_n^{(1)}$, we note that if $\virtual{f}_a(\virtual{b}_2 \otimes \virtual{b}_1) = (\virtual{f}_a \virtual{b}_2) \otimes \virtual{b}_1$ for $a \neq n,n+1$, then we must have previously had $\virtual{f}_a$ act on the right. Specifically, this is equivalent to having $s_a$ have the same sign in both columns of $B(\virtual{\clfw}_n) \otimes B(\virtual{\clfw}_{n+1})$. Thus the proof is also similar for type $B_n^{(1)}$.
\end{proof}

We note that our proof is almost type independent as it is the same general technique, but we require some mild type dependencies. We also note that Theorem~\ref{thm:virtual_untwisted_delta},~\cite[Rem.~5.15]{SchillingS15}, and~\cite[Thm.~6.5]{SchillingS15} implies that we could define $\delta$ by considering the virtualization map of $B_n^{(1)}$ into $A_{2n-1}^{(1)}$.

\begin{remark}
\label{remark:scaling_untwisted}
Instead of using the scaling factors to enlarge the partitions, we could instead consider scaling each $\nu^{(a)}$ by $1 / T_a$. So that the partitions have integer lengths, we scale by (a multiple of) $\max_a T_a$, which the net effect would be to multiply by $\gamma_a$. This suggests a strong relationship between the Weyl chamber geometry and rigged configurations through the bijection $\Phi$.
\end{remark}

%%%%%%%%%%
\subsection{Defining $\lb$ and general columns}

The $\lb$-diagram for type $C_n^{(1)}$ is given by Equation~\eqref{eq:lb_general}.
For type $B_n^{(1)}$, the $\lb$-diagram we consider is
\[
\begin{tikzpicture}[baseline=-4]
\node (n) at (0,0) {$n$};
\node (1) at (2,0) {$1$};
\node (d) at (2,0.75) {$\vdots$};
\node (nm) at (2,1.5) {$n-1$};
\draw[->] (1) -- (n) node[midway, below]{\small $1\bn$};
\draw[->,dotted] (d) -- (n);
\draw[->] (nm) -- (n) node[midway, above left]{\small $(n-1)\bn$};
\end{tikzpicture}.
\]

For type $F_4^{(1)}$, we use the $\lb$-diagram
\[
\begin{tikzpicture}[baseline=-4]
\node (4) at (0,0) {$4$};
\node (3) at (2,0) {$3$};
\node (2) at (4,0) {$2$};
\node (1) at (2,1) {$1$};
\draw[->] (3) -- (4) node[midway, below]{\small $3 \bfo$};
\draw[->] (2) -- (3) node[midway, below]{\small $2 \bth$};
\draw[->] (1) -- (4) node[midway, above left]{\small $1 \bfo$};
\end{tikzpicture}.
\]

For type $G_2^{(1)}$, we want to consider $B(\clfw_1)$ as a virtual crystal inside of $B(3\clfw_1) \subseteq B(\clfw_2) \otimes B(\clfw_2)$. This corresponds to adding a singular row of length 1 to $\nu^{(2)}$, which would be of length $\gamma_2$ in $\gamma_2 \nu^{(2)}$. This allows us to construct an $\lb$-diagram as $1 \xrightarrow[\hspace{20pt}]{1^3\btw} 2$.

%=====================================================================
\section{Results}
\label{sec:results}

We gather our results and proofs here.
We first prove our results for minuscule nodes. Next will be for the adjoint node. We then extend our results to all single-columns.
In the following subsection, we collect our main results: a uniform description and proof of the rigged configuration bijection $\Phi$ and $X = M$ for all single-column KR tableaux in dual untwisted types. We then discuss how $\Phi$ can be extended to a bijection for all affine types by describing the relation with virtualization. We conclude this section extending $\Phi$ and $X = M$ uniformly to tensor products of higher level KR crystals corresponding to minuscule nodes.

%%%%%%%%%%
\subsection{Minuscule nodes}

We assume that $\g$ is a dual untwisted type and $r$ is a minuscule node.

We note that $\delta_1 = \KSS{\delta}$ was described by Okado and Sano~\cite{OS12} for type $E_6^{(1)}$. It is also straightforward to see that $\delta_1 = \KSS{\delta}$ in type $A_n^{(1)}$ (given in~\cite{KKR86,KR86}) and type $D_n^{(1)}$ and $A_{2n-1}^{(2)}$ (given in~\cite{OSS03}). We collect these results the following theorem.

\begin{theorem}
\label{thm:known_minuscule_bijections}
Let $B = \bigotimes_{i=1}^N B^{1,1}$ be of type $A_n^{(1)}$, $D_n^{(1)}$, $E_6^{(1)}$, or $A_{2n-1}^{(2)}$. The map $\KSS{\Phi} \colon \RC(B) \to B$ is a bijection such that $\KSS{\Phi} \circ \eta$ sends cocharge to energy.
\end{theorem}

We need a few facts about minuscule representations (see, \textit{e.g.},~\cite{Stembridge03}). Let $\clW = \langle s_a \mid a \in I_0 \rangle$ be the Weyl group of $\g_0$ with $s_a$ being the simple reflection corresponding to $\alpha_a$. The cosets $\clW / \clW_{\widehat{r}}$, where $\clW_{\widehat{r}}$ is the parabolic subgroup generated by $\langle s_i \mid i \in I_0 \setminus \{r\} \rangle$, parameterize the elements $B(\clfw_r)$. Specifically, we have
\begin{equation}
\label{eq:minuscule_parameterization}
B(\clfw_r) = \{ b_w := f_{a_1} \cdots f_{a_{\ell}} u_{\clfw_r} \mid w = s_{a_1} \cdots s_{a_{\ell}} \in \clW / \clW_{\widehat{r}} \},
\end{equation}
where the elements $w$ are the minimal length coset representatives.
Furthermore, there reduced expressions of $w$ give all paths from $b_w$ to $u_{\clfw_r}$.

\begin{lemma}
\label{lemma:ep_phi_minuscule_repr}
Let $\clfw_r$ be a minuscule weight. Then $0 \leq \varepsilon_a(b) + \varphi_a(b) \leq 1$ for all $b \in B(\clfw_r)$ and $a \in I_0$.
\end{lemma}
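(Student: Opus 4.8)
\textbf{The plan} is to reduce the quantity $\varepsilon_a(b) + \varphi_a(b)$, which measures the length of the $a$-string through $b$, to a single coroot pairing evaluated at the top of that string, and then to bound that pairing using the minuscule property of $B(\clfw_r)$. The lower bound $0 \leq \varepsilon_a(b) + \varphi_a(b)$ is immediate since $\varepsilon_a(b), \varphi_a(b) \in \ZZ_{\geq 0}$ by definition, so the entire content is the upper bound.

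First I would pass to the highest element of the $a$-string. Set $b' := e_a^{\varepsilon_a(b)} b$, so that $\varepsilon_a(b') = 0$. By the crystal axiom~(2), the $a$-string through $b$ is the same as the one through $b'$, and its cardinality is $\varepsilon_a(b) + \varphi_a(b) + 1 = \varphi_a(b') + 1$; hence
\[
\varepsilon_a(b) + \varphi_a(b) = \varphi_a(b').
\]
Applying crystal axiom~(1) to $b'$ and using $\varepsilon_a(b') = 0$ gives $\varphi_a(b') = \inner{\alpha_a^{\vee}}{\wt(b')}$. Therefore
\[
\varepsilon_a(b) + \varphi_a(b) = \inner{\alpha_a^{\vee}}{\wt(b')},
\]
and it remains to show this pairing is at most $1$ (it is automatically $\geq 0$, being equal to $\varphi_a(b')$).

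Next I would invoke that $r$ is a minuscule node, so $B(\clfw_r)$ is a minuscule representation on which $\clW$ acts transitively; in particular every weight of $B(\clfw_r)$ has the form $w\clfw_r$ for some $w \in \clW$, and $\wt(b') = w\clfw_r$ for a suitable $w$. Since the set of coroots is stable under $\clW$, we have $w^{-1}\alpha_a^{\vee} = \beta^{\vee}$ for some coroot $\beta^{\vee}$, and so
\[
\inner{\alpha_a^{\vee}}{\wt(b')} = \inner{\alpha_a^{\vee}}{w\clfw_r} = \inner{w^{-1}\alpha_a^{\vee}}{\clfw_r} = \inner{\beta^{\vee}}{\clfw_r}.
\]
The defining property of a minuscule fundamental weight is precisely that $\inner{\beta^{\vee}}{\clfw_r} \in \{-1, 0, 1\}$ for every coroot $\beta^{\vee}$, which yields the bound $\inner{\alpha_a^{\vee}}{\wt(b')} \leq 1$ and completes the proof.

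The only delicate point, and the step I expect to require the most care, is the reduction identity $\varepsilon_a(b) + \varphi_a(b) = \varphi_a(b')$: one must argue carefully that the connected $a$-string through $b$ is a genuine $\mathfrak{sl}_2$-string (so that moving to its highest element $b'$ preserves the total string length) using only the abstract crystal axioms~(1) and~(2). The invocation of the minuscule characterization $\inner{\beta^{\vee}}{\clfw_r} \in \{-1,0,1\}$ is standard, but it would be worth stating explicitly whether it is taken as the definition of minuscule or derived from transitivity of $\clW$ on the single-multiplicity weights, to keep the argument self-contained.
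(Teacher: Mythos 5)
Your proof is correct. The paper disposes of this lemma in one line by citing Equation~(3.1), the parameterization of $B(\clfw_r)$ by minimal-length coset representatives in $\clW/\clW_{\widehat{r}}$: since every element is $b_w$ for such a representative and reduced expressions give all paths to $u_{\clfw_r}$, each $a$-string has length at most one. Your route unpacks essentially the same underlying fact but through weights rather than cosets: you pass to the top $b'$ of the $a$-string, use seminormality and axiom~(1) to identify the string length with $\inner{\alpha_a^{\vee}}{\wt(b')}$, and then bound that pairing by $1$ using transitivity of $\clW$ on $B(\clfw_r)$ together with the characterization $\inner{\beta^{\vee}}{\clfw_r} \in \{-1,0,1\}$ for all coroots $\beta^{\vee}$. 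Both arguments rest on the same minuscule dichotomy; yours is more self-contained and does not require first establishing the coset parameterization, at the cost of invoking the $\{-1,0,1\}$ characterization of minuscule weights, which the paper never states explicitly (it defines minuscule nodes via diagram automorphisms and asserts the equivalence with transitivity of $\clW$). The two delicate points you flag — that the $a$-string is a genuine $\mathfrak{sl}_2$-string, and the provenance of the $\{-1,0,1\}$ property — are both handled correctly: the former is immediate from the seminormal definitions of $\varepsilon_a$ and $\varphi_a$ adopted in the paper, and the latter is a standard equivalent form of minusculeness.
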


\begin{proof}
The claim follows immediately from Equation~\eqref{eq:minuscule_parameterization}.
\end{proof}

The following lemma is the key fact for minuscule nodes, which is a generalization of~\cite[Lemma 2.1]{OS12}.

\begin{lemma}
\label{lemma:key_minuscule}
Let $\lambda \in P^+$. Then $\lambda$ is a minuscule weight if and only if the crystal graph of $B(\lambda)$ has the following properties:
\begin{enumerate}[{\rm (A)}]
\item \label{key1} Consider a path $P$ in $B(\clfw_r)$ such that the initial and terminal arrows have the same color $a$. Then either
\begin{enumerate}[{\rm (a)}]
\item there are exactly two arrows colored by $a'$ and $a''$ in $P$ such that $a' \sim a$ and $a'' \sim a$, or
\item there is exactly one arrow colored by $a'$ in $P$ such that $A_{aa'} = -2$.
\end{enumerate}

\item \label{key2} Consider a length $2$ path with colors $(a, a')$ in $B(\clfw_r)$ with $a \not\sim a'$. Then there exists a length $2$ path $(a', a)$ with the same initial and terminal vertices in $B(\clfw_r)$.
\end{enumerate}
\end{lemma}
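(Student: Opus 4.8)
The plan is to derive everything from a single inequality. Since $\clfw_r$ is minuscule, Lemma~\ref{lemma:ep_phi_minuscule_repr} gives $\varepsilon_a(b) + \varphi_a(b) \leq 1$, and combining this with the crystal axiom $\varphi_a(b) - \varepsilon_a(b) = \inner{\alpha_a^{\vee}}{\wt(b)}$ forces
\[
\inner{\alpha_a^{\vee}}{\wt(b)} \in \{-1, 0, 1\}
\]
for every $b \in B(\clfw_r)$ and every $a \in I_0$. I would prove the two implications separately, reading property~\ref{key1} as applying to a path whose only $a$-colored arrows are its initial and terminal ones (that is, a segment lying between two consecutive $a$-moves), since this is the form needed to analyze $\delta_r$ and is what makes the dichotomy exact.

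For the forward direction of property~\ref{key1}, I would write the path as $b_0 \xrightarrow{c_1} b_1 \xrightarrow{c_2} \cdots \xrightarrow{c_k} b_k$ with $c_1 = c_k = a$ and set $d_i := \inner{\alpha_a^{\vee}}{\wt(b_i)}$. Each forward arrow applies $f_{c_i}$, so $\wt(b_i) = \wt(b_{i-1}) - \clsr_{c_i}$ and hence $d_i - d_{i-1} = -A_{a c_i}$. Because the first and last arrows are colored $a$, the bound above pins $d_1 = -1$ and $d_{k-1} = 1$. For the interior arrows $c_i \neq a$, so $-A_{a c_i} \geq 0$: it equals $0$ when $c_i \not\sim a$ and $1$ or $2$ when $c_i \sim a$ (the value $3$ cannot occur, as no minuscule node borders a $G_2$ edge). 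Thus $d$ rises monotonically from $-1$ to $1$ while remaining in $\{-1,0,1\}$; a total rise of $2$ forces either two steps of $+1$, i.e.\ exactly two neighbor-colored arrows (case~(a)), or one step of $+2$, i.e.\ exactly one arrow with $A_{a a'} = -2$ (case~(b)). Non-adjacent colors contribute $0$ and are irrelevant, and an overshoot to $2$ is excluded by the bound.

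For property~\ref{key2}, note that $a \not\sim a'$ means $A_{a a'} = A_{a' a} = 0$, so on $B(\clfw_r)$ the operators $f_a, e_a$ commute with $f_{a'}, e_{a'}$; a length-$2$ path colored $(a,a')$ therefore admits a companion colored $(a',a)$, and multiplicity one (minusculeness) guarantees the intervening vertex is the unique element of its weight, so the two paths genuinely share endpoints, with paths mixing $e$- and $f$-arrows handled identically. For the converse, suppose $B(\lambda)$ satisfies property~\ref{key1} but $\lambda$ is not minuscule; then some $a$-string has length $\geq 2$, giving $b \xrightarrow{a} f_a b \xrightarrow{a} f_a^2 b$, a path with both end arrows colored $a$ and no neighbor-colored arrow at all, contradicting property~\ref{key1}. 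Hence $\lambda$ is minuscule.

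The computations are short once the $\{-1,0,1\}$ bound is in hand, so the real difficulty is conceptual. I expect the main obstacle to be pinning down the correct reading of property~\ref{key1}: the dichotomy genuinely fails for paths containing further interior $a$-arrows (one can then alternate $+1$ neighbor steps with $-2$ steps and accumulate extra neighbor arrows), so the statement must be understood for segments between consecutive $a$-moves, exactly as they arise in $\delta_r$. The only other point requiring care is confirming, for the non-simply-laced $\g_0$ arising from $A_{2n-1}^{(2)}$ and $D_{n+1}^{(2)}$, that the $A_{a a'} = -2$ edge lands in case~(b) rather than case~(a); the weight computation above does this automatically.
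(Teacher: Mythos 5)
Your argument is correct, but it takes a genuinely different route from the paper. The paper's proof is essentially a two-line citation: it invokes the parameterization of $B(\clfw_r)$ by minimal-length coset representatives in $\clW/\clW_{\widehat{r}}$ (Equation~\eqref{eq:minuscule_parameterization}) and then quotes Stembridge's results on fully commutative/minuscule elements for property~(\ref{key1}) and Proctor--Stembridge for property~(\ref{key2}). You instead give a self-contained weight-counting argument: the bound $\varepsilon_a(b)+\varphi_a(b)\leq 1$ together with $\varphi_a-\varepsilon_a=\inner{\alpha_a^{\vee}}{\wt(b)}$ confines $d_i=\inner{\alpha_a^{\vee}}{\wt(b_i)}$ to $\{-1,0,1\}$, the endpoints of the path pin $d_1=-1$ and $d_{k-1}=1$, and the interior increments $-A_{ac_i}\geq 0$ must sum to exactly $2$, which is precisely the dichotomy (a)/(b); property~(\ref{key2}) follows from orthogonality plus weight-multiplicity one, and the converse from the existence of an $a$-string of length $\geq 2$ in any non-minuscule $B(\lambda)$. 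What your approach buys is transparency and independence from the fully-commutative machinery; what the paper's buys is brevity and the heap/coset picture that it reuses elsewhere (e.g.\ in Lemma~\ref{lemma:key_adjoint} and Proposition~\ref{prop:higher_R_matrix}). Your observation that property~(\ref{key1}) must be read for paths whose only $a$-arrows are the initial and terminal ones (consecutive $a$-moves) is a real point: as literally stated the dichotomy fails for paths with interior $a$-arrows, and the consecutive reading is both what Stembridge's proposition asserts and what the application to $\delta_r$ requires. Two small things you leave terse but which are easily repaired: for property~(\ref{key2}) one should check that $f_{a'}b\neq 0$ and $f_af_{a'}b\neq 0$ before invoking multiplicity one (this follows since $A_{a'a}=0$ makes $\inner{\alpha_{a'}^{\vee}}{\wt(b)}=\inner{\alpha_{a'}^{\vee}}{\wt(f_ab)}=1$), and the converse as you state it only exhibits a failure of property~(\ref{key1}), which suffices for the biconditional since the properties are taken jointly.
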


\begin{proof}
We recall that $\lambda \in P^+$ is minuscule implies $\lambda$ is a fundamental weight.
By Equation~\eqref{eq:minuscule_parameterization}, property~(\ref{key1}) is given by~\cite[Prop.~2.3]{Stembridge01II} and~(\ref{key2}) was shown in~\cite{Proctor99} for the simply-laced case and the general case by~\cite[Prop.~2.1]{Stembridge01II}.
\end{proof}

\begin{remark}
The conditions~(2) and~(4) of~\cite[Lemma~2.1]{OS12}, respectively, are consequences of~(\ref{key1}) and~(\ref{key2}) of Lemma~\ref{lemma:key_minuscule}, respectively, which correspond to~(1) and~(3) in~\cite{OS12}, respectively. Thus we have only stated the necessary properties.
\end{remark}

One important consequence of Lemma~\ref{lemma:key_minuscule} is that the result of $\delta_r$ does not depend on the choice of $a'$ such that $i_{a'} = \min_a i_a$ at each step. Another is that the proof given in~\cite{OS12} holds in this generality except for a fact about the local energy function (\textit{i.e.}, part~(\ref{cV})). Recall that we need to show $H(b_N \otimes b_{N-1})$ is equal to the number of length 1 singular rows selected by $\delta_r$ in $\nu^{(r)}$, which implies that $\Phi$ preserves statistics. Note that we have already shown that $\Phi$ is a bijection.

Next, we compute the (local) energy function on classically highest weight elements in $B^{r,1} \otimes B^{r,1}$.

%\Travis{In type $D_{n+1}^{(2)}$, highest weight elements of $B^{n,1} \otimes B^{n,1}$ are given by $+ \cdots + - \cdots - \otimes u_{\clfw_n}$ where the local energy is equal to the number of $-$'s.}

\begin{theorem}
\label{thm:minuscule_energy}
Let $\clfw_r$ be a special node. Then for classically highest weight elements $b \otimes u_{s'\clfw_r} \in B^{r,s} \otimes B^{r,s'}$, we have $H(b \otimes u_{s'\clfw_r})$ equal to the number of $r$-arrows in the path from $b$ to $u_{s\clfw_r}$ in $B(s\clfw_r)$.
\end{theorem}

\begin{proof}
First recall that for $r$ to be a special node, there exists a diagram automorphism $\phi$ such that $r = \phi(0)$. Therefore, if we consider the finite-type $\g_r$ given by $I_r := I \setminus \{r\}$, then the corresponding fundamental weight $\check{\Lambda}_0$ is minuscule. We note that $B^{r,s} \iso B(s\check{\Lambda}_0)$ as $U_q(\g_r)$-crystals, and the classically highest weight element in $B^{r,s}$ is the $I_r$-lowest weight element in $B(s\check{\Lambda}_0)$.
Hence, for every classically highest weight element $b \otimes u_{s'\clfw_r} \in B^{r,s} \otimes B^{r,s'}$, there exists a path to $u_{s\clfw_r} \otimes u_{s'\clfw_r}$ using the crystal operators $f_a$, for $a \in I \setminus \{r\}$. Moreover, the crystal operators only act on the left-most factor since $\varphi(u_{s\clfw_r}) = s\Lambda_r$. The number of $0$-arrows is equal to the number of $r$-arrows in the path from $b$ to $u_{s\clfw_r}$ in $B(s\clfw_r)$ because $f_r f_0 b = f_0 f_r b$ as $r \not\sim 0$. Hence, we have $H(b \otimes u_{s'\clfw_r})$ as claimed.
\end{proof}

It remains to show the local energy function satisfies Equation~\eqref{eq:change_energy}.

\begin{lemma}
\label{lemma:minuscule_local_energy_change}
Part~(\ref{cV}) holds for
\[
B = \bigotimes_{i=1}^N B^{r, 1}
\]
when $\clfw_r$ is a minuscule weight.
\end{lemma}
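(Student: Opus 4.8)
The plan is to first reduce Part~(\ref{cV}) to a single combinatorial identity and then prove that identity by induction on the second application of $\delta_r$. Since all factors of $B$ are $B^{r,1}$ we have $r_N = r_{N-1} = r$, and since $\clfw_r$ is minuscule we have $t_a = 1$ for all $a \in I$; a direct check of the Kac labels in these types gives $t_{r_N}^{\vee}/c_0^{\vee} = 1$. Moreover $B^{r,1} \iso B(\clfw_r)$ contains no $\emptyset$, so the two $\chi$-terms in Equation~\eqref{eq:change_energy} vanish, and Part~(\ref{cV}) becomes
\[
H(b_N \otimes b_{N-1}) = \beta_1^{(r)} - \overline{\beta}_1^{(r)}.
\]
The right-hand side is exactly the number of length $1$ rows of $\nu^{(r)}$ selected (and hence removed) by the first application of $\delta_r$: removing the single box of a selected length $1$ row shortens the first column of $\nu^{(r)}$ by one, whereas removing a box from a longer selected row leaves the number of rows unchanged. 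On the left-hand side we recall from the preceding theorem that $H(b \otimes u_{\clfw_r})$ equals the number of $r$-arrows on the path from $b$ to $u_{\clfw_r}$ in $B(\clfw_r)$, and that $H$ is constant on $I_0$-components by Equation~\eqref{eq:local_energy}.

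For the base case I take $b_{N-1} = u_{\clfw_r}$, i.e.\ the second application of $\delta_r$ returns immediately. Since $\varphi_a(u_{\clfw_r}) = \inner{h_a}{\clfw_r}$ is nonzero only for $a = r$, this occurs precisely when $\overline{\nu}^{(r)}$ has no singular row of positive length. But any row of length $\geq 2$ in $\nu^{(r)}$ selected by the first $\delta_r$ becomes, after the box removal, a singular row of positive length in $\overline{\nu}^{(r)}$; hence the hypothesis forces the first $\delta_r$ to have selected only length $1$ rows in $\nu^{(r)}$. Consequently every $r$-arrow on the path from $b_N$ to $u_{\clfw_r}$ corresponds to a removed length $1$ row, so the number of $r$-arrows equals $\beta_1^{(r)} - \overline{\beta}_1^{(r)}$, and the preceding theorem closes the base case.

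For the inductive step I induct on the depth of $b_{N-1}$ in $B(\clfw_r)$ (equivalently, the number of steps taken by the second $\delta_r$). Because $H$ is constant on $I_0$-components, one would like to replace $b_N \otimes b_{N-1}$ by the classically highest weight representative $c \otimes u_{\clfw_r}$ of its component and invoke the base case; the difficulty is that $\beta_1^{(r)} - \overline{\beta}_1^{(r)}$ is not manifestly an $I_0$-component invariant, so I instead track the change explicitly. Writing $b_{N-1} = f_a b_{N-1}'$ with $b_{N-1}'$ one step closer to $u_{\clfw_r}$, I compare the rigged configuration realizing $b_{N-1}$ with the one realizing $b_{N-1}'$ via Lemma~\ref{lemma:difference}: the two differ by a single box placed on a singular row of length $\ell' \geq \ell$, together with a controlled loss of singularity on the longer rows. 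This records exactly how $\overline{\beta}_1^{(r)}$ changes, namely by $1$ when the extra box creates a new length $1$ row in $\nu^{(r)}$ and by $0$ otherwise. On the energy side, the transition $b_{N-1}' \mapsto f_a b_{N-1}$ changes $H(b_N \otimes b_{N-1})$ through Equation~\eqref{eq:local_energy}, with the sign dictated by whether condition~(LL) or~(RR) holds for the combinatorial $R$-matrix.

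The main obstacle is precisely this last matching. I expect the delicate point to be showing that the alternative in part~(\ref{key1}) of Lemma~\ref{lemma:key_minuscule} forces the correct correspondence between ``a new length $1$ row appears in $\nu^{(r)}$'' and ``condition~(LL) rather than~(RR) holds for $b_N \otimes b_{N-1}$'', so that the increment of $\beta_1^{(r)} - \overline{\beta}_1^{(r)}$ agrees with the increment of $H$ at each step. Establishing this requires combining the minuscule path combinatorics of Lemma~\ref{lemma:key_minuscule} with the description of $\delta_r^{-1}$ in Lemma~\ref{lemma:difference}, together with the fact that $r \not\sim 0$ makes $f_0$ and $f_r$ commute; the classical invariance of $H$ is what allows me to read off the relevant $0$-arrows while ignoring the tensor factors other than $b_N \otimes b_{N-1}$.
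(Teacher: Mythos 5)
Your reduction to $H(b_N \otimes b_{N-1}) = \beta_1^{(r)} - \overline{\beta}_1^{(r)}$ and your identification of the right-hand side with the number of length-$1$ rows selected in $\nu^{(r)}$ by the first application of $\delta_r$ are correct, and your base case is essentially the paper's (note, though, that the preceding theorem computes $H(b \otimes u_{\clfw_r})$ only for \emph{classically highest weight} $b \otimes u_{\clfw_r}$, so invoking it for arbitrary $b_N$ with $b_{N-1} = u_{\clfw_r}$ is already an overreach). The genuine gap is in the inductive step. You induct by lowering only the right factor, writing $b_{N-1} = f_a b_{N-1}'$ with $b_N$ held fixed; but $b_N \otimes (f_a b_{N-1}')$ is in general \emph{not} $f_a(b_N \otimes b_{N-1}')$, since the tensor product rule may force $f_a$ to act on the left factor. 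Your transition can therefore jump between classical components of $B^{r,1} \otimes B^{r,1}$, and nothing controls how $H$ changes across such a jump. Moreover, your appeal to Equation~\eqref{eq:local_energy} with conditions (LL)/(RR) is a misreading: those conditions govern only the $0$-arrows, and for $a \in I_0$ the local energy simply does not change when $f_a$ is applied to the tensor product. The ``main obstacle'' you single out --- matching (LL)/(RR) against the creation of a length-$1$ row --- is thus based on a false premise and is not the actual difficulty.

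The induction that works, and that the paper carries out, is on the classical components of the pair: apply $f_a$ to $b' \otimes b$ itself, so that $H$ is automatically constant, and show that $\beta_1^{(r)} - \overline{\beta}_1^{(r)}$ is constant as well. For $a \neq r$, and for $f_r(b' \otimes b) = b' \otimes (f_r b)$, this is routine via Lemma~\ref{lemma:difference}. The one nontrivial case --- which your proposal never treats --- is $f_r(b' \otimes b) = (f_r b') \otimes b$, where one must rule out that the extra box in $\nu^{(r)}$ is a new length-$1$ row removed by the first application of $\delta_r$. There Lemma~\ref{lemma:ep_phi_minuscule_repr} forces $\varepsilon_r(b') = \varphi_r(b) = 0$, hence $b \neq u_{\clfw_r}$ and the existence of a singular string in $\overline{\nu}^{(r)}$, and the arrow-counting of Lemma~\ref{lemma:key_minuscule} yields the contradiction. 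That case is the entire content of the lemma, so as written the proposal does not close.
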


\begin{proof}
Note that in order for the second application of $\delta_r$ to return $u_{\clfw_r}$, there must not exist any singular rows in $\nu^{(r)}$ after the first application of $\delta_r$. Hence all rows selected in $\nu^{(r)}$ must have length $1$. Thus the claim holds on classically highest weight elements of $B^{r,1} \otimes B^{r,1}$ by Theorem~\ref{thm:minuscule_energy}.

Thus, to show this holds in the general case of $b' \otimes b$, we use induction on the classical components in $B^{r,1} \otimes B^{r,1}$. We note that we are not applying $f_r$ to the crystal/rigged configuration, but instead looking at how the two left-most factors differ, which results in a box being added to a row in $\nu^{(r)}$.
Indeed, we show the claim holds by showing the additional box removed to obtain $f_a(b' \otimes b)$ must not have come from a length $1$ row as $H(b' \otimes b) = H\bigl( f_a(b' \otimes b) \bigr)$ for all $a \in I_0$.

Suppose $f_r(b' \otimes b) = b' \otimes (f_r b)$ and let $(\nu', J')$ be the corresponding rigged configuration.
We note that the element $b'$ is unchanged, and so $\delta_r$ selects the same number of boxes in $\nu^{(r)}$ as in $\nu'^{(r)}$. There must be at least one more singular row in the $r$-th partition of $\delta_r(\nu',J')$ than in $\delta_r(\nu, J)$. This implies we must have removed the same number of (singular) rows of length $1$ from $\nu^{(r)}$ and $\nu'^{(r)}$. Hence, the claim follows by induction.

The case for $a \neq r$ is similar as above except the number of singular rows in the $r$-th partition of $\delta_r(\nu', J')$ is at least the same as in $\delta_r(\nu, J)$.

Instead suppose $f_r(b' \otimes b) = (f_r b') \otimes b$. Let $(\overline{\nu}, \overline{J}) = \delta_r(\nu, J) = \delta_r(\tnu, \tJ)$ with a return value of $b' \otimes b$ and $(f_r b') \otimes b$ respectively. Thus, we have $\varepsilon_r(b') \geq \varphi_r(b)$, and we must have $\varepsilon_r(b') = \varphi_r(b) = 0$ because $f_r(b') \neq 0$ and Lemma~\ref{lemma:ep_phi_minuscule_repr}. This implies that $b \neq u_{\clfw_r}$ as $\varphi_r(u_{\clfw_r}) = 1$, and so there must exist a singular string in $\overline{\nu}^{(r)}$. Suppose we select only length 1 singular strings in $\tnu^{(r)}$, then in order for these to be singular strings in $\overline{\nu}^{(r)}$, we must have followed twice as many $a$-arrows, for $a \sim r$, than $r$-arrows. However, by Lemma~\ref{lemma:key_minuscule}, we can only select the same number of $a$-arrows as $r$-arrows. This is a contradiction, and hence, the claim follows.
\end{proof}

Thus, we collect all of our results for this section in the following.

\begin{lemma}
\label{lemma:minuscule_bijections}
Let $B = \bigotimes_{i=1}^N B^{r,1}$, where $r$ is a minuscule node. The map
\[
\Phi_r \colon \RC(B) \to B,
\]
which is given by applying $\delta_r$, is a bijection on classically highest weight elements such that $\Phi_r \circ \eta$ sends cocharge to energy.
\end{lemma}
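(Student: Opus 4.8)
The strategy is to verify the five conditions~(\ref{cI})--(\ref{cV}) of Section~\ref{sec:bijection_KSS} for the leftmost factor $B^{r,1}$ of $B = \bigotimes_{i=1}^N B^{r,1}$, from which the statement follows by induction on $N$ via the standard recursive argument of~\cite{KSS02, OSS03}; the base case $N = 1$ is immediate. First I would note that $\delta_r$ is well-defined: by Lemma~\ref{lemma:key_minuscule}(\ref{key2}) the choice of color at each step does not affect the row that gets selected, so both the returned element $b \in B(\clfw_r)$ and the resulting rigged configuration are unambiguous.

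For the bijectivity, i.e.\ conditions~(\ref{cI})--(\ref{cIII}), I would repeat the argument of Okado--Sano~\cite{OS12} essentially verbatim. Their proof for the node $1$ of type $E_6^{(1)}$ uses only the structural facts about $B(\clfw_r)$ recorded in Lemma~\ref{lemma:ep_phi_minuscule_repr} and Lemma~\ref{lemma:key_minuscule}, and since these now hold for an arbitrary minuscule node, their argument transfers without modification; this is precisely the sense in which we have already observed that $\Phi_r$ is a bijection. Condition~(\ref{cIV}), the cocharge change~\eqref{eq:change_cocharge}, likewise follows from the bookkeeping in~\cite{OS12} of the boxes removed by $\delta_r$ from $\nu^{(r)}$, aided by the fact that $t_a = 1$ throughout the types under consideration.

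The one remaining input is condition~(\ref{cV}), the local energy change~\eqref{eq:change_energy}, which is exactly Lemma~\ref{lemma:minuscule_local_energy_change}; its proof in turn rests on the preceding theorem identifying $H(b \otimes u_{\clfw_r})$ with the number of $r$-arrows from $b$ to $u_{\clfw_r}$, together with Lemma~\ref{lemma:difference}. With~(\ref{cI})--(\ref{cV}) in place, telescoping~\eqref{eq:change_cocharge} and~\eqref{eq:change_energy} across the $N$ successive applications of $\delta_r$, as in~\cite[Lemma~5.1]{OSS03}, equates the total cocharge of $(\nu, J)$ with the energy $D$ of its image, the complement rigging map $\eta$ absorbing the interchange of riggings and coriggings produced in the rank-one step. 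This yields $D \circ \Phi_r \circ \eta = \cc$, as claimed.

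I expect the genuine obstacle to be condition~(\ref{cV}). Conditions~(\ref{cI})--(\ref{cIV}) are combinatorial bookkeeping that transfers directly once Lemma~\ref{lemma:key_minuscule} is available, but matching the algebraically defined local energy function with the count of length-one singular rows selected in $\nu^{(r)}$ forces a delicate comparison between the number of $a$-arrows and $r$-arrows traversed along a path, and it is exactly here that minusculity is indispensable: Lemma~\ref{lemma:key_minuscule} forbids following twice as many $a$-arrows as $r$-arrows, which is what rules out a removed box coming from a length-one row in the relevant case.
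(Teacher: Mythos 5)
Your proposal matches the paper's own argument: the paper likewise observes that Lemma~\ref{lemma:key_minuscule} (together with Lemma~\ref{lemma:ep_phi_minuscule_repr}) lets the Okado--Sano proof of~(\ref{cI})--(\ref{cIV}) and of bijectivity transfer verbatim to an arbitrary minuscule node, and isolates~(\ref{cV}) as the only genuinely new ingredient, supplied by the local energy computation and Lemma~\ref{lemma:minuscule_local_energy_change} via Lemma~\ref{lemma:difference}. Your identification of~(\ref{cV}) as the real obstacle, and of the arrow-counting argument that resolves it, is exactly the paper's emphasis.
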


%%%%%%%%%%
\subsection{Adjoint nodes}

We assume that $\g$ is a dual untwisted type and consider the adjoint node $N_{\g}$.

As for minuscule representations, there is a bijection between paths $x_{\alpha}$ to $x_{\theta}$ and reduced expressions of minimal length coset representatives in $\clW / \clW_{\widehat{N}_{\g}}$ (or $\clW / \clW_{\widehat{1}, \widehat{n}}$ for type $A_n^{(1)}$).
We also have an analog of Lemma~\ref{lemma:key_minuscule} for $B(\theta)$.

\begin{lemma}
\label{lemma:key_adjoint}
Consider the path
\[
x_{\alpha_a} \xrightarrow[\hspace{20pt}]{a} y_a \xrightarrow[\hspace{20pt}]{a} x_{-\alpha_a}
\]
as a single edge.
Then (\ref{key1}) and (\ref{key2}) of Lemma~\ref{lemma:key_minuscule} holds for $B(\theta)$.
\end{lemma}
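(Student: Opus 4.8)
The plan is to exploit the path--to--reduced-word correspondence recalled immediately before the statement. Just as Equation~\eqref{eq:minuscule_parameterization} identifies $B(\clfw_r)$ with minimal coset representatives, the vertices $x_\alpha$ of $B(\theta)$ are indexed by the (short) roots $\alpha \in \Delta$, and after collapsing each length-two string $x_{\alpha_a} \xrightarrow{a} y_a \xrightarrow{a} x_{-\alpha_a}$ to a single $a$-edge, the $a$-arrows are of exactly two kinds: the \emph{ordinary} arrows $x_\alpha \xrightarrow{a} x_{\alpha-\alpha_a}$ (with $\alpha, \alpha-\alpha_a \in \Delta$) and the \emph{collapsed} edges $x_{\alpha_a} \xrightarrow{a} x_{-\alpha_a}$. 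Thus the collapsed crystal graph is the Hasse-type diagram of the quasiminuscule weight $\theta$, and paths from $x_\alpha$ to $x_\theta$ biject with reduced words for minimal coset representatives in $\clW/\clW_{\widehat{N}_{\g}}$ (or in $\clW/\clW_{\widehat{1},\widehat{n}}$ in type $A_n^{(1)}$). Since both conditions~(\ref{key1}) and~(\ref{key2}) refer only to a color $a$ together with colors $a'$ satisfying $a'\sim a$, $A_{aa'}=-2$, or $a\not\sim a'$, it suffices to verify them after restricting to the rank $\le 2$ root subsystem generated by the colors appearing in the path under consideration.

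Away from the collapsed edge---that is, for sub-paths lying entirely among the $x_\alpha$ with $\alpha$ a positive root, or entirely among the negative roots---the local structure is literally that of a minuscule crystal: because $\theta$ is quasiminuscule, every $a$-string among the nonzero roots has length at most $2$, so these portions of $B(\theta)$ obey exactly the reduced-word combinatorics of~\cite{Proctor99, Stembridge01II} invoked in the proof of Lemma~\ref{lemma:key_minuscule}. Hence~(\ref{key1}) and~(\ref{key2}) hold for any path avoiding every edge $x_{\alpha_a}\xrightarrow{a}x_{-\alpha_a}$.

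It remains to treat paths that traverse a collapsed edge. By the Lusztig involution on $B(\theta)$, which interchanges $x_\alpha \leftrightarrow x_{-\alpha}$ up to the relabeling $\xi$, it is enough to analyze the local configuration around a single edge $x_{\alpha_a}\xrightarrow{a}x_{-\alpha_a}$ inside the rank-two subsystem spanned by $a$ and a second color $a'$. For~(\ref{key2}) with $a\not\sim a'$ the reflections $s_a,s_{a'}$ commute and $\alpha_a\pm\alpha_{a'}$ is not a root, so $f_{a'}x_{\pm\alpha_a}=0$ and no $a$-arrow can terminate at $x_{\alpha_{a'}}$; consequently no length-two path $(a,a')$ through a collapsed edge exists when $a\not\sim a'$, and the condition is vacuous there. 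For~(\ref{key1}), a path whose first and last arrows are colored $a$ and which crosses a collapsed edge restricts, in the rank-two subsystem of type $A_2$, $B_2$, or $G_2$, to a walk through $x_{\alpha_a}$ and $x_{-\alpha_a}$; inspecting the (short-)root crystal of each such rank-two type, and recording that the collapsed edge counts as a single $a$-arrow, one verifies that the number of intervening arrows colored by a neighbor $a'$ is exactly two, or exactly one precisely when $A_{aa'}=-2$.

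The main obstacle is precisely this collapsed edge: by identifying the length-three adjoint $\mathfrak{sl}_2$-string through $y_a$ with one arrow, we risk miscounting neighbor-colored arrows in~(\ref{key1}) or manufacturing a spurious commuting square in~(\ref{key2}). The resolution is the root-theoretic fact that $\pm\alpha_a\pm\alpha_{a'}$ is a root if and only if the corresponding adjacency between $a$ and $a'$ holds; this constrains exactly which ordinary arrows may flank a collapsed edge and reduces the remaining verification to the finitely many rank-two root systems, where it is routine.
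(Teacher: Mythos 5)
Your route is genuinely different from the paper's: the paper disposes of this lemma in one line, citing Proposition~\ref{prop:adjoint_elements} (elements of $B(\theta)$ are determined by $\varepsilon$ and $\varphi$, and $\varepsilon_i(b)=2$ forces $\varepsilon_j(b)=0$ for $j\neq i$) together with the transitivity of $\clW$ on $\Delta$, whereas you argue through the path/reduced-word correspondence and a reduction to rank-two subsystems. Your treatment of~(\ref{key2}) is sound: if $a\not\sim a'$ then $\alpha_a+\alpha_{a'}$ is not a root, so no length-two path of colors $(a,a')$ can touch a collapsed edge, and the remaining case is the usual commuting square of roots.

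The gap is in~(\ref{key1}). First, the reduction to a rank-$\le 2$ subsystem is not available there: condition~(\ref{key1}) counts \emph{all} arrows of $P$ colored by neighbors of $a$, and a single path can pass through several distinct neighbors. In type $D_4^{(1)}$ the path
\[
x_{\theta}\xrightarrow{\;2\;}x_{\alpha_1+\alpha_2+\alpha_3+\alpha_4}\xrightarrow{\;1\;}x_{\alpha_2+\alpha_3+\alpha_4}\xrightarrow{\;3\;}x_{\alpha_2+\alpha_4}\xrightarrow{\;4\;}x_{\alpha_2}\xrightarrow{\;2\;}x_{-\alpha_2}
\]
(last edge collapsed) has initial and terminal color $2$ and \emph{three} intervening arrows colored by neighbors of $2$; the subsystem generated by the colors has rank four, and no rank-two inspection sees this configuration. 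Second, your counting at collapsed edges fails already in rank two: in type $A_2^{(1)}$ the path $x_{\theta}\xrightarrow{\;1\;}x_{\alpha_2}\xrightarrow{\;2\;}x_{-\alpha_2}\xrightarrow{\;1\;}x_{-\theta}$ contains exactly \emph{one} intervening arrow colored $2$, with $A_{12}=-1$, contradicting your asserted outcome ``exactly two, or exactly one precisely when $A_{aa'}=-2$.'' Both phenomena have the same source: a collapsed edge moves $\inner{\alpha_a^{\vee}}{\wt(\cdot)}$ by twice as much as an ordinary edge, so it must be counted with multiplicity two when colored by a neighbor of $a$, and it shifts the endpoint of the weight bookkeeping from $\pm1$ to $\pm2$ when it is colored $a$ itself. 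The statement only holds under this reinterpretation (which is how the paper actually uses it --- ``if $a'=a''$, then two boxes are removed'' in the proof of part~(\ref{cI}) of Lemma~\ref{lemma:adjoint_bijections}); as written, your verification step neither states this reinterpretation nor survives the two examples above.
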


\begin{proof}
This follows from Proposition~\ref{prop:adjoint_elements} and that $\clW$ acts transitively on $\Delta$.
\end{proof}

\begin{lemma}
\label{lemma:adjoint_bijections}
Let $\g$ be such that $t_a = 1$ for all $a \in I$.
Let $B = \bigotimes_{i=1}^N B^{\theta,1}$ be a tensor product of KR crystals. Then the map
\[
\Phi \colon \RC(B) \to B
\]
is a bijection on classically highest weight elements such that $\Phi \circ \eta$ sends cocharge to energy.
\end{lemma}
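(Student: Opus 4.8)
The plan is to run the same five-point verification (\ref{cI})--(\ref{cV}) that underlies the KSS framework and the minuscule case of Section~\ref{sec:minuscule}, now with $B^{\theta,1} \iso B^{N_{\g},1}$ (or $B^{n,1} \otimes B^{1,1}$ in type $A_n^{(1)}$) as the left-most factor removed by $\delta_{\theta}$. The structural input is Proposition~\ref{prop:adjoint_elements}, which identifies elements of $B(\theta)$ by $\varepsilon$ and $\varphi$, and Lemma~\ref{lemma:key_adjoint}, which says that after contracting each edge $x_{\alpha_a} \to y_a \to x_{-\alpha_a}$ the graph $B(\theta)$ satisfies the minuscule key properties (\ref{key1}) and (\ref{key2}). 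Granting these, the commutation arguments that made $\delta_r$ well defined in the minuscule setting transfer to the first phase of $\delta_{\theta}$, so I would first check that the output of $\delta_{\theta}$ is independent of the choice of $a'$ realizing $i_{a'} = \min_a i_a$. The genuinely new bookkeeping is the return phase through a $y_a$ vertex, where Case~(D) removes a second box from a previously selected row and Case~(Q) creates a quasisingular row, and I would verify directly from the definition that these choices are forced by convexity (Equation~\eqref{eq:convexity}) together with the singular/quasisingular labels.

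Next I would establish the bijection by verifying (\ref{cI})--(\ref{cIII}): that $\lambda - \overline{\wt}(b)$ is dominant for the returned element $b$, that $\delta_{\theta}(\nu,J) \in \hwRC\bigl(B^{\bullet}, \lambda - \wt(b)\bigr)$, and that $b$ can be uniquely reattached to recover $(\nu,J)$. Dominance and the highest-weight property follow by tracking the vacancy numbers under box removal exactly as in Section~\ref{sec:minuscule}, with the terminal cases~(E) and~(T) matched against the elements $\emptyset \in B(0)$ and $x_{\alpha_a} \in B(\theta)$ of $B^{\theta,1}$; injectivity and surjectivity then amount to the well-definedness of $\delta_{\theta}^{-1}$, which I would obtain from a difference analysis in the spirit of Lemma~\ref{lemma:difference} adapted to the two-phase algorithm.

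For statistic preservation I would verify (\ref{cIV}) and (\ref{cV}); since $t_a = 1$ throughout, the constants in Equation~\eqref{eq:change_cocharge} and Equation~\eqref{eq:change_energy} simplify considerably. The cocharge change (\ref{cIV}) I would compute directly from Equation~\eqref{eq:cocharge} by summing the contributions of the removed boxes, taking care that Case~(D) removes two boxes from one row, so that the net change collapses to $\beta_1^{(N_{\g})} - \chi(b = \emptyset)$. The local energy (\ref{cV}) is handled by first reducing to classically highest weight elements of $B^{\theta,1} \otimes B^{\theta,1}$, where $H(b \otimes u)$ equals the number of $0$-arrows on the path to the maximal element; one then shows this count equals the number of length-$1$ rows selected in $\nu^{(N_{\g})}$ by $\delta_{\theta}$ and propagates through each classical component by induction on the $f_a$, mirroring Lemma~\ref{lemma:minuscule_local_energy_change}. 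Combining (\ref{cIV}) and (\ref{cV}) across the $N$ tensor factors via the recursion defining $\Phi$ yields $\cc = D \circ \Phi \circ \eta$, as desired.

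The main obstacle I expect is exactly this local-energy induction in the presence of the $y_a$ vertices. Unlike the minuscule case there is a return phase, a single row can be selected twice (Case~(D)), and quasisingular rows must be tracked, so when an added box forces $f_{N_{\g}}(b' \otimes b) = (f_{N_{\g}} b') \otimes b$ I must show the removed box did not come from a length-$1$ row across \emph{both} phases of $\delta_{\theta}$. As in the proof of Lemma~\ref{lemma:minuscule_local_energy_change}, this should reduce via Lemma~\ref{lemma:key_adjoint} to the impossibility of following twice as many arrows of color $a \sim N_{\g}$ as adjoint arrows; the delicate point is carrying this count consistently through the transition at $y_a$ and the redefinition $\ell_{j'} := \ell_{j'} - 1$ triggered by Case~(D).
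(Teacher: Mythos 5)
Your proposal follows essentially the same route as the paper: the five-point KSS verification (\ref{cI})--(\ref{cV}) with Proposition~\ref{prop:adjoint_elements} and Lemma~\ref{lemma:key_adjoint} supplying the structural input, the cocharge change computed by tracking removed boxes (with Case~(D) counted twice) against the vacancy-number form of cocharge, and the local energy handled first on classically highest weight elements of $B^{\theta,1}\otimes B^{\theta,1}$ and then propagated through classical components by the difference analysis of Lemma~\ref{lemma:difference} adapted to the $y_a$ return phase. The only cosmetic difference is that the paper anchors step~(\ref{cV}) in the explicit local energy table of~\cite{BFKL06} together with the observation that $c_{N_{\g}}=2$, rather than re-deriving $H$ as a $0$-arrow count, but the substance and the identified difficulties coincide.
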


We follow the proof of the KSS-type bijection~\cite{OSS03}. Recall that we have $p_{\infty}^{(a)} = \inner{\alpha^{\vee}_a}{\wt(\nu, J)}$. We give our proof when $\g$ not of type $A_n^{(1)}$ for simplicity, but the proof for type $A_n^{(1)}$ follows by considering $N_{\g} = \{1, n\}$.

\begin{proof}[Proof of~(\ref{cI})]
Suppose $\widetilde{\lambda} = \lambda - \overline{\wt}(r)$ is not dominant. Thus $r \neq y_a, \emptyset$ as $\overline{\wt}(y_a) = \overline{\wt}(\emptyset) = 0$ for all $a \in I$. Thus the only possibilities which make $\widetilde{\lambda}$ not dominant for some $a$ is when $0 \leq \inner{\alpha^{\vee}_a}{\lambda} < \varphi_a(r)$. Note that $\delta_{\theta}$ terminates at $\nu^{(a)}$ in each of these cases, and let $P_{\delta}$ denote the path taken by $\delta_{\theta}$. Let $\ell = \nu_1^{(a)}$, \textit{i.e.}, the largest part of $\nu^{(a)}$. Let $u_{\theta}$ denote the highest weight element in $B(\theta) \subseteq B^{N_{\g},1}$.

We start by assuming $\ell = 0$. Then we have
\begin{equation}
\label{eq:ell_equal_0}
\inner{\alpha^{\vee}_a}{\lambda} = \sum_{j \in \ZZ_{>0}} j L_j^{(a)} - \sum_{b \in I_0 \setminus \{a\}} A_{ab} \absval{\nu^{(b)}}.
\end{equation}
Consider the case when $\inner{\alpha^{\vee}_a}{\lambda} = 0$. If $a = N_{\g}$, then this is a contradiction since $L_1^{(N_{\g})} > 0$ and $A_{ab} \leq 0$ for all $b \in I_0 \setminus \{a\}$. Now if $a \neq N_{\g}$, then $r \neq u_{\theta}$. Thus we must have removed a box from $\nu^{(b)}$ for some $b \sim a$ when performing $\delta_{\theta}$. So $-A_{ab} \absval{\nu^{(b)}} > 0$, which is a contradiction. Next, consider the case $\inner{\alpha^{\vee}_a}{\lambda} > 0$.
Hence, we have $\varphi_a(r) > 1$, and so $r \neq u_{\theta}$ (specifically, $r = x_{\alpha_a}$ in the types we consider). Note that either
\begin{itemize}
\item there exists a $a' \sim a$ such that $\delta_{\theta}$ removes a box from $\nu^{(a')}$ with $-A_{aa'} \geq \varphi_a(r)$, or
\item there exists $a', a''$ such that $a', a'' \sim a$ such that $\delta_{\theta}$ removes a box from $\nu^{(a')}$ and $\nu^{(a'')}$ (if $a' = a''$, then two boxes are removed)
\end{itemize}
from the crystal structure of $B(\theta)$. (Note this is essentially Lemma~\ref{lemma:key_adjoint}.) Thus, Equation~\eqref{eq:ell_equal_0} implies$\inner{\alpha^{\vee}_a}{\lambda} \geq \varphi_a(r)$, which is a contradiction.

Now assume that $\ell > 0$. By the definition of the vacancy numbers, we have
\[
0 \leq p_i^{(a)} + \sum_{b \in I_0} A_{ab} m_i^{(b)} = \inner{\alpha^{\vee}_a}{\lambda}.
\]
In particular, we have
\begin{equation}
\label{eq:necessary_convexity}
-p_{i-1}^{(a)} + 2 p_i^{(a)} - p_{i+1}^{(a)} = L_i^{(a)} - \sum_{b \in I_0} A_{ab} m_i^{(b)}.
\end{equation}

\vspace{12pt} \noindent
\underline{$\inner{\alpha^{\vee}_a}{\lambda} = 0$}:

We note that this is the same proof of~(\ref{cI}) for $\ell > 0$ as given in~\cite{OS12}.

In this case, $\nu^{(a)}$ has a singular string of length $\ell$ since $0 \geq p_{\ell}^{(a)} \geq J_{\ell}^{(a)} \geq 0$ by convexity. Moreover, we have $m_i^{(b)} = 0$ for all $b \sim a$ and $i > \ell$. If not all rows of length $\ell$ in $\nu^{(a)}$ have been selected (or doubly selected in Case~(S) if we return $x_{\alpha}$ with $\alpha < 0$), then we have a contradiction as we can select a row from $m_{\ell}$. Next, by considering the smallest subpath $P$ of $P_{\delta}$ ending at $r$ and starting with an $a$ arrow, the fact selecting every row of length $\ell$ implies
\begin{equation}
\label{eq:ell_inequality}
2m_{\ell}^{(a)} \leq \sum_{b \sim a} A_{ab} m_i^{(a)}
\end{equation}
by Lemma~\ref{lemma:key_adjoint}.
Thus, Equation~\eqref{eq:necessary_convexity} implies that $p_{\ell-1}^{(a)} = 0$ and that Equation~\eqref{eq:ell_inequality} is an equality. Therefore all rows of length $\ell$ in $\nu^{(b)}$ for $b \sim a$ are selected by $\delta$ between. Hence, when $\delta_{\theta}$ selected the first row of length $\ell$, then all rows of length at most $\ell-1$ must have been selected by the definition of $\delta_{\theta}$, otherwise we would have selected a row of length at most $\ell-1$ in $\mu^{(b)}$ for $b \sim a$. Thus, we can proceed by induction and show that we select every row of $\nu^{(a)}$.

Therefore, we now have
\[
0 = L_1^{(a)} - \sum_{b \in I_0} A_{ab} m_1^{(b)}.
\]
If $a = N_{\g}$, then this is a contradiction from Equation~\eqref{eq:ell_inequality} and $L_1 \geq 1$. If $a \neq N_{\g}$, then we must have selected all boxes in length $1$ rows of $\nu^{(b)}$ for $b \sim a$ between the first box of a length $1$ row of $\nu^{(a)}$. However, to get to $a$, we must have selected a length $1$ row of $\nu^{(b)}$ for some $b \sim a$. This is a contradiction.

\vspace{12pt} \noindent
\underline{$\inner{\alpha^{\vee}_a}{\lambda} = 1$}:

As above, we have $\varphi_a(r) > 1$ and $r = x_{\alpha_a}$.
By convexity, we have $p_{\ell}^{(a)} \leq p_{\ell+1}^{(a)} \leq \inner{\alpha^{\vee}_a}{\lambda}$.
If $p_{\ell}^{(a)} = 1$, then we have $m_i^{(a')} = 0$ for all $a' \sim a$ and $i > \ell$ because $\inner{\alpha^{\vee}_a}{\lambda} = p_i^{(a)} = p_{\ell}^{(a)}$ and Equation~\eqref{eq:necessary_convexity}. Furthermore, every row of $\nu^{(a)}$ of length $\ell$ must be selected and singular, as otherwise $\delta_{\theta}$ would select such a (quasi)singular row. Therefore, the previous argument holds and results in a contradiction. Hence, we can assume $p_{\ell}^{(a)} = 0$.

Since $0 = p_{\ell}^{(a)} < p_{\ell+1}^{(a)} = 1$, we must have $m_{\ell+1}^{(a')} = 1$ for precisely one $a' \sim a$ with $A_{aa'} = -1$ by Equation~\eqref{eq:necessary_convexity}.
Note that there exists a singular row of length $\ell$ in $\nu^{(a)}$.
If $\delta_{\theta}$ can select a row of length $\ell$ from $\nu^{(a)}$ (whether there is a selectable row or not), then we have a contradiction as above. Therefore, we must have $\delta_{\theta}$ selecting the row of length $\ell+1$ in $\nu^{(a')}$.
If the row of length $\ell+1$ was the first row of length $\ell+1$ selected, then we would have chosen this row in $\nu^{(a)}$ before the row in $\nu^{(a')}$, which is a contradiction. Thus suppose $\delta_{\theta}$ previously selected a row of length $\ell+1$ in $\nu^{(a'')}$ corresponding to some node $x_{\alpha} \in B(\theta)$. If $a'' \sim a$, then we have $p_{\ell+1}^{(a)} > 1$, which is a contradiction. We note that $f_a x_{\alpha} \neq 0$ as the coefficient of $\alpha_{a'}$ in $\alpha$ is still 1. Hence, by Lemma~\ref{lemma:key_adjoint}, the map $\delta_{\theta}$ would remove a box from $\nu^{(a)}$ before the one from $\nu^{(a'')}$. This is a contradiction.
\end{proof}

\begin{proof}[Proof of~(\ref{cII})]
We need to show that
\begin{equation}
\label{eq:validity}
0 \leq \max J_i^{(a)} \leq p_i^{(a)}
\end{equation}
for all $(a,i) \in \HH_0$.
Considering the algorithm for $\delta$ and considering the change in vacancy numbers, the only way to violate Equation~\eqref{eq:validity} is when the following cases occur.
\begin{enumerate}
\item \label{part2_case1} There exists a singular or quasisingular string of length $i$ in $\nu^{(a)}$ such that $\widetilde{p}_i^{(a)} - p_i^{(a)} = -1, -2$ respectively and $\ell' \leq i < \ell$.
\item \label{part2_case2} We have $m_{\ell-1}^{(a)} = 0$, $p_{\ell-1}^{(a)} = 0$, and $\ell' < \ell$.
\end{enumerate}
In both cases, let $\ell$ be the length of the row selected by $\delta$ in $\nu^{(a)}$ and $\ell'$ be the length of the row selected immediately before $\ell$ in $\nu^{(c)}$ such that $c \sim a$.

We will first show that~(\ref{part2_case1}) cannot occur. Assume a singular string occurs, then $\delta$ would have selected the string of length $i$, which is a contradiction. Now suppose a quasisingular string occurs which corresponds to $\widetilde{p}_i^{(a)} - p_i^{(a)} = -2$, in this case we have $a = 1$ and again, the map $\delta$ would have selected this quasisingular string, which is a contradiction.

Now we will show that~(\ref{part2_case2}) cannot occur. Let $t < \ell$ be the largest integer such that $m_t^{(a)} > 0$, and if no such $t$ exists, set $t = 0$. By the convexity condition, we have $p_i^{(a)} = p_{\ell-1}^{(a)}$ for all $t < i < \ell$. Thus $m_i^{(c)} = 0$ for all $c \sim a$ and $t \leq i \leq \ell$. Thus $\ell' < t$. If $t = 0$, then this is a contradiction since $1 \leq \ell'$. If $t > 0$, then we have $p_t^{(a)} = 0$ and the row must be singular because $m_t^{(a)} > 0$. Thus we must have $t = \ell$, which is a contradiction.
\end{proof}

\begin{proof}[Proof of~(\ref{cIII})]
From the change in vacancy numbers, any string of length not of a length created from $\delta_{\theta}$ becomes non-singular. Therefore it is easy to see that the procedure outlined for $\delta_{\theta}^{-1}$ is the inverse of $\delta_{\theta}$.
\end{proof}

\begin{proof}[Proof of~(\ref{cIV})]
Recall that $(\overline{\nu}, \overline{J}) = \delta_{\theta}(\nu, J)$.
We can rewrite Equation~\eqref{eq:cocharge_configurations} using Equation~\eqref{eq:vacancy} as
\begin{equation}
\label{eq:cocharge_vacancy}
\cc(\nu) = \frac{1}{2} \sum_{(a,i) \in \HH_0}  t_a^{\vee} \left( \sum_{j \in \ZZ_{>0}} L_j^{(a)} \min(i, j) - p_i^{(a)} \right) m_i^{(a)}
\end{equation}
Next we express $(\ell_k)_k$ as $(\ell_m^{(a_k)})_k$ by the $m$-th selection in $\nu^{(a_k)}$ by $\delta$. Let $\overline{m}_i^{(a)}$, $\overline{p}_i^{(a)}$, $\overline{J}_i^{(a)}$, and $\overline{L}_i^{(a)}$ denote $m_i^{(a)}$, $p_i^{(a)}$, $J_i^{(a)}$, and $L_i^{(a)}$, respectively, after applying $\delta$, and we have
\begin{subequations}
\begin{align}
\overline{m}_i^{(a)} & = m_i^{(a)} + \sum_m \delta_{\ell^{(a)}_m-1,i} - \delta_{\ell^{(a)}_m, i},
\\ \overline{L}_i^{(a)} & = L_i^{(a)} - \delta_{a, N_{\g}} \delta_{i, 1},
\end{align}
\end{subequations}
Let $\Delta\bigl( \cc(\nu, J) \bigr) := \cc(\nu, J) - \cc(\overline{\nu}, \overline{J})$.
Next, from a straight-forward, but somewhat tedious, calculation using the changes in vacancy numbers, we obtain
\begin{align*}
\Delta\bigl(\cc(\nu)\bigr) & = \sum_{(a,i) \in \HH_0} t_a^{\vee} \left( p_i^{(a)} - \overline{p}_i^{(a)} \right) \left( m_i^{(a)} - \sum_m \delta_{\ell_m^{(a)},i} \right) - \chi(r = y_a)
\\ & \hspace{20pt} + \frac{t_{N_{\g}}^{\vee}}{c_0^{\vee}} \sum_{i \in \ZZ_{\geq 0}} m_i^{(N_{\g})} - \chi(r = \emptyset),
\end{align*}
where $r$ is the return value of $\delta_{\theta}$.
Moreover, from the description of $\delta_{\theta}$, we have
\[
\Delta\bigl(\lvert J \rvert\bigr)
= \sum_{(a,i) \in \HH_0} t_a^{\vee} \left( p_i^{(a)} - \overline{p}_i^{(a)} \right) \left( m_i^{(a)} - \sum_m \delta_{\ell_m^{(a)},i} \right) + \chi(r = y_a),
\]
where the last term is because a quasisingular string was changed into a singular string if that holds.
Combining this with the fact that $b_N = r$, we have
\[
\Delta\bigl(\cc(\nu, J)\bigr) = \Delta\bigl(\cc(\nu)\bigr) + \Delta\bigl(\lvert J \rvert\bigr) = \frac{t_{N_{\g}}^{\vee}}{c_0^{\vee}} \beta_1^{(N_{\g})} - \chi(b_N = \emptyset)
\]
as desired.
\end{proof}

\begin{table}
\[
\begin{array}{c@{\ \ }c@{\ \ }c@{\ \ }c@{\ \ }c@{\ \ }c@{\ \ }c@{\ \ }c@{\ \ }c}
\toprule
b' \otimes b & x_{\theta} \otimes x_{\theta} & x_{\theta^-} \otimes x_{\theta} & \emptyset \otimes x_{\theta} & x_{\theta} \otimes \emptyset & x_{\alpha} \otimes x_{\theta} & y_a \otimes x_{\theta} & x_{-\theta} \otimes x_{\theta} & \emptyset \otimes \emptyset
\\ \midrule
H(b' \otimes b) & 0 & -A_{0,N_{\g}} & 1 & 1 & 2 & 2 & 2 & 2
\\ \bottomrule
\end{array}
%\begin{array}{|c|c|}
%\hline b' \otimes b & H(b' \otimes b) \\\hline
%x_{\theta} \otimes x_{\theta} & 0 \\
%x_{\theta-\alpha_{N_{\g}}} \otimes x_{\theta} & 1 \\
%\emptyset \otimes x_{\theta} & 1 \\
%x_{\theta} \otimes \emptyset & 1 \\
%x_{\alpha} \otimes x_{\theta} & 2 \\
%y_a \otimes x_{\theta} & 2 \\
%x_{-\theta} \otimes x_{\theta} & 2 \\
%\emptyset \otimes \emptyset & 2 \\
%\hline
%\end{array}
\]
\caption{Local energy on $B^{\theta,1} \otimes B^{\theta,1}$, where $\theta^- = \theta-\alpha_{N_{\g}}$ and $\alpha \in \Delta^+ \setminus \{ \theta^- \}$ is such that $\alpha + \alpha_{N_{\g}} \in \Delta^+$.}
\label{table:adjoint_local_energy}
\end{table}

\begin{proof}[Proof of~(\ref{cV})]
We recall the local energy function on $B^{\theta,1} \otimes B^{\theta,1}$ from~\cite{BFKL06} in Table~\ref{table:adjoint_local_energy}, renormalized to our convention. We note that there are two minor typos in~\cite{BFKL06}, where it is stated $H(x_{\alpha} \otimes x_{\theta}) = 1$ and $H(x_{\theta-\alpha_{N_{\g}}} \otimes x_{\theta}) = 1$ (the only difference is for types $D_{n+1}^{(2)}$ and $A_{2n}^{(2)}$).

Note that in order for the second application of $\delta_{\theta}$ to return $x_{\theta}$ or $\emptyset$, we must not have any singular rows in $\nu^{(N_{\g})}$. Hence, $\beta_1^{(N_{\g})} - \overline{\beta}_1^{(N_{\g})}$ is the number of $N_{\g}$-arrows in the path from $x_{\theta}$ to $b'$ from the definition of $\delta_{\theta}$. We note that for every type, we have $c_{N_{\g}} = 2$, and hence, it is straightforward to see the claim holds in this case.

When $b' \otimes b$ is not classically highest weight, this is similar to the non-highest weight case in the proof of Lemma~\ref{lemma:minuscule_local_energy_change}. % as Lemma~\ref{lemma:difference} holds with minor modifications for when the return value is $y_a$.
\end{proof}

We can also extend our proof to types $C_n^{(1)}$, $A_{2n}^{(2)}$, and $A_{2n}^{(2)\dagger}$ (recall Remark~\ref{rem:adjoint_extensions}).
We note that for the proofs above to hold for type $A_{2n}^{(2)}$ and $A_{2n}^{(2)\dagger}$. We have to use the (modified) scaling factors of $\widetilde{\gamma}_a = 1$ for all $a \in I$ and the matrix
\[
(\widetilde{A}_{ab})_{a,b\in I_0} = \begin{bmatrix}
2 & -1 & & & \\
-1 & 2 & -1 & & \\
& \ddots & \ddots & \ddots & \\
& & -1 & 2 & -1 \\
& & & -1 & 1
\end{bmatrix}
\]
instead of the Cartan matrix in, \textit{e.g.}, Equation~\eqref{eq:necessary_convexity} and Equation~\eqref{eq:ell_inequality}.

%%%%%%%%%%
\subsection{Onwards and upwards}

We have shown that $\delta$ induces a bijection $\Phi$. Now we need to show $\lb$ induces a bijection. Note that $\lb$ is clearly a (classically strict) crystal embedding. Let $\sigma$ denote the unique sink of the $\lb$-diagram.

\begin{lemma}
\label{lemma:lb_fixes_vacancy_numbers}
The vacancy numbers are invariant under $\lb$.
\end{lemma}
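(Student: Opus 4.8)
The plan is to verify the invariance by computing directly the change $\Delta p_i^{(a)}$ induced by $\lb$, tracking its two contributions separately and showing that they cancel. Since this section concerns dual untwisted types, $t_a = 1$ and hence $\gamma_a = 1$ for all $a \in I$, so throughout I may use the reduced form
\[
p_i^{(a)} = \sum_{j} L_j^{(a)} \min(i,j) - \sum_{a' \in I_0} A_{a a'} \sum_{j} \min(i,j)\, m_j^{(a')}.
\]

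First I would record the two bookkeeping changes. Passing from $\RC(B^{r,1} \otimes B^{\bullet})$ to $\RC(B^{\sigma,1} \otimes B^{r',1} \otimes B^{\bullet})$ replaces the factor $B^{r,1}$ by $B^{\sigma,1} \otimes B^{r',1}$ and leaves $B^{\bullet}$ untouched, so the shape data change only at length $1$, by $\Delta L_1^{(a)} = \delta_{a\sigma} + \delta_{a r'} - \delta_{a r}$. On the configuration side, $\lb$ attaches one singular row of length $1$ to $\nu^{(a_i)}$ for each letter of a raising string $e_{a_1} \cdots e_{a_m} b = u_{\clfw_\sigma}$, so the multiplicities change only by $\Delta m_1^{(a)} = c_a$, where $c_a := \#\{\, i : a_i = a \,\}$. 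Because the $B^{\bullet}$ contributions are identical before and after, they drop out of every difference.

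The crux is to evaluate $\sum_{a'} A_{a a'} c_{a'}$, and here the defining property of the $\lb$-diagram does the work. Raising $b$ to $u_{\clfw_\sigma}$ adds $\clsr_{a_i}$ at the $i$-th step, so $\wt(b) = \clfw_\sigma - \sum_{a'} c_{a'} \clsr_{a'}$; in particular $c_{a'}$ depends only on $\wt(b)$, which is also why $\lb$ is independent of the chosen reduced word. Pairing with $\alpha_a^{\vee}$ gives $\inner{h_a}{\wt(b)} = \delta_{a\sigma} - \sum_{a'} A_{a a'} c_{a'}$. On the other hand, the $\lb$-diagram requires $\varphi_a(b) = \delta_{a r}$ and $\varepsilon_a(b) = \delta_{a r'}$, whence $\inner{h_a}{\wt(b)} = \varphi_a(b) - \varepsilon_a(b) = \delta_{a r} - \delta_{a r'}$. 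Equating the two expressions yields exactly $\sum_{a'} A_{a a'} c_{a'} = \delta_{a\sigma} + \delta_{a r'} - \delta_{a r}$.

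Finally, since vacancy numbers are indexed by $i \in \ZZ_{>0}$ we have $\min(i,1) = 1$, so the two changes combine to $\Delta p_i^{(a)} = \Delta L_1^{(a)} - \sum_{a'} A_{a a'} c_{a'} = 0$ for all $(a,i) \in \HH_0$. I expect the only step requiring genuine care to be this weight-bookkeeping identity: one must confirm that the color multiset of the raising string is governed entirely by $\wt(b)$ and that the $\lb$-diagram condition pins down $\varphi_a(b) - \varepsilon_a(b)$. Once these are in place the cancellation is forced, and the remaining manipulations are routine.
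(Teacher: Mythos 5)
Your proof is correct and follows essentially the same route as the paper's: both arguments reduce to the observation that the multiset of colors in the raising string is determined by $\clfw_{\sigma}-\wt(b)$, while the $\lb$-diagram condition forces $\wt(b)=\clfw_r-\clfw_{r'}$, so that the change in $\sum_{a'}A_{aa'}m_1^{(a')}$ exactly matches the change $\delta_{a\sigma}+\delta_{ar'}-\delta_{ar}$ in $L_1^{(a)}$. You have simply written out in full the cancellation that the paper leaves to "the definition of vacancy numbers."
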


\begin{proof}
Given an arrow $r \xrightarrow[\hspace{20pt}]{b} r'$, note that $\wt(b) = \clfw_r - \clfw_{r'}$. Since $\lb$ adds rows of length $1$ and the number of rows in $\nu^{(a)}$ corresponds to the coefficient of $\alpha_a$ in $\clfw_{\sigma} - \wt(b) = \clfw_{\sigma} - \clfw_r + \clfw_{r'}$, the claim follows from the definition of vacancy numbers.
\end{proof}

\begin{proposition}
\label{prop:lb_bijection_reduction}
Suppose $r \neq \sigma$, and let $r \xrightarrow[\hspace{20pt}]{b_r} r'$ be the outgoing arrow of $r$ in the $\lb$-digram.
Let $B = B^{r,1} \otimes B^{\bullet}$.
Then we have
\[
\lb \circ \Phi = \Phi \circ \lb.
\]
%Then the diagram
%\[
%\xymatrixrowsep{3pc}
%\xymatrixcolsep{3.5pc}
%\xymatrix{\RC(B^{r,1} \otimes B^{\bullet}) \ar[r]^-{\lb} \ar[d]_{\Phi} & \RC(B^{\sigma,1} \otimes B^{r',1} \otimes B^{\bullet}) \ar[d]^{\Phi}
%\\ B^{r,1} \otimes B^{\bullet} \ar[r]_-{\lb} & B^{\sigma,1} \otimes B^{r',1} \otimes B^{\bullet}}
%\]
%commutes.
\end{proposition}

\begin{proof}
Suppose the claim holds for $r'$ (so $\Phi$ is a bijection for $B^{\sigma,1} \otimes B^{r',1} \otimes B^{\bullet}$ by induction).
It is sufficient to show the claim on classically highest weight elements.
Consider some classically highest weight element $b \otimes b^{\bullet} \in B^{r,1} \otimes B^{\bullet}$, and let $\lb(b \otimes b^{\bullet}) = b'' \otimes b' \otimes b^{\bullet}$.
If $b'' = b_r$, then $b = u(B^{r,1})$ and $b' = u(B^{r',1})$ from the definition of $\lb$ and there is a unique element of maximal weight. Therefore, $\Phi^{-1}(b^{\bullet})$ has the same partitions and riggings as $\Phi^{-1}(b' \otimes b^{\bullet})$ (we will see below that applying $\lb^{-1}$ removes the boxes added by $\delta_{\sigma}^{-1}$ with $b_r$). Now since $b' = u(B^{r',1})$, the vacancy numbers $p_i^{(r')}$ of $(\nu, J) := \Phi^{-1}(b' \otimes b^{\bullet})$ are all one larger than those of $\Phi^{-1}(b^{\bullet})$. Thus, there are no singular rows in $\nu^{(r')}$. Hence, when we add $b_r$, we only add singular rows of length $1$ as the first box added under $\delta_{\sigma}^{-1}(b_r)$ has to be to a row of length $0$ in $\nu^{(r)}$ and $\delta_{\sigma}^{-1}$ adds boxes to weakly shorter rows at each subsequent step.
Hence, the result is in the image of $\lb$, and hence $\lb^{-1}$ can be applied.

Next, we proceed by induction on the depth of $b'' \otimes b'$ to $b_r \otimes u(B^{r',1})$ (\textit{i.e.}, the number of $e_a$ operators, for $a \in I_0$, one needs to apply). Suppose $f_a(b'' \otimes b') = f_a b'' \otimes b'$, then the claim follows by induction as one additional box is added by $\delta_{\sigma}$. Indeed, $\delta_{\sigma}^{-1}(f_a b'')$ could only have increased the number of length-one singular rows added compared to $\delta_{\sigma}^{-1}(b'')$ as $\delta_{\sigma}^{-1}(f_a b'')$ adds an extra box to some singular row in $\nu^{(a)}$ during its initial step and the subsequent selected rows must be weakly smaller. Thus every row selected for $\delta_{\sigma}^{-1}(f_a b'')$ must be weakly smaller than the corresponding one from $\delta_{\sigma}^{-1}(b'')$.
Now instead assume $f_a(b'' \otimes b') = b'' \otimes f_a b'$.
Let $(\nu, J) = \Phi^{-1}(b' \otimes b^{\bullet})$ and $(\tnu, \tJ) = \Phi^{-1}(f_a b' \otimes b^{\bullet})$.
For $b^{\dagger} \in B^{r',1}$, to ease notation define $\delta_{\sigma}^{-1}(b^{\dagger}) := \delta_{\sigma}^{-1}(b_{\sigma}^{\dagger})$, where $\lb(b^{\dagger}) = b_{\sigma}^{\dagger} \otimes \overline{b}^{\dagger}$.
As in the previous case, the rows selected by $\delta_{\sigma}^{-1}(f_a b')$ are at most as long as those added by  $\delta_{\sigma}^{-1}(b')$.
From the tensor product rule, we must have $0 = \varepsilon_a(b'') < \varphi_a(b') = 1$, and so the first row that we add a box for $\delta_{\sigma}^{-1}(b'')$ cannot be in $\nu^{(a)}$. By considering how the vacancy numbers change after applying $\delta_{\sigma}^{-1}(b')$, the first box added by $\delta_{\sigma}^{-1}(b'')$ to $(\tnu, \tJ)$ can be at a row at most as long as the one selected by $\delta_{\sigma}^{-1}(b'')$ for $(\nu, J)$. Hence, the rows selected by $\delta_{\sigma}^{-1}(b'')$ in $(\tnu, \tJ)$ must be at most as long as those selected for $(\nu, J)$. Therefore, we can apply $\lb^{-1}$.
\end{proof}

\begin{lemma}
\label{lemma:cocharge_left_top}
Let $B = B^{r,1} \otimes \bigotimes_{i=1}^N B^{r_i, s_i}$. For $(\nu, J) \in \RC(B)$, we have
\[
\cc\bigl( \lb(\nu, J) \bigr) - \cc(\nu, J) = \sum_{k=1}^{\ell} t_{a_k}^{\vee} \sum_{j \in \ZZ_{>0}} L_j^{(a_k)}
+ \frac{1}{2} \sum_{k=1}^{\ell} t_{a_k}^{\vee} (\delta_{a_k r'} + \delta_{a_k \sigma} - \delta_{a_k r}),
\]
where we have the path $b = f_{a_1} f_{a_2} \dotsm f_{a_{\ell}} u_{\Lambda_{\sigma}}$ for the arrow $r \xrightarrow[\hspace{15pt}]{b} r'$ in the $\lb$-diagram.
\end{lemma}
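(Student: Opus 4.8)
The plan is to compute the two summands of the cocharge formula in Equation~\eqref{eq:cocharge} separately, exploiting the fact that on rigged configurations $\lb$ acts simply by adjoining new rows. Writing $c_a$ for the number of indices $k$ with $a_k = a$, the map $\lb$ adjoins $c_a$ singular rows of length $1$ to $\nu^{(a)}$ for each $a \in I_0$ and leaves every existing string and its label untouched. From the labelling of the arrow $r \xrightarrow{b} r'$ in the $\lb$-diagram we have $\wt(b) = \clfw_r - \clfw_{r'}$, while tracing the path $b = f_{a_1}\cdots f_{a_\ell} u_{\Lambda_\sigma}$ gives $\wt(b) = \clfw_\sigma - \sum_a c_a \clsr_a$; hence $\sum_a c_a \clsr_a = \clfw_\sigma - \clfw_r + \clfw_{r'} =: \mu$. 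Since $\g$ is of dual untwisted type we have $t_a = 1$ for all $a$, and Lemma~\ref{lemma:lb_fixes_vacancy_numbers} tells us the vacancy numbers are unchanged, so each adjoined row is singular with label $p_1^{(a)}$, the same value computed before or after applying $\lb$. (As $\cc$ is constant on classical components by Proposition~\ref{prop:cocharge_classical_invar} and $\lb$ is a strict embedding, one could instead reduce to classically highest weight elements, but the row-adjoining description lets me argue for arbitrary $(\nu,J)$ directly.)

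For the label part of Equation~\eqref{eq:cocharge} only the adjoined rows contribute, yielding a change of $\sum_a t_a^\vee c_a\, p_1^{(a)} = \sum_{k=1}^\ell t_{a_k}^\vee p_1^{(a_k)}$. For the configuration part I would regard $\cc(\nu)$ in Equation~\eqref{eq:cocharge_configurations} as a quadratic form $\tfrac12 \sum Q_{(a,i),(b,j)} m_i^{(a)} m_j^{(b)}$ with $Q_{(a,i),(b,j)} = (\clsr_a \mid \clsr_b)\min(i,j)$ (using $t_a = \upsilon_a = 1$), whose only change is $\Delta m_1^{(a)} = c_a$. Expanding $\cc(\bar\nu) - \cc(\nu)$ and using symmetry of $Q$, the change splits into a cross term plus a self-interaction term:
\[
\Delta \cc(\nu) = \sum_{(a,i)} \sum_b (\clsr_a \mid \clsr_b)\, m_i^{(a)} c_b + \tfrac12\sum_{a,b} (\clsr_a \mid \clsr_b) c_a c_b.
\]
The cross term is $\sum_b c_b$ times exactly the combination occurring in the vacancy-number formula Equation~\eqref{eq:vacancy} at $i=1$ (since $\min(1,i)=1$), so for the \emph{original} data it equals $\sum_b c_b\, t_b^\vee\bigl(\sum_j L_j^{(b)} - p_1^{(b)}\bigr) = \sum_{k} t_{a_k}^\vee\bigl(\sum_j L_j^{(a_k)} - p_1^{(a_k)}\bigr)$.

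Adding the label part to $\Delta\cc(\nu)$, the $p_1^{(a_k)}$ contributions cancel, leaving $\sum_k t_{a_k}^\vee \sum_j L_j^{(a_k)}$ together with the self-interaction $\tfrac12\sum_{a,b}(\clsr_a\mid\clsr_b) c_a c_b = \tfrac12(\mu \mid \mu)$. To finish I would evaluate $(\mu\mid\mu)$ using $(\clsr_a \mid \clfw_b) = \tfrac{c_a^\vee}{c_a}\delta_{ab}$, which follows from $(\clsr_a \mid \clsr_b) = \tfrac{c_a^\vee}{c_a}A_{ab}$ and $\clfw_b = \sum_c (A^{-1})_{cb}\clsr_c$; this gives $(\mu\mid\mu) = \sum_a c_a(\clsr_a \mid \mu) = \sum_a c_a^\vee(\delta_{a\sigma} - \delta_{ar} + \delta_{ar'}) = c_\sigma^\vee - c_r^\vee + c_{r'}^\vee$. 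Finally, since $c_0 = 1$ for all dual untwisted types, the definition $t_a^\vee = \max(c_a^\vee/c_a, c_0)$ together with $t_a = 1$ (hence $c_a \le c_a^\vee$) forces $c_a^\vee = c_a t_a^\vee$, so $\tfrac12(\mu\mid\mu) = \tfrac12\sum_k t_{a_k}^\vee(\delta_{a_k\sigma} + \delta_{a_k r'} - \delta_{a_k r})$, matching the claim. The main obstacle is the bookkeeping of the cross term: one must recognize it as the vacancy-number expression evaluated for the original $(\nu,J)$ rather than its image, and track the $t_a^\vee$ normalization carefully, since this is precisely where the first summand $\sum_k t_{a_k}^\vee \sum_j L_j^{(a_k)}$ of the formula is produced.
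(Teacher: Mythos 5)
Your proposal is correct and reaches the stated formula, but it is organized differently from the paper's proof. The paper first rewrites $\cc(\nu)$ via the vacancy numbers as in Equation~\eqref{eq:cocharge_vacancy} and then computes the change $K$ of the bilinear expression $\sum_{(a,i)} t_a^{\vee} \sum_j L_j^{(a)} m_i^{(a)} \min(i,j)$, which forces it to track explicitly how the multiplicities $L$ change under $\lb$ (the replacement of $B^{r,1}$ by $B^{\sigma,1}\otimes B^{r',1}$); the $\delta$-terms arise there from the cross terms involving $\Delta L$. You instead work with the quadratic form of Equation~\eqref{eq:cocharge_configurations} directly, so the change in $L$ never enters except through Lemma~\ref{lemma:lb_fixes_vacancy_numbers} (needed to know the adjoined rows carry the rigging $p_1^{(a)}$ computed from the old data), and the $\delta$-terms come out of the self-interaction $\tfrac{1}{2}(\mu \mid \mu)$ with $\mu = \clfw_{\sigma} - \clfw_r + \clfw_{r'}$. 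That identification is the nicest feature of your argument: it explains the otherwise unmotivated combination $\delta_{a_k r'} + \delta_{a_k \sigma} - \delta_{a_k r}$ as the norm of the weight defect of the arrow. Both proofs rely on the same cancellation of the $p_1^{(a_k)}$ contributions between the cross term and the riggings of the new singular rows.

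One slip to repair in your last step: you write $(\mu \mid \mu) = \sum_a c_a (\clsr_a \mid \mu) = \sum_a c_a^{\vee}(\delta_{a\sigma} - \delta_{ar} + \delta_{ar'}) = c_{\sigma}^{\vee} - c_r^{\vee} + c_{r'}^{\vee}$, conflating your $c_a$ (the multiplicity of $a$ among $a_1, \dotsc, a_{\ell}$) with the Kac label $c_a$. The middle equalities are false in general: for the arrow $3 \xrightarrow{\bon 3} 1$ in the type $E_6^{(1)}$ $\lb$-diagram~\eqref{eq:lb_E6} one has $\mu = \clsr_1$ and $(\mu \mid \mu) = 2$, whereas $c_1^{\vee} - c_3^{\vee} + c_1^{\vee} = 0$. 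The correct chain is $(\mu \mid \mu) = \sum_a (\text{mult}_a)(\clsr_a \mid \mu)$ together with $(\clsr_a \mid \clfw_b) = \tfrac{c_a^{\vee}}{c_a}\delta_{ab} = t_a^{\vee}\delta_{ab}$ (valid since $t_a = 1$ and $c_0 = 1$ here), which yields $\sum_{k} t_{a_k}^{\vee}(\delta_{a_k \sigma} + \delta_{a_k r'} - \delta_{a_k r})$ directly --- the expression you end with, so the conclusion stands.
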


\begin{proof}
Recall from Equation~\eqref{eq:cocharge_vacancy} that the cocharge of a configuration can be expressed as
\[
\cc(\nu) = \frac{1}{2} \sum_{(a,i) \in \HH_0} t_a^{\vee} \left( \sum_{j \in \ZZ_{>0}} L_j^{(a)} \min(i, j) - p_i^{(a)} \right) m_i^{(a)}. %+ \sum_{x \in J_i^{(a)}} x
\]
Since $\lb$ adds a singular string of length 1 to $\nu^{(a_k)}$ for all $1 \leq k \leq \ell$, where we have $b = f_{a_1} f_{a_2} \dotsm f_{a_{\ell}} u_{\theta}$, for the arrow $r \xrightarrow[\hspace{15pt}]{b} r'$ in the corresponding $\lb$-diagram.
A straightforward tedious calculation gives
\begin{align*}
K & := \sum_{(a,i) \in \HH_0} t_a^{\vee} \sum_{j \in \ZZ_{>0}} \left( \widetilde{L}_j^{(a)} \widetilde{m}_i^{(a)} - L_j^{(a)} m_i^{(a)} \right) \min(i, j)
%\\ & = \sum_{(a,i) \in \HH_0} t_a^{\vee} \sum_{j \in \ZZ_{>0}} \left( \left[L_j^{(a)} + \delta_{j,1} (\delta_{ar'} + \delta_{a \sigma} - \delta_{ar})\right] \left[m_i^{(a)} + \delta_{i,1} \sum_{k=1}^{\ell} \delta_{a a_k}\right] - L_j^{(a)} m_i^{(a)} \right) \min(i, j)
%\\ & = \sum_{(a,i) \in \HH_0} t_a^{\vee} \sum_{j \in \ZZ_{>0}} \Biggl( L_j^{(a)} m_i^{(a)} + \delta_{j,1} (\delta_{ar'} + \delta_{a \sigma} - \delta_{ar}) m_i^{(a)}
%\\ & \hspace{40pt} + L_j^{(a)} \delta_{i,1} \sum_{k=1}^{\ell} \delta_{a a_k} + \delta_{j,1} (\delta_{ar'} + \delta_{a \sigma} - \delta_{ar}) \delta_{i,1} \sum_{k=1}^{\ell} \delta_{a a_k} - L_j^{(a)} m_i^{(a)} \Biggr) \min(i, j)
\\ & = \sum_{(a,i) \in \HH_0} t_a^{\vee} (\delta_{ar'} + \delta_{a \sigma} - \delta_{ar}) m_i^{(a)}
 + \sum_{k=1}^{\ell} t_{a_k}^{\vee} \sum_{j \in \ZZ_{>0}} L_j^{(a_k)}
\\ & \hspace{40pt} + \sum_{k=1}^{\ell} t_{a_k}^{\vee} (\delta_{a_k r'} + \delta_{a_k \sigma} - \delta_{a_k r})
%\\ & = \sum_{(a,i) \in \HH_0} \left[ \sum_{k=1}^{\ell} (\clsr_{a_k} | \clsr_a) \right] m_i^{(a)}
% + \sum_{k=1}^{\ell} t_{a_k}^{\vee} \sum_{j \in \ZZ_{>0}} L_j^{(a_k)}
% + \sum_{k=1}^{\ell} t_{a_k}^{\vee} (\delta_{a_k r'} + \delta_{a_k \sigma} - \delta_{a_k r})
\\ & = -\sum_{k=1}^{\ell} t_{a_k}^{\vee} p_1^{(a_k)} + 2\sum_{k=1}^{\ell} t_{a_k}^{\vee} \sum_{j \in \ZZ_{>0}} L_j^{(a_k)}
 + \sum_{k=1}^{\ell} t_{a_k}^{\vee} (\delta_{a_k r'} + \delta_{a_k \sigma} - \delta_{a_k r}).
\end{align*}
Note that during the derivation, we used Lemma~\ref{lemma:lb_fixes_vacancy_numbers} and note that $\min(i, j) = 1$ if either $i = 1$ or $j = 1$. Therefore, we have
\begin{align*}
\cc\bigl( \lb(\nu, J) \bigr) - \cc(\nu, J) & = \frac{1}{2} \left( K - \sum_{k=1}^{\ell} t_{a_k}^{\vee} p_1^{(a_k)} \right) + \sum_{k=1}^{\ell} t_{a_k}^{\vee} p_1^{(a_k)}
\\ & = \sum_{k=1}^{\ell} t_{a_k}^{\vee} \sum_{j \in \ZZ_{>0}} L_j^{(a_k)}
+ \frac{1}{2} \sum_{k=1}^{\ell} t_{a_k}^{\vee} (\delta_{a_k r'} + \delta_{a_k \sigma} - \delta_{a_k r})
\end{align*}
as desired.
\end{proof}

We first need a type-independent proof of~\cite[Thm.~6.1]{ST12}, which we show for $s = 1$. This is essentially the same proof as the proof of~\cite[Thm.~9.4]{Naoi12}.

\begin{proposition}
\label{prop:demazure_description}
Let $B = \bigotimes_{i=1}^N B^{r_i,1}$ for dual untwisted types, and let $\lambda = - \sum_{i=1}^N \Lambda_{\xi(r_i)}$. Let $t_{\lambda}$ denote the translation by $\lambda$ in the extended affine Weyl group, and let $t_{\lambda} = v \tau$, where $v$ is an element in the affine Weyl group. Then there exists a $U_q'(\g)$-crystal isomorphism
\[
j \colon B(\Lambda_{\tau(0)}) \to B \otimes B(\Lambda_0)
\]
such that the image of the Demazure subcrystal $B_v(\Lambda_{\tau(0)})$ is $B \otimes u_{\Lambda_0}$.
\end{proposition}

\begin{proof}
Recall from~\cite{L95,L95-2} that any Littelmann path that stays in the dominant chamber with an endpoint of $\lambda$ generates $B(\lambda)$. The claim follows from using (projected) level-zero LS paths~\cite{NS06II,NS08II,NS08}, that tensor products in the Littelmann path model corresponding to concatenation of the LS paths~\cite{L95,L95-2}.
\end{proof}

\begin{lemma}
\label{lemma:right_bottom_energy}
Let $B = \bigotimes_{i=1}^N B^{r_i, 1} \otimes B^{r,1}$.
For $b \in B$, we have
\[
D\bigl( \rb(b) \bigr) - D(b) = \sum_{k=1}^{\ell} t_{a_k}^{\vee} \left( \sum_{j \in \ZZ_{>0}} L_j^{(a_k)}
+ \frac{1}{2} (\delta_{a_k r'} + \delta_{a_k \sigma} - \delta_{a_k r}) \right),
\]
where $b = f_{a_1} f_{a_2} \dotsm f_{a_{\ell}} u_{\clfw_{\sigma}}$, for the arrow $r_N \xrightarrow[\hspace{15pt}]{b} r'_N$.
\end{lemma}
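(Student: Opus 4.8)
The key observation is that the right-hand side of this lemma is identical to the right-hand side of Lemma~\ref{lemma:cocharge_left_top}, which records the change in cocharge under the left-box map $\lb$ on rigged configurations. So the content to be proved is exactly that, under the statistic-preserving bijection $\Phi$, the energy increment produced by the right-box map $\rb$ equals the cocharge increment produced by $\lb$. The plan is therefore to transport Lemma~\ref{lemma:cocharge_left_top} through $\Phi$, exploiting the defining relation $\rb = \diamond \circ \lb \circ \diamond$ (with $\diamond = \mathbin{\uparrow} \circ \lusztig$) that realizes $\rb$ as the Lusztig-dual of $\lb$.

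First I would reduce to classically highest weight elements. Since $\rb = \lusztig \circ \lb \circ \lusztig$ is a strict $U_q(\g_0)$-crystal embedding (being the Lusztig conjugate of the strict embedding $\lb$) and $D$ is constant on classical components, the difference $D(\rb(b)) - D(b)$ is constant on each classical component of $B$; the right-hand side is likewise independent of $b$, depending only on the arrow $r \to r'$ of the $\lb$-diagram of the right factor $B^{r,1}$. Hence it suffices to verify the identity on the classically highest weight element of each component, where $\mathbin{\uparrow}$ acts trivially and $\diamond b$ is again classically highest weight, simplifying the bookkeeping.

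Setting $(\nu, J) = \Phi^{-1}(\diamond b)$, I would combine Proposition~\ref{prop:lb_bijection_reduction} (which gives $\lb \circ \Phi = \Phi \circ \lb$) with the statistic-preservation $D \circ \KSS{\Phi} \circ \eta = \cc$, now available for these types from Sections~\ref{sec:minuscule}--\ref{sec:box_map}, to rewrite $D(\rb(b)) - D(b)$ as a cocharge difference of the form $\cc(\lb(\nu,J)) - \cc(\nu,J)$, to which Lemma~\ref{lemma:cocharge_left_top} then applies and yields the stated formula. Equivalently, one assembles the dual bijection $\KSS{\Phi}^{\lusztig}$ from $\KSS{\delta}^{\lusztig}$, $\rb$, and $\rs$: by the Lusztig-conjugation symmetry $\rb$ plays for $\KSS{\Phi}^{\lusztig}$ precisely the role that $\lb$ plays for $\KSS{\Phi}$, so the energy increment under $\rb$ is forced to match the cocharge increment computed in Lemma~\ref{lemma:cocharge_left_top}.

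The step I expect to be the main obstacle is matching the statistics cleanly through the two involutions $\eta$ (complementing riggings) and $\lusztig$ (Lusztig's involution on the crystal). By Lemma~\ref{lemma:lb_fixes_vacancy_numbers} the vacancy numbers are invariant under $\lb$, so the added length-one rows are singular and contribute their labels $\sum_{k} t_{a_k}^{\vee} p_1^{(a_k)}$ to cocharge; a careless insertion of $\eta$, which sends each such singular label to corigging $0$, would discard exactly this term and leave only the configuration cocharge $\cc(\nu)$. I must therefore track the behavior of $D$ under $\lusztig$ carefully, using $\xi(0)=0$ on $B^{r,1}$ and the normalization $H(u \otimes u') = 0$, to show that this singular-label contribution is precisely the energy accrued by the adjoined column, so that the two increments agree term by term. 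I would carry this out on the two-factor reduction $B^{r,1} \otimes B^{\bullet}$, where the change in $D$ decomposes into the intrinsic pieces $D_{B^{r,1}}$ versus $D_{B^{\sigma,1}} + D_{B^{r',1}}$ together with the relevant local energies, and compare this directly against Lemma~\ref{lemma:cocharge_left_top}.
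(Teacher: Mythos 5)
Your proposal is circular. The bridge you want to cross --- the statistic preservation $D \circ \Phi \circ \eta = \cc$ for a general tensor product $B = \bigotimes_{i=1}^N B^{r_i,1} \otimes B^{r,1}$ --- is not available at this point in the argument: it is precisely the content of Theorem~\ref{thm:bijection}, whose proof cites Lemma~\ref{lemma:right_bottom_energy} (together with Lemma~\ref{lemma:cocharge_left_top} and Proposition~\ref{prop:lb_bijection_reduction}) as an input. What Sections~\ref{sec:minuscule} and~\ref{sec:adjoint} establish is statistic preservation only for homogeneous tensor powers of the atomic crystals ($r$ minuscule or adjoint); the whole purpose of the pair of lemmas in Section~\ref{sec:box_map} is to supply the matching cocharge and energy increments needed to extend that result to mixed tensor products through the $\lb$/$\rb$ maps. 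Your observation that the right-hand sides of the two lemmas coincide correctly explains \emph{why} the bijection ends up preserving statistics, but it cannot be used to \emph{prove} either lemma from the other. Note also that Lemma~\ref{lemma:right_bottom_energy} is a statement purely about the energy function on tensor products of KR crystals, with no reference to rigged configurations, so one should expect a proof living entirely on the crystal side.

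That is what the paper does, and the key ideas are absent from your proposal: one embeds $B \otimes B(\Lambda_0)$ into the affine highest weight crystal $B(\Lambda_{\tau(0)})$ via Proposition~\ref{prop:demazure_description}, and computes the change in $D$ by counting \emph{Demazure $0$-arrows} using the affine grading $d(b)$ and~\cite[Lemma~7.3]{ST12}. Concretely, one starts from the classically lowest weight element $v = u_{\bullet}^{\lusztig} \otimes u_r^{\lusztig}$, applies $\rb$, and tracks an explicit sequence of crystal operators returning $\lb(v^{\lusztig})^{\lusztig}$ to a maximal element; each time an operator $e_j$ with $j = r_i$ hits the lowest weight element of a factor $B^{r_i,1}$, one pays exactly $t_{a_k}^{\vee}$ Demazure $0$-arrows, which produces the term $\sum_k t_{a_k}^{\vee} \sum_j L_j^{(a_k)}$; the remaining correction term is a single-factor computation via the LS path model. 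The extension from $v$ to an arbitrary classical component is then done with the grading $d$, not by reduction to highest weight elements through $\Phi$. Your instinct to worry about how $\eta$ interacts with the singular labels added by $\lb$ is a real issue, but it arises in the proof of Theorem~\ref{thm:bijection}, where both lemmas are combined --- it cannot be resolved inside a proof of this lemma that presupposes the theorem.
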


\begin{proof}
Let $u$ denote the maximal vector in $B$. Note that $u^{\lusztig} \in B$ is the unique element of classical weight $w_0 \wt(u)$ and is in the same classical component as $u$.

Let $B^{\bullet} = \bigotimes_{i=1}^N B^{r_i, 1}$, and denote the maximal vector in $B$ by $u_{\bullet} \otimes u_r$, where $u_{\bullet}$ and $u_r$ are the maximal elements in $B^{\bullet}$ and $B^{r,1}$ respectively.
We note that the unique element of classical weight $w_0 \wt(u)$ is given by $v = u_{\bullet}^{\lusztig} \otimes u_r^{\lusztig}$ (technically we apply $\lusztig$ to each factor individually, but we do not care about the ordering of $B^{\bullet}$ and $u_{\bullet}$).
We note that $v$ is in the same classical component as $u_{\bullet} \otimes u_r$.
Therefore, $v^{\lusztig}$ is the maximal element in $B^{r,1} \otimes B^{\bullet}$.
%$B^{\xi(r),1} \otimes (B^{\bullet})^{\lusztig}$ (recall that $(B^{r,1})^{\lusztig} \iso B^{\xi(r),1}$).

Therefore, we have
\begin{align*}
\lb(v^{\lusztig}) & = b \otimes u_{r'} \otimes u_{\bullet},
\\ \lb(v^{\lusztig})^{\lusztig} & = u_{\bullet}^{\lusztig} \otimes u_{\xi(r')} \otimes b^{\lusztig} =: b_0,
\end{align*}
where
\[
b^{\lusztig} = f_{\xi(a_1)} f_{\xi(a_2)} \dotsm f_{\xi(a_{\ell})} u_{\clfw_{\sigma}}.
\]
Next, we want to apply a sequence $e_{\vec{\jmath}_k}$ to $b_{k-1}$ such that we obtain
\[
b_k := u_{\bullet}^{\lusztig} \otimes u_{\xi(r')} \otimes \left( f_{\xi(a_{k+1})} \dotsm f_{\xi(a_{\ell})} u_{\clfw_{\sigma}} \right),
\]
where $b_{\ell} = u_{\bullet}^{\lusztig} \otimes u_{\xi(r')} \otimes u_{\clfw_{\sigma}}$.

Fix some $j \in I_0$ and $i$. Let $I_{r_i} = I \setminus \{r_i\}$.
If we have $v_{r_i} \otimes x$, where $v_{r_i}$ is the lowest weight element in $B^{r_i,1}$, then if $r_i \neq j$, we have $e_j(v_{r_i} \otimes x) = v_{r_i} \otimes e_j x$.
If $r_i = j$, then $e_j^2(v_{r_i} \otimes x) = (e_j v_{r_i}) \otimes (e_j x)$, but we note that there exists a sequence of crystal operators $e_{\vec{j}}$ that acts only on the left that returns to $v_i \otimes (e_j x)$ that uses precisely $t_{a_k}$ $0$-arrows. This follows from the fact that $e_j v_i$ is the unique $I_{r_i}$-lowest weight element in the unique $I_{r_i}$-component $B(\check{\Lambda}_0) \subseteq B^{r,1}$. Note that these $0$-arrows are \defn{Demazure}, not at the beginning of a $0$-string, as the extra $B^{\sigma,1}$ results in $\varphi_0(\lb(v^{\lusztig})^{\lusztig}) - \varphi_0(v) = 1$.

From~\cite[Lemma~7.3]{ST12}, the number of Demazure $0$-arrows, a $0$-arrow that is not at the end of a $0$-string, between two elements determines the difference in energy. Therefore, we have
\[
D\bigl( \rb(v) \bigr) - D(v) = \sum_{k=1}^{\ell} t_{a_k}^{\vee} \sum_{j \in \ZZ_{>0}} \widetilde{L}_j^{(a_k)} + S,
\]
where $\widetilde{L}$ corresponds to $B^{\bullet}$ and $S$ is the claim for a single tensor factor $B^{r,1}$. Thus, it remains to show the claim for a single tensor factor, which is a finite computation using the results from~\cite{LNSSS14,LNSSS14II}.
Thus the claim holds for any element in the same classical component of $v$.

Let $d(b)$ denote the affine grading of $b$~\cite[Def.~7.1]{ST12}, the number of Demazure $0$-arrows in the path from $b$ to $u_{\Lambda_{\tau(0)}}$ in $B(\Lambda_{\tau(0)})$.
Next, for any other classically lowest weight element $v'$, we have
$d\bigl( \rb(v') \bigr) = d(b')$ since the number of Demazure $0$-arrows is determined by the coefficient of $\alpha_0$ in $\Lambda_{\tau(0)} - \wt(v')$ and $\wt\bigl( \rb(v') \bigr) = \wt(v')$.
Therefore, we have
\begin{align*}
D\bigl(\rb(v')\bigr) - D(v') & = \bigl[ d\bigl(\rb(v')\bigr) - d(v_{\rb}) \bigr] - \bigl[ d(v') - d(v) \bigr]
\\ & = \bigl[ d\bigl(\rb(v')\bigr) - d(v') \bigr] + \bigl[ d(v) - d(v_{\rb}) \bigr]
\\ & = D\bigl(\rb(v)\bigr) - D(v),
\end{align*}
where $v_{\rb}$ is the minimal element in $\rb(B)$. Hence, the claim follows.
\end{proof}

\begin{remark}
The proof of Lemma~\ref{lemma:cocharge_left_top} and Lemma~\ref{lemma:right_bottom_energy} is true for general type $\g$ and \emph{any} $\lb$-diagram.
\end{remark}

\begin{remark}
The proof of Lemma~\ref{lemma:right_bottom_energy} holds for general tensor factors provided there is an analog of~\cite[Lemma~7.2]{ST12} and~\cite[Thm.~6.1]{ST12} for all $B^{r,s}$ in any type. In particular, we are using~\cite[Lemma~7.3]{ST12}, which is essentially a type-independent proof (other than the single factor case, \textit{i.e.},~\cite[Lemma~7.2]{ST12}).
\end{remark}

%%%%%%%%%%
\subsection{Statistic preserving bijection}

\begin{proposition}
\label{prop:R_matrix_invariance}
Let $\g$ be of type $A_n^{(1)}$, $D_n^{(1)}$, $E_{6,7,8}^{(1)}$, or $E_6^{(2)}$. Let $B = \bigotimes_{i=1}^N B^{r_i,1}$ and $B' = \bigotimes_{i=1}^{N'} B^{r_i',1}$ (except possibly for $r_i$ and $r_i'$ being $4,5$ in type $E_8^{(1)}$). Then the diagram
\[
\xymatrixrowsep{3pc}
\xymatrixcolsep{3.5pc}
\xymatrix{\RC(B \otimes B') \ar[r]^{\Phi} \ar[d]_{\id} & B \otimes B' \ar[d]^{R} \\ \RC(B' \otimes B) \ar[r]_{\Phi} & B' \otimes B}
\]
commutes.
\end{proposition}

\begin{proof}
For type $A_n^{(1)}$ (resp.~$D_n^{(1)}$) was shown in~\cite{KSS02} (resp.~\cite{OSSS16}).
For types $E_{6,7,8}^{(1)}$ and $E_6^{(2)}$, the claim reduces to $B^{r_1,1} \otimes B^{r_2,1}$ by the definition of $\Phi$ and is a finite computation.
\end{proof}

Now we can show that the map $\Phi$ is a bijection that sends cocharge to energy.

\begin{theorem}
\label{thm:bijection}
Let $B = \bigotimes_{i=1}^N B^{r_i,1}$ be a tensor product of KR crystals. Then the map
\[
\Phi \colon \RC(B) \to B
\]
is a bijection on highest weight elements such that $\Phi \circ \eta$ sends cocharge to energy.
\end{theorem}

\begin{proof}
By Proposition~\ref{prop:lb_bijection_reduction}, Lemma~\ref{lemma:cocharge_left_top}, and Lemma~\ref{lemma:right_bottom_energy}, showing the bijection is well-defined and preserves statistics is reduced to showing when the left-most factor for $\RC(B)$ and right-most factor for $B^{\mathrm{rev}}$ is minuscule or adjoint. Thus the claim follows by Lemma~\ref{lemma:minuscule_bijections} and Lemma~\ref{lemma:adjoint_bijections}.
\end{proof}

\begin{theorem}
\label{thm:minuscule_classical_isomorphism}
Let $\Phi$ be defined using $\delta_r$ such that $r$ is a minuscule node. Then $\Phi \colon \RC(B) \to B$ is a $U_q(\g_0)$-crystal isomorphism.
\end{theorem}

\begin{proof}
A straightforward check shows that $\Phi$ preserves weights. A tedious but straightforward check shows that the arguments in~\cite{Sakamoto14} extend to all minuscule nodes by the description of $\delta_r$ and that the arguments are about the relative position where two boxes are added by applying $f_a$.
\end{proof}

\begin{remark}
\label{rem:universal_R_construction}
Let $B$ be as in Theorem~\ref{thm:minuscule_classical_isomorphism} and $B'$ be some reordering of the tensor factors.
From Proposition~\ref{prop:R_matrix_invariance} and Theorem~\ref{thm:minuscule_classical_isomorphism}, we can construct the combinatorial $R$-matrix $R \colon B \to B'$ by $R = \Phi' \circ \Phi^{-1}$, where $\Phi' \colon \RC(B') \to B'$ is the corresponding bijection. Note that this provides a uniform combinatorial construction of the combinatorial $R$-matrix.
\end{remark}

For $r = 1, 2, 6$ in type $E_6^{(1)}$, we can describe $e_0$ and $f_0$ on $\RC(B^{r,s})$ by using the description in~\cite{JS10} as it is given solely in terms of $\varepsilon_i$, $\varphi_i$, and the weight. Hence, Theorem~\ref{thm:minuscule_classical_isomorphism} immediately implies the following.

\begin{corollary}
Let $\g$ be of type $E_6^{(1)}$ and $r = 1, 2, 6$. Then
\[
\Phi \colon \RC(B^{r,s}) \to B^{r,s}
\]
is a $U_q'(\g)$-crystal isomorphism.
\end{corollary}

%%%%%%%%%%
\subsection{Virtual bijection}

In order to extend the bijection to full columns in all other types, we need to extend $\lb$ to commute with virtualization maps. In particular, we introduce the notion of a \defn{virtual $\lb$ map}, which we denote by $\lb^v$. Specifically, we generalize the notion of an $\lb$-diagram to use for the map $\lb^v$, which we call a $\lb^v$-diagram. In all cases below, the resulting $\lb^v$-diagram is derived from a virtualization map.

For type $E_6^{(2)}$ as a folding of type $E_6^{(1)}$, we require having an arrow $(r_1, r_2) \longrightarrow r'$ defining a map $\lb^v \colon B^{r_1,1} \otimes B^{r_2,1} \to B^{2,1} \otimes B^{r',1}$. Hence, the virtual $\lb^v$-diagram we use is
\begin{equation}
\label{eq:lbv_E6_1}
\begin{tikzpicture}[baseline=-4]
\node (1) at (0,0) {$2$};
\node (2) at (2,0) {$4$};
\node (3) at (4,0) {$(3,5)$};
\node (4) at (2,1) {$(1,6)$};
\draw[->] (2) -- (1) node[midway, below]{\small $\btw 4$};
\draw[->] (3) -- (2) node[midway, below]{\small $\bfo 35$};
\draw[->] (4) -- (1) node[midway, above left]{\small $\btw 16$};
\end{tikzpicture}.
\end{equation}

For type $B_n^{(1)}$ as the dual of $A_{2n-1}^{(2)}$ (this has a virtualization map with scaling factors given by Table~\ref{table:scaling_factors}), the $\lb^v$-diagram we consider is
\[
\begin{tikzpicture}[baseline=-4]
\node (n) at (0,0) {$n$};
\node (1) at (3,0) {$[1]$};
\node (d) at (3,0.75) {$\vdots$};
\node (nm) at (3,1.5) {$[n-1]$};
\draw[->] (1) -- (n) node[midway, below]{\small $\column{1,\btw, \dotsc, \bn}$};
\draw[->,dotted] (d) -- (n);
\draw[->] (nm) -- (n) node[midway, above left]{\small $\column{1,\dotsc,n-1,\bn}$};
\end{tikzpicture}
\]
where $[r]$ corresponds to $B^{r,2}$. Note that the arrows are labeled by a single-column KR tableau $[t_1, \dotsc, t_k]$, where we read the column from top to bottom. See also Appendix~\ref{sec:KR_tableaux}.

For type $F_4^{(1)}$ as the dual of $E_6^{(2)}$, the $\lb^v$-diagram is a modification of Equation~\eqref{eq:lb_E6_2}:
\[
\begin{tikzpicture}[baseline=-4]
\node (1) at (0,0) {$4$};
\node (2) at (2,0) {$3$};
\node (3) at (4,0) {$[2]$};
\node (4) at (2,1) {$[1]$};
\draw[->] (2) -- (1) node[midway, below]{\small $\column{1, \bon3\bfo}$};
\draw[->] (3) -- (2) node[midway, below]{\small $\column{1, \bon22\bth}$};
\draw[->] (4) -- (1) node[midway, above left]{\small $\column{1, 1\bfo}$};
\end{tikzpicture}.
\]
The derivation is similar to the type $B_n^{(1)}$ case.

For type $G_2^{(1)}$ as the dual of $D_4^{(3)}$, recall that we consider $B(\clfw_1)$ as the natural virtual crystal of $B(3\clfw_1) \subseteq B(\clfw_2) \otimes B(\clfw_2)$ in type $G_2^{(1)}$. Continuing this in type $D_4^{(3)}$, we construct an $\lb^v$-diagram as $2 \xrightarrow[\hspace{20pt}]{\column{1,3}} [[1]]$, where $[[1]]$ corresponds to $B^{1,3}$.

\begin{example}
Consider type $B_4^{(1)}$ and $B^{3,1}$. Note that the image under the virtualization map $B_4^{(1)} \lhook\joinrel\longrightarrow A_7^{(2)}$ is $B^{3,2}$. Then we have
\[
\lb^v\left(\young(1,2,3)\right) = \young(1,2,3,\bfo) \otimes \young(1,2,3,4) \in B^{4,1} \otimes B^{4,1}.
\]
\end{example}

\begin{proposition}
\label{prop:type_A_virtualization}
Let $v$ be one of the virtualization maps of
\begin{equation}
\label{eq:flipping_arrows}
C_n^{(1)} \lhook\joinrel\longrightarrow A_{2n}^{(2)} \lhook\joinrel\longrightarrow D_{n+1}^{(2)}\lhook\joinrel\longrightarrow A_{2n-1}^{(1)},
\end{equation}
with scaling factors $(2, 1, \dotsc, 1)$, $(1, \dotsc, 1, 2)$, and $(1, \dotsc, 1)$ respectively.
Then
\[
v \circ \delta_{\theta} = \virtual{\delta}_{\theta} \circ v.
\]
\end{proposition}

\begin{proof}
Let $\g$ be of type $D_{n+1}^{(2)}$ and $\virtual{\g}$ of type $A_{2n-1}^{(2)}$.
We note that if $\delta_{\theta}$ selects a singular string in $\nu^{(a)}$ for $a \neq n$, then it must select the same singular string in $\nu^{(a')} = \nu^{(a'')}$ for all $a', a'' \in \phi^{-1}(a)$.

For the remaining virtualization maps, this follows from the description of $\delta_{\theta}$ as per Section~\ref{sec:adjoint}.
\end{proof}

\begin{remark}
We can also compose the virtualization maps of Equation~\eqref{eq:flipping_arrows}, and we obtain another proof of~\cite[Thm.~7.1]{OSS03II}.
\end{remark}

We also can use
\[
C_n^{(1)} \lhook\joinrel\longrightarrow A_{2n}^{(2)\dagger} \lhook\joinrel\longrightarrow D_{n+1}^{(2)}\lhook\joinrel\longrightarrow A_{2n-1}^{(1)},
\]
with scaling factors $(1, \dotsc, 1, 2)$, $(2, 1, \dotsc, 1)$, and $(1, \dotsc, 1)$, respectively, instead of Equation~\eqref{eq:flipping_arrows}.

For type $D_{n+1}^{(2)}$ as a folding of type $A_{2n-1}^{(1)}$, we use the $\lb^v$-diagram:
\[
\label{eq:lb_general_virtual}
\begin{tikzpicture}[baseline=-4, xscale=1.3]
\node (1) at (0,0) {$(1,2n-1)$};
\node (2) at (4,0) {$\cdots$};
\node (3) at (0,-1) {$\cdots$};
\node (4) at (5,-1) {$(n-1,n+1)$};
\node (5) at (8.5,-1) {$[n]$};
\draw[->] (2) -- (1) node[midway, above]{\small $\bon2(2n-2)(\overline{2n-1})$};
\draw[->] (4) -- (3) node[midway, above]{\small $(\overline{n-2})(n-1)(n+1)(\overline{n+2})$};
\draw[->] (5) -- (4) node[midway, above]{\small $(\overline{n-1})nn(\overline{n+1})$};
\end{tikzpicture}.
\]

\begin{lemma}
\label{lemma:virtual_lt}
Let $\g$ be of non-simply-laced affine type.
Then we have
\[
v \circ \lb = \lb^v \circ v,
\]
where $v$ is one of the virtualization maps given above.
\end{lemma}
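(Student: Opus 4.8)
The plan is to prove the identity directly on rigged configurations, where all three maps are given by explicit combinatorial recipes: $\lb$ and $\lb^v$ append singular rows along a path in their respective diagrams, while $v$ rescales the configuration data via~\eqref{eq:virtual_m}–\eqref{eq:virtual_J}. Because $\lb$ adds new parts to $\nu$ without altering the existing ones, and $v$ acts factorwise and preserves singularity (by~\eqref{eq:virtual_vacancy_numbers}, the vacancy numbers scale by $\gamma_a$, so a singular row of length $i$ in $\nu^{(a)}$ goes to singular rows of length $\gamma_a i$ in each $\virtual{\nu}^{(b)}$, $b\in\phi^{-1}(a)$), the whole statement is independent of the trailing factor $B^{\bullet}$ and of the particular $(\nu,J)$: the existing parts are rescaled identically on both sides, so it suffices to match the \emph{newly added} singular rows produced by $v\circ\lb$ and by $\lb^v\circ v$. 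Using Lemma~\ref{lemma:lb_fixes_vacancy_numbers} for $\lb$ and its ambient-type instance for $\lb^v$, the riggings of these new rows are forced to be singular on both sides, so the claim reduces to an equality of multisets of (ambient node, length) pairs.

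First I would write the two sides explicitly for a fixed arrow. If the $\lb$-label of $r \xrightarrow{b} r'$ is $b = f_{a_1}\cdots f_{a_m} u_{\clfw_\sigma}$, then $\lb$ appends a singular length-$1$ row to each $\nu^{(a_i)}$, and applying $v$ replaces each of these by a singular length-$\gamma_{a_i}$ row in every $\virtual{\nu}^{(b')}$ with $b'\in\phi^{-1}(a_i)$; thus $v\circ\lb$ adds the multiset $\{(b',\gamma_{a_i}) : 1\le i\le m,\ b'\in\phi^{-1}(a_i)\}$. On the other side, $\lb^v$ appends singular rows along the path of the corresponding $\lb^v$-label, where a node marked $[r]$ (resp.\ $[[1]]$) contributes rows of length equal to the column width $\gamma$ used there. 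Hence the identity is exactly the assertion that the virtual image under $v$ of the length-$1$ rows prescribed by $b$ coincides with the rows prescribed by the $\lb^v$-label, which is precisely how the $\lb^v$-diagrams were built.

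Then I would carry out the matching type by type, noting that two mechanisms occur (and sometimes together). For the pure folding maps, where $\gamma_a=1$, the map $v$ merely duplicates each length-$1$ row across the pair $\phi^{-1}(a)$, and the paired node labels—for instance $(3,5)$ and $(1,6)$ in~\eqref{eq:lbv_E6_1} for $E_6^{(2)}\hookrightarrow E_6^{(1)}$, or the $(1,2n-1),\dots,(n-1,n+1)$ labels for $D_{n+1}^{(2)}\hookrightarrow A_{2n-1}^{(1)}$—are chosen so that the ambient path hits exactly those duplicated nodes; this is a finite check for $E_6^{(2)}$ and a uniform, rank-free check for $D_{n+1}^{(2)}$. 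For the remaining maps ($F_4^{(1)}\hookrightarrow E_6^{(2)}$, $G_2^{(1)}\hookrightarrow D_4^{(3)}$, and $B_n^{(1)}\hookrightarrow A_{2n-1}^{(2)}$), the width-$\gamma$ boxes $[r]$ and $[[1]]$ make $\lb^v$ produce rows of length $\gamma$, matching the length scaling performed by $v$; here I would verify, using the explicit description of the adjoint/minuscule labels together with Proposition~\ref{prop:adjoint_elements}, that decomposing each column label visits the nodes of $\phi^{-1}(a_i)$ with the multiplicities dictated by $b$. The cases for $C_n^{(1)}$ and $A_{2n}^{(2)}$ then follow by composing the virtualizations along~\eqref{eq:flipping_arrows} and invoking the cases just settled.

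I expect the genuine obstacle to be these width-$\gamma$ cases, where one must confirm that unfolding a height- or width-$\gamma$ column label in the ambient type reproduces, node for node and with the correct lengths, the rescaled single rows coming from $b$. This is the only place where essentially type-dependent input enters, namely the explicit virtualization of the relevant minuscule or adjoint crystal; everything else—the reduction to matching new rows, the preservation of singularity, and the independence from $B^{\bullet}$—is uniform.
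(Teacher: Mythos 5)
Your proposal is correct and follows essentially the same route as the paper: reduce to matching the newly added singular rows (using that $\lb$ preserves vacancy numbers and that $v$ scales rows and riggings by $\gamma_a$), then verify by a finite, type-by-type check that each arrow $r \xrightarrow{b} r'$ of the $\lb$-diagram corresponds under $v$ to an arrow of the $\lb^v$-diagram, with the width-$\gamma$ column labels accounting for the doubling/tripling. Your write-up simply makes explicit the bookkeeping that the paper compresses into ``a finite computation'' and ``considering the doubling (or tripling) map.''
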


\begin{proof}
Suppose $\g$ is of type $E_6^{(2)}$. Recall that $\lb$ adds a length 1 singular string to $\virtual{\nu}^{(a_i)}$, where $e_{a_1} \dotsm e_{a_k} b = u_{\clfw_2}$. Moreover, recall that $\lb$ does not change the vacancy numbers. It is a finite computation to show that for any arrow $r \xrightarrow[\hspace{20pt}]{b} r'$ in the $\lb$-diagram in Equation~\eqref{eq:lb_E6_2}, we have an arrow $\phi^{-1}(r) \xrightarrow[\hspace{20pt}]{v(b)} \phi^{-1}(r')$, where $\phi \colon E_6^{(1)} \searrow E_6^{(2)}$ is the diagram folding, in the $\lb^v$-diagram in Equation~\eqref{eq:lbv_E6_1}. Thus the claim follows.

For the other types, the proof is similar with also considering the doubling (or tripling) map.
\end{proof}

\begin{theorem}
\label{thm:virtual_bijection}
Let $\g$ be of non-simply-laced affine type. Then
\[
v \circ \Phi = \virtual{\Phi} \circ v,
\]
where $v$ is one of the virtualization maps given above.
Moreover, $\Phi$ is a bijection such that $\Phi \circ \eta$ sends cocharge to energy.
\end{theorem}

\begin{proof}
Lemma~\ref{lemma:virtual_lt} implies that it is sufficient to show $v \circ \delta = \virtual{\delta} \circ v$.
For untwisted types, this follows by definition and Theorem~\ref{thm:virtual_untwisted_delta}. For all dual untwisted types, type $A_{2n}^{(2)}$, and type $A_{2n}^{(2)\dagger}$, the proof is similar to the proof of Proposition~\ref{prop:type_A_virtualization}. Thus the claim follows.
\end{proof}

\begin{proposition}
\label{prop:virtual_R_matrix}
Let $\g$ be of affine type. Let $B = \bigotimes_{i=1}^N B^{r_i,1}$, and let $B' = \bigotimes_{i=1}^{N'} B^{r_i',1}$. Then the diagram
\[
\xymatrixrowsep{3pc}
\xymatrixcolsep{3.5pc}
\xymatrix{v(B \otimes B') \ar[r]^{\virtual{R}} & v(B \otimes B') \\ B' \otimes B \ar[u]^{v} \ar[r]_{R} & B' \otimes B \ar[u]_{v}}
\]
commutes. Moreover, the combinatoral $R$-matrix can be defined as the restriction of $\virtual{R}$ to the image of $v$.
\end{proposition}

\begin{proof}
This follows from Proposition~\ref{prop:demazure_description}.
\end{proof}

\begin{corollary}
Proposition~\ref{prop:R_matrix_invariance} holds for all affine types.
\end{corollary}

\begin{proof}
This follows from Theorem~\ref{thm:virtual_bijection}, Proposition~\ref{prop:virtual_R_matrix}, and Proposition~\ref{prop:R_matrix_invariance}.
\end{proof}

%%%%%%%%%%
\subsection{Higher levels for minuscule nodes}

\begin{proposition}
Let $r$ be such that $\clfw_r$ is a minuscule weight. Then $B(s\clfw_r) \subseteq B(\clfw_r)^{\otimes s}$ is characterized by
\[
\{ b_1 \otimes \cdots \otimes b_s \mid b_1 \leq \cdots \leq b_s \}.
\]
\end{proposition}

\begin{proof}
By~\cite[Lemma~2.9]{JS10}, it is sufficient to show when $s = 2$. We show this by induction, where the base case is $x_1 = x_2 = u_{\clfw_r}$. Next, suppose $x_1 \leq x_2$, and let $x_2 = f_{a_m} \cdots f_{a_1} x_1$. Fix some $i \in I_0$.
Consider when $f_i(x_1 \otimes x_2) = x_1 \otimes (f_i x_2)$, then we have $x_1 \leq f_i x_2$.
Now, consider the case $f_i(x_1 \otimes x_2) = (f_i x_1) \otimes x_2$. Since $B(\clfw_r)$ is minuscule, all elements are parameterized by trailing words of the longest coset representative $w \in \clW / \clW_{\widehat{r}}$ and that $w$ is fully commutative~\cite[Prop.~2.1]{Stembridge01II}. Since $w$ is fully commutative, we must have $f_{a_1} f_i x = f_i f_{a_1} x$, and similarly, there exists a $k$ such that $a_k = i$ and
\[
x_2 = f_{a_m} \cdots f_{a_{k+1}} f_{a_{k-1}} \cdots f_{a_1} f_{a_k} x_1.
\]
Therefore, we have $f_i x_1 \leq x_2$.
\end{proof}

Next, we note that the proof given in~\cite[Lemma~8.2]{SS2006} that the left-split map $\ls \colon B^{r,s} \otimes B^{\bullet} \to B^{r,1} \otimes B^{r,s-1} \otimes B^{\bullet}$ is preserved under $\Phi$ is straightforward to generalize to when $r$ is a minuscule node. We also can show that $\Phi$ send the combinatorial $R$-matrix to the identity map on rigged configurations

\begin{proposition}
\label{prop:higher_R_matrix}
Let $B = \bigotimes_{i=1}^N B^{r_i,s_i}$ and $B' = \bigotimes_{i=1}^{N'} B^{r_i',s_i'}$, where $r_i$ and $r_i'$ are minuscule nodes for all $i$. Then the diagram
\[
\xymatrixrowsep{3pc}
\xymatrixcolsep{3.5pc}
\xymatrix{\RC(B \otimes B') \ar[r]^{\Phi} \ar[d]_{\id} & B \otimes B' \ar[d]^{R} \\ \RC(B' \otimes B) \ar[r]_{\Phi} & B' \otimes B}
\]
commutes.
\end{proposition}

\begin{proof}
By the description of $\Phi$ it is sufficient to reduce this to the case when $B = B^{r,s}$ and $B' = B^{r',s'}$.
Recall that $B^{r,s} \iso B(s\clfw_r)$ as $U_q(\g_0)$-crystal when $r$ is a special node.
Since $\clfw_r$ and $\clfw_{r'}$ are minuscule weights, it follows from~\cite{Stembridge03} that $B^{r,s} \otimes B^{r',s'} \iso B(s\clfw_r) \otimes B(s'\clfw_{r'})$ is multiplicity free.
%By the tableaux description of $B^{r,s}$ and $B^{r',s'}$ and the fact that $\clfw_r$ and $\clfw_{r'}$ are minuscule weights, it is straightforward to see that $B^{r',s'} \otimes B^{r,s}$ is multiplicity free. Since $\Phi$ preserves weights, the claim follows.
\end{proof}

Next, it is straightforward to see that on rigged configurations, we have $\rb$ acting as the identity map on the configuration $\nu$ and preserves the coriggings. Next, we need the following description of the dual version of $\delta_r$.

\begin{lemma}
\label{lemma:dual_removal}
Let $r$ be a minuscule node, and define $\delta_r^{\lusztig} := \eta \circ \delta_r \circ \eta$ on rigged configurations.
Suppose $B = B^{\bullet} \otimes B^{r,1}$ and $(\nu, J)$ is a highest weight rigged configuration.
Define $(\nu, \overline{J})$ by
\[
\overline{J}_i^{(a)} = \begin{cases}
J_i^{(a)} & \text{if } a \neq r, \\
\{x-1 \mid x \in J_i^{(r)} \} & \text{if } a = r,
\end{cases}
\]
for all $i \in \ZZ_{>0}$. Then we have
\[
\delta^{\lusztig}(\nu, J) = e_{a_1} \dotsm e_{a_k} (\nu, \overline{J}),
\]
where $a_1, \dotsc, a_k \in I_0$ and $e_{a_1} \dotsm e_{a_k} (\nu, \overline{J})$ is a highest weight rigged configuration.
\end{lemma}

\begin{proof}
On $(\nu, J)$, it is clear that $\delta_r^{\lusztig}$ proceeds the same as $\delta_r$ except by preserving \emph{co}riggings and selecting and keeping \defn{cosingular} rows: rows with a rigging of $0$. Thus if there exists a cosingular row in $(\nu, J)^{(r)}$ (which is the first partition we must select from under $\delta_r^{\lusztig}$), there exists a row with a negative rigging in $(\nu, \overline{J})^{(r)}$ and we can apply $e_r$. Moreover, $e_r$ removes a box from the smallest row in $(\nu, \overline{J})^{(r)}$, which matches the procedure for $\delta_r^{\lusztig}$, and decreases the riggings by $1$ in all weakly longer rows in $(\nu, \overline{J})^{(r')}$ for all $r' \sim r$. Hence, if there are any cosingular rows in $(\nu, J)^{(r')}$ selected by $\delta_r^{\lusztig}$ in the next step, we can also apply $e_{r'}$ to $e_r(\nu, \overline{J})$. By repeating this argument, we can clearly apply an $e_{a_j}$ for every cosingular row in $(\nu, J)^{(a_j)}$ selected by $\delta_r^{\lusztig}$. The fact this is an if and only if is similar to the proof that $\delta_r^{\lusztig}$ gives a bijection (see the proof in~\cite{OS12}).
From the definition of $e_{a_j}$ and the resulting change in vacancy numbers (recall that the classical crystal operators on rigged configurations preserve the coriggings of all unchanged rows), it is straightforward to see that the resulting rigged configurations are equal.
\end{proof}

\begin{example}
Consider $B = B^{3,1} \otimes B^{4,1} \otimes B^{2,2} \otimes B^{6,1}$ in type $E_6^{(1)}$. Then we have
\begin{align*}
(\nu, J) & = \begin{tikzpicture}[scale=.33,baseline=-18]
 \fill[lightgray] (0,-2) rectangle (1,-3);
 \node[scale=.7] at (0.6, -2.5) {$\ell_5$};
 \rpp{2,1}{0,0}{0,0}
 \begin{scope}[xshift=5cm]
 \fill[lightgray] (2,-1) rectangle (3,-2);
 \node[scale=.7] at (2.6, -1.5) {$\ell_6$};
 \rpp{3,2,1}{0,1,1}{0,1,1}
 \end{scope}
 \begin{scope}[xshift=11cm]
 \fill[lightgray] (0,-4) rectangle (1,-5);
 \node[scale=.7] at (0.6, -4.5) {$\ell_4$};
 \rpp{2,2,1,1}{0,0,0,0}{0,0,0,0}
 \end{scope}
 \begin{scope}[xshift=16cm]
 \fill[lightgray] (0,-6) rectangle (1,-7);
 \node[scale=.7] at (0.6, -6.5) {$\ell_3$};
 \fill[lightgray] (2,-1) rectangle (3,-2);
 \node[scale=.7] at (2.6, -1.5) {$\ell_7$};
 \rpp{3,2,2,1,1,1}{0,0,0,0,0,0}{0,0,0,0,0,0}
 \end{scope}
 \begin{scope}[xshift=22cm]
 \fill[lightgray] (0,-4) rectangle (1,-5);
 \node[scale=.7] at (0.6, -4.5) {$\ell_2$};
 \fill[lightgray] (2,-1) rectangle (3,-2);
 \node[scale=.7] at (2.6, -1.5) {$\ell_8$};
 \rpp{3,2,1,1}{0,0,0,0}{0,0,0,0}
 \end{scope}
 \begin{scope}[xshift=28cm]
 \fill[lightgray] (0,-2) rectangle (1,-3);
 \node[scale=.7] at (0.6, -2.5) {$\ell_1$};
 \fill[lightgray] (2,-1) rectangle (3,-2);
 \node[scale=.7] at (2.6, -1.5) {$\ell_9$};
 \rpp{3,1}{0,0}{0,1}
 \end{scope}
\end{tikzpicture}
%\draw[->] (15.5,-8cm) -- (15.5,-11cm) node[midway,right] {$\delta_6^{\lusztig}$};
\\ \delta_6^{\lusztig}(\nu, J) & = \begin{tikzpicture}[scale=.33,baseline=-18]
 \rpp{2}{1}{1}
 \begin{scope}[xshift=5cm]
 \rpp{2,2,1}{0,1,1}{0,1,1}
 \end{scope}
 \begin{scope}[xshift=11cm]
 \rpp{2,2,1}{0,0,0}{0,0,0}
 \end{scope}
 \begin{scope}[xshift=16cm]
 \rpp{2,2,2,1,1}{0,0,0,0,0}{0,0,0,0,0}
 \end{scope}
 \begin{scope}[xshift=22cm]
 \rpp{2,2,1}{0,0,0}{0,0,0}
 \end{scope}
 \begin{scope}[xshift=28cm]
 \rpp{2}{0}{1}
 \end{scope}
\end{tikzpicture}
\end{align*}
(where the return value of $\delta_6^{\lusztig}$ is $\bon3\bsix$). Now, we compute $e_6 e_5 e_4 e_2 e_1 e_3 e_4 e_5 e_6(\nu, \overline{J})$:
\begin{align*}
(\nu, \overline{J}) & = \begin{tikzpicture}[scale=.33,baseline=-18]
 \rpp{2,1}{0,0}{0,0}
 \begin{scope}[xshift=5cm]
 \rpp{3,2,1}{0,1,1}{0,1,1}
 \end{scope}
 \begin{scope}[xshift=11cm]
 \rpp{2,2,1,1}{0,0,0,0}{0,0,0,0}
 \end{scope}
 \begin{scope}[xshift=16cm]
 \rpp{3,2,2,1,1,1}{0,0,0,0,0,0}{0,0,0,0,0,0}
 \end{scope}
 \begin{scope}[xshift=22cm]
 \rpp{3,2,1,1}{0,0,0,0}{0,0,0,0}
 \end{scope}
 \begin{scope}[xshift=28cm]
 \rpp{3,1}{-1,-1}{-1,0}
 \end{scope}
\end{tikzpicture}
\allowdisplaybreaks \\ \xleftarrow[\hspace{15pt}]{6} & \begin{tikzpicture}[scale=.33,baseline=-18]
 \rpp{2,1}{0,0}{0,0}
 \begin{scope}[xshift=5cm]
 \rpp{3,2,1}{0,1,1}{0,1,1}
 \end{scope}
 \begin{scope}[xshift=11cm]
 \rpp{2,2,1,1}{0,0,0,0}{0,0,0,0}
 \end{scope}
 \begin{scope}[xshift=16cm]
 \rpp{3,2,2,1,1,1}{0,0,0,0,0,0}{0,0,0,0,0,0}
 \end{scope}
 \begin{scope}[xshift=22cm]
 \rpp{3,2,1,1}{-1,-1,-1,-1}{-1,-1,-1,-1}
 \end{scope}
 \begin{scope}[xshift=28cm]
 \rpp{3}{1}{1}
 \end{scope}
\end{tikzpicture}
\allowdisplaybreaks \\ \xleftarrow[\hspace{15pt}]{5} & \begin{tikzpicture}[scale=.33,baseline=-18]
 \rpp{2,1}{0,0}{0,0}
 \begin{scope}[xshift=5cm]
 \rpp{3,2,1}{0,1,1}{0,1,1}
 \end{scope}
 \begin{scope}[xshift=11cm]
 \rpp{2,2,1,1}{0,0,0,0}{0,0,0,0}
 \end{scope}
 \begin{scope}[xshift=16cm]
 \rpp{3,2,2,1,1,1}{-1,-1,-1,-1,-1,-1}{-1,-1,-1,-1,-1,-1}
 \end{scope}
 \begin{scope}[xshift=22cm]
 \rpp{3,2,1}{1,1,1}{1,1,1}
 \end{scope}
 \begin{scope}[xshift=28cm]
 \rpp{3}{0}{0}
 \end{scope}
\end{tikzpicture}
\allowdisplaybreaks \\ \xleftarrow[\hspace{15pt}]{4} & \begin{tikzpicture}[scale=.33,baseline=-18]
 \rpp{2,1}{0,0}{0,0}
 \begin{scope}[xshift=5cm]
 \rpp{3,2,1}{{\!\!-1},0,0}{-1,0,0}
 \end{scope}
 \begin{scope}[xshift=11cm]
 \rpp{2,2,1,1}{-1,-1,-1,-1}{-1,-1,-1,-1}
 \end{scope}
 \begin{scope}[xshift=16cm]
 \rpp{3,2,2,1,1}{1,1,1,1,1}{1,1,1,1,1}
 \end{scope}
 \begin{scope}[xshift=22cm]
 \rpp{3,2,1}{0,0,0}{0,0,0}
 \end{scope}
 \begin{scope}[xshift=28cm]
 \rpp{3}{0}{0}
 \end{scope}
\end{tikzpicture}
\allowdisplaybreaks \\ \xleftarrow[\hspace{15pt}]{3} & \begin{tikzpicture}[scale=.33,baseline=-18]
 \rpp{2,1}{{\!\!-1},-1}{-1,-1}
 \begin{scope}[xshift=5cm]
 \rpp{3,2,1}{-1,0,0}{-1,0,0}
 \end{scope}
 \begin{scope}[xshift=11cm]
 \rpp{2,2,1}{1,1,1}{1,1,1}
 \end{scope}
 \begin{scope}[xshift=16cm]
 \rpp{3,2,2,1,1}{0,0,0,0,0}{0,0,0,0,0}
 \end{scope}
 \begin{scope}[xshift=22cm]
 \rpp{3,2,1}{0,0,0}{0,0,0}
 \end{scope}
 \begin{scope}[xshift=28cm]
 \rpp{3}{0}{0}
 \end{scope}
\end{tikzpicture}
\allowdisplaybreaks \\ \xleftarrow[\hspace{15pt}]{1} & \begin{tikzpicture}[scale=.33,baseline=-18]
 \rpp{2}{1}{1}
 \begin{scope}[xshift=5cm]
 \rpp{3,2,1}{-1,0,0}{-1,0,0}
 \end{scope}
 \begin{scope}[xshift=11cm]
 \rpp{2,2,1}{0,0,0}{0,0,0}
 \end{scope}
 \begin{scope}[xshift=16cm]
 \rpp{3,2,2,1,1}{0,0,0,0,0}{0,0,0,0,0}
 \end{scope}
 \begin{scope}[xshift=22cm]
 \rpp{3,2,1}{0,0,0}{0,0,0}
 \end{scope}
 \begin{scope}[xshift=28cm]
 \rpp{3}{0}{0}
 \end{scope}
\end{tikzpicture}
\allowdisplaybreaks \\ \xleftarrow[\hspace{15pt}]{2} & \begin{tikzpicture}[scale=.33,baseline=-18]
 \rpp{2}{1}{1}
 \begin{scope}[xshift=5cm]
 \rpp{2,2,1}{0,0,0}{0,0,0}
 \end{scope}
 \begin{scope}[xshift=11cm]
 \rpp{2,2,1}{0,0,0}{0,0,0}
 \end{scope}
 \begin{scope}[xshift=16cm]
 \rpp{3,2,2,1,1}{-1,0,0,0,0}{-1,0,0,0,0}
 \end{scope}
 \begin{scope}[xshift=22cm]
 \rpp{3,2,1}{0,0,0}{0,0,0}
 \end{scope}
 \begin{scope}[xshift=28cm]
 \rpp{3}{0}{0}
 \end{scope}
\end{tikzpicture}
\allowdisplaybreaks \\ \xleftarrow[\hspace{15pt}]{4} & \begin{tikzpicture}[scale=.33,baseline=-18]
 \rpp{2}{1}{1}
 \begin{scope}[xshift=5cm]
 \rpp{2,2,1}{0,0,0}{0,0,0}
 \end{scope}
 \begin{scope}[xshift=11cm]
 \rpp{2,2,1}{0,0,0}{0,0,0}
 \end{scope}
 \begin{scope}[xshift=16cm]
 \rpp{2,2,2,1,1}{0,0,0,0,0}{0,0,0,0,0}
 \end{scope}
 \begin{scope}[xshift=22cm]
 \rpp{3,2,1}{-1,0,0}{-1,0,0}
 \end{scope}
 \begin{scope}[xshift=28cm]
 \rpp{3}{0}{0}
 \end{scope}
\end{tikzpicture}
\allowdisplaybreaks \\ \xleftarrow[\hspace{15pt}]{5} & \begin{tikzpicture}[scale=.33,baseline=-18]
 \rpp{2}{1}{1}
 \begin{scope}[xshift=5cm]
 \rpp{2,2,1}{0,0,0}{0,0,0}
 \end{scope}
 \begin{scope}[xshift=11cm]
 \rpp{2,2,1}{0,0,0}{0,0,0}
 \end{scope}
 \begin{scope}[xshift=16cm]
 \rpp{2,2,2,1,1}{0,0,0,0,0}{0,0,0,0,0}
 \end{scope}
 \begin{scope}[xshift=22cm]
 \rpp{2,2,1}{0,0,0}{0,0,0}
 \end{scope}
 \begin{scope}[xshift=28cm]
 \rpp{3}{-1}{-1}
 \end{scope}
\end{tikzpicture}
\allowdisplaybreaks \\ \xleftarrow[\hspace{15pt}]{6} & \begin{tikzpicture}[scale=.33,baseline=-18]
 \rpp{2}{1}{1}
 \begin{scope}[xshift=5cm]
 \rpp{2,2,1}{0,0,0}{0,0,0}
 \end{scope}
 \begin{scope}[xshift=11cm]
 \rpp{2,2,1}{0,0,0}{0,0,0}
 \end{scope}
 \begin{scope}[xshift=16cm]
 \rpp{2,2,2,1,1}{0,0,0,0,0}{0,0,0,0,0}
 \end{scope}
 \begin{scope}[xshift=22cm]
 \rpp{2,2,1}{0,0,0}{0,0,0}
 \end{scope}
 \begin{scope}[xshift=28cm]
 \rpp{2}{0}{1}
 \end{scope}
\end{tikzpicture}
\end{align*}
and note that the result equals $\delta_6^{\lusztig}(\nu, J)$.
\end{example}

\begin{proposition}
\label{prop:rb_intertwines}
Let $r$ be a minuscule node.
Let $\delta_r^{\lusztig} := \eta \circ \delta_r \circ \eta$ on rigged configurations and $\delta_r^{\lusztig} := \diamond \circ \delta_r \circ \diamond$ on classically highest weight elements in tensor product of KR crystals and extended as a classical crystal isomorphism.
We have
\[
\delta^{\lusztig} \circ \Phi = \Phi \circ \delta^{\lusztig}.
\]
\end{proposition}

\begin{proof}
Recall $\delta_r$ commutes with the classical crystal operators by Theorem~\ref{thm:minuscule_classical_isomorphism}. Note that we can define $\delta_r^{\lusztig}$ on classically highest weight elements in a tensor product of KR crystals by removing the rightmost factor and then going to the highest weight element from~\cite[Prop.~5.9(5)]{SS2006}.\footnote{Recall that $\delta_r^{\lusztig}$ in~\cite{SS2006}, where it was denoted by $\operatorname{rh}$, was defined by removing the rightmost factor (and then going to the highest weight element) in contrast to our definition of conjugating the left factor removal by $\lusztig$. However, these definitions are equivalent by~\cite[Prop.~5.9(5)]{SS2006}.}
Lemma~\ref{lemma:dual_removal} shows the same description for the rigged configurations.

To prove the claim, it is sufficient to prove this on classically highest weight elements.
Since the rightmost factor is $B^{r,1}$, where $r$ is a minuscule node, the corresponding rightmost element in the tensor product must be $u_{\clfw_r}$. Recall that $\Phi^{-1}(u_{\clfw_r})$ is the empty rigged configuration (but with vacancy numbers $p_i^{(r)}$ shifted by $1$ for all $i \in \ZZ_{>0}$). Note also that $\Phi^{-1}$ is only computed using singular rows, not the actual rigging values. Therefore, if $\Phi^{-1}(b \otimes u_{\clfw_r}) = (\nu, J)$, then $\Phi^{-1}(b) = (\nu, \overline{J})$ as defined in Lemma~\ref{lemma:dual_removal}. If $e_{a_1} \dotsm e_{a_k} b$, where $a_1, \dotsc, a_k \in I_0$, is the corresponding classically highest weight element, then we have
\[
\Phi^{-1}\bigl( \delta^*(b \otimes u_{s\clfw_r}) \bigr) = \Phi^{-1}(e_{a_1} \dotsm e_{a_k} b) = e_{a_1} \dotsm e_{a_k} \Phi^{-1}(b) = \delta^*\bigl( \Phi^{-1}(b \otimes u_{s\clfw_r}) \bigr)
\]
as desired.
\end{proof}

We also have the following from Proposition~\ref{prop:rb_intertwines} and that clearly $[\delta, \delta^{\lusztig}] = 0$ on a tensor product of KR crystals.

\begin{corollary}
\label{cor:delta_star_commute}
On rigged configurations, we have $[\delta, \delta^{\lusztig}] = 0$.
\end{corollary}

We remark that this provides an alternative proof of~\cite[Lemma~3.13]{KSS02} and~\cite[Thm.~8.4(1)]{SS2006}.

Using Corollary~\ref{cor:delta_star_commute}, the remaining proof of~\cite[Thm.~8.6]{SS2006} that $\diamond \circ \Phi = \Phi \circ \theta$ holds. Hence, the proof of~\cite[Thm.~8.8]{SS2006} by using Theorem~\ref{thm:minuscule_classical_isomorphism}. Thus, we have the following.

\begin{theorem}
\label{thm:general_minuscule_bijection}
Let $B = \bigotimes_{i=1}^N B^{r_i,s_i}$ be a tensor product of KR crystals, where $r_i$ is a minuscule node for all $i$. Then the map
\[
\Phi \colon \RC(B) \to B
\]
is a $U_q(\g_0)$-crystal isomorphism such that $\Phi \circ \eta$ sends cocharge to energy.
\end{theorem}

%=====================================================================
\section{Equivalent bijections}
\label{sec:equivalence}

In this section, we show that all of our defined bijections give the same bijection and $\Phi = \KSS{\Phi}$ for all types except $B_n^{(1)}$ (where it remains a conjecture), $F_4^{(1)}$, and $G_2^{(1)}$. We show that $\Phi$ defined using a combination of various $\delta_{\sigma}$ defines the same bijection as $\KSS{\Phi}$ for types $A_n^{(1)}$, $D_n^{(1)}$, and $E_6^{(1)}$. Note that for type $A_n^{(1)}$, the map $\delta_{n-1}$ is the dual bijection of~\cite[Sec.~9]{SS2006}. For types $A_n^{(1)}$ and $D_n^{(1)}$, we will show that the bijection defined by $\delta_{\sigma}$ and $\KSS{\delta}$ are equal. For type $A_{2n-1}^{(2)}$, we only have $\delta_1 = \KSS{\delta}$, which was done in~\cite{OSS03}. For type $D_{n+1}^{(2)}$, we will use the virtualization map and type $A_{2n-1}^{(1)}$. For type $E_6^{(1)}$, the map $\delta_1 = \KSS{\delta}$ from~\cite{OS12} and $\delta_6$ follows from the diagram automorphism given by $1 \leftrightarrow 6$.

We will first show that all of the bijections defined using $\delta_r$ for any $r \in I_0$ in type $A_n^{(1)}$ are equal.

\begin{lemma}
\label{lemma:type_A_equality}
Let $\g$ be of type $A_n^{(1)}$ and $B = B^{r,1} \otimes B^{\bullet}$. We have
\[
\delta_r = \KSS{\delta} \circ (\KSS{\delta} \circ \lb)^{r-1}
\]
such that $\lb^{r-1}(b_r) = b_1^r \otimes \cdots \otimes b_1^1$ (with $\lb$ always acting on the rightmost factor), were $b_1^i$ is the return value for the $i$-th application of $\KSS{\delta}$.
\end{lemma}

\begin{proof}
We note that $\delta_1 = \KSS{\delta}$. Thus fix an $r > 1$. We proceed by induction on $r$.
Note that it is sufficient to show by our induction assumption that
\begin{equation}
\label{eq:type_A_reduction}
\delta_r = \delta_{r-1} \circ \KSS{\delta} \circ \lb
\end{equation}
such that $\lb(b_r) = \young(b) \otimes b_{r-1}$, where $b_r$ denotes the return value of $\delta_r$ and $\young(b) \otimes b_{r-1}$ the return values under $\delta_{r-1} \circ \KSS{\delta} \circ \lb$.
Recall that $\lb(u_{\clfw_r}) = \young(r) \otimes u_{\clfw_{r-1}}$, which defines the unique strict crystal embedding $B(\clfw_r) \to B(\clfw_1) \otimes B(\clfw_{r-1})$.

Suppose $\delta_r$ selects (singular) rows $\ell^{(a)}_1 \leq \cdots \leq \ell^{(a)}_{k_a}$ from $\nu^{(a)}$ for all $a \in I_0$; we consider $k_a = 0$ if no such row was selected in $\nu^{(a)}$.
Note that $\KSS{\delta} \circ \lb$ selects the same singular row of length $\ell^{(a)}_1$ in $\nu^{(a)}$ under $\delta_r$ as it is the smallest selectable singular row in $\nu^{(a)}$ for all $r \leq a < b$, where $\KSS{\delta}$ returns $\young(b)$, and both algorithms (effectively\footnote{From the definition of $\lb$ and that $\lb$ preserves vacancy numbers, we can consider $\KSS{\delta} \circ \lb$ as one operation that is the same as $\KSS{\delta}$ except that it begins at $\nu^{(r)}$ instead of $\nu^{(1)}$.}) start at $\nu^{(r)}$.
Indeed, by the definition of $\delta_r$, we must have $\ell_1^{(a)} \leq \ell_1^{(a+1)}$ and no singular rows in $\nu^{(a+1)}$ of length $\ell_1^{(a)} \leq i < \ell_1^{(a+1)}$ for all $r \leq a < b-1$ as $\delta_r$ selects the minimal singular row and would have instead selected any singular row of length $\ell_1^{(a)} \leq i < \ell_1^{(a+1)}$.

Next, let $(\overline{\nu}, \overline{J}) = (\KSS{\delta} \circ \KSS{\lb})(\nu, J)$. We claim that $\delta_{r-1}$ on $(\overline{\nu}, \overline{J})$ must select exactly all other rows selected by $\delta_r$. If $k_{r-1} = 0$, then $\delta_r$ has selected no other singular rows (note that in this case the crystal structure of $B(\clfw_r) \subseteq B(\clfw_1) \otimes B(\clfw_{r-1})$ implies $k_a = 1$ for all $r \leq a < b$ and $k_a = 0$ otherwise). Now we have
\begin{equation}
\label{eq:vac_change_A_bijections_equal}
p_i^{(r-1)}(\overline{\nu},\overline{J}) =
\begin{cases}
p_i^{(r-1)}(\nu,J) + 1 &\text{if } i < \ell_1^{(r)}, \\
p_i^{(r-1)}(\nu,J) & \text{otherwise},
\end{cases}
\end{equation}
so none of these rows of length $i < \ell_1^{(r)}$ are singular and $k_{r-1} = 0$ with Equation~\eqref{eq:vac_change_A_bijections_equal} implies the other rows are not singular, thus $\delta_{r-1}$ returns $u_{\clfw_{r-1}}$, and the claim holds.

Now suppose $k_{r-1} > 0$. The algorithm for $\delta_{r-1}$ begins by selecting the row of length $\ell^{(r-1)}_1$ because of Equation~\eqref{eq:vac_change_A_bijections_equal} and $\delta_r$ would have selected any singular rows of length $\ell_1^{(r)} \leq i < \ell_1^{(r-1)}$.
Indeed, if there was such a singular row $R$ and $\delta_r$ selects $\ell_1^{(r-1)}$ at step $t$, then $R$ would have been selected during the an earlier step of $\delta_r$ by the fact we select a singular row of minimal length and we could always select a string from $\nu^{(r-1)}$ after the first step (and up to step $t$).
Next, we note that there are no singular rows in $\overline{\nu}^{(r)}$ of length $\ell^{(r)}_1 \leq i < \ell^{(r+1)}_1$ as $p_i^{(r)}$ has been increased by $1$ in that region. Therefore, the next selected row in $\overline{\nu}^{(r)}$ is of length $\ell_2^{(r)}$ since
\begin{itemize}
\item all vacancy numbers $p_i^{(r)}$ for $i \geq \ell^{(r+1)}_1$ remain unchanged,
\item $\ell_1^{(r-1)} \geq \ell_1^{(r)}$,
\item all other rows of length at least $\ell_1^{(r+1)}$ are singular in $(\nu, J)^{(r)}$ if and only if they are singular in $(\overline{\nu}, \overline{J})^{(r)}$,
\item and we can select a second row in $\nu^{(r)}$ as soon as we select a row in $\nu^{(r-1)}$ from the definition of $\delta_r$.
\end{itemize}
Similarly for all $(\overline{\nu}, \overline{J})^{(a)}$ with $r < a < b-1$, but we note there are no singular rows in $\overline{\nu}^{(b-1)}$ of length at least $\ell_1^{(b-1)}$. Note that by the definition of $\delta_r$ and the crystal $B(\clfw_r)$, we must have $\ell_2^{(b-2)} \geq \ell_1^{(b-1)}$ (as otherwise we would have $\begin{array}{|c|}\hline b-1 \\\hline b-1 \\\hline \end{array}$ as a subtableau for that element since the definition of $\delta_r$ would imply the $(b-2)$-arrow $\begin{array}{|c|}\hline b-1 \\\hline b-2 \\\hline \end{array} \xrightarrow[\hspace{20pt}]{b-2} \begin{array}{|c|}\hline b-1 \\\hline b-1 \\\hline \end{array}$ in the crystal graph exists, which is impossible (note all other entries in the tableau do not matter by the column strictness, so we do not write them)). Thus, we have $\young(b) \otimes b_{r-1} \in B(\clfw_r) \subseteq B(\clfw_1) \otimes B(\clfw_{r-1})$ (equivalently $\lb(b_r) = \young(b) \otimes b_{r-1}$) as $\delta_{r-1}$ must select the row of length $\ell_2^{(b-2)}$ in $\overline{\nu}^{(b-2)}$ before selecting a row in $\overline{\nu}^{(b-1)}$ and hence cannot select any more rows in $\overline{\nu}^{(b-1)}$. Additionally, we have $(\nu,J)^{(a)} = (\overline{\nu},\overline{J})^{(a)}$ for all $a < r$ with the same vacancy numbers for all $a < r - 1$ and
\[
p_i^{(a)}(\overline{\nu},\overline{J}) =
\begin{cases}
p_i^{(a)}(\nu,J) + 1 &\text{if } \ell_1^{(a)} \leq i < \ell_1^{(a+1)}, \\
p_i^{(a)}(\nu,J) & \text{if } i \geq \ell_1^{(a+1)},
\end{cases}
\]
for all $r \leq a < b - 1$.
Thus, once $\delta_{r-1}$ starts selecting $\ell_2^{(a)}$, it must select the same rows of length $\ell_k^{(a)}$ for $k \geq 2$ as $\delta_r$. Hence the rest of the algorithm for $\delta_{r-1}$ selects all the same other rows selected by $\delta_r$ following the same path as $\delta_r$ except for those boxes selected by $\KSS{\delta} \circ \lb$. Therefore, the resulting rigged configurations are equal, and hence the claim follows.
\end{proof}

We note that the proof of Lemma~\ref{lemma:type_A_equality} is given by $\delta_{r-1} \circ \KSS{\delta} \circ \lb$ following the path
\[
u_{\clfw_r} = \underbrace{
\young(u,r)
\xrightarrow[\hspace{20pt}]{r}
\cdots
\xrightarrow[\hspace{20pt}]{b-1}
\young(u,b)
}_{\KSS{\delta} \circ \lb}
\xrightarrow[\hspace{30pt}]{\delta_{r-1}}
\young(x,b) = b_r
\]
in $B(\clfw_r)$, where $u = u_{\clfw_{r-1}}$ and $x = b_{r-1}$.

\begin{example}
Consider
\[
B = B^{3,1} \otimes B^{2,2} \otimes B^{4,2} \otimes B^{1,5} \otimes B^{4,3} \otimes B^{3,2} \otimes B^{4,1} \otimes B^{2,1} \otimes B^{4,2}\
\]
of type $A_5^{(1)}$. Then we have
\[
\begin{tikzpicture}[scale=.33,baseline=-18]
 \fill[lightgray] (0,-2) rectangle (1,-3);
 \node[scale=.7] at (0.6, -2.5) {$\ell_3$};
 \rpp{4,1}{0,1}{2,1}
 \begin{scope}[xshift=8cm]
 \fill[lightgray] (0,-4) rectangle (1,-5);
 \node[scale=.7] at (0.6, -4.5) {$\ell_2$};
 \fill[lightgray] (3,-1) rectangle (4,-2);
 \node[scale=.7] at (3.6, -1.5) {$\ell_6$};
 \rpp{4,2,1,1}{0,0,0,0}{0,0,0,0}
 \end{scope}
 \begin{scope}[xshift=16cm]
 \fill[lightgray] (0,-4) rectangle (1,-5);
 \node[scale=.7] at (0.6, -4.5) {$\ell_1$};
 \fill[lightgray] (3,-1) rectangle (4,-2);
 \node[scale=.7] at (3.6, -1.5) {$\ell_5$};
 \rpp{4,2,1,1}{4,1,2,2}{4,4,2,2}
 \end{scope}
 \begin{scope}[xshift=24cm]
 \fill[lightgray] (1,-4) rectangle (2,-3);
 \node[scale=.7] at (1.6, -3.5) {$\ell_4$};
 \fill[lightgray] (3,-1) rectangle (4,-2);
 \node[scale=.7] at (3.6, -1.5) {$\ell_8$};
 \rpp{4,2,2,1}{1,1,1,0}{1,1,1,1}
 \end{scope}
 \begin{scope}[xshift=32cm]
 \fill[lightgray] (2,-1) rectangle (3,-2);
 \node[scale=.7] at (2.6, -1.5) {$\ell_7$};
 \rpp{3}{2}{2}
 \end{scope}
%%%
\draw[->] (15.5,-6cm) -- (15.5,-9cm) node[midway,right] {$\delta_3$};
\draw (21,-7.5cm) node {(returns $\young(3,5,6)$\,)};
\begin{scope}[yshift=-9cm]
 \rpp{4}{0}{2}
 \begin{scope}[xshift=8cm]
 \rpp{3,2,1}{0,0,0}{0,0,0}
 \end{scope}
 \begin{scope}[xshift=16cm]
 \rpp{3,2,1}{3,1,2}{3,3,2}
 \end{scope}
 \begin{scope}[xshift=24cm]
 \rpp{3,2,1,1}{2,1,0,0}{2,2,0,0}
 \end{scope}
 \begin{scope}[xshift=32cm]
 \rpp{2}{2}{2}
 \end{scope}
\end{scope} % y-shift
\end{tikzpicture}
\]
Furthermore, we also compute 
\[
\begin{tikzpicture}[scale=.33,baseline=-18]
 \rpp{4,1}{0,1}{2,1}
 \begin{scope}[xshift=8cm]
 \rpp{4,2,1,1}{0,0,0,0}{0,0,0,0}
 \end{scope}
 \begin{scope}[xshift=16cm]
 \fill[lightgray] (0,-4) rectangle (1,-5);
 \node[scale=.7] at (0.6, -4.5) {$\ell_1$};
 \rpp{4,2,1,1}{4,1,2,2}{4,4,2,2}
 \end{scope}
 \begin{scope}[xshift=24cm]
 \fill[lightgray] (1,-3) rectangle (2,-4);
 \node[scale=.7] at (1.6, -3.5) {$\ell_2$};
 \rpp{4,2,2,1}{1,1,1,0}{1,1,1,1}
 \end{scope}
 \begin{scope}[xshift=32cm]
 \fill[lightgray] (2,-1) rectangle (3,-2);
 \node[scale=.7] at (2.6, -1.5) {$\ell_3$};
 \rpp{3}{2}{2}
 \end{scope}
%%%
\draw[->] (15.5,-6cm) -- (15.5,-9cm) node[midway,right] {$\KSS{\delta} \circ \lb$};
\draw (23,-7.5cm) node {(returns $\young(6)$\,)};
\begin{scope}[yshift=-9cm]
 \fill[lightgray] (0,-2) rectangle (1,-3);
 \node[scale=.7] at (0.6, -2.5) {$\ell_2$};
 \rpp{4,1}{0,1}{2,1}
 \begin{scope}[xshift=8cm]
 \fill[lightgray] (0,-4) rectangle (1,-5);
 \node[scale=.7] at (0.6, -4.5) {$\ell_1$};
 \fill[lightgray] (3,-1) rectangle (4,-2);
 \node[scale=.7] at (3.6, -1.5) {$\ell_4$};
 \rpp{4,2,1,1}{0,0,0,0}{0,0,0,0}
 \end{scope}
 \begin{scope}[xshift=16cm]
 \fill[lightgray] (3,-1) rectangle (4,-2);
 \node[scale=.7] at (3.6, -1.5) {$\ell_3$};
 \rpp{4,2,1}{4,1,2}{4,4,3}
 \end{scope}
 \begin{scope}[xshift=24cm]
 \fill[lightgray] (3,-1) rectangle (4,-2);
 \node[scale=.7] at (3.6, -1.5) {$\ell_5$};
 \rpp{4,2,1,1}{2,1,0,0}{2,2,0,0}
 \end{scope}
 \begin{scope}[xshift=32cm]
 \rpp{2}{2}{2}
 \end{scope}
\end{scope} % y-shift
%%%
\draw[->] (15.5,-15cm) -- (15.5,-18cm) node[midway,right] {$\delta_2$};
\draw (21,-16.5cm) node {(returns $\young(3,5)$\,)};
\begin{scope}[yshift=-18cm]
 \rpp{4}{0}{2}
 \begin{scope}[xshift=8cm]
 \rpp{3,2,1}{0,0,0}{0,0,0}
 \end{scope}
 \begin{scope}[xshift=16cm]
 \rpp{3,2,1}{3,1,2}{3,3,2}
 \end{scope}
 \begin{scope}[xshift=24cm]
 \rpp{3,2,1,1}{2,1,0,0}{2,2,0,0}
 \end{scope}
 \begin{scope}[xshift=32cm]
 \rpp{2}{2}{2}
 \end{scope}
\end{scope} % y-shift
\end{tikzpicture}
\]
which agrees with applying $\delta_3$ as per Lemma~\ref{lemma:type_A_equality}.
\end{example}

In order to show that the bijections agree for type $D_n^{(1)}$, we recall the map $\delta_{\mathrm{sp}} \colon B^{r,1} \otimes B^{\bullet} \to B^{\bullet}$ for $r = n-1, n$ from~\cite{S05}. Let $v \colon B^{r,1} \to B^{r,2}$ denote the virtualization map given by $\gamma_a = 2$ for all $a \in I$. Let $\lb^{(r)} \colon B^{r,2} \otimes B^{\bullet} \to B^{1,1} \otimes B^{n-1,1} \otimes B^{n,1} \otimes B^{\bullet}$ be given on rigged configurations by adding a length-one singular row to $\nu^{(a)}$ for all $a \leq n - 2$ and $a = n,n-1$ if $r = n-1, n$ respectively. Let $\overline{\lb} \colon B^{n-1, 1} \otimes B^{n,1} \otimes B^{\bullet} \mapsto B^{n-2,1} \otimes B^{\bullet}$ be given on rigged configurations by adding a length-one singular row to $\nu^{(a)}$ for all $a \leq n-2$. We define $\delta_{\mathrm{sp}} \colon B^{r,1} \otimes B^{\bullet} \to B^{\bullet}$ by
\[
\delta_{\mathrm{sp}} := v^{-1} \circ (\KSS{\delta} \circ \lb)^{n-2} \circ \KSS{\delta} \circ \overline{\lb} \circ \KSS{\delta} \circ \lb^{(r)} \circ v.
\]

We could also define $\delta_{\mathrm{sp}}$ by using the (generalized) $\lb$-diagram of
\[
\begin{tikzpicture}[baseline=-4, xscale=1.4]
\node (1) at (0,0) {$1$};
\node (2) at (2,0) {$\cdots$};
\node (3) at (4,0) {$n-2$};
\node (4) at (6,0) {$(n-1,n)$};
\node (5) at (8,0) {$[r]$};
\draw[->] (2) -- (1) node[midway, above]{\small $\boxed{1}$};
\draw[->] (3) -- (2) node[midway, above]{\small $\boxed{n-2}$};
\draw[->] (4) -- (3) node[midway, above]{\small $\boxed{n-1}$};
\draw[->] (5) -- (4) node[midway, above]{\small $\boxed{x}$};
\end{tikzpicture},
\]
where $x = \bn, n$ if $r = n-1, n$ respectively and recall $[r]$ corresponds to $B^{r,2}$.

\begin{lemma}
\label{lemma:type_D_equality}
Let $\g$ be of type $D_n^{(1)}$ and $B = B^{r,1} \otimes B^{\bullet}$ with $r = n-1, n$. We have
\[
\delta_r = \delta_{\mathrm{sp}}.
\]
\end{lemma}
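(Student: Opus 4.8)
The plan is to mirror the strategy used for type $A_n^{(1)}$ in Lemma~\ref{lemma:type_A_equality}, namely an induction on the length of the path from the return value to $u_{\clfw_r}$ in the crystal $B(\clfw_r)$, while carefully tracking the effect of the doubling virtualization $v$. First I would note that for $r = n-1, n$ the node $r$ is minuscule, so $\delta_r$ is the map of Section~\ref{sec:minuscule}; by Theorem~\ref{thm:minuscule_classical_isomorphism} it commutes with the classical crystal operators, and the analogous equivariance holds for $\delta_{\mathrm{sp}}$ since it is a composite of $\lb$-type embeddings (which are strict $U_q(\g_0)$-morphisms) and the classically equivariant $\KSS{\delta}$. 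Consequently it suffices to prove that $\delta_r$ and $\delta_{\mathrm{sp}}$ agree on classically highest weight rigged configurations, with matching return values, and then propagate along $f_a$ exactly as in the proof of Lemma~\ref{lemma:type_A_equality}.

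Next I would set up the virtualization bookkeeping. The map $v$ has $\gamma_a = 2$ for all $a$, so by Equations~\eqref{eq:virtual_RC} and~\eqref{eq:virtual_vacancy_numbers} every part, rigging, and vacancy number of $(\nu, J)$ is doubled; in particular singular strings are sent to singular strings and a quasisingular string becomes one whose rigging lies two below its vacancy number. The left-box maps $\lb^{(r)}$, $\overline{\lb}$, and $\lb$ add only singular rows of length $1$ and, by Lemma~\ref{lemma:lb_fixes_vacancy_numbers}, leave all vacancy numbers unchanged. Using the observation that $\delta \circ \lb$ performs the same procedure as $\delta$ but starting from the element $b$ labeling the relevant arrow of the $\lb$-diagram, I would unwind the composite $(\KSS{\delta}\circ \lb)^{n-2}\circ \KSS{\delta}\circ \overline{\lb}\circ \KSS{\delta}\circ \lb^{(r)}$ as a single walk through $B(\clfw_r)$ in the doubled picture, following the diagram
\[
1 \longrightarrow \cdots \longrightarrow n-2 \longrightarrow (n-1,n) \longrightarrow [r].
\]
The successive $\KSS{\delta}$-steps read off the entries of the length-$n$ spin column one node at a time, which is precisely the data recorded by the path that $\delta_r$ traces from $u_{\clfw_r}$.

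The core of the argument is then a step-by-step matching claim: the rows selected by $\delta_{\mathrm{sp}}$ in the virtual configuration are exactly the factor-$2$ dilations of the rows selected by $\delta_r$ in $(\nu, J)$, so that devirtualizing by $v^{-1}$ returns $\delta_r(\nu, J)$ with the same return value. The base case is when both maps return $u_{\clfw_r}$, which occurs exactly when $\nu^{(r)}$ carries no selectable singular string; here the $\lb^{(r)}$, $\overline{\lb}$, and $\KSS{\delta}$ chain collapses to the maximal column, matching $\delta_r(\nu, J) = (\nu, J)$. For the inductive step I would compare $\delta_{\mathrm{sp}}$ on the two rigged configurations whose return values are $b$ and $f_a b$; by Lemma~\ref{lemma:difference} these differ by one additional box on a singular row of length $\geq \ell$ together with the expected desingularization of longer rows, and one checks that each $\KSS{\delta}$ in the chain transports this difference consistently, exactly as in the two tensor-product cases treated in Lemma~\ref{lemma:type_A_equality}.

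I expect the main obstacle to be reconciling the type-$D_n^{(1)}$ selection rules of $\KSS{\delta}$ from~\cite{S05} --- in particular the branching at node $n-2$ into the two spin nodes $n-1, n$ and the possible appearance of quasisingular strings --- with the uniform, unbranched selection performed by the minuscule $\delta_r$. Concretely, one must verify that the boxes removed by the two $\KSS{\delta}$ applications straddling $\overline{\lb}$ (which merges the $B^{n-1,1}\otimes B^{n,1}$ factors) correspond, after halving, to the single choice of a singular row that $\delta_r$ makes at the branch node, and that no spurious quasisingular selection is introduced by the doubling. This is the step where the type-specific description of $\KSS{\delta}$ and the factor-of-two scaling must be shown to be compatible; the remaining cases then follow formally by the same induction, completing the proof that $\delta_r = \delta_{\mathrm{sp}}$.
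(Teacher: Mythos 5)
Your proposal is correct and follows essentially the same route as the paper: induction on the return value $b$ along the crystal graph of $B(\clfw_r)$, with the base case $b = u_{\clfw_r}$ (no singular strings in $\nu^{(r)}$) and the inductive step comparing $(\nu,J)$ with $(\nu',J')$ via Lemma~\ref{lemma:difference} and matching the doubled box against the two applications of $\KSS{\delta}$ that see the changed entries of the spin column. The verification you flag as the "main obstacle" is exactly the point the paper settles by observing that $f_a^2\bigl(v(b)\bigr)$ alters the entries $a$ and $\overline{a+1}$ at heights $h$ and $\overline{h}$, so the $h$-th and $\overline{h}$-th applications of $\KSS{\delta}$ select the (doubled) added box and then terminate because the longer singular rows in $\nu^{(a\pm 1)}$ have become non-singular.
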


\begin{proof}
We will use the spin representation for elements of $B(\clfw_r) \iso B^{r,1}$ (as $U_q(\g_0)$-crystals) of type $D_n^{(1)}$. That is, an element $b \in B(\clfw_r)$ with $\wt(b) = \frac{1}{2} \sum_{i=1}^n s_i \epsilon_i$ (given as an element in the ambient $\frac{1}{2}\ZZ^n$ lattice) we represent as the $\pm$-sequence $(s_1, \dotsc, s_n)$ (see also, \textit{e.g.},~\cite[Sec.~6.4]{KN94}). Under the doubling map $v$, the corresponding tableau has an $i$ (resp.~$\overline{\imath}$) if and only if $s_i = +$ (resp.~$s_i = -$), written in increasing order down the column. Note that $u_{\clfw_r} = (+, +, \dotsc, +, \pm)$, where we have $s_n = -,+$ if $r = n-1, n$ respectively.

The proof is essentially the same as the proof of Lemma~\ref{lemma:type_A_equality}. Indeed, it is sufficient to show that the resulting rigged configurations are equal after applying $\delta_r$ and $\delta_{\mathrm{sp}}$ to a fixed rigged configuration $(\nu, J)$ as $B(\clfw_r)$ is a minuscule representation.
Let $b$ be the return value of $\delta_r$, which selects rows of length $\ell^{(a)}_1 \leq \cdots \leq \ell^{(a)}_{k_a}$ in $\nu^{(a)}$ for all $a \in I_0$, where we consider $k_a = 0$ if no row was selected in $\nu^{(a)}$.

Let $j_1 < \cdots < j_m$ be all indices such that $s_{j_h} = -$.
Let $(\tnu, \tJ) = v(\nu, J)$.
Let $\check{r} = n,n-1$ if $r = n-1,n$ respectively.
For $h = 1$, we must have applied $f_r f_{n-2} \dotsm f_{j_1}$ to $u_{\clfw_r}$ in order to obtain $b$.
Thus, the application of $\KSS{\delta} \circ \lb^{(r)}$ results in selecting the row of length $2\ell^{(a)}_1$ in $\tnu^{(a)}$ for all $a = r, n-2, \dotsc, j_1$ as a row is singular in $(\nu, J)$ if and only if it is singular in $(\tnu, \tJ)$.
Let $(\overline{\nu}, \overline{J})$ be the resulting rigged configuration.
For $h = 2$, we must have also applied $f_{\check{r}} f_{n-2} \dotsm f_{j_2}$.
When applying $\KSS{\delta} \circ \overline{\lb}$, we note that we have increased $p_i^{(\check{r})}$ for all $i < 2\ell_1^{(n-2)}$, and so we must select a row of length $2\ell_1^{(\check{r})}$ in $\overline{\nu}^{(\check{r})}$.
Similar to the proof of Lemma~\ref{lemma:type_A_equality}, we select rows of length $2\ell^{(a)}_2$ in $\overline{\nu}^{(a)}$ for all $a = n-2, \dotsc, j_2$.
The case for all other $h$ and applying $\KSS{\delta} \circ \lb$ is similar except the initial steps until we reach $n-1, n$ remove a box from an odd length row.
Similarly, all of the positive values in the column $v(b)$ remove a box from an odd length row.
Therefore, the resulting rigged configurations are equal and the claim follows.
\end{proof}

Next, we consider the case of $D_{n+1}^{(2)}$ and recall that $\virtual{B}^{n,1} = B^{n,1}$ of type $A_{2n-1}^{(1)}$. Let $\virtual{\delta}_n$ denote the map $\delta_n$ of type $A_{2n-1}^{(1)}$. Thus, we have the following from the definition of $\delta_r$.% and Lemma~\ref{lemma:type_A_equality}.

\begin{proposition}
\label{prop:D_twisted_minuscule_virtualization}
Let $\g$ be of type $D_{n+1}^{(2)}$. Then
\[
v \circ \delta_n = \virtual{\delta}_n \circ v.
\]
%Moreover, we have $\delta_n = \KSS{\delta}$.
\end{proposition}

%For $B(\clfw_5)$ , this is generated by one of the following in $B(\clfw_1)^{\otimes 4}$:
%\begin{align*}
%\bon \overline{6} 5 \otimes \bon 6 \otimes 1 \otimes 1,
%\\ \overline{4} 5 \otimes \bth 4 \otimes \bon 3 \otimes 1,
%\\ \bon \overline{6} 5 \otimes \bth 1 6 \otimes \bon 3 \otimes 1,
%\\ \btw 5 \otimes \bth 2 \otimes \bon 3 \otimes 1,
%\\ \bon \overline{6} 5 \otimes 1 \otimes \bon 6 \otimes 1,
%\\ \btw 5 \otimes \overline{6} 2 \otimes \bon 6 \otimes 1.
%\end{align*}

\begin{lemma}
\label{lemma:dual_equivalence}
Let $\g$ be of type $E_6^{(1)}$ and $B = B^{6,1} \otimes B^{\bullet}$. Then
\begin{align*}
\delta_6 & = \delta_1 \circ \delta_1 \circ \lb,
\\ \delta_1 & = \delta_6 \circ \delta_6 \circ \lb^{\vee}.
\end{align*}
\end{lemma}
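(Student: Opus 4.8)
The plan is to deduce the first identity from the commutation of the left-box map with the full bijection, which is already available as Proposition~\ref{prop:lb_bijection_reduction}, and then obtain the second identity from the first by the Dynkin diagram automorphism. Since node $6$ is minuscule, $\delta_6$ is a valid instance of the map of Section~\ref{sec:minuscule}, so $\Phi$ on $\RC(B^{6,1} \otimes B^\bullet)$ may be computed by peeling off the left factor with $\delta_6$; likewise $\Phi$ on $\RC(B^{1,1} \otimes B^{1,1} \otimes B^\bullet)$ is computed by two successive applications of $\delta_1$. Applying the $\lb$-diagram of Equation~\eqref{eq:lb_E6} to $r = 6$ gives the arrow $6 \xrightarrow{\bon 6} 1$ with sink $\sigma = 1$, so $\lb \colon \RC(B^{6,1} \otimes B^\bullet) \to \RC(B^{1,1} \otimes B^{1,1} \otimes B^\bullet)$ and $\sigma \neq 6$, which is the hypothesis of Proposition~\ref{prop:lb_bijection_reduction}.

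First I would expand both sides of $\lb \circ \Phi = \Phi \circ \lb$ evaluated at $(\nu, J)$, writing $\Phi_\bullet$ for $\Phi$ on $\RC(B^\bullet)$. Setting $\Phi(\nu, J) = b \otimes b_\bullet$ with $b$ the return value of $\delta_6$ and $b_\bullet = \Phi_\bullet(\delta_6(\nu, J))$, the right-hand side is $\lb(\Phi(\nu, J)) = \lb(b) \otimes b_\bullet$. Computing the left-hand side through the recursive definition of $\Phi$ on $\RC(B^{1,1} \otimes B^{1,1} \otimes B^\bullet)$, the two applications of $\delta_1$ return some $y, x \in B^{1,1}$ and leave $(\delta_1 \circ \delta_1 \circ \lb)(\nu, J)$, so $\Phi(\lb(\nu, J)) = y \otimes x \otimes \Phi_\bullet\bigl((\delta_1 \circ \delta_1 \circ \lb)(\nu, J)\bigr)$. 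Equating the two and comparing the $B^{1,1} \otimes B^{1,1}$ components forces $y \otimes x = \lb(b)$ and $\Phi_\bullet\bigl((\delta_1 \circ \delta_1 \circ \lb)(\nu, J)\bigr) = \Phi_\bullet(\delta_6(\nu, J))$; since $\Phi_\bullet$ is a bijection by Theorem~\ref{thm:bijection}, the configurations agree and the return values agree under $\lb$. This is exactly the assertion $\delta_6 = \delta_1 \circ \delta_1 \circ \lb$ once return values are identified through the tableau description of $B^{6,1}$.

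For the second identity I would invoke the Dynkin diagram automorphism $\tau$ of $E_6^{(1)}$ interchanging $1 \leftrightarrow 6$ and $3 \leftrightarrow 5$ (and fixing $0, 2, 4$). This $\tau$ acts on rigged configurations by permuting the partitions $\nu^{(a)}$, satisfies $\tau \circ \delta_1 \circ \tau^{-1} = \delta_6$, and I would check directly that relabeling the arrows of the $\lb$-diagram of Equation~\eqref{eq:lb_E6} by $\tau$ produces precisely the $\lb^{\vee}$-diagram of Equation~\eqref{eq:lb_E6_dual} (for instance $6 \xrightarrow{\bon 6} 1$ becomes $1 \xrightarrow{1\bsix} 6$, and $2 \xrightarrow{2\bsix} 6$ becomes $2 \xrightarrow{\bon 2} 1$), so that $\tau \circ \lb = \lb^{\vee} \circ \tau$. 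Conjugating the already-proven identity $\delta_6 = \delta_1 \circ \delta_1 \circ \lb$ by $\tau$ then yields $\delta_1 = \delta_6 \circ \delta_6 \circ \lb^{\vee}$.

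The main point requiring care is to ensure the argument is not circular: Proposition~\ref{prop:lb_bijection_reduction} and Theorem~\ref{thm:bijection} must be established without reference to this lemma, which they are, the former relying only on the structure of $\delta_\sigma$ for the sink together with Lemma~\ref{lemma:lb_fixes_vacancy_numbers}. As a backup, should one prefer a self-contained verification, the identity can instead be proved by induction on the return value $b \in B(\clfw_6)$ in the style of Lemma~\ref{lemma:type_A_equality} and Lemma~\ref{lemma:type_D_equality}: the base case $b = u_{\clfw_6}$ is immediate, and the inductive step compares how $(\nu, J)$ changes under Lemma~\ref{lemma:difference} with the two cases $f_a(y \otimes x) = (f_a y) \otimes x$ and $y \otimes (f_a x)$, the delicate case being the transfer of a box between the two $B^{1,1}$ layers, which is controlled by Lemma~\ref{lemma:key_minuscule} applied to both $B(\clfw_1)$ and $B(\clfw_6)$.
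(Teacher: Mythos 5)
Your primary route is circular. In this paper, $\Phi$ on $\RC(B^{6,1} \otimes B^{\bullet})$ is by definition computed through the $\lb$-diagram of Equation~\eqref{eq:lb_E6}, whose unique sink is $\sigma = 1$: one applies $\lb$ and then two instances of $\delta_1$. Consequently the first tensor factor of $\Phi(\nu, J)$ is $\lb^{-1}$ of the pair returned by $\delta_1 \circ \delta_1$ --- not, a priori, the return value of $\delta_6$. When you write $\Phi(\nu, J) = b \otimes b_{\bullet}$ with $b$ the return value of $\delta_6$ and $b_{\bullet} = \Phi_{\bullet}\bigl(\delta_6(\nu, J)\bigr)$, you are silently invoking the \emph{other} candidate bijection (the one built from $\delta_6$ and the $\lb^{\vee}$-diagram, which exists by Lemma~\ref{lemma:minuscule_bijections}) and asserting that it agrees with $\Phi$; that agreement is precisely the content of the lemma and of Theorem~\ref{thm:equivalence}. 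Proposition~\ref{prop:lb_bijection_reduction} concerns the single recursively defined $\Phi$ and never involves $\delta_6$, so it cannot produce the identification you need; and Theorem~\ref{thm:bijection} does not rescue the argument, since both maps are statistic-preserving classical crystal isomorphisms but the classical decomposition of these tensor products has multiplicities, so no abstract uniqueness argument identifies them.

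Your backup is the correct proof and is in fact what the paper does: the argument is ``similar to the proof of Lemma~\ref{lemma:type_A_equality},'' i.e., induction on the return value $b \in B(\clfw_6)$, with base case $b = u_{\clfw_6}$ and an inductive step that compares, via Lemma~\ref{lemma:difference}, how the preimage rigged configurations change in the two cases $f_a(y \otimes x) = (f_a y) \otimes x$ and $y \otimes (f_a x)$, using Lemma~\ref{lemma:ep_phi_minuscule_repr} and Lemma~\ref{lemma:key_minuscule} to control which rows are singular. Your derivation of the second identity from the first by conjugating with the diagram automorphism $1 \leftrightarrow 6$, $3 \leftrightarrow 5$ is sound: the automorphism carries the $\lb$-diagram of Equation~\eqref{eq:lb_E6} to the $\lb^{\vee}$-diagram of Equation~\eqref{eq:lb_E6_dual} and intertwines $\delta_1$ with $\delta_6$, which matches the paper's own remark at the start of Section~\ref{sec:equivalence} that $\delta_6$ follows from this automorphism. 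So the proposal is salvageable, but only by promoting the backup to the main argument and discarding the first two paragraphs.
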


\begin{proof}
Similar to the proof of Lemma~\ref{lemma:type_A_equality}.
\end{proof}

%\begin{proof}
%Fix a return value for $\delta_6(\nu, J) = b^{\vee}$, and suppose $b^{\vee} \mapsto b \otimes b'$ under the canonical classical strict embedding $B^{6,1} \lhook\joinrel\rightarrow B^{1,1} \otimes B^{1,1}$. By direct inspection of $B^{1,1} \otimes B^{1,1}$, we see there exists a path $(a_1, a_2, \dotsc, a_i, a'_1, a'_2, \dotsc, a'_{i'})$ such that
%\[
%f_{a'_{i'}} \dotsm f_{a'_1} f_{a_i} \dotsm f_{a_1}(b \otimes b') = (f_{a_i} \dotsm f_{a_1} b) \otimes (f_{a'_{i'}} \dotsm f_{a'_1} b').
%\]
%Since $\lb$ adds precisely the length 1 singular strings, which correspond to the path $\bon6 \to 1$. They are selected by $\delta_1$, and so the continuation of the procedure for $\delta_1$ and the entire $\delta_6$ agree as claimed.
%
%Thus it remains to show that the second application of $\delta$ does not remove any additional boxes. Let $(\nu_{\delta}, J_{\delta}) = \delta_1(\nu, J)$. Suppose there is an unbarred $a \in b'$. If $\overline{a} \in b$, then there are no singular strings in $\nu^{(a)}_{\delta}$ by the change in vacancy numbers. If $\overline{a} \notin b$, then there exists no singular string in $\nu^{(a)}$ as it would be selected by $\delta_6$ or $a = 1$ and $\lb$ makes all strings in $\nu^{(1)}$ non-singular. Thus the claim follows.
%\end{proof}

In type $D_{n+1}^{(2)}$, we define $\lb \colon B^{1,1} \otimes B^{\bullet} \to B^{n,1} \otimes B^{n,1} \otimes B^{\bullet}$ by
\begin{align*}
\young(1) \otimes b^{\bullet} & \mapsto 1 \bn \otimes n \otimes b^{\bullet},
\\ \young(\emptyset) \otimes b^{\bullet} & \mapsto \bn \otimes n \otimes b^{\bullet},
\end{align*}
and extended as a $U_q(\g_0)$-crystal embedding.

\begin{lemma}
\label{lemma:E6_adjoint_equivalence}
Consider $B = B^{\theta,1} \otimes B^{\bullet}$ of type $\g$. Then, we have
\[
\delta_\theta = \begin{cases}
\delta_n \circ \delta_1 = \delta_1 \circ \delta_n & \text{if } \g = A_n^{(1)}, \\
\delta_1 \circ \delta_1 \circ \lb & \text{if } \g = D_n^{(1)}, \\
\delta_n \circ \delta_n \circ \lb & \text{if } \g = D_{n+1}^{(2)}, \\
\delta_1 \circ \delta_1 \circ \lb \circ \delta_1 \circ \lb & \text{if } \g = E_6^{(1)}, \\
\delta_7 \circ \delta_7 \circ \lb & \text{if } \g = E_7^{(1)}.
\end{cases}
\]
\end{lemma}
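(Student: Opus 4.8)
The plan is to prove each case by the same inductive strategy used in Lemma~\ref{lemma:type_A_equality} and Lemma~\ref{lemma:dual_equivalence}, comparing the two procedures directly on rigged configurations. Since $\Phi$ is assembled recursively out of the building blocks $\delta$ and $\lb$, it suffices to establish the stated identities of $\delta$-maps on $\RC(B^{\theta,1} \otimes B^{\bullet})$. Throughout I would use the observation from Section~\ref{sec:box_map} that $\delta_\sigma \circ \lb$ is the same procedure as $\delta_\sigma$ but started at the element $b$ labelling the relevant arrow of the $\lb$-diagram; iterating, the right-hand side of each identity becomes a sequence of box-removal sweeps that together select strings according to a path in $B(\clfw_\sigma)^{\otimes k}$, where $\sigma$ is the sink of the $\lb$-diagram ($\sigma=1$ for $D_n^{(1)}$ and $E_6^{(1)}$, $\sigma=n$ for $D_{n+1}^{(2)}$, $\sigma=7$ for $E_7^{(1)}$, and the two minuscule ends $1,n$ for $A_n^{(1)}$) and $k$ is the number of minuscule factors produced by $\lb$. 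The key structural input is that $\lb$ is a strict $U_q(\g_0)$-crystal embedding (Section~\ref{sec:box_map}), so iterating realizes $B(\theta)$ as a subcrystal of $B(\clfw_\sigma)^{\otimes k}$, identifying each $x_\alpha, y_a \in B(\theta)$ with a tensor of minuscule elements.

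First I would dispose of the base case: when the return value of $\delta_\theta$ is the maximal element $x_\theta$, there are no available singular strings, both sides leave the rigged configuration unchanged, and the composition returns the image of $x_\theta$ under the iterated $\lb$. For the inductive step I would assume the identity holds when $\delta_\theta$ returns $b \in B(\theta)$ and prove it for $f_a b$. Letting $(\nu,J)$ and $(\nu',J')$ be the rigged configurations returning $b$ and $f_a b$ respectively under $\delta_\theta$, Lemma~\ref{lemma:difference}, in the form already used in the proof of Lemma~\ref{lemma:adjoint_bijections} (valid with minor modifications when the return value is a $y_a$), describes exactly how $(\nu',J')$ differs from $(\nu,J)$: an extra box on a sufficiently long singular row, together with the loss of singularity of certain longer rows. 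I would then trace this single change through both the direct sweep $\delta_\theta$ and the iterated composition, matching the string selected at each step; the identification of paths in $B(\theta)$ with concatenated paths in $B(\clfw_\sigma)^{\otimes k}$ guarantees that the colours $a$ and the admissible lengths $i_a \geq \ell_{j-1}$ agree, by Lemma~\ref{lemma:key_adjoint} and Lemma~\ref{lemma:key_minuscule}.

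The hard part will be the behaviour of $\delta_\theta$ near the zero-weight elements $y_a$. There the direct map enters its $\ellbar$-phase and, via Case~(D), removes two boxes from a single row, while Case~(Q) leaves a quasisingular row; I must check that this doubled removal is reproduced exactly by the two successive applications of $\delta_\sigma$ corresponding to the outgoing and returning halves of the edge $x_{\alpha_a} \to y_a \to x_{-\alpha_a}$, which by Lemma~\ref{lemma:key_adjoint} is treated as a single edge. Concretely, the first $\delta_\sigma$-sweep should select the row at the length recorded by $\ell_{j'}$, and the second sweep, starting from the image under $\lb$ one step further along the path, should revisit that same (now shortened) row precisely when Case~(D) fires, with the intermediate (quasi)singularities created between the two sweeps accounting for the bookkeeping of Case~(Q). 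This is the one place where the argument is genuinely more delicate than in Lemma~\ref{lemma:type_A_equality}, and it is where the explicit classification of $B(\theta)$ in Proposition~\ref{prop:adjoint_elements} is needed.

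Finally, the alternate and higher-depth factorizations I would reduce to the generic case. For $\delta_\theta = \delta_1 \circ \delta_n \circ \lb^\vee$ in type $A_n^{(1)}$, I would deduce it from the first factorization using the diagram automorphism interchanging the two ends of the Dynkin diagram, exactly as $\delta_1 = \delta_6 \circ \delta_6 \circ \lb^\vee$ was obtained from $\delta_6 = \delta_1 \circ \delta_1 \circ \lb$ in Lemma~\ref{lemma:dual_equivalence}. The $E_6^{(1)}$ case, with its two $\lb$-maps and three $\delta_1$-sweeps, is handled by first reducing $B^{2,1}=B^{\theta,1}$ to $B^{1,1}\otimes B^{6,1}$ and peeling off the $B^{1,1}$ factor by $\delta_1\circ\lb$, leaving $B^{6,1}$; expanding the resulting inner $\delta_6$ by the already-established identity $\delta_6 = \delta_1 \circ \delta_1 \circ \lb$ of Lemma~\ref{lemma:dual_equivalence} then yields the stated five-fold composition, so that only the adjoint-to-$B^{6,1}$ reduction requires the induction above.
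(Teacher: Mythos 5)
Your proposal follows essentially the same route as the paper's proof: induction over $B(\theta)$ realized inside the minuscule factors produced by $\lb$, with the base case at $x_\theta$, the inductive step split according to which tensor factor $f_a$ acts on (with the quasisingular bookkeeping at the elements $x_{\alpha_a}$ and $y_a$), the reduction of the five-fold $E_6^{(1)}$ composition to $\delta_6 \circ \delta_1 \circ \lb$ via Lemma~\ref{lemma:dual_equivalence}, and the dual $A_n^{(1)}$ factorization via the diagram automorphism. The only omission of note is that the paper also treats the terminal return value $\emptyset$ as a separate case, but your discussion of the $y_a$-phase covers the same ground.
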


\begin{proof}
We first consider type $E_6^{(1)}$.

Note that $B(\theta) \subseteq B(\clfw_1)^{\otimes 3}$ is the unique factor with highest weight element $2 \overline{6} \otimes \bon 6 \otimes 1$.
By Lemma~\ref{lemma:dual_equivalence}, % and Equation~\eqref{eq:lb_E6_dual}
we have
\[
\delta_1 \circ \delta_1 \circ \lb \circ \delta_1 \circ \lb = \delta_6 \circ \delta_1 \circ \lb.
%\delta_6 \circ \delta_6 \circ \lb^{\vee} \circ \delta_1 \circ \lb = \delta_1 \circ \delta_6 \circ \lb^{\vee}.
\]
Therefore, it is sufficient to show $\delta_{\theta} = \delta_6 \circ \delta_1 \circ \lb$. %\delta_1 \circ \delta_6 \circ \lb^{\vee}$.
%Note that $6 = e_6 e_5 e_4 e_3 e_1 \bon 2$, so we add a singular row of length 1 to $\nu^{(a)}$ for $a \in \{1, 3, 4, 5, 6\}$.
%Note that we have the unique highest vector of
%\[
%\bon 2 \otimes 1 \in B(\clfw_2) \subseteq B(\clfw_6) \otimes B(\clfw_1) \iso B^{6,1} \otimes B^{1,1}.
%2\bsix \otimes 6 \in B(\clfw_2) \subseteq B(\clfw_1) \otimes B(\clfw_6) \iso B^{1,1} \otimes B^{6,1}.
%\]
Note that the return values can be ignored since there is a unique $B(\theta) \subseteq B(\clfw_6) \otimes B(\clfw_1)$ and unique $B(\clfw_6) \subseteq B(\clfw_1) \otimes B(\clfw_1)$, giving a canonical identification of the return values. Furthermore, the only non-zero weight multiplicity of $B(\theta)$ is for weight $0$, but these are distinguished by $\varepsilon_a(y_{a'}) = \delta_{aa'}$, which in terms of the algorithm $\delta_{\theta}$ is determined by the partition $\nu^{(a)}$ the algorithm terminates at.

We proceed by induction on depth of the return value of $\delta_{\theta}$. The base case is when $\delta_{\theta}$ returns $u_{\theta}$ as no boxes are removed, and so we wish to show $\delta_6 \circ \delta_1 \circ \lb$ returns $2 \bsix \otimes 6$. Adding $u_{\theta}$ under $\delta_{\theta}^{-1}$ does not change the rigged configuration. Similarly, adding $6$ under $\delta_6^{-1}$ only makes all rows in $\nu^{(6)}$ non-singular. Hence, all boxes added by $2 \bsix$ under $\delta_1^{-1}$ must be of length $1$, which are all subsequently removed by $\lb^{-1}$. Hence, we have $\delta_{\theta} = \delta_6 \circ \delta_1 \circ \lb$.

Consider $b' \otimes b \in B(\theta) \subseteq B(\clfw_1) \otimes B(\clfw_6)$. If $f_a(b' \otimes b) = (f_a b') \otimes b$, then the claim follows from induction as the new box is selected on the application of $\delta_1$. Therefore, suppose $f_a(b' \otimes b) = b' \otimes (f_a b)$. Therefore, we must have $\varepsilon_a(b') = 0 < 1 = \varphi_a(b)$ by the tensor product rule. If $\varphi_a(b') = 1$, then we have $\varphi_a(b' \otimes b) = 2$, and hence, $b' \otimes b$ corresponds to $x_{\alpha_a}$. In this case, $f_a(b' \otimes b)$ differs by a quasisingular string in $\nu^{(a)}$, hence after $\delta_1 \circ \lb$, this string becomes singular and selected by $\delta_6$. Thus, the result of $\delta_6 \circ \delta_1 \circ \lb$ is $b' \otimes f_a b$. Therefore, we have $\varphi_a(b') = 0$. Hence, the new singular row in $\nu^{(6)}$ is selected during $\delta_6$. Hence, we have $\delta_{\theta} = \delta_6 \circ \delta_1 \circ \lb$.

% -6
% -1,6 |--> -6 \otimes -1,6
% lb adds 2,-6
Finally, we consider the case when $\delta_{\theta}$ returns $\emptyset$. In this case, we want to show $\delta_6 \circ \delta_1 \circ \lb$ returns $6$ and $\bsix$ respectively. Adding $\emptyset$ under $\delta_{\theta}^{-1}$ adds $c_a$ singular rows of length $1$ to $\nu^{(a)}$. Next, we note that $\delta_6^{-1}$ adding $6$ makes all rows in $\nu^{(6)}$ non-singular. Therefore, adding $\bsix$ under $\delta_1^{-1}$ can only create singular rows of length $1$ and, after performing $\lb^{-1}$, there are precisely $c_a$ such rows. Hence, we have $\delta_{\theta} = \delta_6 \circ \delta_1 \circ \lb$.

The other cases are similar.
\end{proof}

\begin{lemma}
Let $\g$ be of type $C_n^{(1)}$. Then $\delta_1 = \KSS{\delta}$.
\end{lemma}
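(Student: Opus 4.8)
The plan is to avoid unwinding the explicit combinatorial rule for $\KSS{\delta}$ in type $C_n^{(1)}$ from~\cite{OSS03II} and instead reduce everything to the already-settled dual type $D_{n+1}^{(2)}$ through the virtualization map. Recall from Section~\ref{sec:untwisted} and Table~\ref{table:virtual_dual} that $\delta_1$ for type $C_n^{(1)}$ is \emph{defined} as $v^{-1} \circ \virtual{\delta}_1 \circ v$, where $v$ is the virtualization map into the dual type $\g^{\vee} = D_{n+1}^{(2)}$ with scaling factors $(2,1,\dotsc,1,2)$, and $\virtual{\delta}_1$ is the map for node $1$ of $D_{n+1}^{(2)}$. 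Since node $1$ is the adjoint node $N_{\g}$ of $D_{n+1}^{(2)}$ and $B^{\theta,1} \iso B^{1,1}$ there, $\virtual{\delta}_1$ is precisely the adjoint map $\delta_{\theta}$ of $D_{n+1}^{(2)}$. By Theorem~\ref{thm:virtual_untwisted_delta} this map preserves the image of $v$, so it makes sense and the tautological intertwining relation $v \circ \delta_1 = \virtual{\delta}_1 \circ v$ holds on the image of $v$.

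First I would record the identity $\virtual{\delta}_1 = \virtual{\KSS{\delta}}$ on the dual side. This requires no new work: as noted in the remark following the definition of $\delta_{\theta}$, our adjoint map $\delta_{\theta}$ in type $D_{n+1}^{(2)}$ coincides with the map $\KSS{\delta}$ of~\cite{OSS03}. The second and essential ingredient is that the KSS map $\KSS{\delta}$ in type $C_n^{(1)}$ intertwines with the \emph{same} virtualization map, namely $v \circ \KSS{\delta} = \virtual{\KSS{\delta}} \circ v$. This is exactly~\cite[Thm.~7.1]{OSS03II} applied to the folding $C_n^{(1)} \lhook\joinrel\longrightarrow D_{n+1}^{(2)}$ obtained by composing the first two arrows of Equation~\eqref{eq:flipping_arrows}; see also the remark after Proposition~\ref{prop:type_A_virtualization}, where this composite folding and its commutation with the bijection is discussed.

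Granting these two facts, the conclusion is a formal diagram chase. Using the definition of $\delta_1$ followed by the two intertwining relations, we have
\[
v \circ \delta_1 = \virtual{\delta}_1 \circ v = \virtual{\KSS{\delta}} \circ v = v \circ \KSS{\delta},
\]
and since $v$ is injective we may cancel it on the target to obtain $\delta_1 = \KSS{\delta}$.

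The main obstacle I anticipate is bookkeeping of conventions rather than any genuine difficulty. One must verify that the doubling of the partition $\nu^{(n)}$ required in type $C_n^{(1)}$ (Remark~\ref{remark:conventions}) is compatible with the scaling factors $(2,1,\dotsc,1,2)$ of $v$ and with the rigging convention of~\cite{OSS03, OSS03II}, so that the virtualization $v$ appearing in the defining relation $\delta_1 = v^{-1}\circ\virtual{\delta}_1\circ v$ is literally the same map as the one in $v \circ \KSS{\delta} = \virtual{\KSS{\delta}} \circ v$. The only point demanding care is that the cited commutation genuinely covers the \emph{direct} folding into $D_{n+1}^{(2)}$ and not merely the composite folding into $A_{2n-1}^{(1)}$; this is why the remark after Proposition~\ref{prop:type_A_virtualization} is the appropriate reference. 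Once the normalizations are aligned, no further computation is needed.
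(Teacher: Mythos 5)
Your argument is logically sound and follows the same basic strategy as the paper's (very terse) proof: unwind the definition $\delta_1 = v^{-1}\circ\virtual{\delta}_1\circ v$, identify $\virtual{\delta}_1$ with the adjoint map $\delta_{\theta} = \KSS{\delta}$ of type $D_{n+1}^{(2)}$, and cancel the injective virtualization map. The difference lies in where the intertwining of the KSS map with $v$ comes from. The paper invokes its own Proposition~\ref{prop:type_A_virtualization}, i.e.\ the internally proved commutation $v\circ\delta_{\theta} = \virtual{\delta}_{\theta}\circ v$ along the arrows of the chain in Equation~\eqref{eq:flipping_arrows}, together with the description of $\delta_1$; you instead import $v\circ\KSS{\delta} = \virtual{\KSS{\delta}}\circ v$ from~\cite[Thm.~7.1]{OSS03II}. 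Both close the same diagram; your route avoids redoing the case analysis behind Proposition~\ref{prop:type_A_virtualization} at the cost of leaning on the external reference.

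One point you flag but should actually close: as the remark after Proposition~\ref{prop:type_A_virtualization} indicates, \cite[Thm.~7.1]{OSS03II} concerns the virtualization of $C_n^{(1)}$ (and of $A_{2n}^{(2)}$, $D_{n+1}^{(2)}$) into the ambient type $A_{2n-1}^{(1)}$, not the direct folding $C_n^{(1)}\lhook\joinrel\longrightarrow D_{n+1}^{(2)}$ that your diagram chase requires. This is repairable in one line: factor $v_{C\to A} = v_{D\to A}\circ v_{C\to D}$, apply the cited theorem to both $C_n^{(1)}$ and $D_{n+1}^{(2)}$ to get $v_{D\to A}\circ v_{C\to D}\circ\KSS{\delta} = \KSS{\delta}^{A}\circ v_{D\to A}\circ v_{C\to D} = v_{D\to A}\circ\virtual{\KSS{\delta}}\circ v_{C\to D}$, and cancel the injective $v_{D\to A}$. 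With that inserted, and the routine check (which you already note) that the doubling of $\nu^{(n)}$ from Remark~\ref{remark:conventions} matches the scaling factors $(2,1,\dotsc,1,2)$, the proof is complete.
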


\begin{proof}
This follows from the description of $\delta_1$ and Proposition~\ref{prop:type_A_virtualization}.
\end{proof}

By combining the above results and noting that our proofs did not rely on any specific choice of $\lb$-diagram (with a given sink), we have the following.

\begin{theorem}
\label{thm:equivalence}
Let $\g$ be of dual untwisted type or type $A_{2n}^{(2)}$, $A_{2n}^{(2)\dagger}$, $C_n^{(1)}$.
Let $\sigma$ be either a minuscule or adjoint node.
Let $\Phi$ be a bijection defined by $\delta_{\sigma}$ with a corresponding $\lb$-diagram. Then we have
\[
\Phi = \KSS{\Phi}.
\]
Moreover, for any bijection $\Phi'$ defined by $\delta_{\sigma'}$, where $\sigma'$ is either a minuscule or adjoint node, with a corresponding $\lb$-diagram, then we have $\Phi = \Phi'$.
\end{theorem}

\begin{conjecture}
Theorem~\ref{thm:equivalence} holds for type $B_n^{(1)}$.
\end{conjecture}

Theorem~\ref{thm:equivalence} states that there is a unique bijection $\Phi$ that is defined by KKR-type algorithms. In other words, for a fixed rigged configuration, we can only obtain different KR tableaux representations of the \emph{same} element in a tensor product of KR crystals under such a bijection. It is likely that there is a unique bijection that sends cocharge to energy and the combinatorial $R$-matrix to the identity map.

%=====================================================================
\section{The filling map}
\label{sec:filling_map}

In this section, we characterize all highest weight rigged configurations appearing in $\hwRC(B^{r,s})$ for various $r \in I_0$ in types $E_{6,7,8}^{(1)}$, $E_6^{(2)}$, and $F_4^{(1)}$.
Note that it has not been shown that $B^{r,s}$ is the crystal basis of the KR module $W^{r,s}$ in general.
For non-exceptional types, this was shown in~\cite{OS08} and for certain other special cases in~\cite{JS10,KMOY07,Yamane98}. We note that when $r$ is a special (resp.\ adjoint) node, then $W^{r,s}$ has a crystal basis by~\cite[Rem.~3.1]{OS08} and~\cite[Prop.~3.4.4]{KKMMNN92} (resp.~\cite[Prop.~3.4.5]{KKMMNN92}).
However, this provides further evidence for this to be true as we show many of the graded decompositions agrees with the conjectures of~\cite{HKOTY99,HKOTT02}.
Using this, we describe the filling map and define \defn{Kirillov--Reshetikhin (KR) tableaux} for $B^{r,s}$ for certain $r$ in types $E_{6,7,8}^{(1)}$ and $E_6^{(2)}$ as an extension of~\cite{OSS13,SchillingS15,Scrimshaw15}.
We give a table of the KR tableaux for $B^{r,1}$ in Appendix~\ref{sec:KR_tableaux}.
Note that the height of the KR tableaux is the distance from $r$ to $\sigma$ in the $\lb$-diagram (recall $\sigma$ is the unique sink in the $\lb$-diagram); in particular, $r$ no longer necessarily corresponds to the height.

%\TravisR{For $B^{r,1}$, with $r$ a minuscule node, we note that the weight spaces are all 1 dimensional. Hence, we can define a twisted automorphism by sending $I \setminus \{0,r\}$ components to themselves as that decomposition is multiplicity-free.  Since the $I_r := I \setminus \{r\}$ and $I_0$ decompositions uniquely determine the structure,  we have a combinatorial description of $e_0$ and $f_0$ in this case.}

For the nodes we do not consider here, the roots which appear as edge labels in the (ambient) Kleber tree, and hence the weights can appear in the decompositions, are completely determined by those at level $1$, \textit{i.e.}, appear in the (ambient) Kleber tree for $B^{r,1}$. Thus, it is possible to determine an explicit parameterization of the classical decomposition of $B^{r,s}$ for all exceptional types, as well as the corresponding rigged configurations (and their cocharge). In particular, this parameterization will be given by integer points in a polytope.

However, the author believes any such parameterization is likely to not be enlightening as it will involve numerous linear inequalities (and possibly some equalities). Then for the filling map, the rules can become even more complicated. For example, consider the parameterization for $B^{2,s}$ of type $D_4^{(3)}$ given in~\cite[Def.~4.10]{Scrimshaw15} and the corresponding filling map.

%%%%%%%%%%
\subsection{Notation}

We define some notation to aid in the description of $\hwRC(B^{r,s})$.
Consider a tuple $(\alpha^{(1)}, \alpha^{(2)}, \dotsc, \alpha^{(\ell)})$, where $\alpha^{(k)} = \sum_{a \in I_0} c_a^{(k)} \clsr_a \in Q^+_0$ for all $1 \leq k \leq \ell$. We denote by $\nu(\alpha^{(1)}, \alpha^{(2)}, \dotsc, \alpha^{(\ell)})$ the configuration given by $\nu^{(a)}$ stacking a column of height $c_a^{(k)}$ for all $1 \leq k \leq \ell$ and left justifying this (so it is a partition). We also denote $k \ast [\alpha]$ as the sequence $(\alpha, \alpha, \dotsc, \alpha)$ of length $k$ and $k_1 \ast [\alpha^{(1)}] + k_2 \ast [\alpha^{(2)}]$ as the concatenation of the two sequences.

%%%%%%%%%%
\subsection{Type $E_6^{(1)}$}

% --- r = 1,6 ---

For $r = 1$ and $r = 6$ in type $E_6^{(1)}$, we have $B^{r,s} \iso B(s\clfw_r)$ as $U_q(\g_0)$-crystals. Thus, the filling map $\fillmap \colon B^{r,s} \to T^{r,s}$ is the identity map. Moreover, we have $\RC(B^{r,s}) \iso \RC(B^{r,s}; s \clfw_r)$ as $U_q(\g_0)$-crystals.

% --- r = 2 ---

\begin{proposition}
\label{prop:RC_E6_2}
Consider the KR crystal $B^{2,s}$ of type $E_6^{(1)}$. We have
\[
\RC(B^{2,s}) = \bigoplus_{k=0}^s \RC(B^{2,s}; (s-k) \clfw_2).
\]
Moreover, the highest weight rigged configurations in $\RC(B^{2,s})$ are given by $\nu(k\ast[\alpha])$ with all riggings $0$, where
\[
\alpha = \clsr_1 + 2\clsr_2 + 2\clsr_3 + 3\clsr_4 + 2\clsr_5 + \clsr_6 = \clfw_2,
\]
and
\[
\cc(\nu, J) = k.
\]
\end{proposition}

\begin{proof}
Note that by Condition~(K2) of Definition~\ref{def:kleber_algorithm}, the only root that we can subtract from $\clfw_2$ is $\alpha$. Thus the Kleber tree $T(B^{2,s})$ is a path where the node at depth $k$ has weight $\clfw_2 - k\alpha$ for $0 \leq k \leq s$. Hence, the rigged configuration is given by $\nu(k \ast [\alpha])$. It is straightforward to check that $\cc(\nu, J) = k$.
\end{proof}

\begin{definition}
\label{def:filling_E6_2}
We define the filling map $\fillmap \colon B^{2,s} \to T^{2,s}$ on the classically highest weight element $b \in B\bigl( (s-k)\Lambda_2\bigr) \subseteq B^{2,s}$ by defining $\fillmap(b)$ as the tableau with the first $s-k$ columns as $\column{1, \bon6, 2\bsix}$, the next $\lfloor k / 2 \rfloor$ columns as
\[
\begin{tikzpicture}[baseline]
\matrix [matrix of math nodes,column sep=-.4, row sep=-.5,text height=10,text width=15,align=center,inner sep=3] 
 {
 	\node[draw]{\btw5}; &
	\node[draw]{1}; \\
	\node[draw]{\bfive6}; &
	\node[draw]{\bon6}; \\
	\node[draw]{\bsix}; &
	\node[draw]{2\bsix}; \\
 };
\end{tikzpicture}
\]
and if $k$ is odd, the final column as $\column{1, \bon6, \bsix}$. We then extend $\fillmap$ as a classical crystal isomorphism.
\end{definition}

\begin{example}
Consider $B^{2, 8}$ of type $E_6^{(1)}$. Then $B(k\clfw_2) \subseteq B^{2,8}$ corresponds to the classically highest weight element
\begin{align*}
k = 4 \rightsquigarrow &
\begin{tikzpicture}[baseline]
\matrix [matrix of math nodes,column sep=-.4, row sep=-.5,text height=10,text width=15,align=center,inner sep=3, ampersand replacement=\&] % Last part is because this is in an align* environment
 {
	\node[draw]{1}; \&
	\node[draw]{1}; \&
	\node[draw]{1}; \&
	\node[draw]{1}; \&
 	\node[draw,fill=gray!20]{\btw5}; \&
	\node[draw,fill=gray!20]{1}; \&
	\node[draw,fill=gray!20]{\btw5}; \&
	\node[draw,fill=gray!20]{1}; \\
	\node[draw]{\bon6 }; \&
	\node[draw]{\bon6 }; \&
	\node[draw]{\bon6 }; \&
	\node[draw]{\bon6 }; \&
	\node[draw,fill=gray!20]{\bfive6}; \&
	\node[draw,fill=gray!20]{\bon6 }; \&
	\node[draw,fill=gray!20]{\bfive6}; \&
	\node[draw,fill=gray!20]{\bon6 }; \\
	\node[draw]{2\bsix}; \&
	\node[draw]{2\bsix}; \&
	\node[draw]{2\bsix}; \&
	\node[draw]{2\bsix}; \&
	\node[draw,fill=gray!20]{\bsix}; \&
	\node[draw,fill=gray!20]{2\bsix}; \&
	\node[draw,fill=gray!20]{\bsix}; \&
	\node[draw,fill=gray!20]{2\bsix}; \\
 };
\end{tikzpicture},
\\ k = 5 \rightsquigarrow &
\begin{tikzpicture}[baseline]
\matrix [matrix of math nodes,column sep=-.4, row sep=-.5,text height=10,text width=15,align=center,inner sep=3, ampersand replacement=\&] % Last part is because this is in an align* environment
 {
	\node[draw]{1}; \&
	\node[draw]{1}; \&
	\node[draw]{1}; \&
 	\node[draw,fill=gray!20]{\btw5}; \&
	\node[draw,fill=gray!20]{1}; \&
	\node[draw,fill=gray!20]{\btw5}; \&
	\node[draw,fill=gray!20]{1}; \&
	\node[draw,fill=gray!40]{1}; \\
	\node[draw]{\bon6}; \&
	\node[draw]{\bon6}; \&
	\node[draw]{\bon6}; \&
	\node[draw,fill=gray!20]{\bfive6}; \&
	\node[draw,fill=gray!20]{\bon6}; \&
	\node[draw,fill=gray!20]{\bfive6}; \&
	\node[draw,fill=gray!20]{\bon6}; \&
	\node[draw,fill=gray!40]{\bon6}; \\
	\node[draw]{2\bsix}; \&
	\node[draw]{2\bsix}; \&
	\node[draw]{2\bsix}; \&
	\node[draw,fill=gray!20]{\bsix}; \&
	\node[draw,fill=gray!20]{2\bsix}; \&
	\node[draw,fill=gray!20]{\bsix}; \&
	\node[draw,fill=gray!20]{2\bsix}; \&
	\node[draw,fill=gray!40]{\bsix}; \\
 };
\end{tikzpicture},
\end{align*}
where the shaded regions are the parts that are ``filled in.'' The corresponding rigged configurations, respectively, are
\begin{gather*}
\begin{tikzpicture}[scale=.25,baseline=-18]
 \rpp{4}{0}{0}
 \begin{scope}[xshift=8cm]
 \rpp{4,4}{0}{0,0}
 \end{scope}
 \begin{scope}[xshift=16cm]
 \rpp{4,4}{0,0}{0,0}
 \end{scope}
 \begin{scope}[xshift=24cm]
 \rpp{4,4,4}{0,0,0}{0,0,0}
 \end{scope}
 \begin{scope}[xshift=32cm]
 \rpp{4,4}{0,0}{0,0}
 \end{scope}
 \begin{scope}[xshift=40cm]
 \rpp{4}{0}{0}
 \end{scope}
\end{tikzpicture},
\\
\begin{tikzpicture}[scale=.25,baseline=-18]
 \rpp{5}{0}{0}
 \begin{scope}[xshift=8cm]
 \rpp{5,5}{0}{0,0}
 \end{scope}
 \begin{scope}[xshift=16cm]
 \rpp{5,5}{0,0}{0,0}
 \end{scope}
 \begin{scope}[xshift=24cm]
 \rpp{5,5,5}{0,0,0}{0,0,0}
 \end{scope}
 \begin{scope}[xshift=32cm]
 \rpp{5,5}{0,0}{0,0}
 \end{scope}
 \begin{scope}[xshift=40cm]
 \rpp{5}{0}{0}
 \end{scope}
\end{tikzpicture},
\end{gather*}
\end{example}

\begin{proposition}
\label{prop:filling_iso_E6_2}
Let $\fillmap \colon B^{2,s} \to T^{2,s}$ given by Definition~\ref{def:filling_E6_2} and $\iota$ be the natural (classical) crystal isomorphism. We have
\[
\Phi = \fillmap \circ \iota
\]
on classically highest weight elements.
\end{proposition}

\begin{proof}
If we apply column splitting $\ls$ to $B^{2,s}$ when $s > k$, then we make all rows in $\nu^{(2)}$ nonsingular. A straightforward computation shows that we obtain $\column{1, \bon3, 2\bth}$ and the rigged configuration  has not changed. If $s = k$, then $\ls$ keeps all rows of length $k$ being singular. It is a straightforward computation shows the column is $\column{1\btw, \bon6, 6}$ when $k > 1$ and the new rigged configuration is obtained by deleting two boxes from every row of $\nu^{(a)}$ for all $a \in I_0$. If $k = 1$, then a finite computation shows we obtain $\column{1, \bon6, \bsix}$.
\end{proof}

% --- r = 3,5 ---

\begin{proposition}
\label{prop:RC_E6_3}
Consider the KR crystal $B^{3,s}$ of type $E_6^{(1)}$. We have
\[
\RC(B^{3,s}) = \bigoplus_{k=0}^s \RC(B^{3,s}; (s-k) \clfw_3 + k \clfw_6).
\]
Moreover the highest weight rigged configurations in $\RC(B^{3,s})$ are given by $\nu(k\ast[\alpha])$ with all riggings $0$,
where
\[
\alpha = \clsr_1 + \clsr_2 + 2\clsr_3 + 2\clsr_4 + \clsr_5 = \clfw_3 - \clfw_6,
\]
and
\[
\cc(\nu, J) = k.
\]
\end{proposition}

\begin{proof}
Similar to the proof of Proposition~\ref{prop:RC_E6_2}.
\end{proof}

\begin{definition}
\label{def:filling_E6_3}
We define the filling map $\fillmap \colon B^{3,s} \to T^{3,s}$ on classically highest weight element $b \in B((s-k) \clfw_3 + k \clfw_6) \subseteq B^{3,s}$ by defining $\fillmap(b)$ as the tableau with the first $s-k$ columns as $\column{1, \bon3}$, the next $\lfloor k / 2 \rfloor$ columns as
\[
\begin{tikzpicture}[baseline]
\matrix [matrix of math nodes,column sep=-.4, row sep=-.5,text height=10,text width=15,align=center,inner sep=3] 
 {
 	\node[draw]{1\bth6}; &
	\node[draw]{1}; \\
	\node[draw]{\bon6}; &
	\node[draw]{\bon3}; \\
 };
\end{tikzpicture}
\]
and if $k$ is odd, the last column as $\column{1, \bon6}$. We then extend $\fillmap$ as a classical crystal isomorphism.
\end{definition}

\begin{proposition}
\label{prop:filling_iso_E6_3}
Let $\fillmap \colon B^{3,s} \to T^{3,s}$ given by Definition~\ref{def:filling_E6_3} and $\iota$ be the natural (classical) crystal isomorphism. We have
\[
\Phi = \fillmap \circ \iota
\]
on classically highest weight elements.
\end{proposition}

\begin{proof}
Similar to the proof of Proposition~\ref{prop:filling_iso_E6_2}.
\end{proof}

The case for $r = 5$ is dual to the above. In particular, the proofs of the following propositions are similar to those for the $r = 3$ case.

\begin{proposition}
\label{prop:RC_E6_5}
Consider the KR crystal $B^{5,s}$ of type $E_6^{(1)}$. We have
\[
\RC(B^{5,s}) = \bigoplus_{k=0}^s \RC(B^{5,s}; (s-k) \clfw_5 + k \clfw_1).
\]
Moreover the highest weight rigged configurations in $\RC(B^{5,s})$ are given by $\nu(k\ast[\alpha])$ with all riggings $0$,
where
\[
\alpha = \clsr_2 + \clsr_3 + 2\clsr_4 + \clsr_5 + \clsr_6 = \clfw_5 - \clfw_1,
\]
and
\[
\cc(\nu, J) = k.
\]
\end{proposition}

\begin{definition}
\label{def:filling_E6_5}
We define the filling map $\fillmap \colon B^{5,s} \to T^{5,s}$ on classically highest weight element $b \in B\bigl( (s-k) \clfw_5 + k \clfw_1 \bigr) \subseteq B^{5,s}$ by defining $\fillmap(b)$ as the tableau with the first $s-k$ columns be $\column{1, \bon3, 2\bth, \btw5}$, the next $\lfloor k / 2 \rfloor$ columns as
\[
\begin{tikzpicture}[baseline]
\matrix [matrix of math nodes,column sep=-.4, row sep=-.5,text height=10,text width=15,align=center,inner sep=3] 
 {
 	\node[draw]{1}; &
	\node[draw]{1}; \\
	\node[draw]{2\bfive6}; &
	\node[draw]{\bon3}; \\
	\node[draw]{\bsix}; &
	\node[draw]{2\bth}; \\
	\node[draw]{1\btw}; &
	\node[draw]{\btw5}; \\
 };
\end{tikzpicture}
\]
and if $k$ is odd, the last column as $\column{1, \bon3, 2\bth, \btw1}$. We then extend $\fillmap$ as a classical crystal isomorphism.
\end{definition}

\begin{proposition}
\label{prop:filling_iso_E6_5}
Let $\fillmap \colon B^{5,s} \to T^{5,s}$ given by Definition~\ref{def:filling_E6_5} and $\iota$ be the natural (classical) crystal isomorphism. We have
\[
\Phi = \fillmap \circ \iota
\]
on classically highest weight elements.
\end{proposition}

% --- r = 4 ---

\begin{proposition}
\label{prop:RC_E6_4}
Consider the KR crystal $B^{4,s}$ of type $E_6^{(1)}$. We have
\[
\RC(B^{4,s}) = \bigoplus_{\lambda} \RC(B^{4,s}; \lambda)^{\oplus (1 + k_2 - k_4 - 2k_5)},
\]
where $\lambda = s \clfw_4 - \sum_{i=1}^5 k_i \clsr^{(i)}$ with
\begin{align*}
\clsr^{(1)} & := 2\clsr_1 + 3\clsr_2 + 4\clsr_3 + 6\clsr_4 + 4\clsr_5 + 2\clsr_6 && = \clfw_4,
\\ \clsr^{(2)} & := \clsr_1 + \clsr_2 + 2 \clsr_3 + 3 \clsr_4 + 2 \clsr_5 + \clsr_6 && = \clfw_4 - \clfw_2,
\\ \clsr^{(3)} & := \clsr_2 + \clsr_3 + 2\clsr_4 + \clsr_5 && = \clfw_4 - \clfw_1 - \clfw_6,
\\ \clsr^{(4)} & := \clsr_2 + \clsr_4 && = \clfw_2 + \clfw_4 - (\clfw_3 + \clfw_5),
\\ \clsr^{(5)} & := \clsr_2 && = 2\clfw_2 - \clfw_4,
\end{align*}
such that
\begin{enumerate}[{\rm (1)}]
\item $k_1 + k_2 + k_3 + k_4 \leq s$,
\item $k_i \geq 0$ for $i = 1,2,3,4,5$, and
\item $k_2 \geq k_4 + 2k_5$.
\end{enumerate}
Moreover the highest weight rigged configurations in $\RC(B^{4,s})$ are given by
\[
\nu( k_1\ast [\clsr^{(1)}] + k_2 \ast [\clsr^{(2)}] + k_3 \ast [\clsr^{(3)}] + k_4 \ast [\clsr^{(4)}]+ k_5 \ast [\clsr^{(5)}] ),
\]
with all riggings $0$ except for $x$ in the first row of $\nu^{(2)}$, which satisfies $0 \leq x \leq k_2 - k_4 - 2k_5$, and
\[
\cc(\nu, J) = 3k_1 + k_2 + k_3 + k_4 + k _5 + x.
\]
\end{proposition}

\begin{proof}
Note that $\clsr^{(1)} > \clsr^{(2)} > \clsr^{(3)} > \clsr^{(4)} > \clsr^{(5)}$ component-wise expressed in terms of $\{ \clsr_i \}_{i \in I_0}$.
The claim for the rigged configurations follows from Definition~\ref{def:kleber_algorithm} and the description of $\clsr^{(i)}$ for $i = 1,2,3,4,5$. The claim for the cocharge is a straightforward computation.
\end{proof}

We first note that for $r =1,2,3,5,6$, it is clear that these graded decompositions agree with the conjectures in~\cite[App.~A]{HKOTY99} (with some straightforward relabeling). Thus, we show the $r = 4$ case.

\begin{proposition}
\label{prop:equivalence_E6_4}
Let $q^k B(\clfw_r)$ denote that $B(\clfw_r)$ is given a grading of $k$.
We have
\begin{align*}
B^{4,s} & \iso \bigoplus_{\substack{j_1+j_2+2j_3+j_4 \leq s \\ j_1,j_2,j_3,j_4 \in \ZZ_{\geq 0}}} \min(1 + j_2, 1 + s - j_1 - j_2 - 2j_3 - j_4) q^{3s - 2j_1 - 3j_2 - 4j_3 - 2j_4}
\\ & \hspace{75pt} \times \sum_{k=0}^{j_1} q^k B\bigl(j_1 \clfw_2 + j_2 \clfw_4 + j_3 (\clfw_3 + \clfw_5) + j_4 (\clfw_1 + \clfw_6)\bigr).
\end{align*}
\end{proposition}

\begin{proof}
First note that
\begin{align*}
\wt(\nu, J) & = s\clfw_4 - k_1 \clfw_2 - k_2(\clfw_4 - \clfw_2) - k_3(\clfw_4 - \clfw_1 - \clfw_6)
\\ & \hspace{30pt} - k_4\bigl(\clfw_2 + \clfw_4 - (\clfw_3 + \clfw_5) \bigr) - k_5 (2\clfw_2 - \clfw_4)
\\ & = (k_2 - k_4 - 2k_5)\clfw_2 + (s - k_1 - k_2 - k_3 - k_4 + k_5) \clfw_4
\\ & \hspace{30pt} + k_4 (\clfw_3 + \clfw_5) + k_3 (\clfw_1 + \clfw_6)
\end{align*}
Hence, we must have
\begin{equation}
\label{eq:k_to_j}
\begin{aligned}
j_1 & = k_2 - k_4 - 2k_5,
&
j_2 & = s - k_1 - k_2 - k_3 - k_4 + k_5,
\\
j_3 & = k_4,
&
j_4 & = k_3.
\end{aligned}
\end{equation}
Note that the conditions on $k_1, k_2, k_3, k_4, k_5$ guarantee that $j_1, j_2, j_3, j_4 \geq 0$. Next, we have
\begin{align*}
j_1 + j_2 + 2j_3 + j_4 & = s - k_1 - k_5 \leq s,
\\ 3s - 2j_1 - 3 j_2 - 4 j_3 - 2 j_4 & = 3k_1 + k_2 + k_3 + k_4 + k_5 = \cc(\nu).
\end{align*}
Additionally, we want the multiplicity to agree, so we need to show
\[
M := \min(1 + j_2, 1 + s - j_1 - j_2 - 2j_3 - j_4)
\]
equals the number of time a node of weight $j_1 \clfw_2 + j_2 \clfw_4 + j_3 (\clfw_3 + \clfw_5) + j_4 (\clfw_1 + \clfw_6)$ occurs since the multiplicity of a node equals $1 + k_2 - k_4 - 2k_5 = 1 + j_1$. We note that $\clsr^{(1)} = 2\clsr^{(2)} + \clsr^{(5)}$. Explicitly, we fix some $j_1, j_2, j_3, j_4 \in \ZZ_{\geq 0}$ such that $j_1 + j_2 + 2j_3 + j_4 \leq s$, and we want to show $M$ equals the number of $k_1, k_2, k_3, k_4, k_5 \in \ZZ_{\geq 0}$ such that
\begin{subequations}
\begin{align}
\label{eq:s_k1234} s &\geq k_1 + k_2 + k_3 + k_4,
\\ \label{eq:k2_k45} k_2 &\geq k_4 + 2k_5,
\end{align}
\end{subequations}
and, from Equation~\eqref{eq:k_to_j}, that:
\begin{equation}
\begin{gathered}
k_1 + k_5 = s - j_1 - j_2 - 2j_3 - j_4,
\qquad k_2 - 2 k_5 = j_1 + j_3,
\\ k_3 = j_4,
\qquad k_4 = j_3.
\end{gathered}
\end{equation}

Immediately, we have $k_3, k_4 \geq 0$ and are completely determined since we have fixed $j_3, j_4 \in \ZZ_{\geq 0}$. We note that
\begin{gather}
\label{eq:recover_k2_k45}
k_2 = j_1 + j_3 + 2k_5 \geq j_3 + 2k_5 = k_4 + 2k_5,
\\ \label{eq:k1234} k_1 + k_2 + k_3 + k_4 = s - j _2 + k_5,
\end{gather}
and so if we take $k_5 = 0$, then $k_1$ and $k_2$ are completely determined and give a valid set. Next, since we want to look over all possible values that give rise to the same weight, the linear dependence $\clsr^{(1)} = 2\clsr^{(2)} + \clsr^{(5)}$ implies we can replace $k_1 \mapsto 2k_2 + k_5$. Note Equation~\eqref{eq:recover_k2_k45}, and hence Equation~\eqref{eq:k2_k45}, still holds under this replacement, but from Equation~\eqref{eq:k1234} and Equation~\eqref{eq:s_k1234}, we have $k_5 \leq j_2$ (so we can do this replacement at most $j_2$ times). However, we can only do this at most $s - j_1 - j_2 - 2j_3 - j_4$ times since it is the maximum value of $k_1$. Thus, we have exactly $M$ possible values for $k_1$, $k_2$, $k_3$, $k_4$, and $k_5$.
\end{proof}

%%%%%%%%%%
\subsection{Type $E_7^{(1)}$}

For $r = 7$ in type $E_7^{(1)}$, we have $\RC(B^{7,s}) \iso \RC(B^{7,s}; s \clfw_7)$. Threrefore the filling map $\fillmap \colon B^{7,s} \to T^{7,s}$ is the identity map.

%For my reference, the labels go `mine' $\to$ `theirs': $1 \mapsto 1$, $2 \mapsto 7$, $i \mapsto i-1$ for $3 \leq i \leq 7$.

% --- r = 1 ---

\begin{proposition}
\label{prop:RC_E7_1}
Consider the KR crystal $B^{1,s}$ of type $E_7^{(1)}$. We have
\[
\RC(B^{1,s}) \iso \bigoplus_{k=0}^s \RC(B^{1,s}; k \clfw_1).
\]
Moreover the highest weight rigged configurations in $\RC(B^{3,s})$ are given by $\nu(k\ast[\alpha])$ with all riggings $0$,
where
\[
\alpha = 2\clsr_1 + 2\clsr_2 + 3\clsr_3 + 4\clsr_4 + 3\clsr_5 + 2\clsr_6 + \clsr_7 = \clfw_1,
\]
and
\[
\cc(\nu, J) = k.
\]
\end{proposition}

\begin{proof}
Similar to the proof of Proposition~\ref{prop:RC_E6_2}.
\end{proof}

\begin{definition}
\label{def:filling_E7_1}
We define the filling map $\fillmap \colon B^{1,s} \to T^{1,s}$ on classically highest weight element $b \in B\bigl((s-k) \clfw_1\bigr) \subseteq B^{1,s}$ by defining $\fillmap(b)$ as the tableau with the first $s-k$ columns as $\column{7,1\bseven}$, the next $\lfloor k / 2 \rfloor$ columns as
\[
\begin{tikzpicture}[baseline]
\matrix [matrix of math nodes,column sep=-.4, row sep=-.5,text height=10,text width=15,align=center,inner sep=3] 
 {
 	\node[draw]{\bon7}; &
	\node[draw]{7}; \\
	\node[draw]{\bseven}; &
	\node[draw]{1\bseven}; \\
 };
\end{tikzpicture}
\]
and if $k$ is odd, the last column as $\column{7, \bseven}$. We then extend $\fillmap$ as a classical crystal isomorphism.
\end{definition}

\begin{proposition}
\label{prop:filling_iso_E7_1}
Let $\fillmap \colon B^{1,s} \to T^{1,s}$ given by Definition~\ref{def:filling_E7_1} and $\iota$ be the natural (classical) crystal isomorphism. We have
\[
\Phi = \fillmap \circ \iota
\]
on classically highest weight elements.
\end{proposition}

\begin{proof}
Similar to the proof of Proposition~\ref{prop:filling_iso_E6_2}.
\end{proof}

% --- r = 2 ---

\begin{proposition}
\label{prop:RC_E7_2}
Consider the KR crystal $B^{2,s}$ of type $E_7^{(1)}$. We have
\[
\RC(B^{2,s}) \iso \bigoplus_{k=0}^s \RC(B^{2,s}; (s-k) \clfw_2 + k \clfw_7)
\]
Moreover the highest weight rigged configurations in $\RC(B^{3,s})$ are given by $\nu(k\ast[\alpha])$ with all riggings $0$,
where
\[
\alpha =\clsr_1 + 2\clsr_2 + 2\clsr_2 + 3\clsr_4 + 2\clsr_5 + \clsr_6 = \clfw_2 - \clfw_7,
\]
and
\[
\cc(\nu, J) = k.
\]
\end{proposition}

\begin{proof}
Similar to the proof of Proposition~\ref{prop:RC_E6_2}.
\end{proof}

\begin{definition}
\label{def:filling_E7_2}
We define the filling map $\fillmap \colon B^{2,s} \to T^{2,s}$ on classically highest weight element $b \in B((s-k) \clfw_2 + k \clfw_7) \subseteq B^{2,s}$ by defining $\fillmap(b)$ as the tableau with the first $s-k$ columns as $\column{7, 1\bseven, \bon2}$, the next $\lfloor k / 2 \rfloor$ columns as
\[
\begin{tikzpicture}[baseline]
\matrix [matrix of math nodes,column sep=-.4, row sep=-.5,text height=10,text width=15,align=center,inner sep=3] 
 {
 	\node[draw]{7}; &
	\node[draw]{7}; \\
	\node[draw]{1\btw}; &
	\node[draw]{1\bseven}; \\
	\node[draw]{\bon7}; &
	\node[draw]{\bon2}; \\
 };
\end{tikzpicture}
\]
and if $k$ is odd, the last column as $\column{7, 1\bseven, \bon7}$. We then extend $\fillmap$ as a classical crystal isomorphism.
\end{definition}

\begin{proposition}
\label{prop:filling_iso_E7_2}
Let $\fillmap \colon B^{2,s} \to T^{2,s}$ given by Definition~\ref{def:filling_E7_2} and $\iota$ be the natural (classical) crystal isomorphism. We have
\[
\Phi = \fillmap \circ \iota
\]
on classically highest weight elements.
\end{proposition}

\begin{proof}
Similar to the proof of Proposition~\ref{prop:filling_iso_E6_2}.
\end{proof}

% --- r = 6 ---

\begin{proposition}
\label{prop:RC_E7_6}
Consider the KR crystal $B^{6,s}$ of type $E_7^{(1)}$. We have
\[
\RC(B^{6,s}) \iso \bigoplus_{ \substack{k_1 + k_2 \leq s \\ k_1,k_2 \in \ZZ_{\geq 0}} } \RC(B^{6,s}; (s - k_1 - k_2) \clfw_6 + k_2 \clfw_1).
\]
Moreover the highest weight rigged configurations in $\RC(B^{6,s})$ are given by
\[
\nu( k_1\ast [\clsr^{(1)}] + k_2 \ast [\clsr^{(2)}] ),
\]
where
\begin{align*}
\clsr^{(1)} & = 2\clsr_1 + 3\clsr_2 + 4\clsr_3 + 6\clsr_4 + 5\clsr_5 + 4\clsr_6 + 2\clsr_7 && = \clfw_6,
\\ \clsr^{(2)} & = \clsr_2 + \clsr_3 + 2\clsr_4 + 2\clsr_5 + 2\clsr_6 + \clsr_7 && = \clfw_6 - \clfw_1,
\end{align*}
with all riggings $0$ and
\[
\cc(\nu, J) = 2k_1 + k_2.
\]
\end{proposition}

\begin{proof}
Similar to the proof of Proposition~\ref{prop:RC_E6_4} as $\clsr^{(1)} > \clsr^{(2)}$.
\end{proof}

% --- r = 3 ---

\begin{proposition}
\label{prop:RC_E7_3}
Consider the KR crystal $B^{3,s}$ of type $E_7^{(1)}$. We have
\[
\RC(B^{3,s}) = \bigoplus_{\lambda} \RC(B^{3,s}; \lambda)^{\oplus (1 + k_2 - k_4 - 2k_5)},
\]
where $\lambda = s \clfw_3 - \sum_{i=1}^5 k_i \clsr^{(i)}$ with
\begin{align*}
\clsr^{(1)} & = 3\clsr_1 + 4\clsr_2 + 6\clsr_3 + 8\clsr_4 + 6\clsr_5 + 4\clsr_6 + 2\clsr_7 && = \clfw_3
\\ \clsr^{(2)} & = \clsr_1 + 2\clsr_2 + 3\clsr_3 + 4\clsr_4 + 3\clsr_5 + 2\clsr_6 + 1\clsr_7 && = \clfw_3 - \clfw_1
\\ \clsr^{(3)} & = \clsr_1 + \clsr_2 + 2\clsr_3 + 2\clsr_4 + \clsr_5 && = \clfw_3 - \clfw_6
\\ \clsr^{(4)} & = \clsr_1 + \clsr_3 && = \clfw_1 + \clfw_3 - \clfw_4
\\ \clsr^{(5)} & = \clsr_1 && = 2 \clfw_1 - \clfw_3
\end{align*}
such that
\begin{enumerate}[(1)]
\item $k_1 + k_2 + k_3 + k_4 \leq s$,
\item $k_i \geq 0$ for $i = 1,2,3,4,5$, and
\item $k_2 \geq k_4 + 2k_5$.
\end{enumerate}
Moreover the highest weight rigged configurations in $\RC(B^{3,s})$ are given by
\[
\nu( k_1\ast [\clsr^{(1)}] + k_2 \ast [\clsr^{(2)}] + k_3 \ast [\clsr^{(3)}] + k_4 \ast [\clsr^{(4)}]+ k_5 \ast [\clsr^{(5)}] ),
\]
with all riggings $0$ except for $x$ in the first row of $\nu^{(1)}$, which satisfies $0 \leq x \leq k_2 - k_4 - 2k_5$, and
\[
\cc(\nu, J) = 3k_1 + k_2 + k_3 + k_4 + k _5 + x.
\]
\end{proposition}

\begin{proof}
Similar to the proof of Proposition~\ref{prop:RC_E6_4}.
\end{proof}

% --- r = 5 ---

%For $r = 5$, we have
%\begin{align*}
%\clsr^{(1)} & = 2\clsr_1 + 3\clsr_2 + 4\clsr_3 + 6\clsr_4 + 5\clsr_5 + 3\clsr_6 + \clsr_7 && = \clfw_5 - \clfw_7,
%\\ \clsr^{(2)} & = \clsr_1 + \clsr_2 + 2\clsr_3 + 3\clsr_4 + 3\clsr_5 + 2\clsr_2 + \clsr_7 && = \clfw_5 - \clfw_2,
%\\ \clsr^{(3)} & = \clsr_2 + \clsr_3 + 2\clsr_4 + 2\clsr_5 + \clsr_6 && = \clfw_5 - \clfw_2 - \clfw_7,
%\\ \clsr^{(4)} & = \clsr_2 + \clsr_4 + \clsr_5 && = \clfw_2 + \clfw_5 - \clfw_3 - \clfw_6,
%\\ \clsr^{(5)} & = \clsr_5 + \clsr_6 + \clsr_7 && = \clfw_5 + \clfw_7 - \clfw_4,
%\\ \clsr^{(6)} & = \clsr_2 + \clsr_4 + \clsr_5 + \clsr_6 && = \clfw_2 + \clfw_6 - \clfw_3 - \clfw_7,
%\\ \clsr^{(7)} & = \clsr_2 && = 2\clfw_2 - \clfw_4,
%\\ \clsr^{(8)} & = \clsr_7 && = 2\clfw_4 - \clfw_6,
%\\ \clsr^{(9)} & = \clsr_6 + \clsr_7 && = \clfw_6 + \clfw_7 - \clfw_5,
%\\ \clsr^{(10)} & = \clsr_2 + \clsr_4 + \clsr_5 + \clsr_6 + \clsr_7 && = \clfw_2 + \clfw_7 - \clfw_3,
%\\ \clsr^{(11)} & = \clsr_5 + \clsr_6 && = \clfw_5 + \clfw_6 - \clfw_4 - \clfw_7,
%\\ \clsr^{(12)} & = \clsr_5 && = 2\clfw_5 - \clfw_4 - \clfw_6,
%\\ \clsr^{(13)} & = \clsr_2 + \clsr_3 + 2\clsr_4 + 2\clsr_5 + \clsr_6 + \clsr_7 && = \clfw_5 + \clfw_7 - \clfw_1 - \clfw_6,
%\end{align*}
%\TravisR{This is hard... I think this is all of the roots, but I'm not sure.}
%\Travis{In principle, we should be able to do this and show it agrees with~\cite{HKOTY99}.}

% --- r = 4 ---

% For $r = 4$, we have
% Basically it seems impossible.

We note that our graded decompositions for $r = 1,2,3,6,7$ agree with the conjectures of~\cite[App.~A]{HKOTY99}, where the case of $r=3$ is similar to the proof of Proposition~\ref{prop:equivalence_E6_4}.

%%%%%%%%%%
\subsection{Type $E_8^{(1)}$}

For type $E_8^{(1)}$, we do not have any minuscule nodes as there are no diagram automorphism. Thus, we begin with the adjoint node.

% --- r = 8 ---

\begin{proposition}
Consider the KR crystal $B^{8,s}$ of type $E_8^{(1)}$. We have
\[
\RC(B^{8,s}) = \bigoplus_{k=0}^s \RC(B^{8,s}; (s-k) \clfw_8).
\]
Moreover the highest weight rigged configurations in $\RC(B^{8,s})$ are given by $\nu(k \ast [\alpha])$ with all riggings $0$,
where
\[
\alpha = 2 \clsr_1 + 3 \clsr_2 + 4 \clsr_3 + 6 \clsr_4 + 5 \clsr_5 + 4 \clsr_6 + 3 \clsr_7 + 2 \clsr_8 = \clfw_8
\]
and $\cc(\nu, J) = k$.
\end{proposition}

\begin{proof}
The proof is similar to Proposition~\ref{prop:RC_E6_2}.
\end{proof}

\begin{definition}
\label{def:filling_E8_8}
We define the filling map $\fillmap \colon B^{8,s} \to T^{8,s}$ on classically highest weight element $b \in B\bigl((s-k) \clfw_8\bigr) \subseteq B^{8,s}$ by defining $\fillmap(b)$ as the tableau with the first $s-k$ columns as $\column{8}$, the next $\lfloor k / 2 \rfloor$ columns as $\young(\beight8)$, and if $k$ is odd, the last column as $\column{\emptyset}$. We then extend $\fillmap$ as a classical crystal isomorphism.
\end{definition}

\begin{proposition}
\label{prop:filling_iso_E8_8}
Let $\fillmap \colon B^{8,s} \to T^{8,s}$ given by Definition~\ref{def:filling_E8_8} and $\iota$ be the natural (classical) crystal isomorphism. We have
\[
\Phi = \fillmap \circ \iota
\]
on classically highest weight elements.
\end{proposition}

\begin{proof}
Similar to the proof of Proposition~\ref{prop:filling_iso_E6_2}.
\end{proof}

% --- r = 7 ---

\begin{proposition}
Consider the KR crystal $B^{7,s}$ of type $E_8^{(1)}$. We have
\[
\RC(B^{7,s}) = \bigoplus_{\lambda} \RC(B^{7,s}; \lambda)^{\oplus (1 + k_2 - k_4 - 2k_5)}
\]
where
\begin{align*}
\clsr^{(1)} & = 4\clsr_1 + 6\clsr_2 + 8\clsr_3 + 12\clsr_4 + 10\clsr_5 + 8\clsr_6 + 6\clsr_7 + 3\clsr_8 && = \clfw_7,
\\ \clsr^{(2)} & = 2\clsr_1 + 3\clsr_2 + 4\clsr_3 + 6\clsr_4 + 5\clsr_5 + 4\clsr_6 + 3\clsr_7 + \clsr_8 && = \clfw_7 - \clfw_8,
\\ \clsr^{(3)} & = \clsr_2 + \clsr_3 + 2\clsr_4 + 2\clsr_5 + 2\clsr_6 + 2\clsr_7 + \clsr_8 && = \clfw_7 - \clfw_1,
\\ \clsr^{(4)} & = \clsr_7 + \clsr_8 && = \clfw_7 + \clfw_8 - \clfw_6,
\\ \clsr^{(5)} & = \clsr_8 && = 2 \clfw_8 - \clfw_7,
\end{align*}
and $\lambda = s \clfw_7 - \sum_{i=1}^5 k_i \clsr^{(i)}$, such that
\begin{enumerate}[(1)]
\item $k_1 + k_2 + k_3 + k_4 \leq s$,
\item $k_i \geq 0$ for $i = 1,2,3,4,5$, and
\item $k_2 \geq k_4 + 2k_5$.
\end{enumerate}
Moreover the highest weight rigged configurations in $\RC(B^{8,s})$ are given by
\[
\nu(k_1 \ast [\clsr^{(1)}] + k_2 \ast [\clsr^{(2)}] + k_3 \ast [\clsr^{(3)}] + k_4 \ast [\clsr^{(4)}] + k_5 \ast [\clsr^{(5)}])
\]
with all riggings $0$ except the first row of $\nu^{(8)}$, which satisfies $0 \leq x \leq k_2 - k_4 - 2k_5$, and
\[
\cc(\nu, J) = 3k_1 + k_2 + k_3 + k_4 + k _5 + x.
\]
\end{proposition}

\begin{proof}
Similar to the proof of Proposition~\ref{prop:RC_E6_4}.
\end{proof}

% --- r = 1 ---

\begin{proposition}
Consider the KR crystal $B^{1,s}$ of type $E_8^{(1)}$. We have
\[
\RC(B^{1,s}) = \bigoplus_{\substack{k_1,k_2 \geq 0 \\ k_1+k_2 \leq s}} \RC(B^{1,s}; (s - k_1 - k_2) \clfw_1 + k_2 \clfw_8).
\]
Moreover the highest weight rigged configurations in $\RC(B^{1,s})$ are given by
\[
\nu(k_1 \ast [\clsr^{(1)}] + k_2 \ast [\clsr^{(2)}]),
\]
where
\begin{align*}
\clsr^{(1)} & = 4\clsr_1 + 5\clsr_2 + 7\clsr_3 + 10\clsr_4 + 8\clsr_5 + 6\clsr_6 + 4\clsr_7 + 2\clsr_8 && = \clfw_1,
\\ \clsr^{(2)} & = 2\clsr_1 + 2\clsr_2 + 3\clsr_3 + 4\clsr_4 + 3\clsr_5 + 2\clsr_6 + \clsr_7 && = \clfw_1 - \clfw_8,
\end{align*}
with all riggings $0$ and
\[
\cc(\nu, J) = 2k_1 + k_2.
\]
\end{proposition}

\begin{proof}
Similar to the proof of Proposition~\ref{prop:RC_E6_4} as $\clsr^{(1)} > \clsr^{(2)}$.
\end{proof}

% --- r = 2 ---

%For $r = 2$, we have at least 44 different root edges (for $T(B^{2,7})$).

% --- r = 6 ---

%For $r = 6$, there are 118 different types of roots which appear on the edges of the Kleber tree in $T(B^{6,3})$.

We note that our graded decompositions for $r = 1,7,8$ agree with the conjectures of~\cite[App.~A]{HKOTY99}, where the case of $r = 7$ is similar to the proof of Proposition~\ref{prop:equivalence_E6_4}.

\begin{remark}
The cases of $r = 4$ of type $E_6^{(1)}$, $r = 3$ of type $E_7^{(1)}$, and $r = 7$ of type $E_8^{(1)}$ are all nodes which are distance $2$ from the affine node, and all have the same graded decompositions. Thus it would be interesting to see if there is a uniform describe of these KR crystals, and additionally to compare it with $B^{r,s}$ for $r = 1, 3$ in type $D_n^{(1)}$ and $r = 2, n-2$ in type $A_n^{(1)}$.
\end{remark}

%%%%%%%%%%
\subsection{Type $E_6^{(2)}$}

% --- r = 1 ---

For $r = 1$, this is immediate deduced from the devirtualization of Proposition~\ref{prop:RC_E6_2} ($r = 2$ in type $E_6^{(1)}$) as $\gamma_a = 1$ for all $a \in I$.

\begin{proposition}
\label{prop:RC_E6t_1}
Consider the KR crystal $B^{1,s}$ of type $E_6^{(2)}$. We have
\[
\RC(B^{1,s}) = \bigoplus_{k=0}^s \RC(B^{1,s}; (s-k) \clfw_1).
\]
Moreover, the highest weight rigged configurations in $\RC(B^{1,s})$ are given by $\nu(k\ast[\alpha])$ with all riggings $0$, where
\[
\alpha = 2\clsr_1 + 3\clsr_2 + 2\clsr_3 + \clsr_4 = \clfw_1,
\]
and
\[
\cc(\nu, J) = k.
\]
\end{proposition}

\begin{proof}
The proof is similar to Proposition~\ref{prop:RC_E6_2} as the ambient Kleber tree is the same as the virtual Kleber tree.
\end{proof}

\begin{definition}
\label{def:filling_E6t_1}
We define the filling map $\fillmap \colon B^{1,s} \to T^{1,s}$ on classically highest weight element $b \in B\bigl((s-k) \clfw_1\bigr) \subseteq B^{1,s}$ by defining $\fillmap(b)$ as the tableau with the first $s-k$ columns as $\column{1}$, the next $\lfloor k / 2 \rfloor$ columns as $\young(\bon1)$, and if $k$ is odd, the last column as $\column{\emptyset}$. We then extend $\fillmap$ as a classical crystal isomorphism.
\end{definition}

\begin{proposition}
\label{prop:filling_iso_E6t_1}
Let $\fillmap \colon B^{1,s} \to T^{1,s}$ given by Definition~\ref{def:filling_E6t_1} and $\iota$ be the natural (classical) crystal isomorphism. We have
\[
\Phi = \fillmap \circ \iota
\]
on classically highest weight elements.
\end{proposition}

\begin{proof}
Similar to the proof of Proposition~\ref{prop:filling_iso_E6_2}.
\end{proof}

% --- r = 2 ---

\begin{proposition}
\label{prop:RC_E6t_2}
Consider the KR crystal $B^{2,s}$ of type $E_6^{(2)}$. We have
\[
\RC(B^{2,s}) = \bigoplus_{\lambda} \RC(B^{2,s}; \lambda)^{\oplus (1 + k_2 - k_4 - 2k_5)},
\]
where
\begin{align*}
\clsr^{(1)} & := 3\clsr_1 + 6\clsr_2 + 4\clsr_3 + 2\clsr_4&& = \clfw_2,
\\ \clsr^{(2)} & := \clsr_1 + 3 \clsr_2 + 2 \clsr_3 + \clsr_4 && = \clfw_2 - \clfw_1,
\\ \clsr^{(3)} & := \clsr_1 + 2\clsr_2 + \clsr_3 && = \clfw_2 - \clfw_4,
\\ \clsr^{(4)} & := \clsr_1 + \clsr_2 && = \clfw_1 + \clfw_2 - \clfw_3,
\\ \clsr^{(5)} & := \clsr_1 && = 2\clfw_1 - \clfw_2,
\end{align*}
and $\lambda = s \clfw_2 - \sum_{i=1}^5 k_i \clsr^{(i)}$, such that
\begin{enumerate}[(1)]
\item $k_1 + k_2 + k_3 + k_4 \leq s$,
\item $k_i \geq 0$ for $i = 1,2,3,4,5$, and
\item $k_2 \geq k_4 + 2k_5$.
\end{enumerate}
Moreover the highest weight rigged configurations in $\RC(B^{2,s})$ are given by
\[
\nu(k_1 \ast [\clsr^{(1)}] + k_2 \ast [\clsr^{(2)}] + k_3 \ast [\clsr^{(3)}] + k_4 \ast [\clsr^{(4)}] + k_5 \ast [\clsr^{(5)}]),
\]
with all riggings $0$ except for $x$ in the first row of $\nu^{(1)}$, which satisfies $0 \leq x \leq k_2 - k_4 - 2k_5$, and
\[
\cc(\nu, J) = 3k_1 + k_2 + k_3 + k_4 + k _5 + x.
\]
\end{proposition}

\begin{proof}
Note that $\clsr^{(1)} > \clsr^{(2)} > \clsr^{(3)} > \clsr^{(4)} > \clsr^{(5)}$ component-wise expressed in terms of $\{ \clsr_i \}_{i \in I_0}$. Thus any sequence $(k_1, k_2, k_3, k_4, k_5)$ uniquely determines a path in the ambient Kleber tree, which is the same tree constructed in Proposition~\ref{prop:RC_E6_4}. Hence the claim follows from Definition~\ref{def:virtual_kleber} as all $\virtual{\clsr}^{(i)}$ are symmetric with respect to the diagram automorphism $\phi$ and that $\gamma_a = 1$ for all $a \in I$.
\end{proof}

% --- r = 4 ---

\begin{proposition}
\label{prop:RC_E62_4}
Consider the KR crystal $B^{4,s}$ of type $E_6^{(2)}$. We have
\[
\RC(B^{4,s}) = \bigoplus_{\substack{k_1,k_2 \geq 0 \\ k_1+k_2 \leq s}} \RC(B^{4,s}; k_2 \clfw_1 + (s - k_1 - k_2) \clfw_4).
\]
Moreover, the highest weight rigged configurations in $\RC(B^{4,s})$ are given by
\[
\nu(k_1 \ast [\clsr^{(1)}] + k_2 \ast [\clsr^{(2)}]),
\]
where
\begin{align*}
\clsr^{(1)} & = 2\clsr_1 + 4\clsr_2 + 3\clsr_3 + 2 \clsr_4 && = \clfw_4,
\\ \clsr^{(2)} & = \clsr_2 + \clsr_3 + \clsr_4 && = \clfw_4 - \clfw_1,
\end{align*}
with all riggings $0$ and
\[
\cc(\nu, J) = 2k_1 + k_2.
\]
\end{proposition}

\begin{proof}
We note that $\clsr^{(1)} > \clsr^{(2)}$ (and likewise for their ambient counterparts). The claim follows from Definition~\ref{def:virtual_kleber}.
\end{proof}

It is straightforward to see that our graded decompositions agree with those conjectured in~\cite[App.~A]{HKOTT02}, where the $r = 2$ case is similar to the proof of Proposition~\ref{prop:equivalence_E6_4}.

%%%%%%%%%%
\subsection{Type $F_4^{(1)}$}

We can also describe the set $\hwRC(B^{r,s})$ for $r = 1,2,4$ in type $F_4^{(1)}$.

% --- r = 1 ---
% --- r = 2 ---

For $r = 1,2$ in type $F_4^{(1)}$, this is the same as for type $E_6^{(2)}$ except for $\nu^{(3)}$ and $\nu^{(4)}$ are scaled by 2. This follows from Definition~\ref{def:virtual_kleber}, that $\gamma_{0,1,2} = 2$, and that $\gamma_{3,4} = 1$.

% --- r = 4 ---

\begin{proposition}
Consider the KR crystal $B^{4,s}$ of type $F_4^{(1)}$. We have
\[
\RC(B^{4,s}) = \bigoplus_{\substack{k_1,k_2 \geq 0 \\ k_1+k_2 \leq s/2}} \RC(B^{4,s}; k_2 \clfw_1 + (s - 2k_1 - 2k_2) \clfw_4).
\]
Moreover the highest weight rigged configurations in $\RC(B^{4,s})$ are given by
\begin{align*}
\nu^{(1)} & = (k_1, k_1)
\\ \nu^{(2)} & = (k_1+k_2, k_1, k_1, k_1),
\\ \nu^{(3)} & = (2k_1+2k_2, 2k_1, 2k_1),
\\ \nu^{(4)} & = (2k_1+2k_2, 2k_1),
\end{align*}
with all riggings $0$.
\end{proposition}

\begin{proof}
Similar to the proof of Proposition~\ref{prop:RC_E62_4} except for we select all nodes at even depths and the note above about devirtualization.
\end{proof}

Given the devirtualization map and that all nodes for $r = 1,2,4$ appear at even levels in the ambient Kleber tree, and Proposition~\ref{prop:equivalence_E6_4}, it is straightforward to see that our crystals agree with the conjectured decompositions of~\cite[App.~A]{HKOTY99}.

% --- r = 3 ---

%For $r = 3$, we first note that there are at least 50 edge labels can appear in the ambient tree, but they are bounded above by the following (which occur at depth 1):
%\begin{align*}
%\alpha^{(1)} & := 3\alpha_1 + 4\alpha_2 + 6\alpha_3 + 8\alpha_4 + 6\alpha_5 + 3\alpha_6 & = & \clfw_3 + \clfw_5,
%\\ \alpha^{(2)} & := 2\alpha_1 + 2\alpha_2 + 4\alpha_3 + 5\alpha_4 + 4\alpha_5 + 2\alpha_6 & = & \clfw_3 + \clfw_5 - \clfw_2,
%\\ \alpha^{(3)} & := \alpha_1 + 2\alpha_2 + 3\alpha_3 + 4\alpha_4 + 3\alpha_5 + \alpha_6 & = & \clfw_3 + \clfw_5 - \clfw_1 - \clfw_6,
%\\ \alpha^{(4)} & := \alpha_1 + \alpha_2 + 2\alpha_3 + 2\alpha_4 + 2\alpha_5 + \alpha_6 & = & \clfw_3 + \clfw_5 - \clfw_4,
%\\ \alpha^{(5)} & := \alpha_1 + 2\alpha_3 + 2\alpha_4 + 2\alpha_5 + \alpha_6 & = & \clfw_3 + \clfw_5 - 2\clfw_2,
%\\ \alpha^{(6)} & := \alpha_3 + \alpha_4 + \alpha_5 & = & \clfw_3 + \clfw_5 -  \clfw_1 - \clfw_2 - \clfw_6,
%\\ \alpha^{(2,1)} & := \alpha_2 + \alpha_3 + 2\alpha_4 + \alpha_5 & = & \clfw_4 - \clfw_1 - \clfw_6,
%\\ \alpha^{(3,1)} & := \alpha_3 + 2\alpha_4 + \alpha_5 & = & 2\clfw_4 - \clfw_1 - 2\clfw_2 - \clfw_6,
%\\ \alpha^{(3,2)} & := \alpha_3 + \alpha_5 & = & 2\clfw_3 + 2\clfw_5 - \clfw_1 - 2\clfw_4 - \clfw_6,
%\\ \alpha^{(3,3)} & := \alpha_4 & = & 2\clfw_4 - \clfw_2 - \clfw_3 - \clfw_5,
%\\ \alpha^{(4,1)} & := \alpha2 + 2\alpha_4 & = & 3 \clfw_4 - 2\clfw_3 - 2\clfw_5
%\\ \vdots
%\end{align*}

%=====================================================================
\section{Outlook}
\label{sec:outlook}

For untwisted non-simply-laced affine types, the author believes that there is a way to modify the description of the rigged configurations such that each partition is scaled by $1 / T_r$ as in Remark~\ref{remark:scaling_untwisted}. In particular, this modification will be similar to the description given here and the proof would be similar and uniform. We note that this was done for type $C_n^{(1)}$ and $B_n^{(1)}$ for $B^{1,1}$ in~\cite{OSS03}. However, we need to take $B^{1,2}$ for type $C_n^{(1)}$ and $B^{2,1}$ for type $B_n^{(1)}$ in order to get the adjoint node (and have a perfect crystal of level $1$). As such, a modification to our description would be needed. It seems plausible that the extra possibility of case~(Q), a singular row of length one less than previously selected, as given in $\delta_1$ of type $B_n^{(1)}$ from~\cite{OSS03} is the necessary modification. For type $G_2^{(1)}$, this is an open problem from~\cite{Scrimshaw15}, which we give here in more generality.

\begin{problem}
Describe explicitly the map $\delta_{\theta}$ for $\g$ of untwisted non-simply-laced affine type.
\end{problem}

%=====================================================================
\appendix

\section{KR tableaux for fundamental weights}
\label{sec:KR_tableaux}

{
\allowdisplaybreaks
In this section, we list the classically highest weight KR tableaux for $B^{r,1}$ for types $E_{6,7,8}^{(1)}$, $E_6^{(2)}$, $F_4^{(1)}$, and $G_2^{(1)}$.

Recall that highest weight rigged configurations must have $0 \leq p_i^{(a)}$ for all $(a,i) \in \HH_0$. Moreover, all rigged configurations in this section will have $p_i^{(a)} = 0$ except for possibly one $(b, j) \in \HH_0$ where $p_j^{(b)} = 1$. Therefore we describe the rigged configuration simply by its configuration $\nu$ and if $p_i^{(a)} = 1$, then we write the corresponding rigging $x$ as the subscript $(x)$.
We also write the column tableau
$
\begin{array}{|c|c|c|c|c|}
\hline
x_1 &
x_2 &
x_3 &
\cdots &
x_r
\\\hline
\end{array}^{\ t}
$
as $\column{x_1, x_2, x_3, \dotsc, x_r}$.

\begin{example}
In type $E_7^{(1)}$ for $B^{4,1}$, we denote the rigged configuration
\[
\begin{tikzpicture}[scale=.35,anchor=top,baseline=-18]
 \rpp{1}{0}{0}
 \begin{scope}[xshift=4cm]
 \rpp{1,1}{0,0}{0,0}
 \end{scope}
 \begin{scope}[xshift=8cm]
 \rpp{1,1}{1,0}{1,1}
 \end{scope}
 \begin{scope}[xshift=12cm]
 \rpp{1,1,1,1}{0,0,0,0}{0,0,0,0}
 \end{scope}
 \begin{scope}[xshift=16cm]
 \rpp{1,1,1}{0,0,0}{0,0,0}
 \end{scope}
 \begin{scope}[xshift=20cm]
 \rpp{1,1}{0,0}{0,0}
 \end{scope}
 \begin{scope}[xshift=24cm]
 \rpp{1}{0}{0}
 \end{scope}
\end{tikzpicture},
\]
%where we write the vacancy number of a row on the left and the rigging of a row on the right,
by $(1, 11, 1_{(1)} 1_{(0)}, 1111, 111, 11, 1)$ or more compactly $(1, 1^2, 1^2_{(1,0)}, 1^4, 1^3, 1^2, 1)$.
\end{example}

In the remaining part of this section, we give the KR tableaux for $B^{r,1}$ for the exceptional types (except for $r = 4, 5$ in type $E_8^{(1)}$, which can be generated using \textsc{SageMath}~\cite{sage}).

%%%%%%%%%%
\subsection{Type $E_6^{(1)}$}

\noindent {$r=1$}
\begin{align*}
(\emptyset, \emptyset, \emptyset, \emptyset, \emptyset, \emptyset) && \mapsto && \column{1}
\intertext{\noindent {$r=2$}}
%
%(\emptyset, \emptyset, \emptyset, \emptyset, \emptyset, \emptyset) && \mapsto && \column{1, \bon 3, 2 \bth}
%\\ (1, 1^2, 1^2, 1^3, 1^2, 1) && \mapsto && \column{1, \bon 6,  \bsix}
(\emptyset, \emptyset, \emptyset, \emptyset, \emptyset, \emptyset) && \mapsto && \column{1, \bon 6, 2 \bsix}
\\ (1, 1^2, 1^2, 1^3, 1^2, 1) && \mapsto && \column{1, \bon 6,  \bsix}
\intertext{\noindent {$r=3$}}
(\emptyset, \emptyset, \emptyset, \emptyset, \emptyset, \emptyset) && \mapsto && \column{1, \bon 3}
\\ (1, 1, 1^2, 1^2, 1, \emptyset) && \mapsto && \column{1, \bon 6}
\intertext{\noindent {$r=4$}}
(\emptyset, \emptyset, \emptyset, \emptyset, \emptyset, \emptyset) && \mapsto && \column{1, \bon 3, \bth 4}
\\ (\emptyset, 1, 1, 1^2, 1, \emptyset) && \mapsto && \column{1, \bon 3, 1 \bth 6}
\\ (1, 1_{(0)}, 1, 1^3, 1^2, 1) && \mapsto && \column{1, \bon 6, 2\bsix}
\\ (1, 1_{(1)}, 1^2, 1^3, 1^2, 1) && \mapsto && \column{1, \bon 3, 2 \bth}
\\ (1^2, 1^3, 1^4, 1^6, 1^4, 1^2) && \mapsto && \column{1, \bon 6, \bsix}
\intertext{\noindent {$r=5$}}
%
%(\emptyset, \emptyset, \emptyset, \emptyset, \emptyset, \emptyset) && \mapsto && \column{1, \bon 3, 2 \bth, \btw 5}
%\\ (\emptyset, 1, 1, 1^2, 1^2, 1) && \mapsto && \column{1, \bon 3, 2 \bth, 1 \btw}
(\emptyset, \emptyset, \emptyset, \emptyset, \emptyset, \emptyset) && \mapsto && \column{1, \bon 6, 2 \bsix, \btw 5}
\\ (\emptyset, 1, 1, 1^2, 1^2, 1) && \mapsto && \column{1, \bon 6, 2 \bsix, 1 \btw}
\intertext{\noindent {$r=6$}}
(\emptyset, \emptyset, \emptyset, \emptyset, \emptyset, \emptyset) && \mapsto && \column{1, \bon 6}
\end{align*}

%%%%%%%%%%
\subsection{Type $E_7^{(1)}$}

\noindent {$r=1$}
\begin{align*}
(\emptyset, \emptyset, \emptyset, \emptyset, \emptyset, \emptyset, \emptyset) && \mapsto && \column{7, 1 \bseven}
\\ (1^2, 1^2, 1^3, 1^4, 1^3, 1^2, 1) && \mapsto && \column{7, \bseven}
\intertext{\noindent {$r=2$}}
(\emptyset, \emptyset, \emptyset, \emptyset, \emptyset, \emptyset, \emptyset) && \mapsto && \column{7, 1 \bseven, \bon 2}
\\ (1, 1^2, 1^2, 1^3, 1^2, 1, \emptyset) && \mapsto && \column{7, 1 \bseven, \bon 7}
\intertext{\noindent {$r=3$}}
(\emptyset, \emptyset, \emptyset, \emptyset, \emptyset, \emptyset, \emptyset) && \mapsto && \column{7, 1 \bseven, \bon 2, \btw 3}
\\ (1, 1, 1^2, 1^2, 1, \emptyset, \emptyset) && \mapsto && \column{7, 1 \bseven, \bon 2, \btw 6}
\\ (1_{(0)}, 1^2, 1^3, 1^4, 1^3, 1^2, 1) && \mapsto && \column{7, 1 \bseven, \bon 7, 1 \bseven}
\\ (1_{(1)}, 1^2, 1^3, 1^4, 1^3, 1^2, 1) && \mapsto && \column{7, 1 \bseven,  \bon 2, 1 \btw}
\\ (1^3, 1^4, 1^6, 1^8, 1^6, 1^4, 1^2) && \mapsto && \column{7, 1 \bseven, \bon 7, \bseven}
\intertext{\noindent {$r=4$}}
(\emptyset, \emptyset, \emptyset, \emptyset, \emptyset, \emptyset, \emptyset) && \mapsto && \column{7, 6 \bseven, 5 \bsix, 4 \bfive}
\\ (\emptyset, 1, 1, 1^2, 1, \emptyset, \emptyset) && \mapsto && \column{7, 6 \bseven, 5 \bsix, 1 \bfive 6}
\\ (\emptyset, 1^2, 1^2, 1^4, 1^3, 1^2, 1) && \mapsto && \column{7, 6 \bseven, 1 \bsix 7, 1 \bseven}
\\ (1, 1_{(0)}, 1^2, 1^3, 1^2, 1, \emptyset) && \mapsto && \column{7, 6 \bseven, 1 \bsix 7, \bon 2}
\\ (1, 1_{(1)}, 1^2, 1^3, 1^2, 1, \emptyset) && \mapsto && \column{7, 6 \bseven, 5 \bsix, 2 \bfive 7}
\\ (1, 1^2, 1^2_{(0,0)}, 1^4, 1^3, 1^2, 1) && \mapsto && \column{7, 1 \bseven, \bon 2, \btw 3}
\\ (1, 1^2, 1^2_{(1,0)}, 1^4, 1^3, 1^2, 1) && \mapsto && \column{7, 6 \bseven, 1 \bsix 7, \bon 3 \bseven}
\\ (1, 1^2, 1^2_{(1,1)}, 1^4, 1^3, 1^2, 1) && \mapsto && \column{7, 6 \bseven, 5 \bsix, 3 \bfive}
\\ (1^2, 1^3, 1^4, 1^6, 1^4, 1^2, \emptyset) && \mapsto && \column{7, 6 \bseven, 1\bsix 7, \bon 7}
\\ (1^2, 1^3, 1^4, 1^6, 1^4, 1^2_{(0,0)}, 1) && \mapsto && \column{7, 1 \bseven, \bon 2, \btw 6}
\\ (1^2, 1^3, 1^4, 1^6, 1^4, 1^2_{(1,0)}, 1) && \mapsto && \column{7, 6 \bseven, 1 \bsix 7, \bon 6 \bseven}
\\ (1^2, 1^3, 1^4, 1^6, 1^4, 1^2_{(1,1)}, 1) && \mapsto && \column{7, 6 \bseven, 5 \bsix, \bfive 6}
\\ (1^2_{(0,0)}, 1^4, 1^5, 1^8, 1^6, 1^4, 1^2) && \mapsto && \column{7, 1 \bseven, \bon 7, 1 \bseven}
\\ (1^2_{(1,0)}, 1^4, 1^5, 1^8, 1^6, 1^4, 1^2) && \mapsto && \column{7, 1 \bseven, \bon 2, 1 \btw}
\\ (1^2_{(1,1)}, 1^4, 1^5, 1^8, 1^6, 1^4, 1^2) && \mapsto && \column{7, 6 \bseven, 1 \bsix 7, \bseven}
\\ (1^4, 1^6, 1^8, 1^{12}, 1^9, 1^6, 1^3) && \mapsto && \column{7, 1 \bseven, \bon 7, \bseven}
\\ (2, 21, 2^2, 2^2 1^2, 2 1^2, 1^2, 1) && \mapsto && \column{7, 6 \bseven, 2 \bsix, \btw 6}
\\ (1^2, 2 1^2, 2 1^3, 2^2 1^4, 2^2 1^2, 2^2, 2) && \mapsto && \column{7, 6 \bseven, 2 \bsix, \btw 1}
\\ (2^2, 2^2 1^2, 2^3 1^2, 2^4 1^4, 2^3 1^3, 2^2 1^2, 21) && \mapsto && \column{7, 6 \bseven, \bsix 7, \bseven}
\intertext{\noindent {$r=5$}}
(\emptyset, \emptyset, \emptyset, \emptyset, \emptyset, \emptyset, \emptyset) && \mapsto && \column{7, 6 \bseven, 5\bsix}
\\ (\emptyset, 1,  1, 1^2, 1^2, 1, \emptyset) && \mapsto && \column{7, 6 \bseven, 1 \bsix 7}
\\ (1, 1_{(0)}, 1^2, 1^3, 1^3, 1^2, 1) && \mapsto && \column{7, 1 \bseven, \bon 2}
\\ (1, 1_{(1)}, 1^2, 1^3, 1^3, 1^2, 1) && \mapsto && \column{7, 6 \bseven, 2\bsix}
\\ (1^2, 1^3, 1^4, 1^6, 1^5, 1^3, 1_{(0)}) && \mapsto && \column{7, 1 \bseven, \bon 7}
\\ (1^2, 1^3, 1^4, 1^6, 1^5, 1^3, 1_{(1)}) && \mapsto && \column{7, 6 \bseven, \bsix 7}
\intertext{\noindent {$r=6$}}
(\emptyset, \emptyset, \emptyset, \emptyset, \emptyset, \emptyset, \emptyset) && \mapsto && \column{7, 6 \bseven}
\\ (\emptyset, 1, 1, 1^2, 1^2, 1^2, 1) && \mapsto && \column{7, 1 \bseven}
\\ (1^2, 1^3, 1^4, 1^6, 1^5, 1^4, 1^2) && \mapsto && \column{7, \bseven}
\intertext{\noindent {$r=7$}}
(\emptyset, \emptyset, \emptyset, \emptyset, \emptyset, \emptyset, \emptyset) && \mapsto && \column{7}
\end{align*}

%%%%%%%%%%
\subsection{Type $E_8^{(1)}$}

\noindent {$r=1$}
\begin{align*}
(\emptyset, \emptyset, \emptyset, \emptyset, \emptyset, \emptyset, \emptyset, \emptyset) && \mapsto && \column{8, 1\beight}
\\ (1^2, 1^2, 1^3, 1^4, 1^3, 1^2, 1, \emptyset) && \mapsto && \column{8, \emptyset}
\\ (1^4, 1^5, 1^7, 1^{10}, 1^8, 1^6, 1^4, 1^2) && \mapsto && \column{\emptyset, \emptyset}
\intertext{\noindent {$r=2$}}
(\emptyset, \emptyset, \emptyset, \emptyset, \emptyset, \emptyset, \emptyset, \emptyset) && \mapsto && \column{8, 1\beight, \bon 2}
\\ (1, 1^2, 1^2, 1^3, 1^2, 1, \emptyset, \emptyset) && \mapsto && \column{8, 1\beight, \bon 7}
\\ (1_{(0)}, 1^3, 1^3, 1^5, 1^4, 1^3, 1^2, 1) && \mapsto && \column{8, 1\beight, \bon 1}
\\ (1_{(1)}, 1^3, 1^3, 1^5, 1^4, 1^3, 1^2, 1) && \mapsto && \column{8, 1\beight, \emptyset}
\\ (1^3, 1^5, 1^6, 1^9, 1^7, 1^5, 1^3, 1_{(0)}) && \mapsto && \column{8, \emptyset, 8\beight}
\\ (1^3, 1^5, 1^6, 1^9, 1^7, 1^5, 1^3, 1_{(1)}) && \mapsto && \column{8, \emptyset, \emptyset}
\\ (1^5, 1^8, 1^{10}, 1^{15}, 1^{12}, 1^9, 1^6, 1^3) && \mapsto && \column{\emptyset, \emptyset, \emptyset}
\intertext{\noindent {$r=3$}}
(\emptyset, \emptyset, \emptyset, \emptyset, \emptyset, \emptyset, \emptyset, \emptyset) && \mapsto && \column{8, 1\beight, \bon 2, \btw 3}
\\ (1, 1, 1^2, 1^2, 1, \emptyset, \emptyset, \emptyset) && \mapsto && \column{8, 1\beight, \bon 2, \btw 6}
\\ (1_{(0)}, 1^2, 1^3, 1^4, 1^3, 1^2, 1, \emptyset) && \mapsto && \column{8, 1\beight, \bon 7, 1\bseven 8}
\\ (1_{(1)}, 1^2, 1^3, 1^4, 1^3, 1^2, 1, \emptyset) && \mapsto && \column{8, 1\beight, \bon 2, 1\btw 8}
\\ (1^2, 1^2_{(0,0)}, 1^4, 1^5, 1^4, 1^3, 1^2, 1) && \mapsto && \column{8, 1\beight, \bon 7, 2 \bseven}
\\ (1^2, 1^2_{(1,0)}, 1^4, 1^5, 1^4, 1^3, 1^2, 1) && \mapsto && \column{8, 1\beight, \bon 2, \btw 2}
\\ (1^2, 1^2_{(1,1)}, 1^4, 1^5, 1^4, 1^3, 1^2, 1) && \mapsto && \column{8, 1\beight, \bon 2, \emptyset}
\\ (1^3, 1^4, 1^6, 1^8, 1^6, 1^4, 1^2, \emptyset) && \mapsto && \column{8, 1\beight, \bon 7, \bseven 8 8}
\\ (1^3, 1^4, 1^6, 1^8, 1^6, 1^4, 1^2_{(0,0)}, 1) && \mapsto && \column{8, 1\beight, \bon 1, \bon 7}
\\ (1^3, 1^4, 1^6, 1^8, 1^6, 1^4, 1^2_{(1,0)}, 1) && \mapsto && \column{8, 1\beight, \bon 7, \bseven 7}
\\ (1^3, 1^4, 1^6, 1^8, 1^6, 1^4, 1^2_{(1,1)}, 1) && \mapsto && \column{8, 1\beight, \bon 7, \emptyset}
\\ (1^3_{(0,0,0)}, 1^5, 1^7, 1^{10}, 1^8, 1^6, 1^4, 1^2) && \mapsto && \column{8, \emptyset, \beight 8, 1 \beight}
\\ (1^3_{(1,0,0)}, 1^5, 1^7, 1^{10}, 1^8, 1^6, 1^4, 1^2) && \mapsto && \column{8, 1\beight, \bon 1, \bon 1}
\\ (1^3_{(1,1,0)}, 1^5, 1^7, 1^{10}, 1^8, 1^6, 1^4, 1^2) && \mapsto && \column{8, 1\beight, \bon 1, \emptyset}
\\ (1^3_{(1,1,1)}, 1^5, 1^7, 1^{10}, 1^8, 1^6, 1^4, 1^2) && \mapsto && \column{8, 1\beight, \emptyset, \emptyset}
\\ (1^5, 1^7, 1^{10}, 1^{14}, 1^{11}, 1^8, 1^5, 1^2_{(0,0)}) && \mapsto && \column{8, \emptyset, \beight 8, \beight 8}
\\ (1^5, 1^7, 1^{10}, 1^{14}, 1^{11}, 1^8, 1^5, 1^2_{(1,0)}) && \mapsto && \column{8, \emptyset, \beight 8, \emptyset}
\\ (1^5, 1^7, 1^{10}, 1^{14}, 1^{11}, 1^8, 1^5, 1^2_{(1,1)}) && \mapsto && \column{8, \emptyset, \emptyset, \emptyset}
\\ (1^7, 1^{10}, 1^{14}, 1^{20}, 1^{16}, 1^{12}, 1^8, 1^4) && \mapsto && \column{\emptyset, \emptyset, \emptyset, \emptyset}
\\ (21, 2^2, 2^2 1^2, 2^3 1^2, 2^2 1^2, 21^2, 1^2, 1) && \mapsto && \column{8, 1\beight, \bon 2, \btw 7}
\\ (1^3, 21^3, 21^5, 2^2 1^6, 2^2 1^4, 2^2 1^2, 2^2, 2) && \mapsto && \column{8, 1\beight, \bon 7, 1 \bseven}
\\ (2^2 1_{(0)}, 2^2 1^3, 2^3 1^4, 2^4 1^6, 2^3 1^5, 2^2 1^4, 2 1^3, 1^2) && \mapsto && \column{8, 1\beight, \bon 1, \bon 8}
\\ (2^2 1_{(1)}, 2^2 1^3, 2^3 1^4, 2^4 1^6, 2^3 1^5, 2^2 1^4, 2 1^3, 1^2) && \mapsto && \column{8, 1\beight, \emptyset, \bon 8}
\\ (2^2 1^3, 2^3 1^4, 2^4 1^6, 2^6 1^8, 2^5 1^6, 2^4 1^4, 2^3 1^2, 2^2) && \mapsto && \column{8, \emptyset, \emptyset, \beight}
\intertext{\noindent {$r=6$}}
(\emptyset, \emptyset, \emptyset, \emptyset, \emptyset, \emptyset, \emptyset, \emptyset) && \mapsto && \column{8, 7\beight, 6\bseven}
\\ (\emptyset, 1, 1, 1^2, 1^2, 1^1, 1, \emptyset) && \mapsto && \column{8, 7\beight, 1\bseven 8}
\\ (1, 1_{(0)}, 1^2, 1^3, 1^3, 1^3, 1^2, 1) && \mapsto && \column{8, 1\beight, \bon 2}
\\ (1, 1_{(1)}, 1^2, 1^3, 1^3, 1^3, 1^2, 1) && \mapsto && \column{8, 7\beight, 2 \bseven}
\\ (1^2, 1^3, 1^4, 1^6, 1^5, 1^4, 1^2, \emptyset) && \mapsto && \column{8, 7\beight, \bseven 8 8}
\\ (1^2, 1^3, 1^4, 1^6, 1^5, 1^4, 1^2_{(0,0)}, 1) && \mapsto && \column{8, 1\beight, \bon 7}
\\ (1^2, 1^3, 1^4, 1^6, 1^5, 1^4, 1^2_{(1,0)}, 1) && \mapsto && \column{8, 7\beight, \bseven 7}
\\ (1^2, 1^3, 1^4, 1^6, 1^5, 1^4, 1^2_{(1,1)}, 1) && \mapsto && \column{8, 7\beight, \emptyset}
\\ (1^2_{(0,0)}, 1^4, 1^5, 1^8, 1^7, 1^6, 1^4, 1^2) && \mapsto && \column{8, \beight 8, 1 \beight}
\\ (1^2_{(1,0)}, 1^4, 1^5, 1^8, 1^7, 1^6, 1^4, 1^2) && \mapsto && \column{8, 1 \beight, \bon 1}
\\ (1^2_{(1,1)}, 1^4, 1^5, 1^8, 1^7, 1^6, 1^4, 1^2) && \mapsto && \column{8, 1\beight 8, \emptyset}
\\ (1^4, 1^6, 1^8, 1^{12}, 1^{10}, 1^8, 1^5, 1^2_{(0,0)}) && \mapsto && \column{8, \beight 8, \beight 8}
\\ (1^4, 1^6, 1^8, 1^{12}, 1^{10}, 1^8, 1^5, 1^2_{(1,0)}) && \mapsto && \column{8, \beight 8, \emptyset}
\\ (1^4, 1^6, 1^8, 1^{12}, 1^{10}, 1^8, 1^5, 1^2_{(1,1)}) && \mapsto && \column{8, \emptyset, \emptyset}
\\ (1^6, 1^9, 1^{12}, 1^{18}, 1^{15}, 1^{12}, 1^8, 1^4) && \mapsto && \column{\emptyset, \emptyset, \emptyset}
\\ (1^2, 21^2, 21^3, 2^2 1^4, 2^2 1^3, 2^2 1^2, 2^2, 2) && \mapsto && \column{8, 7\beight, 1 \bseven}
\\ (2^2, 2^2 1^2, 2^3 1^2, 2^4 1^4, 2^3 1^4, 2^2 1^4, 21^3, 1^2) && \mapsto && \column{8, 1\beight, \bon 8}
\\ (2^2 1^2, 2^3 1^3, 2^4 1^4, 2^6 1^6, 2^5 1^5, 2^4 1^4, 2^3 1^2, 2^2) && \mapsto && \column{8, \emptyset, \beight}
\intertext{\noindent {$r=7$}}
(\emptyset, \emptyset, \emptyset, \emptyset, \emptyset, \emptyset, \emptyset, \emptyset) && \mapsto && \column{8, 7\beight}
\\ (\emptyset, 1, 1, 1^2, 1^2, 1^2, 1^2, 1) && \mapsto && \column{8, 1\beight}
\\ (1^2, 1^3, 1^4, 1^6, 1^5, 1^4, 1^3, 1_{(0)}) && \mapsto && \column{8, 8\beight}
\\ (1^2, 1^3, 1^4, 1^6, 1^5, 1^4, 1^3, 1_{(1)}) && \mapsto && \column{8, \emptyset}
\\ (1^4, 1^6, 1^8, 1^{12}, 1^{10}, 1^8, 1^6, 1^3) && \mapsto && \column{\emptyset, \emptyset}
\intertext{\noindent {$r=8$}}
(\emptyset, \emptyset, \emptyset, \emptyset, \emptyset, \emptyset, \emptyset, \emptyset) && \mapsto && \column{8}
\\ (1^2, 1^3, 1^4, 1^6, 1^5, 1^4, 1^3, 1^2) && \mapsto && \column{\emptyset}
\end{align*}

%%%%%%%%%%
\subsection{Type $E_6^{(2)}$}

\noindent {$r=1$}

\begin{align*}
(\emptyset, \emptyset, \emptyset, \emptyset) && \mapsto && \column{1}
\\ (1^2, 1^3, 1^2, 1) && \mapsto && \column{\emptyset}
\intertext{\noindent {$r=2$}}
(\emptyset, \emptyset, \emptyset, \emptyset) && \mapsto && \column{1, \bon 2}
\\ (1, 1^2, 1, \emptyset) && \mapsto && \column{1, \bon 4}
\\ (1_{(0)}, 1^3, 1^2, 1) && \mapsto && \column{1, \bon 1}
\\ (1_{(1)}, 1^3, 1^2, 1) && \mapsto && \column{1, \emptyset}
\\ (1^3, 1^6, 1^4, 1^2) && \mapsto && \column{\emptyset, \emptyset}
\intertext{\noindent {$r=3$}}
(\emptyset, \emptyset, \emptyset, \emptyset) && \mapsto && \column{1, \bon 2, \btw 3}
\\ (\emptyset, 1, 1, \emptyset) && \mapsto && \column{1, \bon 2, 1 \btw 4}
\\ (\emptyset, 1^2, 1^2, 1) && \mapsto && \column{1, \bon 2, 11\btw}
\\ (1, 1^2_{(0,0)}, 1^2, 1) && \mapsto && \column{1, \bon 4, 2\bfo}
\\ (1, 1^2_{(1,0)}, 1^2, 1) && \mapsto && \column{1, \bon 2, \btw 2}
\\ (1, 1^2_{(1,1)}, 1^2, 1) && \mapsto && \column{1, \bon 2, \emptyset}
\\ (1^2, 1^4, 1^3, 1_{(0)}) && \mapsto && \column{1, \emptyset, \bon 4}
\\ (1^2, 1^4, 1^3, 1_{(1)}) && \mapsto && \column{1, \bon 4,\emptyset}
\\ (1^2_{(0,0)}, 1^5, 1^4, 1^2) && \mapsto && \column{1, \bon 1, \bon 1}
\\ (1^2_{(1,0)}, 1^5, 1^4, 1^2) && \mapsto && \column{1, \bon 1, \emptyset}
\\ (1^2_{(1,1)}, 1^5, 1^4, 1^2) && \mapsto && \column{1, \emptyset,\emptyset}
\\ (1^4, 1^8, 1^6, 1^3) && \mapsto && \column{\emptyset, \emptyset, \emptyset}
\\ (2, 2^2, 21, 1) && \mapsto && \column{1, \bon 2, \btw 4}
\\ (1^2, 2 1^3, 2 1^2, 2) && \mapsto && \column{1, \bon 4, 1\bfo}
\\ (2^2, 2^3 1^2, 2^2 1^2, 21) && \mapsto && \column{1, \emptyset, \bon}
\intertext{\noindent {$r=4$}}
(\emptyset, \emptyset, \emptyset, \emptyset) && \mapsto && \column{1, \bon 4}
\\ (\emptyset, 1, 1, 1) && \mapsto && \column{1, \emptyset}
\\ (1^2, 1^4, 1^3, 1^2) && \mapsto && \column{\emptyset, \emptyset}
\end{align*}

%%%%%%%%%%
\subsection{Type $F_4^{(1)}$}

We follow Proposition~\ref{prop:adjoint_elements} to describe the elements of $B(\clfw_4)$.

\noindent {$r=1$}

\begin{align*}
(\emptyset, \emptyset, \emptyset, \emptyset) && \mapsto && \column{4, 1\bfo}
\\ (1^2, 1^3, 2^2, 2) && \mapsto && \column{4, \bfo}
\intertext{\noindent {$r=4$}}
(\emptyset, \emptyset, \emptyset, \emptyset) && \mapsto && \column{4, 3\bfo, 2\bth}
\\ (1, 1^2, 2, \emptyset) && \mapsto && \column{4, 3\bfo, \bth44}
\\ (1_{(0)}, 1^3, 2^2, 2) && \mapsto && \column{4, 4\bfo, 1\bfo}
\\ (1_{(1)}, 1^3, 2^2, 2) && \mapsto && \column{4, 3\bfo, 1\bth}
\\ (1^3, 1^6, 2^4, 2^2) && \mapsto && \column{4, 4\bfo, \bfo}
\intertext{\noindent {$r=3$}}
(\emptyset, \emptyset, \emptyset, \emptyset) && \mapsto && \column{4, 3\bfo}
\\ (1, 1^2, 21, 1) && \mapsto && \column{4, 4\bfo}
\intertext{\noindent {$r=4$}}
(\emptyset, \emptyset, \emptyset, \emptyset) && \mapsto && \column{4}
\end{align*}

%%%%%%%%%%
\subsection{Type $G_2^{(1)}$}

\noindent {$r=1$}

\begin{align*}
(\emptyset, \emptyset) && \mapsto && \column{1}
\intertext{\noindent {$r=2$}}
(\emptyset, \emptyset) && \mapsto && \column{1,2}
\\ (3, 1^2) && \mapsto && \column{1, \bon} %[\btw, \bon] \otimes [1, 2]
\end{align*}
}

%=====================================================================
\section{Examples with SageMath}

We give some examples using \textsc{SageMath}~\cite{sage}, where rigged configurations, KR tableaux, and the bijection $\Phi$ has been implemented by the author.

We first construct Example~\ref{ex:E6t_B11_power}.
\begin{lstlisting}
sage: RC=RiggedConfigurations(['E',6,2], [[1,1]]*4)
sage: n=RC(partition_list=[[2,2,1,1],[2,2,2,1,1,1],[2,2,1,1],[2,1]],
....:        rigging_list=[[1,0,2,1],[0,0,0,0,0,0],[0,0,0,0],[0,0]])
sage: ascii_art(n)
1[ ][ ]1  0[ ][ ]0  0[ ][ ]0  0[ ][ ]0
1[ ][ ]0  0[ ][ ]0  0[ ][ ]0  0[ ]0
2[ ]2     0[ ][ ]0  0[ ]0   
2[ ]1     0[ ]0     0[ ]0   
          0[ ]0             
          0[ ]0             
sage: n.to_tensor_product_of_kirillov_reshetikhin_tableaux().pp()
 (1, -2) (X)  (-1, 2) (X)   E (X)  (1,)
\end{lstlisting}
Next, we construct Example~\ref{ex:RC_E7_B41}.
\begin{lstlisting}
sage: RC=RiggedConfigurations(['E',7,1], [[4,1]])
sage: nu=RC.module_generators[6]
sage: ascii_art(nu)

0[ ]0  0[ ]0  1[ ]1  0[ ]0  0[ ]0  0[ ]0  0[ ]0
       0[ ]0  1[ ]0  0[ ]0  0[ ]0  0[ ]0
                     0[ ]0  0[ ]0       
                     0[ ]0              
sage: nu.to_tensor_product_of_kirillov_reshetikhin_tableaux().pp()
        (7,)
     (-7, 6)
  (-6, 7, 1)
 (-1, -7, 3)
\end{lstlisting}

%=====================================================================
%\section{Proofs}
%\label{sec:proofs}

%\bibliographystyle{plain}
\bibliographystyle{alpha}
\bibliography{biject}{}

\newcommand{\etalchar}[1]{$^{#1}$}
\begin{thebibliography}{KKM{\etalchar{+}}92b}

\bibitem[Bax89]{B89}
Rodney~J. Baxter.
\newblock {\em Exactly solved models in statistical mechanics}.
\newblock Academic Press Inc. [Harcourt Brace Jovanovich Publishers], London,
  1989.
\newblock Reprint of the 1982 original.

\bibitem[BFKL06]{BFKL06}
Georgia Benkart, Igor Frenkel, Seok-Jin Kang, and Hyeonmi Lee.
\newblock Level 1 perfect crystals and path realizations of basic
  representations at {$q=0$}.
\newblock {\em Int. Math. Res. Not.}, pages Art. ID 10312, 28, 2006.

\bibitem[bM12a]{Mahathir12}
Mahathir bin Mohammad.
\newblock Scattering rules in soliton cellular automata associated with
  {$U_q(D_n^{(1)})$}-crystal {$B^{n,1}$}.
\newblock {\em J. Phys. A}, 45(7):075208, 22, 2012.

\bibitem[bM12b]{Mahathir}
Mahathir bin Mohammad.
\newblock Soliton cellular automata constructed from a
  {$U_q(\mathfrak{g})$}-{C}rystal {$B^{n,1}$} and {K}irillov-{R}eshetikhin type
  bijection for {$U_q(E_6^{(1)})$}-{C}rystal {$B^{6,1}$}.
\newblock Thesis (Ph.D.)--Osaka University, 2012.

\bibitem[Bou02]{Bourbaki02}
Nicolas Bourbaki.
\newblock {\em Lie groups and {L}ie algebras. {C}hapters 4--6}.
\newblock Elements of Mathematics (Berlin). Springer-Verlag, Berlin, 2002.
\newblock Translated from the 1968 French original by Andrew Pressley.

\bibitem[Cha95]{Chari95}
Vyjayanthi Chari.
\newblock Minimal affinizations of representations of quantum groups: the rank
  {$2$} case.
\newblock {\em Publ. Res. Inst. Math. Sci.}, 31(5):873--911, 1995.

\bibitem[Cha01]{Chari01}
Vyjayanthi Chari.
\newblock On the fermionic formula and the {K}irillov-{R}eshetikhin conjecture.
\newblock {\em Int. Math. Res. Not. IMRN}, (12):629--654, 2001.

\bibitem[Cli98]{Cliff98}
Gerald Cliff.
\newblock Crystal bases and {Y}oung tableaux.
\newblock {\em J. Algebra}, 202(1):10--35, 1998.

\bibitem[CP95a]{CP95II}
Vyjayanthi Chari and Andrew Pressley.
\newblock Minimal affinizations of representations of quantum groups: the
  nonsimply-laced case.
\newblock {\em Lett. Math. Phys.}, 35(2):99--114, 1995.

\bibitem[CP95b]{CP95}
Vyjayanthi Chari and Andrew Pressley.
\newblock Quantum affine algebras and their representations.
\newblock In {\em Representations of groups ({B}anff, {AB}, 1994)}, volume~16
  of {\em CMS Conf. Proc.}, pages 59--78. Amer. Math. Soc., Providence, RI,
  1995.

\bibitem[CP96a]{CP96}
Vyjayanthi Chari and Andrew Pressley.
\newblock Minimal affinizations of representations of quantum groups: the
  irregular case.
\newblock {\em Lett. Math. Phys.}, 36(3):247--266, 1996.

\bibitem[CP96b]{CP96II}
Vyjayanthi Chari and Andrew Pressley.
\newblock Minimal affinizations of representations of quantum groups: the
  simply laced case.
\newblock {\em J. Algebra}, 184(1):1--30, 1996.

\bibitem[CP98]{CP98}
Vyjayanthi Chari and Andrew Pressley.
\newblock Twisted quantum affine algebras.
\newblock {\em Comm. Math. Phys.}, 196(2):461--476, 1998.

\bibitem[Dev18]{sage}
The~Sage Developers.
\newblock {\em {S}age {M}athematics {S}oftware ({V}ersion 8.4)}.
\newblock The Sage Development Team, 2018.
\newblock \url{http://www.sagemath.org}.

\bibitem[DS06]{DS06}
Lipika Deka and Anne Schilling.
\newblock New fermionic formula for unrestricted {K}ostka polynomials.
\newblock {\em J. Combin. Theory Ser. A}, 113(7):1435--1461, 2006.

\bibitem[FOS09]{FOS09}
Ghislain Fourier, Masato Okado, and Anne Schilling.
\newblock Kirillov-{R}eshetikhin crystals for nonexceptional types.
\newblock {\em Adv. Math.}, 222(3):1080--1116, 2009.

\bibitem[FOS10]{FOS10}
Ghislain Fourier, Masato Okado, and Anne Schilling.
\newblock Perfectness of {K}irillov-{R}eshetikhin crystals for nonexceptional
  types.
\newblock {\em Contemp. Math.}, 506:127--143, 2010.

\bibitem[FR99]{FR99}
Edward Frenkel and Nicolai Reshetikhin.
\newblock The {$q$}-characters of representations of quantum affine algebras
  and deformations of {$\mathscr{W}$}-algebras.
\newblock In {\em Recent developments in quantum affine algebras and related
  topics ({R}aleigh, {NC}, 1998)}, volume 248 of {\em Contemp. Math.}, pages
  163--205. Amer. Math. Soc., Providence, RI, 1999.

\bibitem[FSS07]{FSS07}
Ghislain Fourier, Anne Schilling, and Mark Shimozono.
\newblock Demazure structure inside {K}irillov-{R}eshetikhin crystals.
\newblock {\em J. Algebra}, 309(1):386--404, 2007.

\bibitem[Her06]{Hernandez06}
David Hernandez.
\newblock The {K}irillov-{R}eshetikhin conjecture and solutions of
  {$T$}-systems.
\newblock {\em J. Reine Angew. Math.}, 596:63--87, 2006.

\bibitem[Her10]{Hernandez10}
David Hernandez.
\newblock Kirillov-{R}eshetikhin conjecture: the general case.
\newblock {\em Int. Math. Res. Not. IMRN}, (1):149--193, 2010.

\bibitem[HKO{\etalchar{+}}99]{HKOTY99}
G.~Hatayama, A.~Kuniba, M.~Okado, T.~Takagi, and Y.~Yamada.
\newblock Remarks on fermionic formula.
\newblock In {\em Recent developments in quantum affine algebras and related
  topics ({R}aleigh, {NC}, 1998)}, volume 248 of {\em Contemp. Math.}, pages
  243--291. Amer. Math. Soc., Providence, RI, 1999.

\bibitem[HKO{\etalchar{+}}02a]{HKOTY02}
G.~Hatayama, A.~Kuniba, M.~Okado, T.~Takagi, and Y.~Yamada.
\newblock Scattering rules in soliton cellular automata associated with crystal
  bases.
\newblock In {\em Recent developments in infinite-dimensional {L}ie algebras
  and conformal field theory ({C}harlottesville, {VA}, 2000)}, volume 297 of
  {\em Contemp. Math.}, pages 151--182. Amer. Math. Soc., Providence, RI, 2002.

\bibitem[HKO{\etalchar{+}}02b]{HKOTT02}
Goro Hatayama, Atsuo Kuniba, Masato Okado, Taichiro Takagi, and Zengo Tsuboi.
\newblock Paths, crystals and fermionic formulae.
\newblock In {\em Math{P}hys odyssey, 2001}, volume~23 of {\em Prog. Math.
  Phys.}, pages 205--272. Birkh\"auser Boston, Boston, MA, 2002.

\bibitem[HKT00]{HKT00}
Goro Hatayama, Atsuo Kuniba, and Taichiro Takagi.
\newblock Soliton cellular automata associated with crystal bases.
\newblock {\em Nuclear Phys. B}, 577(3):619--645, 2000.

\bibitem[HL08]{HL08}
Jin Hong and Hyeonmi Lee.
\newblock Young tableaux and crystal {$\mathcal{B}(\infty)$} for finite simple
  {L}ie algebras.
\newblock {\em J. Algebra}, 320(10):3680--3693, 2008.

\bibitem[HN06]{HN06}
David Hernandez and Hiraku Nakajima.
\newblock Level 0 monomial crystals.
\newblock {\em Nagoya Math. J.}, 184:85--153, 2006.

\bibitem[Hum90]{Humphreys90}
James~E. Humphreys.
\newblock {\em Reflection groups and {C}oxeter groups}, volume~29 of {\em
  Cambridge Studies in Advanced Mathematics}.
\newblock Cambridge University Press, Cambridge, 1990.

\bibitem[JM85]{JM85}
Michio Jimbo and Tetsuji Miwa.
\newblock On a duality of branching rules for affine {L}ie algebras.
\newblock In {\em Algebraic groups and related topics ({K}yoto/{N}agoya,
  1983)}, volume~6 of {\em Adv. Stud. Pure Math.}, pages 17--65. North-Holland,
  Amsterdam, 1985.

\bibitem[JS10]{JS10}
Brant Jones and Anne Schilling.
\newblock Affine structures and a tableau model for {$E_6$} crystals.
\newblock {\em J. Algebra}, 324(9):2512--2542, 2010.

\bibitem[Kac90]{kac90}
Victor~G. Kac.
\newblock {\em Infinite-dimensional {L}ie algebras}.
\newblock Cambridge University Press, Cambridge, third edition, 1990.

\bibitem[Kas90]{K90}
Masaki Kashiwara.
\newblock Crystalizing the {$q$}-analogue of universal enveloping algebras.
\newblock {\em Comm. Math. Phys.}, 133(2):249--260, 1990.

\bibitem[Kas91]{K91}
Masaki Kashiwara.
\newblock On crystal bases of the $q$-analogue of universal enveloping
  algebras.
\newblock {\em Duke Math. J.}, 63(2):465--516, 1991.

\bibitem[Kas93]{K93}
Masaki Kashiwara.
\newblock The crystal base and {L}ittelmann's refined {D}emazure character
  formula.
\newblock {\em Duke Math. J.}, 71(3):839--858, 1993.

\bibitem[Kas02]{Kashiwara02}
Masaki Kashiwara.
\newblock On level-zero representations of quantized affine algebras.
\newblock {\em Duke Math. J.}, 112(1):117--175, 2002.

\bibitem[KKM{\etalchar{+}}92a]{KKMMNN91}
Seok-Jin Kang, Masaki Kashiwara, Kailash~C. Misra, Tetsuji Miwa, Toshiki
  Nakashima, and Atsushi Nakayashiki.
\newblock Affine crystals and vertex models.
\newblock In {\em Infinite analysis, {P}art {A}, {B} ({K}yoto, 1991)},
  volume~16 of {\em Adv. Ser. Math. Phys.}, pages 449--484. World Sci. Publ.,
  River Edge, NJ, 1992.

\bibitem[KKM{\etalchar{+}}92b]{KKMMNN92}
Seok-Jin Kang, Masaki Kashiwara, Kailash~C. Misra, Tetsuji Miwa, Toshiki
  Nakashima, and Atsushi Nakayashiki.
\newblock Perfect crystals of quantum affine {L}ie algebras.
\newblock {\em Duke Math. J.}, 68(3):499--607, 1992.

\bibitem[KKM94]{KKM94}
Seok-Jin Kang, Masaki Kashiwara, and Kailash~C. Misra.
\newblock Crystal bases of {V}erma modules for quantum affine {L}ie algebras.
\newblock {\em Compositio Math.}, 92(3):299--325, 1994.

\bibitem[KKR86]{KKR86}
S.~V. Kerov, A.~N. Kirillov, and N.~Yu. Reshetikhin.
\newblock Combinatorics, the {B}ethe ansatz and representations of the
  symmetric group.
\newblock {\em Zap. Nauchn. Sem. Leningrad. Otdel. Mat. Inst. Steklov. (LOMI)},
  155(Differentsialnaya Geometriya, Gruppy Li i Mekh. VIII):50--64, 193, 1986.

\bibitem[Kle98]{Kleber98}
Michael~Steven Kleber.
\newblock {\em Finite dimensional representations of quantum affine algebras}.
\newblock ProQuest LLC, Ann Arbor, MI, 1998.
\newblock Thesis (Ph.D.)--University of California, Berkeley.

\bibitem[KMOY07]{KMOY07}
M.~Kashiwara, K.~C. Misra, M.~Okado, and D.~Yamada.
\newblock Perfect crystals for {$U_q(D^{(3)}_4)$}.
\newblock {\em J. Algebra}, 317(1):392--423, 2007.

\bibitem[KN94]{KN94}
Masaki Kashiwara and Toshiki Nakashima.
\newblock Crystal graphs for representations of the {$q$}-analogue of classical
  {L}ie algebras.
\newblock {\em J. Algebra}, 165(2):295--345, 1994.

\bibitem[Kod09]{Kodera09}
Ryosuke Kodera.
\newblock A generalization of adjoint crystals for the quantized affine
  algebras of type {$A^{(1)}_n$}, {$C^{(1)}_n$} and {$D^{(2)}_{n+1}$}.
\newblock {\em J. Algebraic Combin.}, 30(4):491--514, 2009.

\bibitem[KOS{\etalchar{+}}06]{KOSTY06}
Atsuo Kuniba, Masato Okado, Reiho Sakamoto, Taichiro Takagi, and Yasuhiko
  Yamada.
\newblock Crystal interpretation of {K}erov-{K}irillov-{R}eshetikhin bijection.
\newblock {\em Nuclear Phys. B}, 740(3):299--327, 2006.

\bibitem[KR86]{KR86}
A.~N. Kirillov and N.~Yu. Reshetikhin.
\newblock The {B}ethe ansatz and the combinatorics of {Y}oung tableaux.
\newblock {\em Zap. Nauchn. Sem. Leningrad. Otdel. Mat. Inst. Steklov. (LOMI)},
  155(Differentsialnaya Geometriya, Gruppy Li i Mekh. VIII):65--115, 194, 1986.

\bibitem[KSS02]{KSS02}
Anatol~N. Kirillov, Anne Schilling, and Mark Shimozono.
\newblock A bijection between {L}ittlewood-{R}ichardson tableaux and rigged
  configurations.
\newblock {\em Selecta Math. (N.S.)}, 8(1):67--135, 2002.

\bibitem[KSY07]{KSY07}
Atsuo Kuniba, Reiho Sakamoto, and Yasuhiko Yamada.
\newblock Tau functions in combinatorial {B}ethe ansatz.
\newblock {\em Nuclear Phys. B}, 786(3):207--266, 2007.

\bibitem[Lit95a]{L95}
Peter Littelmann.
\newblock The path model for representations of symmetrizable {K}ac-{M}oody
  algebras.
\newblock In {\em Proceedings of the {I}nternational {C}ongress of
  {M}athematicians, {V}ol.\ 1, 2 ({Z}\"urich, 1994)}, pages 298--308, Basel,
  1995. Birkh\"auser.

\bibitem[Lit95b]{L95-2}
Peter Littelmann.
\newblock Paths and root operators in representation theory.
\newblock {\em Ann. of Math. (2)}, 142(3):499--525, 1995.

\bibitem[LL15]{LL15}
Cristian Lenart and Arthur Lubovsky.
\newblock A generalization of the alcove model and its applications.
\newblock {\em J. Algebraic Combin.}, 41(3):751--783, 2015.

\bibitem[LL18]{LL16}
C.~Lenart and A.~Lubovsky.
\newblock A uniform realization of the combinatorial {$R$}-matrix.
\newblock {\em Adv. Math.}, 334:151--183, 2018.

\bibitem[LNS{\etalchar{+}}15]{LNSSS14}
Cristian Lenart, Satoshi Naito, Daisuke Sagaki, Anne Schilling, and Mark
  Shimozono.
\newblock A uniform model for {K}irillov-{R}eshetikhin crystals {I}: {L}ifting
  the parabolic quantum {B}ruhat graph.
\newblock {\em Int. Math. Res. Not. IMRN}, (7):1848--1901, 2015.

\bibitem[LNS{\etalchar{+}}16a]{LNSSS16}
Cristian Lenart, Satoshi Naito, Daisuke Sagaki, Anne Schilling, and Mark
  Shimozono.
\newblock Quantum {L}akshmibai-{S}eshadri paths and root operators.
\newblock {\em Advanced Studies in Pure Math.}, 71:267--294, 2016.

\bibitem[LNS{\etalchar{+}}16b]{LNSSS14II}
Cristian Lenart, Satoshi Naito, Daisuke Sagaki, Anne Schilling, and Mark
  Shimozono.
\newblock A uniform model for {K}irillov-{R}eshetikhin crystals {II}. {A}lcove
  model, path model, and {$P=X$}.
\newblock {\em Int. Math. Res. Not. IMRN}, 2016.

\bibitem[LNS{\etalchar{+}}17]{LNSSS15}
Cristian Lenart, Satoshi Naito, Daisuke Sagaki, Anne Schilling, and Mark
  Shimozono.
\newblock A uniform model for {K}irillov-{R}eshetikhin crystals {III}:
  {N}onsymmetric {M}acdonald polynomials at {$t=0$} and {D}emazure characters.
\newblock {\em Transform. Groups}, pages 1--39, 2017.

\bibitem[LPS18]{LPS15}
Thomas Lam, Pavlo Pylyavskyy, and Reiho Sakamoto.
\newblock Rigged configurations and cylindric loop {S}chur functions.
\newblock {\em Ann. Inst. Henri Poincar\'{e} D}, 5(4):513--555, 2018.

\bibitem[Lus90]{Lusztig90}
George Lusztig.
\newblock Canonical bases arising from quantized enveloping algebras.
\newblock {\em J. Amer. Math. Soc.}, 3(2):447--498, 1990.

\bibitem[MOW12]{MOW12}
Kailash~C. Misra, Masato Okado, and Evan~A. Wilson.
\newblock Soliton cellular automaton associated with {$G^{(1)}_2$} crystal
  base.
\newblock {\em J. Math. Phys.}, 53(1):013510, 21, 2012.

\bibitem[Nak01]{Nakajima01}
Hiraku Nakajima.
\newblock {$t$}-analogue of the {$q$}-characters of finite dimensional
  representations of quantum affine algebras.
\newblock In {\em Physics and combinatorics, 2000 ({N}agoya)}, pages 196--219.
  World Sci. Publ., River Edge, NJ, 2001.

\bibitem[Nak03]{Nakajima03}
Hiraku Nakajima.
\newblock {$t$}-analogs of {$q$}-characters of quantum affine algebras of type
  {$A_n,D_n$}.
\newblock In {\em Combinatorial and geometric representation theory ({S}eoul,
  2001)}, volume 325 of {\em Contemp. Math.}, pages 141--160. Amer. Math. Soc.,
  Providence, RI, 2003.

\bibitem[Nak10]{Nakajima10}
Hiraku Nakajima.
\newblock {$t$}-analogs of {$q$}-characters of quantum affine algebras of type
  {$E_6,E_7,E_8$}.
\newblock In {\em Representation theory of algebraic groups and quantum
  groups}, volume 284 of {\em Progr. Math.}, pages 257--272.
  Birkh\"auser/Springer, New York, 2010.

\bibitem[Nao12]{Naoi12}
Katsuyuki Naoi.
\newblock Weyl modules, {D}emazure modules and finite crystals for non-simply
  laced type.
\newblock {\em Adv. Math.}, 229(2):875--934, 2012.

\bibitem[Nao18]{Naoi17}
Katsuyuki Naoi.
\newblock Existence of {K}irillov-{R}eshetikhin crystals of type {$G_2^{(1)}$}
  and {$D_4^{(3)}$}.
\newblock {\em J. Algebra}, pages 47--65, 2018.

\bibitem[NS06]{NS06II}
Satoshi Naito and Daisuke Sagaki.
\newblock Construction of perfect crystals conjecturally corresponding to
  {K}irillov-{R}eshetikhin modules over twisted quantum affine algebras.
\newblock {\em Comm. Math. Phys.}, 263(3):749--787, 2006.

\bibitem[NS08a]{NS08II}
Satoshi Naito and Daisuke Sagaki.
\newblock Crystal structure on the set of {L}akshmibai-{S}eshadri paths of an
  arbitrary level-zero shape.
\newblock {\em Proc. Lond. Math. Soc. (3)}, 96(3):582--622, 2008.

\bibitem[NS08b]{NS08}
Satoshi Naito and Daisuke Sagaki.
\newblock Lakshmibai-{S}eshadri paths of level-zero shape and one-dimensional
  sums associated to level-zero fundamental representations.
\newblock {\em Compos. Math.}, 144(6):1525--1556, 2008.

\bibitem[Oka07]{Okado07}
Masato Okado.
\newblock Existence of crystal bases for {K}irillov-{R}eshetikhin modules of
  type {$D$}.
\newblock {\em Publ. Res. Inst. Math. Sci.}, 43(4):977--1004, 2007.

\bibitem[Oka13]{Okado13}
Masato Okado.
\newblock Simplicity and similarity of {K}irillov-{R}eshetikhin crystals.
\newblock In {\em Recent developments in algebraic and combinatorial aspects of
  representation theory}, volume 602 of {\em Contemp. Math.}, pages 183--194.
  Amer. Math. Soc., Providence, RI, 2013.

\bibitem[OS08]{OS08}
Masato Okado and Anne Schilling.
\newblock Existence of {K}irillov-{R}eshetikhin crystals for nonexceptional
  types.
\newblock {\em Represent. Theory}, 12:186--207, 2008.

\bibitem[OS12]{OS12}
Masato Okado and Nobumasa Sano.
\newblock K{KR} type bijection for the exceptional affine algebra
  {$E_6^{(1)}$}.
\newblock In {\em Algebraic groups and quantum groups}, volume 565 of {\em
  Contemp. Math.}, pages 227--242. Amer. Math. Soc., Providence, RI, 2012.

\bibitem[OSS03a]{OSS03}
Masato Okado, Anne Schilling, and Mark Shimozono.
\newblock A crystal to rigged configuration bijection for nonexceptional affine
  algebras.
\newblock In {\em Algebraic combinatorics and quantum groups}, pages 85--124.
  World Sci. Publ., River Edge, NJ, 2003.

\bibitem[OSS03b]{OSS03III}
Masato Okado, Anne Schilling, and Mark Shimozono.
\newblock Virtual crystals and fermionic formulas of type
  {$D^{(2)}_{n+1},A^{(2)}_{2n}$}, and {$C^{(1)}_n$}.
\newblock {\em Represent. Theory}, 7:101--163 (electronic), 2003.

\bibitem[OSS03c]{OSS03II}
Masato Okado, Anne Schilling, and Mark Shimozono.
\newblock Virtual crystals and {K}leber's algorithm.
\newblock {\em Comm. Math. Phys.}, 238(1-2):187--209, 2003.

\bibitem[OSS13]{OSS13}
Masato Okado, Reiho Sakamoto, and Anne Schilling.
\newblock Affine crystal structure on rigged configurations of type
  {$D_n^{(1)}$}.
\newblock {\em J. Algebraic Combin.}, 37(3):571--599, 2013.

\bibitem[OSS18]{OSS17}
Masato Okado, Anne Schilling, and Travis Scrimshaw.
\newblock Rigged configuration bijection and proof of the {$X=M$} conjecture
  for nonexceptional affine types.
\newblock {\em J. Algebra}, 516:1--37, 2018.

\bibitem[OSSS17]{OSSS16}
Masato Okado, Reiho Sakamoto, Anne Schilling, and Travis Scrimshaw.
\newblock Type {$D_n^{(1)}$} rigged configuration bijection.
\newblock {\em J. Algebraic Combin.}, 46(2):341--401, 2017.

\bibitem[Pro99]{Proctor99}
Robert~A. Proctor.
\newblock Minuscule elements of {W}eyl groups, the numbers game, and
  {$d$}-complete posets.
\newblock {\em J. Algebra}, 213(1):272--303, 1999.

\bibitem[PS18]{PS15}
Jianping Pan and Travis Scrimshaw.
\newblock Virtualization map for the {L}ittelmann path model.
\newblock {\em Transform. Groups}, 23(4):1045--1061, 2018.

\bibitem[Sak09]{Sakamoto09}
Reiho Sakamoto.
\newblock Kirillov-{S}chilling-{S}himozono bijection as energy functions of
  crystals.
\newblock {\em Int. Math. Res. Not. IMRN}, (4):579--614, 2009.

\bibitem[Sak14]{Sakamoto14}
Reiho Sakamoto.
\newblock Rigged configurations and {K}ashiwara operators.
\newblock {\em SIGMA Symmetry Integrability Geom. Methods Appl.}, 10:Paper 028,
  88, 2014.

\bibitem[SCc08]{combinat}
The {S}age-{C}ombinat community.
\newblock {S}age-{C}ombinat: enhancing {S}age as a toolbox for computer
  exploration in algebraic combinatorics, 2008.
\newblock \url{http://combinat.sagemath.org}.

\bibitem[Sch05]{S05}
Anne Schilling.
\newblock A bijection between type {$D^{(1)}_n$} crystals and rigged
  configurations.
\newblock {\em J. Algebra}, 285(1):292--334, 2005.

\bibitem[Sch06]{S06}
Anne Schilling.
\newblock Crystal structure on rigged configurations.
\newblock {\em Int. Math. Res. Not.}, pages Art. ID 97376, 27, 2006.

\bibitem[Scr16]{Scrimshaw15}
Travis Scrimshaw.
\newblock A crystal to rigged configuration bijection and the filling map for
  type {$D_4^{(3)}$}.
\newblock {\em J. Algebra}, 448C:294--349, 2016.

\bibitem[Scr17]{Scrimshaw17}
Travis Scrimshaw.
\newblock Rigged configurations as tropicalizations of loop {S}chur functions.
\newblock {\em J. Integrable Syst.}, 2(1), 2017.

\bibitem[SS06a]{SS2006}
Anne Schilling and Mark Shimozono.
\newblock {$X=M$} for symmetric powers.
\newblock {\em J. Algebra}, 295(2):562--610, 2006.

\bibitem[SS06b]{SS06}
Anne Schilling and Philip Sternberg.
\newblock Finite-dimensional crystals {$B^{2,s}$} for quantum affine algebras
  of type {$D^{(1)}_n$}.
\newblock {\em J. Algebraic Combin.}, 23(4):317--354, 2006.

\bibitem[SS15a]{SalisburyS15}
Ben Salisbury and Travis Scrimshaw.
\newblock A rigged configuration model for {$B(\infty)$}.
\newblock {\em J. Combin. Theory Ser. A}, 133:29--57, 2015.

\bibitem[SS15b]{SchillingS15}
Anne Schilling and Travis Scrimshaw.
\newblock Crystal structure on rigged configurations and the filling map for
  non-exceptional affine types.
\newblock {\em Electron. J. Combin.}, 22(1):Research Paper 73, 56, 2015.

\bibitem[SS16]{SalisburyS16}
Ben Salisbury and Travis Scrimshaw.
\newblock Connecting marginally large tableaux and rigged configurations.
\newblock {\em Algebr. Represent. Theory}, 19(3):523--546, 2016.

\bibitem[SS17]{SalisburyS15II}
Ben Salisbury and Travis Scrimshaw.
\newblock Rigged configurations for all symmetrizable types.
\newblock {\em Electron. J. Combin.}, 24(1):Research Paper 30, 13, 2017.

\bibitem[SS18]{SalisburyS16II}
Ben Salisbury and Travis Scrimshaw.
\newblock Rigged configurations and the {$\ast$}-involution.
\newblock {\em Lett. Math. Phys.}, 108(9):1985--2007, 2018.

\bibitem[ST12]{ST12}
Anne Schilling and Peter Tingely.
\newblock Demazure crystals, {K}irillov-{R}eshetikhin crystals, and the energy
  function.
\newblock {\em Electron. J. Combin.}, 19(2):Paper 4, 42, 2012.
\newblock [Second author's name now ``Tingley'' on article].

\bibitem[Ste01]{Stembridge01II}
John~R. Stembridge.
\newblock Minuscule elements of {W}eyl groups.
\newblock {\em J. Algebra}, 235(2):722--743, 2001.

\bibitem[Ste03]{Stembridge03}
John~R. Stembridge.
\newblock Multiplicity-free products and restrictions of {W}eyl characters.
\newblock {\em Represent. Theory}, 7:404--439 (electronic), 2003.

\bibitem[SW10]{SW10}
Anne Schilling and Qiang Wang.
\newblock Promotion operator on rigged configurations of type {$A$}.
\newblock {\em Electron. J. Combin.}, 17(1):Research Paper 24, 43, 2010.

\bibitem[Tak05]{Takagi05}
Taichiro Takagi.
\newblock Inverse scattering method for a soliton cellular automaton.
\newblock {\em Nuclear Phys. B}, 707(3):577--601, 2005.

\bibitem[Yam98]{Yamane98}
Shigenori Yamane.
\newblock Perfect crystals of {$U_q(G^{(1)}_2)$}.
\newblock {\em J. Algebra}, 210(2):440--486, 1998.

\bibitem[Yam04]{Yamada04}
Daisuke Yamada.
\newblock Box ball system associated with antisymmetric tensor crystals.
\newblock {\em J. Phys. A}, 37(42):9975--9987, 2004.

\bibitem[Yam07]{Yamada07}
Daisuke Yamada.
\newblock Scattering rule in soliton cellular automaton associated with crystal
  base of {$U_q(D^{(3)}_4)$}.
\newblock {\em J. Math. Phys.}, 48(4):043509, 28, 2007.

\end{thebibliography}
\end{document}